\let\sma\wedge
\newcommand{\htp}{\simeq}
\renewcommand{\to}{\mathchoice{\longrightarrow}{\rightarrow}{\rightarrow}{\rightarrow}}
\newcommand{\from}{\mathchoice{\longleftarrow}{\leftarrow}{\leftarrow}{\leftarrow}}
\newcommand{\cC}{{\mathcal C}}
\newcommand{\cD}{{\mathcal D}}
\newcommand{\cE}{{\mathcal E}}
\newcommand{\cF}{{\mathcal F}}
\newcommand{\cI}{{\mathcal I}}
\newcommand{\cK}{{\mathcal K}}
\newcommand{\cL}{{\mathcal L}}
\newcommand{\cN}{{\mathcal N}}
\newcommand{\cO}{{\mathcal O}}
\let\catsymbfont\mathcal
\newcommand{\aC}{{\catsymbfont{C}}}
\newcommand{\aD}{{\catsymbfont{D}}}
\newcommand{\aE}{{\catsymbfont{E}}}
\newcommand{\aO}{{\catsymbfont{O}}}
\newcommand{\aT}{{\catsymbfont{T}}}
\newcommand{\D}{\mathcal{D}}
\newcommand{\E}{\mathcal{E}}
\newcommand{\K}{\mathcal{K}}
\newcommand{\bO}{\mathbb{O}}
\newcommand{\bP}{{\mathbb{P}}}
\newcommand{\bR}{{\mathbb{R}}}
\newcommand{\bZ}{{\mathbb{Z}}}
\mathchardef\endash="2D
\newcommand{\CoInd}{\textnormal{CoInd}}
\newcommand{\Sp}{\mathcal Sp}
\newcommand{\Emb}{\textnormal{Emb}}
\newcommand{\Set}{\mathcal Set}
\newcommand{\Ninfty}{N_\infty}
\newcommand{\Coef}{\mathcal Coef}
\newcommand{\Top}{\mathcal Top}
\newcommand{\Ab}{\mathcal Ab}
\newcommand{\mC}{{\underline{\cC}}}
\newcommand{\mSet}{\underline{\Set}}
\newcommand{\mM}{\underline{M}}
\newcommand{\mF}{\underline{\cF}}
\newcommand{\Coll}{\mathrm{SymSeq}}
\def\quickop#1{\expandafter\DeclareMathOperator\csname
#1\endcsname{#1}}
\newcommand{\poset}{\cI}
\newcommand{\Nop}{\cN_{\infty}\text{-}\mbox{Op}}
\newcommand{\Gop}{G\text{-}{\mbox{Op}}}
\newcommand{\sset}{\textrm{Set}^{\Delta^{\op}}}
\newcommand{\gsset}{\textrm{GSet}^{\Delta^{\op}}}
\numberwithin{equation}{section}
\newtheorem{theorem}[equation]{Theorem}
\newtheorem*{theorem*}{Theorem}
\newtheorem{corollary}[equation]{Corollary}
\newtheorem{lemma}[equation]{Lemma}
\newtheorem{proposition}[equation]{Proposition}
\newtheorem{conjecture}[equation]{Conjecture}
\theoremstyle{definition}
\newtheorem{definition}[equation]{Definition}
\newtheorem{example}[equation]{Example}
\newtheorem*{motivatingexample}{Motivating Example}
\theoremstyle{remark}
\newtheorem{remark}[equation]{Remark}
\newtheorem{construction}[equation]{Construction}
\newtheorem{warning}[equation]{Warning}
\newcommand{\Image}{\textrm{Im}}
\begin{document}

\title[Equivariant Operadic Multiplications]{Operadic multiplications in equivariant spectra, norms, and transfers}

\author[A.J. Blumberg]{Andrew J. Blumberg}
\address{Department of Mathematics, University of Texas,
Austin, TX \ 78712}
\email{blumberg@math.utexas.edu}
\thanks{A.~J.~Blumberg was supported in part by NSF grants DMS-0906105
and DMS-1151577}

\author[M.A.Hill]{Michael A.~Hill}
\address{University of Virginia \\ Charlottesville, VA 22904}
\email{mikehill@virginia.edu}
\thanks{M.~A.~Hill was supported in part by NSF DMS--1207774, the Sloan Foundation, and by DARPA through the Air Force Office of Scientific Research (AFOSR) grant number HR0011-10-1-0054}

\begin{abstract}
We study homotopy-coherent commutative multiplicative structures on
equivariant spaces and spectra.  We define $\Ninfty$ operads,
equivariant generalizations of $E_{\infty}$ operads.  Algebras in
equivariant spectra over an $\Ninfty$ operad model homotopically
commutative equivariant ring spectra that only admit certain
collections of Hill-Hopkins-Ravenel norms, determined by the operad.
Analogously, algebras in equivariant spaces over an $\Ninfty$ operad
provide explicit constructions of certain transfers.  This
characterization yields a conceptual explanation of the structure of
equivariant infinite loop spaces.

To explain the relationship between norms, transfers, and $\Ninfty$
operads, we discuss the general features of these operads, linking
their properties to families of finite sets with group actions and
analyzing their behavior under norms and geometric fixed points.  A
surprising consequence of our study is that in stark contract to the
classical setting, equivariantly the little disks and linear
isometries operads for a general incomplete universe $U$ need not
determine the same algebras.

Our work is motivated by the need to provide a framework to describe
the flavors of commutativity seen in recent work of the second author
and Hopkins on localization of equivariant commutative ring spectra.
\end{abstract}

\maketitle

\setcounter{tocdepth}{1}
\tableofcontents

\section{Introduction}

One of the most important ideas in modern stable homotopy theory is
the notion of a structured ring spectrum, an enhancement of the
representing object for a multiplicative cohomology theory.  A
structured ring spectrum is a spectrum equipped with a
homotopy-coherent multiplication; classically the coherence data is
packaged up in an operad.  When the multiplication is coherently
commutative (as in the familiar examples of $H\mathbb{Z}$, $ku$, and
$MU$), the classical operadic description of the multiplication
involves an $E_\infty$ operad.

May originally observed that all $E_\infty$ operads are equivalent up
to a zig-zag of maps of operads~\cite{Maygils} and showed that
equivalent $E_\infty$ operads have equivalent homotopical categories
of algebras.  As an elaboration of this basic insight it is now
well-understood that all possible notions of commutative ring spectrum
agree.  For instance, in the symmetric monoidal categories of EKMM
$S$-modules~\cite{EKMM} and of diagram spectra~\cite{MMSS} (i.e.,
symmetric spectra and orthogonal spectra), the associated categories
of commutative monoids are homotopically equivalent to the classical
category of $E_\infty$-ring spectra~\cite{MayQuinnRay, LMS}.
Moreover, the homotopy theories of the categories of commutative
monoids are equivalent to the homotopy theories of the category of
algebras over any reasonable $E_\infty$ operad~\cite[\S II.4]{EKMM}.

Our focus in this paper is on equivariant generalizations of
$E_\infty$ ring spectra.  At first blush, it might seem that we can
give an analogous account of the situation.  After all, for any
compact Lie group $G$ and universe $U$ of finite dimensional
$G$-representations, there is the classical notion of an equivariant
$E_\infty$ ring spectrum structured by the equivariant linear
isometries operad on $U$~\cite{LMS}.  For each $U$, there are
equivariant analogues of the modern categories of spectra (i.e.,
equivariant orthogonal spectra and equivariant $S$-modules) that are
symmetric monoidal categories \cite{MM, HHR}.  Moreover, once again
commutative monoids in these categories are equivalent to classical
equivariant $E_\infty$ ring spectra (see~\cite[\S4-5]{MM}).

However, this is not the whole story.  Fix a symmetric monoidal
category $\Sp_G$ of equivariant spectra that is tensored over
$G$-spaces and is a model of the equivariant stable homotopy category
specified by a complete universe $U$.  For any operad $\aO$ of
$G$-spaces, we can form the category of $\aO$-algebras in $\Sp_G$.
There are many different $G$-operads $\aO$ such that the underlying
non-equivariant operad is $E_\infty$; for instance, for any universe
$U'$, the equivariant linear isometries operad over $U'$ provides an
example.  Any operad with that property might be entitled to be
thought of as a $G$-$E_\infty$ operad.  However, operadic algebras in
$\Sp_G$ over different such operads can look very different, as the
following example illustrates.
 
\begin{motivatingexample}
Let $\cE$ be an $E_{\infty}$ operad in spaces, and view it as an
operad in $G$-spaces by giving it the trivial $G$-action. Thus the
$n$\textsuperscript{th} space is equivalent to $E\Sigma_{n}$ with a
trivial $G$-action. Let $\cE_{G}$ denote any $E_{\infty}$ operad in
$G$-spaces for which the $n$\textsuperscript{th} space $(\cE_G)_n$ is
a universal space for $(G\times\Sigma_{n})$-bundles in $G$-spaces
(e.g., the $G$-linear isometries operad for a complete universe $U$).
Then algebras over $\cE$ and algebras over $\cE_{G}$ in orthogonal
$G$-spectra are different.  In fact, for almost all positive cofibrant
orthogonal $G$-spectra $E$,
\[
\cE_{n+}\wedge_{\Sigma_{n}}E^{\wedge n}\not\simeq (\cE_G)_{n+}\wedge_{\Sigma_{n}}E^{\wedge n}.
\]
The easiest way to see this generic inequality is by computing the
$G$-geometric fixed points. If $E=\Sigma^{\infty}G_{+}$, then for all
$n$, $E^{\wedge n}$ is a free $G$-spectrum. This means, in particular,
that the geometric fixed points of the free $\cE$-algebra on $E$ are
$S^{0}$. However, if $n=|G|$, then $(\cE_{G})_n$ has cells of the form
$G\times\Sigma_{n}/\Gamma$, where $\Gamma$ is the graph of the
homomorphism $G\to\Sigma_{n}$ describing the left action of $G$ on
itself. The $G$-spectrum
\[
(G\times\Sigma_{n}/\Gamma)_{+}\wedge_{\Sigma_{n}} E^{\wedge n}
\]
is the Hill-Hopkins-Ravenel norm $N_{e}^{G}(E)$, and in particular,
the geometric fixed points are non-trivial.
\end{motivatingexample}

Moreover, it turns out there are many intermediate classes of
$G$-operads that structure equivariant commutative ring spectra that
are richer than $\aE$-algebras but are not $\aE_G$-algebras.  Our
interest in these different notions of equivariant commutative ring
spectra was motivated by recent work of Hopkins and the second author
which showed that equivariantly, Bousfield localization does not
necessarily take $\aE_G$-algebras to $\aE_G$-algebras.  For formal
reasons, the Bousfield localization of any equivariant commutative
ring spectrum must have a multiplication that is an $\aE$-algebra, but
that is all that is guaranteed.  An antecedent of this general result
appears in work of McClure~\cite{mccluretate} which shows that the
Tate spectrum of an $\aE_G$-algebra only necessarily has a
multiplication that is structured by $\aE$ and is usually not itself
an $\aE_G$-algebra.

Our goal in this paper is to provide conceptual descriptions of these
intermediate multiplications on equivariant spaces and spectra in
terms of the Hill-Hopkins-Ravenel norm.  We do this via a careful
study of the $G$-operads that structure intermediate multiplications,
which we characterize in terms of the allowable norms on algebras over
them, as suggested by the example above.  For this reason, we refer to
such operads as $\Ninfty$ operads.

Fix a finite group $G$.  A $G$-operad $\cO$ consists of a sequence of
$G \times \Sigma_{n}$ spaces $\cO_{n}$, $n \geq 0$, equipped with a
$G$-fixed identity element $1 \in \cO_{1}$ and a composition map
satisfying equivariant analogues of the usual axioms (see
Definition~\ref{def:goper} for details).

\begin{definition}\label{def:introgeinfop}
An $\Ninfty$ operad is a $G$-operad such that
\begin{enumerate}
\item The space $\cO_{0}$ is $G$-contractible, 
\item The action of $\Sigma_{n}$ on $\cO_{n}$ is free, and
\item $\cO_{n}$ is a universal space for a family $\cF_{n}(\cO)$
of subgroups of $G\times\Sigma_{n}$ which contains all subgroups of
the form $H\times\{1\}$.
\end{enumerate}
In particular, the space $\cO_{1}$ is also $G$-contractible.
\end{definition}

Forgetting the $G$-action, an $\Ninfty$ operad yields a
non-equivariant $E_\infty$ operad.  Examples include the equivariant
little isometries operads and equivariant little disks operads; see
Definition~\ref{def:operexa} for details.

Our first main theorem is a classification of $\Ninfty$ operads in
terms of the relationship between the universal spaces $\cO_{n}$
forced by the operadic structure maps.  Associated to an $\Ninfty$
operad, there is a naturally defined collection (indexed by the
subgroups of $G$) of categories of finite sets, called admissible
sets.  We can organize the admissible sets as follows.  Define a
symmetric monoidal coefficient system to be a contravariant functor
$\underline{\cC}$ from the orbit category of $G$ to the category of
symmetric monoidal categories and strong monoidal functors.

There is a canonical coefficient system that assigns to the orbit
$G/H$ the category of finite $H$-sets and $H$-maps, with symmetric
monoidal product given by disjoint union.  We have a poset $\poset$ of
certain sub-coefficient systems of the canonical coefficient system,
ordered by inclusion (i.e., the ones closed under Cartesian product
and induction, see Definition~\ref{def:poset}).  Let $\Nop$ denote the
category of $\Ninfty$ operads, regarded as a full subcategory of
$G$-operads and $G$-operad maps.

\begin{theorem}
There is a functor
\[
\mC\colon \Nop \to \poset
\]
which descends to a fully-faithful embedding
\[
\mC\colon \Ho(\Nop)\to\poset,
\]
where the homotopy category is formed with respect to the maps of
$G$-operads which are levelwise $G \times \Sigma_n$-equivalences.
\end{theorem}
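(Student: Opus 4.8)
The plan is to reconstruct from an $\Ninfty$ operad $\cO$ the combinatorial data of which finite $H$-sets it ``sees,'' to recognize this data as a point of $\poset$, and then to show the homotopy type of $\cO$ is determined by, and realizes, exactly this data. \emph{Construction on objects.} Given $\cO$, each $\cO_n$ is a universal space for a family $\cF_n(\cO)$ of subgroups of $G\times\Sigma_n$; since $\Sigma_n$ acts freely on $\cO_n$, every member of $\cF_n(\cO)$ meets $\{1\}\times\Sigma_n$ trivially and so is the graph $\Gamma_\phi=\{(h,\phi(h)):h\in H\}$ of a homomorphism $\phi\colon H\to\Sigma_n$ with $H\le G$ --- equivalently, the stabilizer data of a finite $H$-set $T$ of cardinality $n$. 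Define $\mC(\cO)(G/H)$ to be the full subcategory of finite $H$-sets spanned by those $T$ with $\Gamma_T\in\cF_{|T|}(\cO)$ (the \emph{admissible} $H$-sets). Because $\cF_n(\cO)$ is closed under subgroups and $G$-conjugation, admissibility is conjugation invariant and preserved by the restriction functors $\Res^H_K$ (a subgroup $K\le H$ gives $\Gamma_{\phi|_K}\le\Gamma_\phi$), so $\mC(\cO)$ is a sub-coefficient system of the canonical coefficient system, which disjoint union makes symmetric monoidal. The substantive point is that $\mC(\cO)$ lies in $\poset$: the admissible sets are closed under disjoint union, Cartesian product, and self-induction. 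Each such closure property is read off by feeding the appropriate graph-fixed points into one of the operadic composition maps $\gamma\colon\cO_k\times\cO_{n_1}\times\dots\times\cO_{n_k}\to\cO_{n_1+\dots+n_k}$ and recording that the target acquires a point with the prescribed isotropy; the base cases use that $\cO_0$ and $\cO_1$ are $G$-contractible and that every $H\times\{1\}$ lies in the families. This is the bookkeeping core, where Definition~\ref{def:poset} is matched axiom-by-axiom against the operad axioms of Definition~\ref{def:goper}.

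\emph{Morphisms and descent.} A map of $\Ninfty$ operads $\cO\to\cO'$ restricts at each level to a $G\times\Sigma_n$-map $\cO_n\to\cO'_n$; evaluating on $\Gamma$-fixed points gives $\cF_n(\cO)\subseteq\cF_n(\cO')$ for all $n$, i.e.\ $\mC(\cO)\le\mC(\cO')$. Since $\poset$ is a poset this already makes $\mC$ a functor, with no further checks. If $\cO\to\cO'$ is a levelwise $G\times\Sigma_n$-equivalence, then $(\cO_n)^\Gamma\simeq(\cO'_n)^\Gamma$ for every $\Gamma$, forcing $\cF_n(\cO)=\cF_n(\cO')$ and hence $\mC(\cO)=\mC(\cO')$; thus $\mC$ carries the chosen weak equivalences to identities and descends to $\mC\colon\Ho(\Nop)\to\poset$.

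\emph{Full faithfulness.} As $\poset$ is a poset, it suffices to show $\Ho(\Nop)(\cO,\cO')$ is empty when $\mC(\cO)\not\le\mC(\cO')$ and a single point when $\mC(\cO)\le\mC(\cO')$. Emptiness is the contrapositive of the previous paragraph. For fullness, given $\cF_n(\cO)\subseteq\cF_n(\cO')$ for all $n$, form the levelwise product operad $\cO\times\cO'$ with $(\cO\times\cO')_n=\cO_n\times\cO'_n$: it is again $\Ninfty$, since $\Sigma_n$ still acts freely, $(\cO\times\cO')_0$ is $G$-contractible, and $\cO_n\times\cO'_n$ is a universal space for $\cF_n(\cO)\cap\cF_n(\cO')=\cF_n(\cO)$ (on $\Gamma$-fixed points both factors are contractible when $\Gamma\in\cF_n(\cO)$, and one is empty otherwise). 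The projection $\cO\times\cO'\to\cO$ is then a levelwise $G\times\Sigma_n$-equivalence, so the roof $\cO\xleftarrow{\sim}\cO\times\cO'\to\cO'$ represents a morphism of $\Ho(\Nop)$ over the relation $\mC(\cO)\le\mC(\cO')$. Uniqueness of this morphism --- faithfulness --- follows from the claim that the space of $G$-operad maps $\cO\to\cO'$ is contractible whenever nonempty: the input is the elementary fact that $\Map_{G\times\Sigma_n}(\cO_n,\cO'_n)$ is contractible (all isotropy of $\cO_n$ lies in $\cF_n(\cO)\subseteq\cF_n(\cO')$ and $\cO'_n$ is a universal space for $\cF_n(\cO')$), and one then assembles compatible homotopies over the composition structure by an obstruction argument in which every obstruction lies in one of these contractible spaces. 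The genuinely technical step, and the main obstacle, is the last one: to conclude faithfulness one must know that these strict spaces of operad maps compute $\Ho(\Nop)(\cO,\cO')$, which requires setting up (or citing) a cofibrantly generated model structure on $G$-operads with the levelwise $G\times\Sigma_n$-equivalences as weak equivalences, in which every $\Ninfty$ operad is cofibrant --- its levelwise free $\Sigma_n$-actions being exactly what makes this work --- and fibrant. Everything else is the universal-property calculus of families of subgroups together with the dictionary between the operad axioms and the closure axioms defining $\poset$.
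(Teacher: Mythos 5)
Your construction of $\mC$, the object-level closure checks, the functoriality and descent argument, and the use of the product operad $\cO\times\cO'$ with the projection being a levelwise equivalence to establish fullness all match the paper's proof (Sections~\ref{sec:Admissibles}--\ref{sec:hocat}) closely. The problem is in the faithfulness step.

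You isolate the right issue --- one must know that the strict space of operad maps computes the derived mapping space --- but you resolve it with a false assertion. You write that every $\Ninfty$ operad is cofibrant in the transferred (levelwise) model structure on $G$-operads, ``its levelwise free $\Sigma_n$-actions being exactly what makes this work.'' Levelwise freeness gives $\Sigma$-cofibrancy of the underlying symmetric sequence, which is the right hypothesis for admissibility results (transferred model structures on algebras, invariance of homotopy categories of algebras, etc.), but it does \emph{not} imply cofibrancy as an object of the category of operads in the projective model structure lifted from symmetric sequences. Cofibrant operads there are retracts of cell operads built by attaching \emph{free} operad cells $\mathbb{F}A\to\mathbb{F}B$, and a generic $\Ninfty$ operad is not such a retract; even nonequivariantly, the Barratt--Eccles operad or the linear isometries operad are $\Sigma$-cofibrant but not cofibrant as operads (their cofibrant replacements are things like the $W$-construction). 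So the reduction of $\Ho(\Nop)(\cO,\cO')$ to the space of strict operad maps out of $\cO$ is unjustified.

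The paper's Proposition~\ref{prop:mapcontract} handles exactly this point differently: it passes to $G$-operads in simplicial sets (via the $\Sing$/$|{-}|$ adjunction, both of which preserve weak equivalences), replaces the \emph{source} operad $\Sing\aO$ by a cell operad $\colim_n X_n$ with each $X_n$ obtained by pushout along maps of free operads $\mathbb{F}A_n\to\mathbb{F}B_n$, writes $\Map(\Sing\aO,-)\htp\holim_n\Map(X_n,-)$, and then reduces by induction over the cells to mapping spaces out of free operads --- and only these are computed on symmetric sequences, where the universal-space property of the $\Ninfty$ target gives contractibility or emptiness. Note the target being an $\Ninfty$ operad (so levelwise a universal space, hence levelwise fibrant) is what is used; nothing at all is needed about the source being $\Ninfty$. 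Your obstruction-theoretic heuristic for assembling compatible homotopies is essentially a sketch of this cell-by-cell induction, but without the cofibrant replacement of the source there is no model-categorical justification that what you compute is the derived mapping space. Replace the cofibrancy claim with a cofibrant replacement of the source and your argument would align with the paper's.
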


We conjecture that in fact this embedding is an equivalence of
categories; as we explain in Section~\ref{sec:fullness}, there are
natural candidate $\Ninfty$ operads to represent each object in
$\poset$.  An interesting question is to determine if all homotopy
types are realized by equivariant little disks or linear isometries
operads.

\begin{remark}
The proof of the preceding theorem involves a calculation of the
derived mapping space between $\Ninfty$ operads (see
Proposition~\ref{prop:mapcontract}); in particular, we show that the
space of endomorphisms of an $\Ninfty$ operad is contractible.
\end{remark}
 
The import of this classification theorem is that it establishes that
$\Ninfty$ operads are essentially completely controlled by the
isotropy condition in the definition.  This allows for very surprising
results about the cofree spectra with an action of an $\Ninfty$
operad.

\begin{theorem}
If $\cO$ is an $\Ninfty$ operad and $R$ is an $\cO$-algebra with the
property that the natural map 
\[
R\to F(EG_{+},R)
\]
is an equivariant equivalence, then $R$ is equivalent (as
$\cO$-algebras) to an $\cE_G$-algebra.
\end{theorem}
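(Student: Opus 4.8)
The plan is to show that the Borel completion functor $F(EG_{+},-)$ identifies the homotopy category of algebras over any $\Ninfty$ operad on Borel-complete (``cofree'') $G$-spectra with the corresponding category of $\cE_{G}$-algebras, compatibly with restriction of structure. First I would fix a comparison map of operads: since $\cF_{n}(\cE_{G})$ is the family of \emph{all} graph subgroups of $G\times\Sigma_{n}$, the coefficient system $\mC(\cE_{G})$ is the full canonical coefficient system, i.e.\ the terminal object of $\poset$, so by the classification theorem $\cE_{G}$ is terminal in $\Ho(\Nop)$; invoking Proposition~\ref{prop:mapcontract}, and replacing $\cO$ by a levelwise-equivalent cofibrant $\Ninfty$ operad if necessary, we obtain an essentially unique map of $\Ninfty$ operads $\psi\colon\cO\to\cE_{G}$. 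Restriction along $\psi$ turns every $\cE_{G}$-algebra into an $\cO$-algebra, and the claim is that the given Borel-complete $\cO$-algebra $R$ lies in the essential image of this restriction functor on homotopy categories.

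Next I would analyze the localization $L:=F(EG_{+},-)$, a reflective localization of $\Sp_{G}$ onto the subcategory of Borel-complete spectra, to which $R$ belongs by hypothesis. The crucial observation is that for finite $G$ the $L$-equivalences are exactly the underlying non-equivariant equivalences: $F(EG_{+},Y)\simeq\ast$ iff $\Res_{e}Y\simeq\ast$, so $F(EG_{+},f)$ is an equivalence exactly when $\Res_{e}f$ is. Now $\psi$ induces a map of monads $\bP_{\cO}\to\bP_{\cE_{G}}$ between the free-algebra functors on $\Sp_{G}$, and on underlying spectra
\[
\Res_{e}\,\bP_{\cO}(X)\;\simeq\;\bigvee_{n\geq 0}(\Res_{e}\cO_{n})_{+}\wedge_{\Sigma_{n}}(\Res_{e}X)^{\wedge n}\;\simeq\;\bigvee_{n\geq 0}E\Sigma_{n+}\wedge_{\Sigma_{n}}(\Res_{e}X)^{\wedge n},
\]
since $\cO_{n}$ is a contractible $G\times\Sigma_{n}$-CW complex on which $\Sigma_{n}$ acts freely, so its underlying $\Sigma_{n}$-space is a model for $E\Sigma_{n}$; the same holds for $\cE_{G}$, and $\psi_{n}$ restricts to a $\Sigma_{n}$-equivalence between these free contractible spaces. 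Hence $\bP_{\cO}(X)\to\bP_{\cE_{G}}(X)$ is an underlying equivalence for every cofibrant $X$, so $L\bP_{\cO}\to L\bP_{\cE_{G}}$ is an equivalence of functors; the same computation shows that $\bP_{\cP}$ preserves underlying (hence $L$-) equivalences for any $\Ninfty$ operad $\cP$.

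Finally I would transport monads along $L$. Because $\bP_{\cP}$ preserves $L$-equivalences, $L\bP_{\cP}$ inherits a monad structure on the category of Borel-complete $G$-spectra, and the standard localization-of-algebras argument identifies the homotopy category of $\cP$-algebras in $\Sp_{G}$ with Borel-complete underlying spectrum with that of $L\bP_{\cP}$-algebras in Borel-complete spectra, compatibly with restriction of structure. Applying this with $\cP=\cO$ and $\cP=\cE_{G}$ and combining with the monad equivalence $L\bP_{\cO}\xrightarrow{\ \sim\ }L\bP_{\cE_{G}}$ produced above yields an equivalence between the homotopy categories of Borel-complete $\cO$-algebras and Borel-complete $\cE_{G}$-algebras which is the identity on underlying $G$-spectra and intertwines restriction along $\psi$. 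In particular $R$ corresponds to a Borel-complete $\cE_{G}$-algebra $S$ with $\psi^{\ast}S\simeq R$ as $\cO$-algebras, as asserted.

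The main obstacle is the homotopy-theoretic bookkeeping in the last two steps: in a point-set model of equivariant spectra (e.g.\ orthogonal $G$-spectra with the positive complete model structure) one must verify that $\bP_{\cP}$ is homotopical on cofibrant objects---this uses freeness of the $\Sigma_{n}$-action on $\cP_{n}$, so that the extended powers $\cP_{n+}\wedge_{\Sigma_{n}}(-)^{\wedge n}$ involve no norm terms---that $L=F(EG_{+},-)$ is lax symmetric monoidal and compatible with the operadic tensoring, and that a monad transported along such a reflective localization has the expected category of algebras. By contrast the geometric content is soft: Borel $G$-spectra do not see the difference between $\Ninfty$ operads, since that difference is carried entirely by geometric fixed points, which $F(EG_{+},-)$ annihilates above the trivial subgroup.
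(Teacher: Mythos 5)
Your argument is correct in outline, but it takes a genuinely different route from the paper's. The paper's proof is direct and point-set level: using that $F(EG_+,-)$ is lax symmetric monoidal (via the diagonal of $EG$), it exhibits $F(EG_+,R)$ as an algebra over the cotensor operad $F(EG,\cO)$, invokes Proposition~\ref{prop:whoa} — that $F(EG,\cO)$ is automatically a complete $\Ninfty$ operad, hence an $\cE_G$ — and observes that the diagonal map $\cO\to F(EG_+,\cO)$ makes $R\to F(EG_+,R)$ a map of $\cO$-algebras, so the hypothesis yields the required equivalence with $F(EG_+,R)$ itself serving as the target $\cE_G$-algebra. Your monadic localization argument makes the geometric content more explicit — Borel $G$-spectra only detect underlying nonequivariant equivalences, and every $\Ninfty$ operad restricts to the same underlying $E_\infty$ operad, so the free-algebra monads $\bP_{\cO}$ and $\bP_{\cE_G}$ agree after applying $L=F(EG_+,-)$ — but it demands the model-category bookkeeping you flag: rectifying the comparison map $\psi$ from the homotopy category of $\Nop$ to the point-set level, verifying that the free-algebra monads preserve $L$-equivalences on cofibrant objects, and showing that transport of monads along the reflective localization $L$ behaves as expected on algebras. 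The two approaches share the essential insight that the $\Ninfty$-stratification collapses on Borel-complete objects; what the paper's cotensoring construction buys is a canonical concrete model for the $\cE_G$-algebra with no zig-zag, and a comparison map $\cO\to F(EG_+,\cO)$ that exists on the nose without cofibrant replacement or appeal to the classification theorem.
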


Our other main theorems provide
a characterization of structures on algebras over $\Ninfty$ operads.
The indexed product construction that underlies the norm makes sense
in the symmetric monoidal category of $G$-spaces with the Cartesian
product, where the resulting functor is simply coinduction.  In this
situation, we show in Theorem~\ref{thm:GeneralTransfers} that an
algebra over an $\Ninfty$ operad has precisely those transfers $H \to
G$ such that $G/H$ is an admissible $G$-set.  Specifically, we have
the following result.

\begin{theorem}
For an algebra $X$ in $G$-spaces over a suitable $N_\infty$ operad,
the abelian group valued coefficient system
\[
\underline{\pi_{k}(X)}\colon\mSet\to\Ab
\]
defined by
\[
(T\in\Set^{H})\mapsto \pi_{k}\big(F(T,X)^{H}\big)
\]
has transfer maps
\[
f_{\ast}\colon\pi_{k}\big(F(T,i_{H}^{\ast}X)^{H}\big)\to \pi_{k}\big(F(S,i_{H}^{\ast}X)^{H}\big)
\]
for any $H$-map $f \colon T \to S$ of admissible $H$-sets and all
$k\geq 0$.  Moreover, for the little disks and Steiner operads, these
transfers maps agree with the classical transfers.
\end{theorem}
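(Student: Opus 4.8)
The plan is to build the transfers directly from the operad structure maps, exploiting the isotropy condition of Definition~\ref{def:introgeinfop}, and to reduce the comparison with classical transfers to an explicit geometric computation. First I would reduce to a basic case. Decomposing $S$ into its $H$-orbits gives a natural identification $F(T,X)^H\cong\prod_{[s]\in S/H}F\bigl(f^{-1}(s),\,i_{H_s}^\ast X\bigr)^{H_s}$, and similarly for $F(S,X)^H$, so it suffices to construct $f_\ast$ when $S=\ast$; replacing $G$ by the relevant subgroup (note $i_H^\ast\cO$ is again an $\Ninfty$ operad for $H$), we must produce, for every admissible $H$-set $U$ of cardinality $n$ with action homomorphism $\rho\colon H\to\Sigma_n$, a natural \emph{indexed product} map $F(U,X)^H\to X^H$. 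Because $U$ is admissible, the graph $\Gamma_U=\{(h,\rho(h)):h\in H\}\subseteq H\times\Sigma_n\subseteq G\times\Sigma_n$ lies in the family $\cF_n(\cO)$, so $\cO_n^{\Gamma_U}$ is nonempty and weakly contractible. For $e\in\cO_n^{\Gamma_U}$, the composite
\[
F(U,X)\;\cong\;\{e\}\times X^n\;\hookrightarrow\;\cO_n\times X^n\;\longrightarrow\;\cO_n\times_{\Sigma_n}X^n\;\xrightarrow{\ \theta_n\ }\;X
\]
is readily checked to be $H$-equivariant: the $\rho$-twist in the $H$-action on $F(U,X)$ is exactly absorbed by the $\Gamma_U$-invariance of $e$ together with the $\Sigma_n$-equivariance of $\theta_n$. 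Passing to $H$-fixed points and then to $\pi_k$ defines $f_\ast$; since $\cO_n^{\Gamma_U}$ is contractible this is independent of $e$ up to canonical homotopy, and replacing $e$ by the whole space $\cO_n^{\Gamma_U}$ in the source makes the construction functorial in $X$ and compatible with restriction along subgroups.

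For a general $f\colon T\to S$ one reassembles via the orbit decomposition above, together with the fact that the fibers of a map of admissible $H$-sets are again admissible over their stabilizers; this uses the closure properties defining $\poset$ (closure under Cartesian product and induction, and the consequent closure under summands and restriction) as well as the hypothesis that $\cO$ is \emph{suitable} (so that $\mC(\cO)$ contains all the sets in play, $\cO$ is sufficiently cofibrant for the fixed-point spaces to behave, and the algebra is grouplike so that $\pi_0$ is a group). The operad axioms then supply the expected transfer identities: associativity of $\theta$ gives $(g\circ f)_\ast=g_\ast\circ f_\ast$ and $\mathrm{id}_\ast=\mathrm{id}$, the unit axiom handles isomorphisms, and the equivariance axioms together with the orbit decomposition of $i_{H'}^\ast U$ for $H'\le H$ give the double-coset (push--pull) formula relating $f_\ast$ to restrictions, so that $\underline{\pi_k(X)}$ carries the expected additional transfer structure indexed by the admissible sets.

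Finally, for the little disks and Steiner $G$-operads the identification with the classical transfers is geometric. In the basic case $U=H/K\to\ast$, unwinding shows that a $\Gamma_U$-fixed point of the $n$-th operad space is exactly an $H$-equivariant configuration of disjoint little disks (resp.\ a Steiner path of embeddings) realizing an $H$-embedding of $H/K\times D(V)$ into $D(V)$ for a representation $V$ occurring in the ambient universe --- that is, the normal data of the $0$-dimensional $H$-manifold $H/K$ --- and the structure map $\theta_n$ evaluated on such a point is, on the nose, the Pontryagin--Thom collapse attached to that embedding. After group completion, equivalently after passing to $\pi_k$, this is the classical equivariant transfer of tom Dieck and Lewis--May--Steinberger; independence of the choice of $V$ and of the stabilizations $D(V)\hookrightarrow D(V\oplus V')$ follows from the contractibility statement of the first paragraph, and the general $f$ follows by the orbit decomposition and the additivity of Pontryagin--Thom maps. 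I expect this last comparison to be the main obstacle: putting the classical equivariant transfer into a normal form that matches the operadic output verbatim, and bridging the space-level and spectrum-level pictures, requires real care, whereas the construction of $f_\ast$ and its formal properties are essentially forced by the isotropy of Definition~\ref{def:introgeinfop} and the operad axioms.
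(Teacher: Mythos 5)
Your proposal follows essentially the same route as the paper: the transfer is the operad structure map evaluated at a point of the contractible fixed-point space $\cO_n^{\Gamma_U}$ (exactly the content of Construction~\ref{cons:OperadMaps} and Lemma~\ref{lem:Norms}), the reduction to the one-orbit case is by the same kind of orbit decomposition (the paper factors $f$ through a fold map in Theorem~\ref{thm:NormsasTransfers}, while you decompose the mapping space $F(T,X)^H$ fiberwise over orbits of $S$ using that fibers of a map of admissible sets are admissible over stabilizers --- equivalent bookkeeping), and the comparison with the classical transfer proceeds via Pontryagin--Thom, where the paper supplies the detail you flag as the main obstacle by invoking the Costenoble--Waner delooping $X \htp \Omega^V Y$ and switching from $\cK(U)$ to $\cD(V)$. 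You also correctly identify that ``suitable'' encodes interchange of the operad action with itself (Proposition~\ref{prop:Steiner}), which is precisely what the paper uses in the proof of Theorem~\ref{thm:TransfersinSpaces} to show $f_*$ is a homomorphism of abelian monoids on $\pi_0$.
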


These are therefore incomplete Mackey functors, studied by Lewis
during his analysis of incomplete universes \cite{LewisHurewicz,
LewisChange}.

\begin{remark}
In the result above, ``suitable'' refers to a certain technical
property of $\Ninfty$ operads that we prove for the equivariant
Steiner and linear isometries operads in Section~\ref{sec:inter}.
\end{remark}

In orthogonal $G$-spectra, we show in
Theorem~\ref{thm:ExistenceofNorms} that an algebra over a suitable
$\Ninfty$ operad is characterized as a $G$-spectrum equipped with maps
\[
G_{+}\wedge_{H}N^{T} \iota^*_H R\to R
\]
for the admissible $H$-sets $T$.  This gives rise to the following
characterization:

\begin{theorem}
If $R$ is an algebra in orthogonal $G$-spectra over an $\Ninfty$
operad $\cO$, then 
\[
\underline{\pi_{0}(R)}
\]
is a commutative Green functor.
 
If the $\cO$ action interchanges with itself, then for any admissible
$H$-set $H/K$ we have a ``norm map''
\[
\underline{\pi_{0}(R)}(G/K)\xrightarrow{n_{K}^{H}}\underline{\pi_{0}(R)}(G/H)
\]
which is a homomorphism of commutative multiplicative monoids.

The maps $n_{K}^{H}$ satisfy the multiplicative version of the Mackey
double-coset formula.
\end{theorem}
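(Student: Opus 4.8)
The plan is to deduce all three assertions from machinery already available: the Mackey functor structure on $\underline{\pi_{0}}$ of a $G$-spectrum, the homotopy-commutative $G$-ring structure carried by any $\cO$-algebra, the norm structure maps supplied by Theorem~\ref{thm:ExistenceofNorms}, and the strong monoidality and restriction formula of the Hill--Hopkins--Ravenel norm. \emph{The Green functor.} First I would recall the classical fact that $\underline{\pi_{0}}$ is lax symmetric monoidal from orthogonal $G$-spectra (under $\wedge$) to $G$-Mackey functors (under the box product $\square$): the natural pairing $\underline{\pi_{0}(X)}\,\square\,\underline{\pi_{0}(Y)}\to\underline{\pi_{0}(X\wedge Y)}$ is the one classified by the external products $\pi_{0}(X^{H})\otimes\pi_{0}(Y^{H})\to\pi_{0}((X\wedge Y)^{H})$, which are natural under restriction and satisfy Frobenius reciprocity by the spectrum-level projection formula. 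Then I would observe that an $\cO$-algebra $R$ is a ring spectrum that is $G$-equivariantly homotopy-associative, homotopy-commutative and homotopy-unital: since $\Sigma_{n}$ acts freely on $\cO_{n}$ and each $H\times\{1\}$ lies in $\cF_{n}(\cO)$, the $G$-fixed spaces $(\cO_{n})^{G\times\{1\}}$ are contractible, so a $G$-fixed point of $\cO_{2}$ defines the multiplication $\mu$, its $\Sigma_{2}$-translate lies in the same contractible space and furnishes a $G$-homotopy $\mu\simeq\mu\circ\tau$, and $\cO_{3}$ together with $\cO_{0}$, $\cO_{1}$ supply associativity and unit homotopies; restricting along $H\le G$ makes each $\pi_{0}(R^{H})$ a commutative ring. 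Applying the lax monoidal functor $\underline{\pi_{0}}$ to $\mu$ then gives associative, commutative and unital maps $\underline{\pi_{0}(R)}\,\square\,\underline{\pi_{0}(R)}\to\underline{\pi_{0}(R\wedge R)}\to\underline{\pi_{0}(R)}$, i.e.\ exhibits $\underline{\pi_{0}(R)}$ as a commutative Green functor.

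\emph{The norm maps.} Now assume in addition that $\cO$ is suitable (so Theorem~\ref{thm:ExistenceofNorms} applies) and that its action interchanges with itself, and fix an admissible $H$-set $H/K$. By Theorem~\ref{thm:ExistenceofNorms}, $R$ carries a structure map which adjointly is an $H$-map $s\colon N^{H/K}\iota^{*}_{H}R\to\iota^{*}_{H}R$. I would define $n^{H}_{K}$ as the composite of the Hill--Hopkins--Ravenel norm diagonal $\pi_{0}^{K}(\iota^{*}_{K}R)\to\pi_{0}^{H}(N^{H/K}\iota^{*}_{H}R)$ with $\pi_{0}^{H}(s)$. Multiplicativity would then follow from two inputs: the norm diagonal is multiplicative because $N^{H/K}$ is strong symmetric monoidal, so that the multiplications on $N^{H/K}(\iota^{*}_{K}R\wedge\iota^{*}_{K}R)$ and $N^{H/K}\iota^{*}_{K}R\wedge N^{H/K}\iota^{*}_{K}R$ are identified; and $s$ is a map of homotopy ring objects, which is precisely what the hypothesis that the $\cO$-action interchanges with itself encodes. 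Chasing these through the definition gives $n^{H}_{K}(xy)=n^{H}_{K}(x)\,n^{H}_{K}(y)$, and the parallel unit computation gives $n^{H}_{K}(1)=1$.

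\emph{The double-coset formula.} For $L\le H$ I would combine the orbit decomposition $\iota^{*}_{L}(H/K)\cong\coprod_{[g]\in L\backslash H/K}L/({}^{g}K\cap L)$ with the restriction-of-norm identity for the Hill--Hopkins--Ravenel norm, which for a $K$-spectrum $X$ (writing $N^{H/K}$ for the norm from $K$- to $H$-spectra) reads
\[
\iota^{*}_{L}N^{H/K}X\;\simeq\;\bigwedge_{[g]\in L\backslash H/K}N^{L/({}^{g}K\cap L)}\big(c_{g}\,\iota^{*}_{K\cap{}^{g^{-1}}L}X\big),
\]
and substitute it, with $X=\iota^{*}_{K}R$, into the definition of $n^{H}_{K}$; using multiplicativity of the norm diagonal and of the structure maps, $\mathrm{res}^{H}_{L}\circ n^{H}_{K}$ becomes the product over the double cosets of the composites $n^{L}_{{}^{g}K\cap L}\circ c_{g}\circ\mathrm{res}^{K}_{K\cap{}^{g^{-1}}L}$, which is the multiplicative form of the Mackey double-coset formula.

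\emph{Main obstacle.} The only step that is not formal once these inputs are granted is the multiplicativity of the norm maps: it requires pinning down ``the $\cO$-action interchanges with itself'' as exactly the coherence between the $\cO$-structure maps that build $N^{H/K}$ and the binary multiplication that makes $s$ a ring map after applying $\pi_{0}$. The remainder is bookkeeping with the (already established) Mackey functor structure on $\underline{\pi_{0}}$, the strong monoidality and restriction formula of the norm, and Theorem~\ref{thm:ExistenceofNorms}.
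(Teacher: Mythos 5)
Your proposal is correct and takes essentially the same route as the paper intends: the paper gives no standalone argument here, stating only that the proof is identical to that of Theorem~\ref{thm:TransfersinSpaces} and Proposition~\ref{prop:DoubleCoset} (the space-level analogues). Your account assembles the same ingredients --- Proposition~\ref{prop:Naive} (naive homotopy-commutativity) together with lax monoidality of $\underline{\pi_0}$ for the Green functor structure, the operadic structure map of Theorem~\ref{thm:ExistenceofNorms} for the norm, the interchange hypothesis (via the interchange discussion of Section~\ref{sec:inter}) for multiplicativity, and the restriction-of-norm identity, which is exactly what Theorem~\ref{thm:ExistenceofNorms}(iii) encodes, for the double-coset formula.

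The one place where you go beyond the strict analogy with spaces is a genuine and necessary addition: in $G$-spaces the transfer source is literally $\pi_k\big(F(H/K, X)^H\big) = \pi_k(X^K)$, so no intermediate identification is needed, whereas in spectra the structure map $N_K^H \iota_K^* R \to \iota_H^* R$ produces, on $\pi_0^H$, a map out of $\pi_0^H(N_K^H \iota_K^* R)$ rather than out of $\pi_0^K(R)$. You correctly interpolate the HHR norm diagonal $\pi_0^K(\iota_K^* R) \to \pi_0^H(N_K^H \iota_K^* R)$, which is multiplicative but not additive, to produce the actual norm map $n_K^H$. This is the right reading of the paper's terse ``the proof \ldots is identical''; the spectral case really does require this extra, purely multiplicative, step, and your proposal supplies it.
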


Thus just as the homotopy groups of algebras in spaces over the
Steiner operad on an incomplete universe gave incomplete Mackey
functors with only some transfers, the zeroth homotopy group of an
algebra in spectra over the linear isometries operad on an incomplete
universe gives incomplete Tambara functors with only some norms.

The paper is organized as follows.  In Section~\ref{sec:conv}, we
explain our assumptions and conventions about the kinds of operadic
actions and categories of equivariant spectra we are working with.  We
introduce the notion of $\Ninfty$ operads in Section~\ref{sec:defs}.
We use this to explain in Section~\ref{sec:Admissibles} that
associated to an $\Ninfty$ operad, there is a naturally defined
collection (indexed by the subgroups of $G$) of categories of finite
sets, called admissible sets, and that if two operads have the same
admissible sets, then they are equivalent.  In Section~\ref{sec:LUDU},
we perform a surprising computation: we show that for a generic
incomplete universe, the little disks operad and the linear isometries
operad are different.  In Section~\ref{sec:hocat} we discuss the
connection between the homotopy category of $\Ninfty$ operads and the
poset $\poset$.  In Section~\ref{sec:Algebras}, we then show that the
admissible sets correspond to indexed products that an algebra over
the operad must have.  In Section~\ref{sec:TransfersAndNorms}, we work
out this characterization in equivariant spaces and spectra.  In the
case of algebras in $G$-spaces over $\Ninfty$ operads, this
perspective explains the transfers that arise in $G$-equivariant
infinite loop space theory.  In the case of equivariant spectra, this
structure controls which norms occur in a ring spectrum.  Finally, in
the appendix we collect some miscellaneous technical results: in
Section~\ref{sec:monadicalgebras} we show that weakly equivalent
$\Ninfty$ operads have equivalent homotopical categories of algebras
and we explain the comparison to rigid realizations of $\Ninfty$
operadic algebras in terms of equivariant EKMM spectra, and finally in
Section~\ref{sec:appfix} we describe geometric fixed points of
algebras.

\subsection{Acknowledgements}

We want to thank Mike Hopkins, Mike Mandell, and Peter May for much
useful advice and many helpful conversations.  The paper benefited
enormously from a careful reading of a previous draft by Peter May.
The paper was also improved by comments from Aaron Royer, Anna-Marie
Bohmann, Emanuele Dotto, Justin Noel, Tomer Schlank, and David White.

\section{Conventions on operadic algebras in equivariant spectra}\label{sec:conv}
 
Fix a finite group $G$ and a complete universe $U$ of
$G$-representations.  Let $\Sp_{G}$ denote the category of orthogonal
$G$-spectra~\cite{MM}.  We will always regard $\Sp_G$ as equipped with
the homotopy theory specified by the weak equivalences detected by the
equivariant stable homotopy groups indexed by $U$~\cite[III.3.2]{MM};
$\Sp_G$ is a model of the equivariant stable category and all
representation spheres are invertible~\cite[III.3.8]{MM}.  However,
the multiplicative structures we study are often described by linear
isometries operads over other universes and in general the language of
incomplete universes is very useful in describing $\Ninfty$ operads.
The key point we want to emphasize is that although the multiplicative
structure varies, the additive structure does not.

We now want to be clear about what we mean by an operadic algebra in
$\Sp_{G}$.  Since $\Sp_{G}$ is tensored over $G$-spaces (with the
tensor of a $G$-space $A$ and an orthogonal $G$-spectrum $E$ computed
as $A_+ \sma E$), we can define the category $\Sp_{G}[\aO]$ of
$\aO$-algebras for any operad $\aO$ in $G$-spaces.  This is the notion
of operadic algebra we study in this paper.  However, there is
the potential for terminological confusion: even when $\aO$ is a
classical $G$-$E_\infty$ operad, for instance the $G$-linear
isometries operad, the category $\Sp_{G}[\aO]$ {\em is not} equivalent
to the classical category of $G$-$E_\infty$ ring spectra~\cite{LMS}.
The latter is defined using the category of ``coordinate-free''
$G$-spectra and the twisted half-smash product, and requires of
necessity operads augmented over the $G$-linear isometries operad.
(This terminological point is clearly explained
in~\cite[\S13]{Mayrant}.)  Note that it is the case that the homotopy
categories of $\Sp_{G}[\aO]$ and the classical category of
$G$-$E_\infty$ ring spectra are equivalent.
See Appendix~\ref{sec:monadicalgebras} for further discussion of such
comparison results.

We could also have worked with the equivariant analogues of EKMM
$S$-modules (e.g., see~\cite{ElmendorfMay} for a discussion of this
category) based on $U$.  However, since we rely at various points on
the homotopical analysis of the norm from~\cite[App.~B]{HHR}, it is
convenient for our purposes to work with orthogonal $G$-spectra.  We
have no doubt that our theorems are independent of the specific model
of the equivariant stable category, however.

Finally, we note that our results have analogues in the situation when
the (additive) homotopy theory on $\Sp_G$ is indexed on an incomplete
universe.  However, in this situation some care must be taken. The
underlying analysis was begun by Lewis~\cite{lewisincomplete}, who
analyzed the homotopy theory of $G$-spectra on incomplete
universes, and various subtleties about the connections between the
additive and multiplicative structures are known to experts.  We leave
the elaboration in this setting to the interested reader.  However, we
note that our analysis in Section~\ref{sec:LUDU} of the linear
isometries operads also provides a criterion for the special case when
both the additive and multiplicative universes are the same but
potentially incomplete.

\section{Equivariant operads and indexing systems}\label{sec:defs}

In this section, we define $\Ninfty$ operads and give a number of
examples.  We then move on to introduce definitions and notations for
{\em indexing systems}, which allows us to precisely state our main
result describing the homotopy category of $\Ninfty$ operads in terms
of a certain poset.

\subsection{Equivariant $\Ninfty$ operads}\label{ssec:Ninfty}

In this section we review the definitions and standard examples of
$G$-operads that we will work with.

\begin{definition}\label{def:goper}
A $G$-operad $\cO$ consists of a sequence of $G \times \Sigma_{n}$
spaces $\cO_{n}$, $n \geq 0$, such that
\begin{enumerate}
\item There is a $G$-fixed identity
element $1 \in \cO_{1}$,
\item and we have $G$-equivariant compositions maps 
\[
\cO_{k}\times\cO_{n_{1}}\times\dots\times\cO_{n_{k}}\to\cO_{n_{1}+\dots+n_{k}}
\]
which satisfy the usual compatibility conditions with each other and
with the action of the symmetric groups
(see~\cite[2.1]{costenoblewaner}). In particular, if
$n_{1}=\dots=n_{k}=n$, then the map is actually
$(G\times \Sigma_{k}\wr\Sigma_{n})$-equivariant.
\end{enumerate}
When $\cO_0 = *$, we say that $\cO$ is a reduced operad.
\end{definition}

\begin{remark}
Note that in contrast to the usual convention, we will treat
$G$-operads as having left actions of symmetric groups via the
inversion, as this makes certain formulas easier to understand. It
also allows a simultaneous equivariant treatment of the $G$ and
$\Sigma_n$-actions.
\end{remark}

We will primarily be interested in the equivariant analogues of
$E_\infty$ operads. For this, we need the notion of a family and of a
universal space for a family.

\begin{definition}\label{def:Family}
A family for a group $G$ is a collection of subgroups closed under
passage to subgroup and under conjugacy.

If $\mathcal F$ is a family, then a universal space for $\mathcal F$
is a $G$-space $E\mathcal F$ such that for all subgroups $H$,
\[
(E\mathcal F)^H\simeq \begin{cases}
\ast & H\in\mathcal F, \\
\varnothing & H\not\in\mathcal F.
\end{cases}
\]
\end{definition}

For later purposes, there is an equivalent definition that is more
categorical.

\begin{definition}
A sieve in a category $\cC$ is a full subcategory $\mathcal D$ such
that if $B\to C$ is in $\mathcal D$ and if $A\to B$ is in $\cC$, then
the composite $A\to C$ is in $\mathcal D$.
\end{definition}

With this, we have two equivalent formulations of a family.

\begin{proposition}\label{prop:Families}\mbox{}

\begin{enumerate}
\item A family of subgroups $\cF$ determines a sieve in the orbit
category by considering the full subcategory generated by the objects
$G/H$ for $H\in\cF$. Similarly, the collection of all $H$ such that
$G/H$ is in a sieve in $\cO^{G}$ forms a family. 

\item A family of subgroups $\cF$ is also equivalent to a sieve
  $\Set_{\cF}$ in the category of finite $G$-sets, where again the
  identification specifies that $T$ is in the sieve if and only if the
  stabilizers of points of $T$ are in the family.
\end{enumerate}
\end{proposition}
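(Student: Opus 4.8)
The plan is to prove both equivalences by the same mechanism: in each case one direction sends a family $\cF$ to a subcategory, the other extracts a family from a subcategory, and one checks these are mutually inverse by tracking isotropy. For part (i), given a family $\cF$, let $\mathcal D_{\cF}$ be the full subcategory of the orbit category $\cO^G$ spanned by the orbits $G/H$ with $H \in \cF$. First I would verify this is a sieve: if $G/H \in \mathcal D_{\cF}$ and $G/K \to G/H$ is any $G$-map in $\cO^G$, then such a map exists iff $K$ is subconjugate to $H$; since $\cF$ is closed under subgroups and conjugacy, $K \in \cF$, so $G/K \in \mathcal D_{\cF}$, giving the sieve condition (the composite lands in $\mathcal D_{\cF}$ because $\mathcal D_{\cF}$ is full and contains $G/K$). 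Conversely, given a sieve $\mathcal D \subseteq \cO^G$, set $\cF_{\mathcal D} = \{H \le G : G/H \in \mathcal D\}$; closure under conjugacy is automatic since $G/H \cong G/H'$ for conjugate $H, H'$ and $\mathcal D$ is full, and closure under subgroups follows from the sieve property applied to the projection $G/K \to G/H$ for $K \le H$. That these two assignments are inverse is immediate from fullness of the subcategories and the fact that isomorphism classes of orbits correspond to conjugacy classes of subgroups.

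For part (ii), I would reduce to part (i) by exploiting the structure of finite $G$-sets. Given a family $\cF$, define $\Set_{\cF}$ to be the full subcategory of finite $G$-sets whose objects are those $T$ all of whose point-stabilizers lie in $\cF$. To see this is a sieve: if $S \in \Set_{\cF}$ and $f\colon T \to S$ is any $G$-map, then for $t \in T$ the stabilizer $G_t$ is a subgroup of $G_{f(t)} \in \cF$, hence $G_t \in \cF$ by closure under subgroups, so $T \in \Set_{\cF}$. Conversely, given a sieve $\mathcal S$ in finite $G$-sets, one recovers a collection of subgroups by declaring $H \in \cF_{\mathcal S}$ iff $G/H \in \mathcal S$; this is a family because every subgroup of $G_t$ for $t$ in an object of $\mathcal S$ arises as a stabilizer in some object mapping to it (use $G/K \to G/H$ for $K \le H$), and conjugacy is handled as before. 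To check these are inverse, the key observation is that a finite $G$-set $T = \coprod_i G/H_i$ decomposes into orbits, and there is a $G$-map $G/H_i \hookrightarrow T$ for each $i$; since a sieve is closed under such maps, $T \in \mathcal S$ iff each $G/H_i \in \mathcal S$, i.e. iff each $H_i \in \cF_{\mathcal S}$. Thus membership in a sieve in finite $G$-sets is detected on orbits, which matches exactly the stabilizer condition defining $\Set_{\cF}$.

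The only mild subtlety — and the step I would be most careful about — is the decomposition argument in part (ii): one must know that a sieve in the category of finite $G$-sets is determined by which orbits it contains. This uses both that every object is a finite coproduct of orbits and that $G$-maps between $G$-sets include the orbit inclusions $G/H_i \to \coprod_j G/H_j$, so the sieve condition forces the equivalence "$T$ in the sieve $\iff$ every orbit summand of $T$ in the sieve." (Equivalently, coproducts of objects in the sieve land in the sieve, which also follows once one knows the sieve is nonempty and contains the relevant orbits.) Everything else is a routine unwinding of definitions, so I would present the proof compactly, emphasizing the stabilizer/subconjugacy bookkeeping and the orbit-decomposition reduction, and leave the verification that the two assignments compose to the identity to the reader.
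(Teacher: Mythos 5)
The paper gives no proof of Proposition~\ref{prop:Families}, so there is nothing to compare against directly; but your argument has a genuine gap in part (ii), at exactly the step you flag as the one you would be ``most careful about.'' Your claim that the sieve condition forces $T \in \mathcal{S}$ whenever every orbit summand of $T$ lies in $\mathcal{S}$ --- equivalently, that a sieve in finite $G$-sets is closed under coproducts of its objects --- is false under the paper's definition of a sieve. The sieve axiom gives only the forward implication: a map $G/H_i \to T$ together with $T \in \mathcal{S}$ forces $G/H_i \in \mathcal{S}$. The converse would require a $G$-map \emph{out} of $T$ into some object already known to lie in $\mathcal{S}$, and by the universal property of the coproduct this means a single object of $\mathcal{S}$ receiving maps from all the orbit summands at once, which need not exist. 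Concretely, let $G = C_2 \times C_2$ with $H_1 = \langle(1,0)\rangle$ and $H_2 = \langle(0,1)\rangle$, and let $\mathcal{S}$ be the full subcategory of finite $G$-sets spanned by $\varnothing$ together with all $T$ admitting a $G$-map to $G/H_1$ or to $G/H_2$. This is a sieve, and one computes $\cF_{\mathcal{S}} = \{e, H_1, H_2\}$, yet $G/H_1 \amalg G/H_2$ lies in $\Set_{\cF_{\mathcal{S}}}$ but not in $\mathcal{S}$: since $G$ is abelian, a $G$-map $G/H_1 \amalg G/H_2 \to G/H_i$ would force both $H_1 \subseteq H_i$ and $H_2 \subseteq H_i$, which fails for $i = 1, 2$. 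So the two assignments in your part (ii) are not mutually inverse, and the parenthetical ``coproducts of objects in the sieve land in the sieve'' does not follow from nonemptiness plus containing the relevant orbits.

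What does hold, and is all that part (ii) really asserts, is the one-way construction: your verification that $\Set_{\cF}$ is a sieve is correct (stabilizers only shrink under a $G$-map), and the assignment $\cF \mapsto \Set_{\cF}$ is injective because $\cF$ is recovered as $\{H : G/H \in \Set_{\cF}\}$. Your proof of part (i) is fine and there the correspondence genuinely is a bijection, because a full subcategory of $\cO^G$ is determined by its object set and the objects are already orbits. To repair the stronger bijective reading of (ii) one must restrict to sieves in finite $G$-sets that are additionally closed under finite coproducts --- precisely the hypothesis your decomposition argument implicitly imports --- and with that extra hypothesis your orbit-by-orbit argument does go through.
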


\begin{remark}
An equivalent condition to condition (ii) in
Proposition~\ref{prop:Families} is that the sieve in $G$-sets is the
full subcategory generated by those $G$-sets $T$ such that the space
of $G$-equivariant maps from $T$ to $E\cF$ is contractible.
\end{remark}

\begin{definition}\label{def:geinfop}
An $\Ninfty$ operad is a $G$-operad such that
\begin{enumerate}
\item The space $\cO_{0}$ is $G$-contractible, 
\item The action of $\Sigma_{n}$ on $\cO_{n}$ is free,
\item and $\cO_{n}$ is a universal space for a family $\cF_{n}(\cO)$
of subgroups of $G\times\Sigma_{n}$ which contains all subgroups of
the form $H\times\{1\}$.
\end{enumerate}
In particular, the space $\cO_{1}$ is also $G$-contractible.
\end{definition}

Historically, most sources have focused on the situation where
$\cO_{n}$ is a universal principle $(G,\Sigma_n)$-bundle; i.e.,
$\cO_{n}^{\Lambda}$ is nonempty and contractible for $\Lambda$ which intersects $\Sigma_{n}$ trivially
(e.g., see~\cite{costenoblewaner}).  As we shall recall, this is the
analogue of restricting attention to a complete universe.  We will
refer to such an $\Ninfty$ operad as ``complete'' and follow the
literature in calling these $E_{\infty}$ $G$-operads.  For any
$H \subset G$, there is a forgetful functor from $\Ninfty$ operads on
$G$ to $\Ninfty$ operads on $H$.  When $G = {e}$, it is clear from the
definition that an $\Ninfty$ operad is an ordinary $E_{\infty}$
operad.

\begin{lemma}
The underlying non-equivariant operad for any $\Ninfty$ operad is an
$E_\infty$ operad.
\end{lemma}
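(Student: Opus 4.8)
The plan is to reduce everything to unwinding Definition~\ref{def:geinfop} together with the standard characterization of a non-equivariant $E_\infty$ operad: an operad $\cP$ of spaces is $E_\infty$ exactly when, for every $n$, the space $\cP_n$ is contractible and the $\Sigma_n$-action on $\cP_n$ is free (so that $\cP_n$ is a model for $E\Sigma_n$). So, given an $\Ninfty$ operad $\cO$, I would first observe that passing to the underlying non-equivariant operad is purely formal: the sequence $\{\cO_n\}_{n\ge 0}$ retains its $\Sigma_n$-actions, its unit $1\in\cO_1$, and its composition maps, and all of the operad axioms persist after forgetting the $G$-action, so one genuinely obtains an operad of spaces with symmetric-group actions. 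The only bookkeeping point is the left-versus-right convention flagged in the remark following Definition~\ref{def:goper}; one converts back to the usual right actions by the standard inversion, which changes neither freeness nor contractibility.

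Next I would check the two defining properties of an $E_\infty$ operad directly against Definition~\ref{def:geinfop}. Condition (ii) is precisely the assertion that $\Sigma_n$ acts freely on $\cO_n$, and this is a statement about the action of the subgroup $\Sigma_n\le G\times\Sigma_n$ alone, so it survives forgetting the $G$-action verbatim. For contractibility, condition (iii) says that $\cO_n$ is a universal space for a family $\cF_n(\cO)$ of subgroups of $G\times\Sigma_n$ containing every subgroup of the form $H\times\{1\}$; taking $H=\{e\}$, the trivial subgroup $\{e\}\times\{1\}$ lies in $\cF_n(\cO)$, so by the defining property of a universal space (Definition~\ref{def:Family}) the fixed-point space $(\cO_n)^{\{e\}\times\{1\}}$ — which is exactly the underlying space of $\cO_n$ — is contractible. (The spaces $\cO_0$ and $\cO_1$ are $G$-contractible by hypothesis, hence have contractible underlying spaces as well; for $n=1$ the family $\cF_1(\cO)$ is forced to be the family of all subgroups of $G$.) Combining these, the underlying operad has each space contractible with a free symmetric-group action, hence is $E_\infty$.

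I do not anticipate a genuine obstacle here; the argument is an unwinding of definitions. The one subtlety worth flagging is that the universal-space condition is applied to the trivial subgroup, which is legitimate precisely because $\cF_n(\cO)$ is required to contain all the subgroups $H\times\{1\}$ (in particular with $H=\{e\}$), together with keeping the left/right symmetric-group convention straight when identifying the result with a classical operad.
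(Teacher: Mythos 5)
Your proof is correct and matches what the paper intends: the paper itself gives no separate argument, simply observing in the surrounding text that the claim "is clear from the definition." Your unwinding — that condition (ii) gives the free $\Sigma_n$-action after forgetting the $G$-action, and condition (iii) applied to the trivial subgroup $\{e\}\times\{1\}\in\cF_n(\cO)$ gives contractibility of the underlying space of each $\cO_n$ via the universal-space property — is exactly that intended unwinding, correctly carried out.
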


The category $\Nop$ of $\Ninfty$ operads, regarded as a full
subcategory of the category of $G$-operads and $G$-operad maps, is a
category with weak equivalences.  The weak equivalences are ultimately
lifted from the homotopy theory on $G$-spaces where a map $f \colon
X \to Y$ of $G$-spaces is a $G$-equivalence if the induced maps
$f^H \colon X^H \to Y^H$ on $H$-fixed points are nonequivariant weak
equivalences for each (closed) subgroup $H \subset G$.

\begin{definition}\label{defn:weak}
A map of $\cO \to \cO'$ of $G$-operads is a weak equivalence if each
map $\cO(n)^{\Gamma} \to \cO'(n)^{\Gamma}$ is an equivalence for all subgroups $\Gamma
\subseteq G \times \Sigma_n$.
\end{definition}

Note that this definition of weak equivalence does not generalize the
usual weak equivalences on operads (i.e., the maps of operads which
are underlying equivalences of spaces for each $n$) when $G = e$;
rather, this is a generalization of Rezk's
notion of weak equivalence of operads~\cite[\S 3.2.10]{Rezk}.  The
generalization of the usual notion would lead to a weak equivalence of
$\Ninfty$ operads being a levelwise $G$-equivalence of spaces, and
under this definition the linear isometries operad on a genuine
universe and any $G$-trivial $E_\infty$ operad would be equivalent via
a zig-zag.

\begin{remark}
One can also ask for a weaker notion of weak equivalence wherein one checks only the fixed points for subgroups of $G\times\Sigma_{n}$ which intersect $\Sigma_{n}$ trivially. This arises for instance in work of Dotto and Schlank considering $G$-operads in terms of presheaves on certain subcategories of the orbit category. For $\Ninfty$ operads, the two notions coincide, since all of the other fixed points are assumed to be empty; for this reason, we do not discuss this further.
\end{remark}

We now turn to examples. The $\Ninfty$ operads which arise most
frequently in equivariant algebraic topology are the linear isometries
operad on a universe $U$ and variants of the little disks operad on a
universe $U$.  To be precise, let $U$ denote a countably
infinite-dimensional real $G$-inner product space which contains each
finite dimensional sub-representation infinitely often and for which
the $G$-fixed points are non-empty.  We emphasize that $U$ is not
assumed to be complete.  Our presentation is heavily based on the
excellent treatment of~\cite[\S 10]{guilloumay}; we refer the
interested reader to that paper for more discussion.

\begin{definition}{\mbox{}}\label{def:operexa}
\begin{enumerate}

\item The {\emph{linear isometries}} operad $ \cL(U)$ has
$n$\textsuperscript{th} space $ \cL(U^{n},U)$ of (nonequivariant)
linear isometries from $U^{n}$ to $U$.  The $G\times\Sigma_{n}$-action
is by conjugation and the diagonal action.  The distinguished element
$1 \in \cL(U,U)$ is the identity map, and the structure maps are
induced from composition.
 
\item The {\emph{little disks}} operad $\D(U)$ has
$n$\textsuperscript{th} space $\D(U)_n$ given as the colimit of
embeddings of $n$ copies of the disk in the unit disk of a finite
subrepresentation $V$ in $U$.  Precisely, let $D(V)$ denote the unit
disk in $V$.  A little disk is a (nonequivariant) affine map $D(V) \to
D(V)$.  We define $\D_{V}(U)_n$ as the space of $n$-tuples of
nonoverlapping little disks, where $G$ acts by conjugation on each
disk and $\Sigma_n$ in the obvious way.  The distinguished element
$1 \in \D_{V}(U)_1$ is the identity map and the structure maps are
induced from composition.  For $V \subseteq W$, there is a map induced
by taking the disk $v \mapsto av + b$ to the disk $w \mapsto aw + b$.
We define $\D(U) = \colim_V \D_V(U)$.

\item The {\emph{embeddings}} operad can be defined as follows.  Fix a
real representation $V \subset U$ with $G$-invariant inner product,
and let $\E(V)_{n}$ be the $G$-space of $n$-tuples of topological
embeddings $V \to V$ with disjoint image (topologized as a
$G$-subspace of the space of all embeddings with $G$ acting by
conjugation).  The distinguished element $1 \in \E(V)_1$ is the
identity map and the structure maps are induced by composition and
disjoint union.  As above, we can pass to the colimit over $V$.

\item The {\emph{Steiner}} operad $\K(U)$ is a (superior) variant of
the little disks operad $\D(U)$.  Fix a real representation $V \subset
U$ with $G$-invariant inner product.  Define $R_{V} \subset E(V)_1$ to
be the $G$-subspace of distance reducing embeddings $f \colon V \to
V$.  A Steiner path is a map $h \colon I \to R_V$ with $h(1) = \id$.
Let $P_V$ denote the $G$-space of Steiner paths (with $G$-action
coming from the action on $R_V$).  There is a natural projection map
$\pi \colon P_V \to R_V$ given by evaluation at $0$.  Define $\K(V)_n$
to be the $G$-space of $n$-tuples of Steiner paths $\{h_i\}$ such that
the projections $\pi(h_i)$ have disjoint images.  The Steiner operad
is defined to be $\K(U) = \colim_V \K(V)$.
\end{enumerate}
\end{definition}

\begin{remark}
The equivariant little disks operad is unfortunately extremely poorly
behaved; products of disks are not necessarily disks, and as observed
in~\cite[\S 3]{Mayrant}, the colimit over inclusions $V \subseteq W$
that defines $\D(U)$ is not compatible with the colimit of
$\Omega^V \Sigma^V$.  These problems are fixed by the Steiner operad,
and for these reasons the equivariant Steiner operad is preferable in
most circumstances.  Moreover, the Steiner operad is necessary for
capturing multiplicative structures (i.e., $E_\infty$ ring spaces) via
operad pairings --- there are equivariant operad pairs
$\big(\K(V), \cL(V)\big)$ for each $V \subset U$ and $(\K_U, \cL_U)$.
In contrast, it does not seem possible to have an operad pairing
involving the little disks operad.  See~\cite[10.2]{guilloumay} for
further discussion of this point.
\end{remark}

We have the following result about the $G$-homotopy type of the little
disks and Steiner operads~\cite[9.7, 10.1]{guilloumay}.

\begin{proposition}
Let $V \subset U$ be a real representation with $G$-invariant inner
product.  Then the $n$th spaces $\D(V)_n$ and $\K(V)_n$ are
$G \times \Sigma_n$-equivalent to the equivariant configuration space
$F(V, n)$.
\end{proposition}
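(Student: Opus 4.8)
The plan is to compare both $\D(V)_n$ and $\K(V)_n$ to $F(V,n)$ through the map that remembers only the underlying configuration of centers, and to check that the classical deformations witnessing these comparisons as equivalences can all be chosen $G\times\Sigma_n$-equivariant. Since a $G\times\Sigma_n$-homotopy equivalence restricts to a homotopy equivalence on every $\Gamma$-fixed subspace (apply $(-)^{\Gamma}$ to it and to the homotopies), and is therefore a $G\times\Sigma_n$-weak equivalence, it suffices to produce explicit equivariant homotopy equivalences.

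\emph{Little disks.} Consider the center map $c\colon\D(V)_n\to F(V,n)$ sending a tuple of little disks $\big(v\mapsto a_iv+b_i\big)_{i=1}^{n}$ to $(b_1,\dots,b_n)$, post-composed with a fixed $G$-equivariant homeomorphism of the open unit disk with $V$ so that it lands in $F(V,n)$ on the nose. The conjugation $G$-action fixes the scalars $a_i$ and sends $b_i\mapsto gb_i$, while $\Sigma_n$ permutes the disks, so $c$ is $G\times\Sigma_n$-equivariant, and it is well defined since nonoverlapping disks satisfy $|b_i-b_j|\ge a_i+a_j>0$. Take the $G$-invariant, $\Sigma_n$-equivariant section $s(b)=\big(v\mapsto r_i(b)\,v+b_i\big)_i$ with $r_i(b)=\tfrac13\min\!\big(\min_{j\ne i}|b_i-b_j|,\ 1-|b_i|\big)$, which lands in $\D(V)_n$ because $r_i(b)+r_j(b)<|b_i-b_j|$. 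Finally set $H\big((a_i,b_i)_i,t\big)=\big((1-t)a_i+t\,r_i(b),\,b_i\big)_i$: for a fixed configuration $b$ the tuples of radii giving nonoverlapping disks inside the unit disk are cut out by the affine conditions $a_i>0$, $a_i+|b_i|\le 1$ and $a_i+a_j\le|b_i-b_j|$, hence form a convex set, so $H$ is a well-defined strong deformation retraction of $\D(V)_n$ onto $s\big(F(V,n)\big)$, and it is $G\times\Sigma_n$-equivariant because all the formulas are. Thus $c$ is a $G\times\Sigma_n$-homotopy equivalence with inverse $s$.

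\emph{Steiner operad.} Here take $\mathrm{ev}\colon\K(V)_n\to F(V,n)$ sending $(h_i)_i$ to the $n$-tuple obtained by evaluating the embeddings $\pi(h_i)=h_i(0)\in R_V$ at the origin of $V$; the $n$ values are distinct because the $\pi(h_i)$ have disjoint images, and $\mathrm{ev}$ is $G\times\Sigma_n$-equivariant as before. It factors as
\[
\K(V)_n\ \xrightarrow{\ \pi_\ast\ }\ \{(f_i)\in R_V^{\,n}:\ \text{disjoint images}\}\ \xrightarrow{\ \mathrm{ev}_0^{\,n}\ }\ F(V,n),
\]
where $\mathrm{ev}_0$ is evaluation at the origin. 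For the first map I would deformation-retract each Steiner path onto a canonical path determined by its value at $0$, relative to that value (the equivariant form of the standard contraction of the Steiner path space); carried out coordinatewise this never changes any $\pi(h_i)$, hence respects the disjoint-image condition, and it is $G\times\Sigma_n$-equivariant since $G$ acts on $R_V$ by conjugation by linear isometries and the deformation is built from the affine structure of the target. For the second map one repeats the little-disks argument with $R_V$ in place of the space of little disks: it uses the $G$-equivalences ``$R_V$ is contractible'' and ``$\mathrm{ev}_0\colon R_V\to V$ is an equivalence'' --- the equivariant forms of Steiner's statements --- together with an equivariant fiberwise retraction toward the standard family $b\mapsto\big(b_i+\varepsilon_i(b)\,\phi\big)_i$ modeled on a fixed $G$-equivariant chart $\phi\colon V\xrightarrow{\ \cong\ }\mathrm{int}\,D(V)$ with $\|D\phi\|\le 1$ (e.g.\ $\phi(v)=v/\sqrt{1+|v|^2}$), again holding the origin-values fixed so that disjointness is preserved.

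\emph{Main obstacle.} The non-equivariant equivalences are classical (cf.\ \cite[\S\,9--10]{guilloumay}), so the content is entirely in the equivariant upgrade, and there the only real subtlety is that the homotopies must respect the disjointness conditions built into the operad spaces. The device that makes this automatic is to deform only the data invisible to disjointness --- the radii for little disks, and the disk sizes together with the path direction for the Steiner operad --- while freezing the disjointness-detecting data (the centers, respectively the origin-values of the Steiner paths), so that at every instant one moves inside a fixed convex region that varies continuously and $G\times\Sigma_n$-equivariantly with the frozen data. The step that requires genuine attention to the definitions, rather than formula-chasing, is the reduction of the Steiner path variable; there I would lean on the equivariant version of Steiner's own analysis of the path space rather than the naive straight-line homotopy, which can fail to stay within $R_V$.
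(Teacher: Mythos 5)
The paper does not prove this proposition itself but cites Guillou--May [9.7, 10.1], whose argument is precisely the equivariant upgrade of Steiner's classical deformations that you carry out; your explicit center map, convexity-of-radii retraction, and two-stage factorization for $\K(V)_n$ match that treatment in substance. Your proposal is correct (modulo the routine translation between open-unit-disk and $V$ coordinates via the fixed $G$-equivariant chart $\phi$, and the deferred details of the equivariant contraction of the Steiner path space, which you rightly flag as the delicate point), so this is essentially the same approach.
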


Passing to colimits, this has the following corollary:

\begin{corollary}
The $G$-operads $\D(U)$ and $\K(U)$ are $\Ninfty$ operads for any
universe $U$.
\end{corollary}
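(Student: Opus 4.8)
The plan is to verify conditions~(i)--(iii) of Definition~\ref{def:geinfop} for $\cO=\D(U)$ and for $\cO=\K(U)$, reducing the analysis of the $n$th spaces to the equivariant configuration space $F(U,n):=\colim_V F(V,n)$. Conditions~(i) and~(ii) are formal. For~(i), the $0$th space of each operad is the space of the empty family of little disks (respectively of Steiner paths), which is a one-point space, and a filtered colimit of one-point spaces is a point, so $\cO_0=\ast$ is $G$-contractible. For~(ii), a nontrivial permutation moves any $n$-tuple of nonoverlapping little disks (the disks, having disjoint interiors, are in particular distinct) and any $n$-tuple of Steiner paths whose projections have disjoint images (the paths then take distinct values at $0$); since $\Sigma_n$ acts freely on each finite stage $\D(V)_n$, $\K(V)_n$, and the stabilization maps are $\Sigma_n$-equivariant closed inclusions, $\Sigma_n$ acts freely on the colimit $\cO_n$.

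For condition~(iii), I would first invoke the preceding proposition: the $G\times\Sigma_n$-equivalences $\D(V)_n\simeq F(V,n)\simeq\K(V)_n$ arise from the evident center and projection-evaluation maps, hence are natural in $V$, so passing to colimits gives $\cO_n\simeq_{G\times\Sigma_n}F(U,n)$. It therefore suffices to exhibit $F(U,n)$ as a universal space for some family of subgroups of $G\times\Sigma_n$ containing every subgroup $H\times\{1\}$. Set $\cF_n(\cO):=\{\Gamma\leq G\times\Sigma_n : F(U,n)^\Gamma\neq\varnothing\}$. This collection is closed under conjugacy, since conjugate subgroups have homeomorphic fixed spaces, and under passage to subgroups, since $\Gamma'\subseteq\Gamma$ forces $F(U,n)^\Gamma\subseteq F(U,n)^{\Gamma'}$; thus it is a family. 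It contains $H\times\{1\}$ because $F(U,n)^{H\times\{1\}}$ is the space of $n$-tuples of distinct points of the fixed space $U^H$, which is infinite-dimensional (it contains the trivial summand of $U$). So the whole statement reduces to the claim that $F(U,n)^\Gamma$ is contractible whenever it is nonempty.

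This contractibility is the main obstacle, and I would establish it by the classical ``infinite swindle''. If $F(U,n)^\Gamma\neq\varnothing$, then no nontrivial element of $\{e\}\times\Sigma_n$ can fix a tuple of distinct points, so $\Gamma$ is the graph of a homomorphism $\phi\colon K\to\Sigma_n$ for some $K\leq G$, and a $\Gamma$-fixed configuration is exactly a $K$-equivariant injection of the $K$-set $\{1,\dots,n\}$ (acting through $\phi$) into $U$. Writing $\{1,\dots,n\}=\coprod_j K/L_j$ identifies $F(U,n)^\Gamma$ with an open subspace of the contractible vector space $\prod_j U^{L_j}=\Map_K(\coprod_j K/L_j,U)$, namely the complement of the countable union of proper affine subspaces on which the $K$-map fails to be injective. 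Since $U$ contains every finite-dimensional subrepresentation infinitely often, decompose $U\cong\bigoplus_{k\geq0}U_k$ with each $U_k\cong U$, and let $T\colon U\to U$ be the $G$-equivariant shift $U_k\xrightarrow{\ \sim\ }U_{k+1}$; then $T$ is injective with no nonzero eigenvalue, so $(1-t)\,\mathrm{id}+tT$ is injective for every $t\in[0,1]$, and $p\mapsto\bigl((1-t)\,\mathrm{id}+tT\bigr)p$ is a $G\times\Sigma_n$-equivariant homotopy through configurations that carries $F(U,n)^\Gamma$ into the subspace of configurations whose points all lie in $T(U)=\bigoplus_{k\geq1}U_k$. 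A second, linear, homotopy toward a $\Gamma$-fixed configuration chosen in $U_0^n$ (which exists, again by nonemptiness) stays among configurations because $\bigoplus_{k\geq1}U_k$ meets $U_0$ only in $0$, and concatenating the two yields a contraction of $F(U,n)^\Gamma$. The point needing care is that every homotopy used is built from the fixed $G$-equivariant operator $T$ and from a $\Gamma$-fixed basepoint, so it restricts to the $\Gamma$-fixed subspace; granting this, $F(U,n)$ is a universal space for $\cF_n(\cO)$, conditions~(i)--(iii) hold, and the Corollary follows.
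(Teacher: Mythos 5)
Your argument is correct and fleshes out exactly what the paper leaves implicit by saying ``Passing to colimits'' after the Guillou--May identification of $\D(V)_n$ and $\K(V)_n$ with the configuration space $F(V,n)$: conditions (i) and (ii) are routine, and (iii) reduces to the universal-space property of $F(U,n)$, which you establish by the standard infinite-swindle contraction of each nonempty fixed-point space $F(U,n)^\Gamma$. This is the same approach the paper intends, just spelled out.
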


The classical argument about contractibility of the spaces of equivariant isometries shows the following lemma.

\begin{lemma}
The $G$-operad $\cL(U)$ is an $\Ninfty$ operad for any universe $U$.
\end{lemma}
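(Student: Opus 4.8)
The plan is to verify the three conditions of Definition~\ref{def:geinfop} for $\cL(U)$. Condition (i) is immediate: $\cL(U)_0 = \cL(U^0, U) = \cL(*, U)$ is a single point, hence $G$-contractible. Condition (ii), freeness of the $\Sigma_n$-action on $\cL(U^n, U)$, follows because a linear isometry $U^n \to U$ is injective, so the $n$ summand-images $f(U_1), \dots, f(U_n)$ are distinct subspaces; permuting the summands by a nontrivial $\sigma \in \Sigma_n$ therefore changes $f$, and this remains true after restricting the domain to any finite subrepresentation, so the action is free (indeed $\Sigma_n$ acts freely even before passing to fixed points of any $\Gamma$). The real content is condition (iii): for each $\Gamma \subseteq G \times \Sigma_n$, the fixed-point space $\cL(U^n, U)^\Gamma$ is either empty or contractible, and it is nonempty precisely when $\Gamma$ lies in the family $\cF_n(\cL(U))$, which must contain all $H \times \{1\}$.

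The key input is the classical convexity argument for spaces of linear isometries. First I would recall that for $G$-inner-product spaces $V, W$ with $W$ a $G$-universe (containing each irreducible infinitely often), the space $\cL(V, W)$ of (nonequivariant) linear isometries, with $G$ acting by conjugation, is such that $\cL(V,W)^G$ is contractible whenever it is nonempty, and it is nonempty iff $W$ absorbs $V$ (each irreducible of $V$ occurs in $W$ with at least that multiplicity, which here holds automatically since $W = U$ is a universe). The contractibility is the standard "Whitney embedding / convexity" argument: the subspace of isometries is a deformation retract of a convex (hence contractible) space of all linear maps satisfying an appropriate fiberwise-injectivity condition, or one uses that $\cL(V,W) \times \cL(V,W) \to \cL(V, W)$ built from an isometry $W \oplus W \cong W$ gives a contraction. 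Now specialize: a subgroup $\Gamma \subseteq G \times \Sigma_n$ whose projection to $G$ is $H$ acts on $U^n \to U$ by conjugation; an $f$ is $\Gamma$-fixed exactly when $f$ is $H$-equivariant for the $H$-action on $U^n$ twisted by the homomorphism $H \to \Sigma_n$ that $\Gamma$ determines (when $\Gamma$ is a graph) or, more generally, $f$ is equivariant for the $\Gamma$-representation on $U^n$ obtained by restriction. So $\cL(U^n, U)^\Gamma = \cL_\Gamma(U^n_\Gamma, U)$, the space of $\Gamma$-equivariant isometries from $U^n$ with a twisted $\Gamma$-action to $U$ with its $\Gamma$-action through $G$. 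By the convexity argument this is contractible when nonempty, and it is nonempty iff the $\Gamma$-representation $U^n_\Gamma$ embeds $\Gamma$-equivariantly into $U$ (viewed as a $\Gamma$-space through the projection $\Gamma \to G$). This nonemptiness condition, as $\Gamma$ varies, visibly defines a family of subgroups (it is closed under subconjugacy: a sub-representation still embeds, and conjugation just relabels), call it $\cF_n(\cL(U))$; and since restricting along $H \times \{1\} \hookrightarrow G \times \Sigma_n$ gives $U^n$ the untwisted diagonal $H$-action, which embeds in $U$ as $n$ copies of the embedding $U \hookrightarrow U$, every $H \times \{1\}$ is in the family. Thus $\cL(U)_n$ is a universal space for $\cF_n(\cL(U))$, giving (iii), and the final clause about $\cO_1$ is the $n=1$ case of (i)-style reasoning, or follows since $\{e\} \times \{1\} \subseteq \cF_1$ already forces it.

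The main obstacle is pinning down the contractibility of $\cL_\Gamma(V, U)^{\phantom{\Gamma}}$ — i.e. the equivariant version of the statement that spaces of linear isometries are contractible — with $\Gamma$ acting on the source through a possibly-twisted representation and on the target through $G$. This is where one must be careful that the argument is genuinely about linear isometries and not merely injective linear maps, and that the deformation retraction can be made $\Gamma$-equivariant; the cleanest route is to invoke the standard lemma (as in \cite{LMS} I.1 or \cite{guilloumay}) that for any $G$-universe $U$ and any $G$-inner-product space $V$, the $G$-space $\cL(V \otimes \bR^\infty \text{-like object}, U)$ is contractible, applied with $\Gamma$ in place of $G$ and the appropriate twisted $V$. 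Everything else — freeness, the family axioms, the $H \times \{1\}$ condition — is bookkeeping. I would present the proof as: (1) reduce each condition to a statement about $\Gamma$-fixed points; (2) invoke the equivariant contractibility-of-isometries lemma for (i), (iii)-contractibility, and the $\cO_1$ remark; (3) check freeness directly; (4) identify the nonemptiness locus as a family and check it contains the $H \times \{1\}$.
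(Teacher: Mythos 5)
Your proposal is correct and is a careful elaboration of the paper's own (very terse) proof, which simply says ``The classical argument about contractibility of the spaces of equivariant isometries shows the following lemma.'' You verify the three conditions of the definition directly: contractibility of $\cL(U)_{0}$ and $\cL(U)_{1}$, freeness of the $\Sigma_{n}$-action via injectivity of isometries, and the universal-space property by identifying $\cL(U^{n},U)^{\Gamma}$ with the space $\cL_{\Gamma}(U^{n}_{\Gamma},U)$ of $\Gamma$-equivariant isometries for the $\Gamma$-action on $U^{n}$ obtained by restricting the $G\times\Sigma_{n}$-action, and then invoking the classical convexity/absorption argument to see this is empty or contractible. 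You also correctly check that the nonemptiness locus is closed under subconjugacy, hence a family, and that every $H\times\{1\}$ lies in it because the diagonal $H$-action on $U^{n}$ embeds $H$-equivariantly in $U$. One point worth noting explicitly (which you implicitly use): $U$ viewed as a $\Gamma$-representation through the projection $\Gamma\to G$ is still a $\Gamma$-universe in the required sense (each of its finite $\Gamma$-subrepresentations appears infinitely often), because any such subrepresentation sits inside a finite $G$-subrepresentation of $U$, which by hypothesis recurs infinitely often. With this observation, the appeal to the standard contractibility lemma as in \cite{LMS} is entirely justified, and the proof is complete. This is the same route as the paper, just filled in.
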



One of our original motivations for this paper was to understand the
relationship between $\D(U)$ or $\K(U)$ and $ \cL(U)$ in the case of a
general universe $U$.  We give an answer in the spirit of Lewis'
beautiful work relating dualizability of an orbit $G/H$ to whether it
embeds in the universe $U$~\cite{lewisincomplete}.  The surprising
conclusion of our study will be just how far apart $\K(U)$ and
$ \cL(U)$ can be for an incomplete universe $U$; see
Section~\ref{sec:LUDU}.

\subsection{Indexing systems}\label{ssec:Indexing}

There is a close connection between our $\Ninfty$ operads and certain
subcategories of the categories of finite $G$-sets.  However, as is
often the case in equivariant homotopy, we never want to consider just
the group $G$; instead we should consider all subgroups on equal
footing. This motivates the following replacement for a category.

\begin{definition}
A {\emph{categorical coefficient system}} is a contravariant functor
$\underline{\cC}$ from the orbit category of $G$ to the category of
small categories.
\end{definition}

As we will almost never be talking about abelian group valued
coefficient systems in this paper, we will often abusively drop the
prefix ``categorical''.

\begin{definition}
A {\emph{symmetric monoidal coefficient system}} is a contravariant
functor $\underline{\cC}$ from the orbit category of $G$ to the
category of symmetric monoidal categories and strong symmetric
monoidal functors.

If $\underline{\cC}$ is a symmetric monoidal coefficient system, then
the {\emph{value at H}} is $\underline{\cC}(G/H)$, and will often be
denoted $\underline{\cC}(H)$.

For a symmetric monoidal coefficient system $\underline{\cC}$, let
\[
i_{H}^{\ast}\colon\underline{\cC}(G)\to\underline{\cC}(H)
\]
denote the restriction map associated to the natural map $G/H\to G/G$.
\end{definition}

We can also consider ``enriched'' coefficient systems that take values
in enriched categories.  Most of the naturally arising categories in
equivariant homotopy actually sit in enriched symmetric monoidal
coefficient systems.

\begin{definition}
Let $\underline{\Top}_{(-)}$ be the enriched coefficient system of
spaces. The value at $H$ is $\Top_{H}$, the category of $H$-spaces and
all (not just equivariant) maps. Similarly, let
$\underline{\Top}^{(-)}$ be the associated ``level-wise fixed
points'', the value at $H$ is $\Top^{H}$, the category of $H$-spaces
and $H$-maps. There are two compatible symmetric monoidal structures:
disjoint union and Cartesian product.

Let $\underline{\Sp}_{(-)}$ be the enriched coefficient system of
spectra. The value at $H$ is $\Sp_{H}$, the category of $H$-spectra
and all maps. Let $\underline{\Sp}^{(-)}$ be the associated
coefficient system whose value at $H$ is the category of $H$-spectra
and $H$-maps. We again have two symmetric monoidal structures we can
consider: wedge sum and smash product.
\end{definition}

The most important category for our study of $\Ninfty$ operads is the
coefficient system of finite $G$-sets.

\begin{definition}
Let $\mSet$ be the symmetric monoidal coefficient system of finite
sets. The value at $H$ is $\Set^{H}$, the category of finite $H$-sets
and $H$-maps. The symmetric monoidal operation is disjoint union.
\end{definition}

We will associate to every $\Ninfty$ operad a subcoefficient system of
$\mSet$.  The operadic structure gives rise to additional structure on
the coefficient system.

\begin{definition}
We say that a full sub symmetric monoidal coefficient system $\cF$ of
$\mSet$ is {\emph{closed under self-induction}} if whenever
$H/K\in \cF(H)$ and $T\in \cF(K)$, $H\times_{K} T\in \cF(H)$.
\end{definition}  

\begin{definition}
Let $\cC\subset\cD$ be a full subcategory. We say that $\cC$ is a
{\emph{truncation subcategory}} of $\cD$ if whenever $X\to Y$ is monic
in $\cD$ and $Y$ is in $\cC$, then $X$ is also in $\cC$.

A truncation sub coefficient system of a symmetric monoidal
coefficient system $\underline{\cD}$ is a sub coefficient system that
is levelwise a truncation subcategory.
\end{definition}
In particular, for finite $G$-sets, truncation subcategories are those
which are closed under passage to subobjects.

\begin{definition}
An {\emph{indexing system}} is a truncation sub symmetric monoidal
coefficient system $\mF$ of $\mSet$ that contains all trivial sets and is closed under self
induction and Cartesian product.
\end{definition}

\begin{definition}\label{def:poset}
Let $\Coef(\Set)$ be the poset of all subcoefficient systems of
$\mSet$, ordered by inclusion.  Let $\poset$ be the poset of all
indexing systems.
\end{definition}

With this, we can state our main result describing the homotopy
category of $\Ninfty$ operads.

\begin{theorem}
There is a functor
\[
\mC\colon \Nop \to \poset
\]
which descends to a fully-faithful embedding of categories
\[
\mC\colon \Ho(\Nop)\to\poset.
\]
\end{theorem}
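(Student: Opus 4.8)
The plan is to construct the functor $\mC$ explicitly on the point-set level, then analyze the induced map on homotopy categories by computing derived mapping spaces between $\Ninfty$ operads. First I would define $\mC(\cO)$ for an $\Ninfty$ operad $\cO$. Given a subgroup $H\subseteq G$ and a finite $H$-set $T$ with $|T|=n$, I declare $T\in\mC(\cO)(H)$ precisely when the $H\times\Sigma_n$-fixed points $\cO_n^{\Gamma_{T}}$ are nonempty (equivalently contractible), where $\Gamma_{T}\subseteq G\times\Sigma_n$ is the graph subgroup associated to the homomorphism $H\to\Sigma_n$ encoding the $H$-action on $T$ after a choice of bijection $T\cong\bn$. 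One checks this is independent of the chosen bijection (conjugate graphs), so it is well-defined. The content of Definition~\ref{def:geinfop}, together with the operadic structure maps, forces the collection $\{\mC(\cO)(H)\}_H$ to be a truncation sub symmetric monoidal coefficient system of $\mSet$ closed under Cartesian product and self-induction: closure under subobjects comes from the fact that $\cF_n(\cO)$ is a family (closed under passage to subgroups), the disjoint-union/Cartesian-product closure follows from the operadic composition maps $\cO_k\times\cO_{n_1}\times\cdots\times\cO_{n_k}\to\cO_{n_1+\cdots+n_k}$ being appropriately equivariant, and self-induction is similarly extracted from composition. Containing all trivial sets is immediate from the hypothesis that $\cF_n(\cO)$ contains all $H\times\{1\}$. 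Functoriality of $\mC$ in operad maps is clear: a $G$-operad map $\cO\to\cO'$ induces maps on all fixed-point spaces, hence $\mC(\cO)\subseteq\mC(\cO')$, but since $\poset$ is a poset this just says $\mC$ is monotone, so it automatically sends weak equivalences (which are in particular levelwise fixed-point equivalences by Definition~\ref{defn:weak}) to equalities. Hence $\mC$ descends to $\Ho(\Nop)\to\poset$.

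The real work is proving the descended functor is fully faithful. Since the target is a poset, full faithfulness means: for $\Ninfty$ operads $\cO,\cO'$, the derived mapping space $\operatorname{Map}^h(\cO,\cO')$ is nonempty (and in fact contractible) if and only if $\mC(\cO)\subseteq\mC(\cO')$, and it is empty otherwise. The "only if'' and emptiness directions are the easy half: any actual operad map $\cO\to\cO'$ witnesses $\mC(\cO)\subseteq\mC(\cO')$ by the functoriality above, and a zig-zag through weak equivalences does not change $\mC$-values. The hard direction — and the main obstacle — is showing that whenever $\mC(\cO)\subseteq\mC(\cO')$ there actually exists a map of $\Ninfty$ operads $\cO\to\cO'$, and moreover that the space of such maps is contractible (this is the Remark's Proposition~\ref{prop:mapcontract}). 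I would attack this by a cell-by-cell obstruction-theoretic argument in the model category of $G$-operads. The key input is that each $\cO_n$ is a universal space $E\cF_n(\cO)$, so it has a $(G\times\Sigma_n)$-CW structure with cells of the form $(G\times\Sigma_n)/\Gamma$ for $\Gamma\in\cF_n(\cO)$, with $\Gamma$ a graph subgroup since the $\Sigma_n$-action is free. To extend a partial operad map over such a cell I must map into $(\cO'_n)^{\Gamma}$; the hypothesis $\mC(\cO)\subseteq\mC(\cO')$ says exactly that whenever $\Gamma\in\cF_n(\cO)$ (i.e.\ the corresponding $H$-set is in $\mC(\cO)(H)$) we have $\Gamma\in\cF_n(\cO')$, so $(\cO'_n)^{\Gamma}$ is contractible and the extension exists, uniquely up to contractible choice. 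Compatibility with the operadic composition maps and the identity element is handled by performing this extension inductively over a filtration of the operad (e.g.\ by arity and then by skeleta), using that composition maps send graph subgroups to graph subgroups so that the required fixed-point spaces of $\cO'$ are again contractible. Running this argument relatively over a zig-zag shows that after cofibrant replacement of $\cO$ and fibrant replacement of $\cO'$ (which exist since $\Nop$ sits inside a suitable model structure on $G$-operads), the mapping space is contractible; this simultaneously yields the Remark's claim that endomorphism spaces of $\Ninfty$ operads are contractible (take $\cO=\cO'$, giving $\mC(\cO)=\mC(\cO')$). Finally, assembling: $\Ho(\Nop)(\cO,\cO')=\pi_0\operatorname{Map}^h(\cO,\cO')$ is a single point when $\mC(\cO)\subseteq\mC(\cO')$ and empty otherwise, which is precisely $\poset(\mC(\cO),\mC(\cO'))$, so $\mC$ is fully faithful.

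I expect the genuine technical obstacle to be the homotopy theory of $G$-operads itself — setting up (or invoking) a model structure on $\Gop$ with weak equivalences as in Definition~\ref{defn:weak} in which $\Ninfty$ operads have tractable cofibrant/fibrant replacements, and carefully carrying out the inductive extension so that the symmetric-group equivariance and the nested composition axioms (including the $\Sigma_k\wr\Sigma_n$-equivariance noted in Definition~\ref{def:goper}) are respected at every stage. The conceptual point that makes it work is uniform: every fixed-point space one ever needs to map into is either empty (and then one is in the regime governed by the forbidden direction) or contractible (because it is a fixed-point space of a universal space for a family, evaluated at a graph subgroup that the inclusion of indexing systems guarantees lies in the target's family), so there is never a higher obstruction. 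All verifications that the various closure conditions (truncation, Cartesian product, self-induction, trivial sets) hold for $\mC(\cO)$ are routine unwindings of Definition~\ref{def:geinfop} and the operad axioms, and I would relegate them to lemmas rather than grind through them here.
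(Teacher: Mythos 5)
Your definition of $\mC$ and the reduction of faithfulness to the fact that $\poset$ is a poset match the paper, and you correctly identify Proposition~\ref{prop:mapcontract} as the engine behind uniqueness. But your route to \emph{fullness} (existence of a map $\cO\to\cO'$ in $\Ho(\Nop)$ when $\mC(\cO)\subseteq\mC(\cO')$) is genuinely different from the paper's, and it has a gap you should not wave away. Your plan is to build a point-set map by cell-by-cell extension over a cofibrant resolution of $\cO$. The cells of $\cO_n$ itself have isotropy in $\cF_n(\cO)$ because $\cO_n$ is a universal space; but a cofibrant replacement of $\cO$ as a \emph{$G$-operad} is built from free operads $\mathbb{F}A$ on symmetric sequences, and the free operad at arity $n$ assembles cells from all the lower arities via the composition structure. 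The isotropy of the resulting $n$-ary cells is obtained from graph subgroups by exactly the operations (disjoint union, Cartesian product, self-induction) that appear in Lemmas~\ref{lem:DisjointUnion}--\ref{lem:SelfInduction}. So the claim ``composition maps send graph subgroups to graph subgroups so that the required fixed-point spaces of $\cO'$ are again contractible'' is not a routine observation you can relegate to a footnote --- it is the real content, and you would have to prove that these combined isotropy subgroups lie in $\cF_\ast(\cO')$, which is where the indexing-system closure properties earn their keep. Note also that the paper's Proposition~\ref{prop:mapcontract} proves the mapping space is \emph{empty or contractible}; it does not by itself decide which, so you cannot cite it for existence.

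The paper sidesteps all of this with a much slicker existence argument. It first shows $\mC(\cO\times\cO')=\mC(\cO)\cap\mC(\cO')$ (immediate, since a product of universal spaces is the universal space for the intersection of families), so when $\mC(\cO)\subseteq\mC(\cO')$ both $\cO$ and $\cO\times\cO'$ are levelwise universal spaces for the \emph{same} families; hence the projection $\cO\times\cO'\to\cO$ is a weak equivalence and the zig-zag $\cO\from\cO\times\cO'\to\cO'$ produces the required morphism in $\Ho(\Nop)$. Proposition~\ref{prop:mapcontract} is then used only for uniqueness (faithfulness), not existence. Your obstruction-theoretic route can likely be repaired, but it ends up reproving the closure lemmas in a more awkward form and still relies on the same model-category infrastructure; the product trick is strictly less work and is worth knowing as a general device for comparing operads whose spaces are universal for families. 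One small slip: when defining admissibility you write ``the $H\times\Sigma_n$-fixed points $\cO_n^{\Gamma_T}$''; what you take is the fixed points for the graph subgroup $\Gamma_T\subseteq G\times\Sigma_n$, not for $H\times\Sigma_n$.
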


\section{Admissible sets and $\Ninfty$ operads}\label{sec:Admissibles}
The construction of the functor $\mC$ proceeds in two steps. We first
define a functor, also called $\mC$, from symmetric sequences with an
analogous universal property for their constituent spaces to the poset
$\Coef(\Set)$. We then show that if a symmetric sequence arises from
an operad, then the resulting value of $\mC$ actually lands in
$\poset$.

\subsection{Symmetric sequences and the functor $\mC$}
We begin looking very generally at what sorts of families of subgroups
can arise, using only at the universal space property of the spaces
in an $\Ninfty$ operad and the freeness of the $\Sigma_{n}$-action.

\begin{definition}
An $\Ninfty$ symmetric sequence is a symmetric sequence $\cO$ in
$G$-spaces such that for each $n$,
\begin{enumerate}
\item $\cO_{n}$ is a universal space for a family $\cF_{n}(\cO)$ of subgroups of $G\times\Sigma_{n}$ and 
\item $\Sigma_{n}$ acts freely.
\end{enumerate}
\end{definition}
In particular, the underlying symmetric sequence for an $\Ninfty$
operad is always of this form.

Our entire analysis hinges on a standard observation about the
structure of subgroups of $G\times\Sigma_{n}$ which intersect
$\Sigma_{n}$ trivially.

\begin{proposition}\label{prop:graph}
If $\Gamma\subset G\times\Sigma_{n}$ is such that $\Gamma\cap
(\{1\}\times\Sigma_{n})=\{1\}$, then there is a subgroup $H$ of $G$
and a homomorphism $f\colon H\to \Sigma_{n}$ such that $\Gamma$ is the
graph of $f$.
\end{proposition}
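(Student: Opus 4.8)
The plan is to recover the subgroup $H$ and the homomorphism $f$ directly from the two projection maps $p_1\colon G\times\Sigma_n\to G$ and $p_2\colon G\times\Sigma_n\to\Sigma_n$ restricted to $\Gamma$. First I would set $H := p_1(\Gamma)\subseteq G$, which is visibly a subgroup. The whole argument then rests on showing that $p_1|_\Gamma\colon\Gamma\to H$ is an isomorphism. Surjectivity is immediate from the definition of $H$, so the content is injectivity: if $(g,\sigma)\in\Gamma$ and $p_1(g,\sigma)=1$, then $g=1$, so $(g,\sigma)=(1,\sigma)\in\Gamma\cap(\{1\}\times\Sigma_n)=\{1\}$ by the hypothesis, forcing $\sigma=1$. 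Hence $p_1|_\Gamma$ is a bijective homomorphism, i.e.\ an isomorphism of groups.

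With that in hand, define $f\colon H\to\Sigma_n$ to be the composite $p_2\circ(p_1|_\Gamma)^{-1}$; this is a homomorphism as a composite of homomorphisms. It remains to check that $\Gamma$ is exactly the graph of $f$, namely $\Gamma=\{(h,f(h)) : h\in H\}$. For the inclusion $\subseteq$: any element of $\Gamma$ has the form $(h,\sigma)$ with $h=p_1(h,\sigma)\in H$, and by construction $(p_1|_\Gamma)^{-1}(h)=(h,\sigma)$, so $f(h)=p_2(h,\sigma)=\sigma$; thus $(h,\sigma)=(h,f(h))$ lies in the graph. For $\supseteq$: given $h\in H$, the element $(p_1|_\Gamma)^{-1}(h)\in\Gamma$ has first coordinate $h$ and, by the same computation, second coordinate $f(h)$, so $(h,f(h))\in\Gamma$. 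This establishes the equality and completes the argument.

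There is essentially no obstacle here; the statement is the standard fact that a subgroup of a product $A\times B$ meeting $\{1\}\times B$ trivially is the graph of a homomorphism defined on its image in $A$, and the only thing to be careful about is the direction of the projections and that freeness of the $\Sigma_n$-factor in the intersection hypothesis is what makes $p_1|_\Gamma$ injective. If one prefers a slightly more conceptual phrasing, one can note that $\Gamma\cap(\{1\}\times\Sigma_n)$ is precisely the kernel of $p_1|_\Gamma$, so the hypothesis says exactly that this kernel is trivial, and the first isomorphism theorem then identifies $\Gamma$ with $H=p_1(\Gamma)$; transporting $p_2$ across this identification yields $f$.
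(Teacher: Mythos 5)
Your proof is correct, and it is the standard argument: $H = p_1(\Gamma)$, the hypothesis makes $p_1|_\Gamma$ injective, and $f = p_2 \circ (p_1|_\Gamma)^{-1}$ has $\Gamma$ as its graph. The paper itself gives no proof, calling this a "standard observation," so your write-up simply supplies the expected details.
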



Thus the subgroup $\Gamma$ is equivalent to an
$H$-set structure on $\underline{n}=\{1,\dots,n\}$.  It will be
essential to our future analysis to recast the whole story in terms of
$H$-sets.

\begin{definition}
For an $H$-set $T$, let $\Gamma_{T}$ denote the graph of the
homomorphism $H\to\Sigma_{|T|}$ defining the $H$-set structure.  We
write that an $H$-set $T$ is {\emph{admissible}} for $\cO$ if
$\Gamma_{T}\in\cF_{|T|}(\cO)$.
\end{definition}

The requirements associated to the stipulation that $\cF_{\ast}(\cO)$
forms a family (closure under subgroups and conjugacy) translates to
the following observation in terms of admissibility:

\begin{proposition}\label{prop:admitfam}
If an $H$-set $T$ of cardinality $n$ is admissible, then
\begin{enumerate}
\item for all subgroups $K\subset H$, $i_{K}^{\ast}(T)$ is admissible, and
\item the $gHg^{-1}$-set $g\cdot T$ is admissible.
\item every $H$-set isomorphic to $T$ (as an $H$-set) is admissible.
\end{enumerate}
\end{proposition}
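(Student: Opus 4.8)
The plan is to directly translate the two defining closure properties of a family of subgroups of $G \times \Sigma_n$ — closure under passage to subgroups and closure under conjugation — into the language of $H$-sets via the graph correspondence of Proposition~\ref{prop:graph}. The key observation is that if $T$ is an $H$-set of cardinality $n$ with structure homomorphism $f \colon H \to \Sigma_n$, then $\Gamma_T = \{(h, f(h)) : h \in H\} \subset G \times \Sigma_n$, and this subgroup intersects $\{1\} \times \Sigma_n$ trivially; conversely every such subgroup arises this way. Admissibility of $T$ means $\Gamma_T \in \cF_n(\cO)$, and we want to exhibit the subgroups corresponding to $i_K^*(T)$, $g \cdot T$, and isomorphic copies as (conjugates of) subgroups of $\Gamma_T$, so that the family axioms apply.

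For (i): given $K \subset H$, the restricted $K$-set $i_K^*(T)$ has structure homomorphism $f|_K \colon K \to \Sigma_n$, so $\Gamma_{i_K^*(T)} = \{(k, f(k)) : k \in K\}$, which is literally a subgroup of $\Gamma_T$. Since $\cF_n(\cO)$ is a family, it is closed under passage to subgroups, hence $\Gamma_{i_K^*(T)} \in \cF_n(\cO)$ and $i_K^*(T)$ is admissible. For (ii): the $gHg^{-1}$-set $g \cdot T$ has structure homomorphism $h' \mapsto f(g^{-1} h' g)$; one checks that $\Gamma_{g \cdot T} = (g, 1)\, \Gamma_T\, (g, 1)^{-1}$ inside $G \times \Sigma_n$, so it is a conjugate of $\Gamma_T$ and thus lies in $\cF_n(\cO)$ by conjugacy-closure of the family. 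For (iii): if $\phi \colon T \to T'$ is an isomorphism of $H$-sets, then $\phi$ corresponds to a permutation $\sigma \in \Sigma_n$ with $f'(h) = \sigma f(h) \sigma^{-1}$ for all $h \in H$, where $f, f'$ are the structure homomorphisms of $T, T'$. Then $\Gamma_{T'} = (1, \sigma)\, \Gamma_T\, (1, \sigma)^{-1}$, again a conjugate inside $G \times \Sigma_n$, so $T'$ is admissible. (One could also note that (iii) follows formally once we know admissibility is a property of the isomorphism class — indeed the whole setup is designed so that $\cF_\ast(\cO)$ only sees conjugacy classes of subgroups.)

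I do not expect any serious obstacle here; the content is entirely in setting up the graph dictionary correctly, and the only point requiring care is verifying the conjugation formulas $\Gamma_{g \cdot T} = (g,1)\Gamma_T(g,1)^{-1}$ and $\Gamma_{T'} = (1,\sigma)\Gamma_T(1,\sigma)^{-1}$ — a short computation with the definition of the relevant structure homomorphisms. The mild subtlety worth flagging is a cardinality bookkeeping issue: in (i) the restricted set $i_K^*(T)$ has the same underlying cardinality $n$ as $T$, so it is compared against $\cF_n(\cO)$ and no reindexing is needed; similarly in (ii) and (iii). Thus all three statements reduce to the single family axiom applied within a fixed symmetric group $\Sigma_n$.
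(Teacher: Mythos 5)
Your proof is correct and takes exactly the approach the paper intends: the paper states Proposition~\ref{prop:admitfam} as an ``observation'' immediately following the remark that $\cF_\ast(\cO)$ being a family (closed under subgroups and conjugacy) translates into these three closure properties via the graph correspondence, and supplies no further argument. You have filled in the implicit details correctly, with (i) coming from $\Gamma_{i_K^*(T)}\subset\Gamma_T$ and closure under subgroups, and (ii), (iii) from the conjugation identities $\Gamma_{g\cdot T}=(g,1)\Gamma_T(g,1)^{-1}$ and $\Gamma_{T'}=(1,\sigma)\Gamma_T(1,\sigma)^{-1}$ together with closure of the family under conjugation in $G\times\Sigma_n$.
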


Proposition~\ref{prop:admitfam} actually shows that the admissible
sets assemble into a sub coefficient system of $\mSet $. This allows
us to define the functor $\mC$.

\begin{definition}
Let $\mC(\cO)$ denote the full subcoefficient system of $\mSet$ whose
value at $H$ is the full subcategory of $\Set_{H}$ spanned by the
admissible $H$-sets.
\end{definition}

\begin{proposition}
If $\cO\to\cO'$ is a map of $\Ninfty$ symmetric sequences, then
\[
\mC(\cO)\subset\mC(\cO').
\]
\end{proposition}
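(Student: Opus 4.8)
The plan is to unwind the definitions and reduce the statement to a claim about the families $\cF_n$. A map $\phi\colon\cO\to\cO'$ of $\Ninfty$ symmetric sequences is in particular, for each $n$, a $(G\times\Sigma_n)$-equivariant map $\cO_n\to\cO'_n$ between universal spaces for the families $\cF_n(\cO)$ and $\cF_n(\cO')$. The key point is the standard fact that a $(G\times\Sigma_n)$-map from a universal space for one family to a universal space for another can only exist when the source family is contained in the target family: indeed, for any subgroup $\Gamma\subseteq G\times\Sigma_n$, passing to $\Gamma$-fixed points gives a map $(\cO_n)^\Gamma\to(\cO'_n)^\Gamma$, and if $\Gamma\in\cF_n(\cO)$ then the source is nonempty (in fact contractible), forcing the target to be nonempty, i.e. $\Gamma\in\cF_n(\cO')$. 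Hence $\cF_n(\cO)\subseteq\cF_n(\cO')$ for every $n$.

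**Next I would** translate this into the language of admissible sets. By definition, an $H$-set $T$ of cardinality $n$ is admissible for $\cO$ precisely when $\Gamma_T\in\cF_n(\cO)$, where $\Gamma_T\subseteq G\times\Sigma_n$ is the graph of the homomorphism $H\to\Sigma_n$ classifying the $H$-set structure (using Proposition~\ref{prop:graph} to identify such subgroups with $H$-sets). From the containment $\cF_n(\cO)\subseteq\cF_n(\cO')$ just established, we get immediately that every admissible $H$-set for $\cO$ is admissible for $\cO'$. Since $\mC(\cO)$ is by definition the full subcoefficient system of $\mSet$ whose value at $H$ is spanned by the admissible $H$-sets — and likewise for $\cO'$ — this is exactly the levelwise containment $\mC(\cO)(H)\subseteq\mC(\cO')(H)$ for all $H\subseteq G$, which is the assertion $\mC(\cO)\subseteq\mC(\cO')$.

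**The only mild subtlety**, and the step I would be most careful about, is the universal-space fixed-point argument: one must check that the existence of \emph{any} $(G\times\Sigma_n)$-equivariant map between the $n$th spaces forces the containment of families. This is genuinely one-directional and uses only that $(\cO_n)^\Gamma$ is \emph{nonempty} when $\Gamma\in\cF_n(\cO)$ and \emph{empty} otherwise — no contractibility of the map itself is needed, only of the spaces (and not even that; nonemptiness suffices). One should also note that a map of symmetric sequences is required to be $\Sigma_n$-equivariant in each level as part of the definition, so the fixed points being taken are with respect to subgroups $\Gamma$ of the full product $G\times\Sigma_n$, which is what makes the graph subgroups $\Gamma_T$ relevant. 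No compatibility with operadic structure maps is used here, as expected, since the statement is purely at the level of $\Ninfty$ symmetric sequences.
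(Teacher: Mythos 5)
Your proof is correct and follows essentially the same argument as the paper: both deduce from the $(G\times\Sigma_n)$-equivariance of the map that $\Gamma_T$-fixed points of $\cO_n$ map to $\Gamma_T$-fixed points of $\cO'_n$, so nonemptiness of the former forces nonemptiness (hence contractibility) of the latter. Your version spells out the translation between families and admissible sets a bit more explicitly, but the mathematical content is the same.
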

\begin{proof}
Let $T$ be an admissible set for $\cO$. By definition, this means that
$\cO_{|T|}^{\Gamma_{T}}\neq\emptyset$. Since we have a
$G\times\Sigma_{|T|}$-equivariant map $\cO_{|T|}$ to $\cO'_{|T|}$, we
know that the $\Gamma_{T}$ fixed points of $\cO'_{|T|}$ cannot be
empty.
\end{proof}

To refine our map, we recall the relevant notion of weak equivalence
for $G$-symmetric sequences.

\begin{definition}
A map $f\colon \cO\to\cO'$ between $G$-symmetric sequences is a weak
equivalence if for each $n$ it induces a weak equivalence of
$G \times \Sigma_n$ spaces.
\end{definition}

Notice that a weak equivalence of $\Ninfty$ operads give rise to an
underlying equivalence of $\Ninfty$-symmetric sequences.  Unpacking
the definition immediately gives the following proposition.

\begin{proposition}
If $f\colon \cO\to\cO'$ is a weak equivalence between
$\Ninfty$-symmetric sequences, then $\mC(\cO)=\mC(\cO')$.
\end{proposition}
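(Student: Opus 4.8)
The plan is to unwind the definition of $\mC$ and observe that the functor depends only on a discrete invariant of each constituent space---namely, for each $n$, the \emph{set} of subgroups $\Gamma \subseteq G \times \Sigma_n$ for which $\cO_n^\Gamma \neq \varnothing$---and that this invariant is visibly preserved by any map inducing levelwise $G \times \Sigma_n$-equivalences. So first I would recall that, by hypothesis, each $\cO_n$ is a universal space for a family $\cF_n(\cO)$ and each $\cO'_n$ is a universal space for $\cF_n(\cO')$; in particular $\cO_n^\Gamma$ is either contractible (hence nonempty) if $\Gamma \in \cF_n(\cO)$, or empty if $\Gamma \notin \cF_n(\cO)$, and likewise for $\cO'$.

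Next I would use the hypothesis that $f \colon \cO \to \cO'$ is a weak equivalence of $\Ninfty$-symmetric sequences, meaning each $f_n \colon \cO_n \to \cO'_n$ is a weak equivalence of $G \times \Sigma_n$-spaces. By definition of $G\times\Sigma_n$-equivalence, this means $f_n^\Gamma \colon \cO_n^\Gamma \to (\cO'_n)^\Gamma$ is a (nonequivariant) weak equivalence of spaces for every subgroup $\Gamma \subseteq G \times \Sigma_n$. A weak equivalence of spaces is in particular a bijection on $\pi_0$, so $\cO_n^\Gamma$ is nonempty if and only if $(\cO'_n)^\Gamma$ is nonempty. Comparing with the universal space property, this says precisely that $\Gamma \in \cF_n(\cO)$ if and only if $\Gamma \in \cF_n(\cO')$; that is, $\cF_n(\cO) = \cF_n(\cO')$ for every $n$.

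Finally I would translate this equality of families into the statement $\mC(\cO) = \mC(\cF')$ by chasing through the definitions of admissibility and of $\mC$. An $H$-set $T$ of cardinality $n$ is admissible for $\cO$ precisely when $\Gamma_T \in \cF_n(\cO)$, and admissible for $\cO'$ precisely when $\Gamma_T \in \cF_n(\cO')$; since $\cF_n(\cO) = \cF_n(\cO')$, the two notions of admissibility coincide for $H$-sets of every cardinality and every subgroup $H \subseteq G$. Since $\mC(\cO)$ (resp.\ $\mC(\cO')$) is by definition the full subcoefficient system of $\mSet$ spanned levelwise by the admissible $H$-sets, and these are the same subcategories of each $\Set^H$, we conclude $\mC(\cO) = \mC(\cO')$ as subcoefficient systems of $\mSet$.

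There is no real obstacle here---this is essentially a definitional unpacking---but the one point requiring mild care is making explicit that a $G \times \Sigma_n$-equivalence of spaces induces a $\pi_0$-bijection on every fixed-point subspace $(-)^\Gamma$ for $\Gamma$ ranging over \emph{all} subgroups of $G\times\Sigma_n$ (not just those intersecting $\Sigma_n$ trivially), since that is exactly what lets us compare the two families subgroup-by-subgroup rather than just on graphs. Once that is noted, everything else is immediate from the universal space characterization.
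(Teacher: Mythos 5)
Your proof is correct and is exactly the definitional unpacking the paper has in mind (the paper's own proof is literally the remark that ``unpacking the definition immediately gives the following proposition''). One trivial typo: near the end you wrote $\mC(\cO)=\mC(\cF')$ where you mean $\mC(\cO)=\mC(\cO')$.
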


\subsection{Symmetric monoidal structure of $\mC(\cO)$ and the operadic structure}\label{sec:Properties}

For an $\Ninfty$ operad $\cO$, the spaces $\cO_{n}$ do not exist in
isolation, and the structure maps on $\cO$ assemble to show that
$\mC(\cO)$ has extra structure. We first show that $\mC(\cO)$ is never
empty.

\begin{proposition}\label{prop:TrivialSets}
For all subgroups $H$ and for all finite sets $T$ of cardinality $n$,
the trivial $H$-set $T$ is admissible.
\end{proposition}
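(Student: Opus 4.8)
The plan is to show that for any $\Ninfty$ operad $\cO$ and any subgroup $H \subseteq G$, the trivial $H$-set of cardinality $n$ is admissible, i.e. that the corresponding graph subgroup lies in $\cF_n(\cO)$. The key observation is that the trivial $H$-set structure on $\underline{n}$ corresponds to the \emph{trivial} homomorphism $H \to \Sigma_n$, so its graph $\Gamma$ is exactly $H \times \{1\}$. But condition (iii) in Definition~\ref{def:geinfop} requires precisely that $\cF_n(\cO)$ contains all subgroups of the form $H \times \{1\}$. Hence $\Gamma = \Gamma_T = H\times\{1\} \in \cF_n(\cO)$, which by definition means $T$ is admissible.

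First I would recall the translation set up in the preceding subsection: by Proposition~\ref{prop:graph}, a subgroup $\Gamma \subseteq G \times \Sigma_n$ intersecting $\Sigma_n$ trivially is the graph $\Gamma_f$ of a homomorphism $f\colon H \to \Sigma_n$ for some $H \subseteq G$, and this data is the same as an $H$-set structure on $\underline{n}$. Then I would simply note that the trivial $H$-action on $\underline n$ is classified by the constant homomorphism $f \equiv 1$, whose graph is $\{(h,1) : h \in H\} = H \times \{1\}$. Since $\cF_n(\cO)$ is required by hypothesis (iii) of the definition of an $\Ninfty$ operad to contain every subgroup of this form, we conclude $\Gamma_T \in \cF_n(\cO)$, i.e. $\cO_n^{\Gamma_T} \simeq \ast \neq \varnothing$, so $T$ is admissible for $\cO$.

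There is essentially no obstacle here — the statement is almost a restatement of the defining axiom — but one should be slightly careful about two bookkeeping points. One is that we are using the underlying $\Ninfty$ \emph{symmetric sequence} rather than the operad, and indeed the condition ``$\cF_n$ contains all $H \times \{1\}$'' is already part of what one verifies to even form $\mC(\cO)$; strictly the statement as given for $\Ninfty$ operads follows a fortiori. The other is the edge case $n = 0$ (the empty set), where $\Sigma_0$ is trivial and the unique $H$-set is trivially admissible because $\cO_0$ is $G$-contractible, hence has nonempty $H$-fixed points for all $H$; this is covered by condition (i). This completes the proof that $\mC(\cO)$ is nonempty at every level, since the trivial one-point $H$-set (or any trivial $H$-set) is always an object of $\mC(\cO)(H)$.
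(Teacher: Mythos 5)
Your proof is correct and is exactly the paper's argument, just spelled out: the paper's proof consists solely of the observation "This follows from condition (iii) of Definition~\ref{def:geinfop}," and you correctly unwind that the trivial $H$-set corresponds to the trivial homomorphism $H\to\Sigma_n$ with graph $H\times\{1\}$, which condition (iii) requires to lie in $\cF_n(\cO)$.
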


\begin{proof}
This follows from condition (iii) of Definition~\ref{def:geinfop}.
\end{proof}

\begin{lemma}\label{lem:DisjointUnion}
The coefficient system $\mC(\cO)$ is closed under (levelwise)
coproducts, and is thus a symmetric monoidal subcoefficient system of
$\mSet$.
\end{lemma}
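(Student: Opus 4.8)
The plan is to show that if $T_1$ is an admissible $H$-set of cardinality $m$ and $T_2$ is an admissible $H$-set of cardinality $n$, then $T_1 \sqcup T_2$ is an admissible $H$-set of cardinality $m+n$; by Proposition~\ref{prop:admitfam}(iii) it suffices to check this for one representative of each isomorphism class, and the general case of closure under coproducts follows by induction on the number of summands. Admissibility of $T_1 \sqcup T_2$ means precisely that $\cO_{m+n}^{\Gamma_{T_1 \sqcup T_2}} \neq \emptyset$, so the heart of the argument is to produce a point of that fixed-point space from points of $\cO_m^{\Gamma_{T_1}}$ and $\cO_n^{\Gamma_{T_2}}$, using the operadic structure.

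First I would make the bookkeeping explicit. The homomorphism $H \to \Sigma_{m+n}$ defining the $H$-set $T_1 \sqcup T_2$ factors through the ``block'' subgroup $\Sigma_m \times \Sigma_n \subset \Sigma_{m+n}$, and under the identification $\Sigma_m \times \Sigma_n \cong \Sigma_2 \wr (\text{no, rather})$ -- more precisely it is the composite $H \xrightarrow{(\rho_1,\rho_2)} \Sigma_m \times \Sigma_n \hookrightarrow \Sigma_{m+n}$ where $\rho_i$ is the permutation representation of $T_i$. Thus $\Gamma_{T_1\sqcup T_2}$ sits inside $G \times (\Sigma_m \times \Sigma_n) \subset G \times \Sigma_{m+n}$ and is the image of $\Gamma_{T_1} \times_G \Gamma_{T_2}$ under the map that remembers only one copy of $H$ in the $G$-coordinate. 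Next I would invoke the operadic composition. Recall that the structure map $\cO_2 \times \cO_m \times \cO_n \to \cO_{m+n}$ is $G$-equivariant and, restricted along the diagonal $H \to G$, interacts with the symmetric group actions via the wreath-product equivariance noted in Definition~\ref{def:goper}. Pick the basepoint-like element obtained from the identity $1 \in \cO_1$ (or directly use a point of $\cO_2^G$, which exists since $\cO_2$ is a nonempty $G$-space with free $\Sigma_2$-action and hence, as a universal space for a family containing $G \times \{1\}$, has $\cO_2^{G\times\{1\}} \simeq *$); call it $c \in \cO_2^{G}$. Choose $x_1 \in \cO_m^{\Gamma_{T_1}}$ and $x_2 \in \cO_n^{\Gamma_{T_2}}$, which are nonempty by hypothesis. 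Then $\gamma(c; x_1, x_2) \in \cO_{m+n}$, and I would check that it is fixed by $\Gamma_{T_1 \sqcup T_2}$: an element $(h, \rho_1(h) \cdot \rho_2(h)) \in \Gamma_{T_1\sqcup T_2}$ acts on $\gamma(c;x_1,x_2)$ by acting through $h$ on $c$ (fixing it), and by $(h,\rho_i(h))$ on each $x_i$ (fixing each by choice), using the equivariance of $\gamma$ for the block inclusion $\Sigma_m \times \Sigma_n \hookrightarrow \Sigma_{m+n}$. Hence $\cO_{m+n}^{\Gamma_{T_1\sqcup T_2}} \neq \emptyset$, so $T_1 \sqcup T_2$ is admissible.

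Finally, once $\mC(\cO)$ is closed under levelwise disjoint union and (by Proposition~\ref{prop:TrivialSets}) contains the empty set as a unit at each level, it is a symmetric monoidal subcategory of $(\Set^H, \sqcup)$ at each $H$, and the restriction functors $i_K^*$ are strong monoidal and preserve admissibility by Proposition~\ref{prop:admitfam}(i); therefore $\mC(\cO)$ is a symmetric monoidal subcoefficient system of $\mSet$. The main obstacle is purely notational rather than conceptual: one must track carefully how the graph subgroup $\Gamma_{T_1 \sqcup T_2}$ embeds in $G \times \Sigma_{m+n}$ relative to the product $\Gamma_{T_1} \times \Gamma_{T_2}$, and verify that the wreath-product equivariance of the operadic composition in Definition~\ref{def:goper} is exactly what is needed to see the constructed point is $\Gamma_{T_1\sqcup T_2}$-fixed. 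No homotopical input beyond nonemptiness of fixed-point sets is required.
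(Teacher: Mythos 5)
Your argument is correct and follows essentially the same route as the paper's proof: both produce a point of $\cO_{m+n}^{\Gamma_{T_1 \sqcup T_2}}$ by applying the operadic composition $\gamma\colon \cO_2\times\cO_m\times\cO_n\to\cO_{m+n}$ to a $G$-fixed point of $\cO_2$ (available by Proposition~\ref{prop:TrivialSets}, i.e.\ admissibility of trivial sets) together with $\Gamma_{T_1}$- and $\Gamma_{T_2}$-fixed points of $\cO_m$ and $\cO_n$, and then invoke the block equivariance of $\gamma$. One small slip: the parenthetical appeal to ``the identity $1\in\cO_1$'' does not type-check (you need a point of $\cO_2$, not $\cO_1$), but you immediately give the correct justification via $\cO_2^{G\times\{1\}}\simeq *$, so the argument stands.
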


\begin{proof}
We give the proof for the case of $S\amalg T$; other cases are
analogous. Let $m_{1}=|S|$ and $m_{2}=|T|$.  By definition, the fact
that $S$ and $T$ are admissible $H$-sets means that there exist
subgroups $\Gamma_1 \subset G \times \Sigma_{m_1}$ and
$\Gamma_2 \subset G \times \Sigma_{m_2}$ which are the graphs of
homomorphisms
\[
f_1 \colon H \to \Sigma_{m_1} \qquad \textrm{and} \qquad f_2 \colon
H \to \Sigma_{m_2}
\]
respectively.

Since $\aO$ is an operad, we know there exists a composition map
\[
\gamma \colon \aO_2 \times \aO_{m_1} \times \aO_{m_2} \to \aO_{m_1 + m_2}
\]
which is at least $G \times
(\{e\} \times \Sigma_{m_1} \times \Sigma_{m_2})$ equivariant.  Let
$\Gamma \subset G \times \Sigma_{m_1+m_2}$ be the subgroup specified
by the graph
\[
\Gamma = \{(h, f_1(h) \amalg f_2(h)) \, | \, h \in H\}.
\]
Consider the map $\gamma^{\Gamma}$ induced by passage to fixed points.
On the left hand side, by hypothesis we know that the fixed points are
contractible --- this is true for $\aO_{m_1}$ and $\aO_{m_2}$ by
admissibility, and for $\aO_{2}$ by
Proposition~\ref{prop:TrivialSets}.  Therefore, $\aO_{m_1 +
m_2}^{\Gamma}$ cannot be empty and is therefore contractible.
Translating, this means precisely that $S \amalg T$ is an admissible
$H$-set.
\end{proof}

Already we have neglected structure on the category of finite
$G$-sets. In addition to the disjoint union, there is a Cartesian
product. This is a form of the disjoint union, however, as $G/K\times
G/H$ is the ``disjoint union'' of $G/H$ indexed by the $G$-set $G/K$:
\[
G/K\times G/H \cong \coprod_{G/K}G/H,
\]
where $G$ acts on both the indexing set and the summands.
Induction has a similar formulation as an indexed coproduct, and our
admissible sets are closed under some forms of each operation.

\begin{lemma}\label{lem:CartesianProduct}
For each $H$, the category $\cC_{H}(\cO)$ is closed under Cartesian
product, and thus $\mC(\cO)$ inherits the structure of a symmetric
bimonoidal category levelwise.
\end{lemma}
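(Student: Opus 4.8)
The plan is to mimic the proof of Lemma~\ref{lem:DisjointUnion}, exploiting the key identity $G/K \times G/H \cong \coprod_{G/K} G/H$ together with the enhanced equivariance of the operadic composition map. Suppose $S$ and $T$ are admissible $H$-sets, with $|S| = m$ and $|T| = n$, given by homomorphisms $f_S \colon H \to \Sigma_m$ and $f_T \colon H \to \Sigma_n$. The product $S \times T$ has cardinality $mn$, and the $H$-set structure on $S \times T$ is the diagonal one: $h$ acts by $f_S(h) \times f_T(h)$, viewed as an element of $\Sigma_{mn}$ via the standard embedding $\Sigma_m \times \Sigma_n \hookrightarrow \Sigma_{mn}$. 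Equivalently, thinking of $S \times T$ as $n$ disjoint copies of $S$ permuted by the $T$-action, the relevant permutation of $\underline{mn}$ lies in the image of $\Sigma_m \wr \Sigma_n \hookrightarrow \Sigma_{mn}$.

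First I would set up the composition map $\gamma \colon \cO_n \times \cO_m \times \cdots \times \cO_m \to \cO_{mn}$ (with $n$ copies of $\cO_m$), which by Definition~\ref{def:goper}(ii) is $(G \times \Sigma_n \wr \Sigma_m)$-equivariant. Next I would identify the subgroup $\Gamma_{S\times T} \subset G \times \Sigma_{mn}$ as the graph of the composite $H \xrightarrow{(f_T, f_S)} \Sigma_n \wr \Sigma_m \hookrightarrow \Sigma_{mn}$, where the wreath component records $f_T$ acting on the $n$ blocks and $f_S$ on each block. Now restrict $\gamma$ along the homomorphism $H \to G \times \Sigma_n \wr \Sigma_m$ sending $h \mapsto (h, f_T(h); f_S(h), \dots, f_S(h))$ and pass to $H$-fixed points, where the $H$-action on the source is the restricted action. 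On the fixed-point locus, the factor $\cO_n$ contributes $\cO_n^{\Gamma_T}$, which is contractible since $T$ is admissible; and the $n$ copies of $\cO_m$, which $H$ permutes among themselves via $f_T$ while simultaneously acting on each by $f_S(h)$-conjugation, contribute an $H$-fixed point precisely because a compatible tuple is determined by its first coordinate, which must lie in $\cO_m^{\Gamma_S}$ — contractible since $S$ is admissible. (More formally: the $H$-fixed points of $(\cO_m)^{\times n}$ with this twisted action are the $H_0$-fixed points of a single $\cO_m$ for an appropriate subgroup, and freeness/universality forces nonemptiness.) Hence $\cO_{mn}^{\Gamma_{S \times T}}$ receives a map from a nonempty space, so it is nonempty, and by the universal-space property it is contractible; thus $S \times T$ is admissible.

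Finally, closure under Cartesian product combined with Lemma~\ref{lem:DisjointUnion} and the distributivity $S \times (T_1 \amalg T_2) \cong (S \times T_1) \amalg (S \times T_2)$ in $\Set^H$ gives $\mC(\cO)$ the structure of a symmetric bimonoidal subcoefficient system of $\mSet$. The main obstacle I anticipate is the bookkeeping in the middle step: correctly describing the twisted $H$-action on the $n$-fold product $(\cO_m)^{\times n}$ induced by restricting the $\Sigma_n \wr \Sigma_m$-action along $h \mapsto (f_T(h); f_S(h), \dots, f_S(h))$, and checking that its fixed points are nonempty. The cleanest way to handle this is to recognize that fixed points of a permutation-twisted product reduce to fixed points over a single orbit representative, whose stabilizer in $H$ is the kernel of the orbit map and whose action on that coordinate factors through an appropriate subquotient — so nonemptiness follows from $\cO_m$ being a universal space for a family containing all subgroups $H' \times \{1\}$, exactly as in Proposition~\ref{prop:TrivialSets} when the relevant $H$-set on $\underline m$ happens to be trivial, and from admissibility of $S$ otherwise. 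Everything else is formal.
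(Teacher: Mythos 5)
Your proposal takes essentially the same route as the paper: interpret $S \times T$ as an indexed product via the wreath embedding, feed the composition map $\gamma\colon \cO_n \times \cO_m^{\,n} \to \cO_{mn}$ through the graph of the diagonal homomorphism $H \to \Sigma_n \wr \Sigma_m$, and show both factors have nonempty fixed points (the paper uses the transposed factorization $\cO_m \times \cO_n^{\,m}$, but that is only a choice of ordering). One small slip: your claim that a fixed tuple in $(\cO_m)^{\times n}$ ``is determined by its first coordinate'' fails when $f_T(H)$ does not act transitively on $\underline n$; however, this reduction is unnecessary. The clean argument, which is what the paper uses, is that the diagonal map $\cO_m \to (\cO_m)^{\times n}$ is $\Gamma_{S\times T}$-equivariant (since the inner twist $f_S(h)$ is the same in every coordinate), and on the diagonal the $\Gamma_{S\times T}$-action collapses to the $\Gamma_S$-action; hence any point of $\cO_m^{\Gamma_S}$ maps to a $\Gamma_{S\times T}$-fixed point, with no transitivity hypothesis needed.
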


\begin{proof}
Without loss of generality, we may assume that $H=G$, and let $S$ be
an admissible $G$-set of cardinality $m$ and $T$ one of cardinality
$n$. Associated to $S$ is a subgroup $\Gamma_{S}$ which is the graph
of $f\colon G\to\Sigma_{m}$, and associated to $T$, we have a similar
subgroup $\Gamma_{T}$ and function $h\colon G\to\Sigma_{n}$. Now there
is an embedding
\[
\Delta\colon\Sigma_{m}\times\Sigma_{n}\to\Sigma_{m}\wr\Sigma_{n}
\] 
which is just the diagonal on the $\Sigma_{n}$ factor, and we let
$F\colon G\to\Sigma_{m}\wr\Sigma_{n}$ be $\Delta\circ(f\times
h)$. Finally, let $\Gamma_{S\times T}$ be the graph of $F$.

We need now to show two things:
\begin{enumerate}
\item that $(\cO_{m}\times\cO_{n}^{m})^{\Gamma_{S\times T}}$ is non-empty (which in turn forces the $\Gamma_{S\times T}$-fixed points of $\cO_{mn}$ to be non-empty) and
\item that the function $F$ classifies the $G$-set $S\times T$.
\end{enumerate}

For the first part, we observe that $\Gamma_{S\times T}$ acts on
$\cO_{m}\times\cO_{n}^{m}$ via its natural action on the two named
factors. Thus
\[
(\cO_{m}\times\cO_{n}^{m})^{\Gamma_{S\times
T}}=\cO_{m}^{\Gamma_{S\times T}}\times (\cO_{n}^{m})^{\Gamma_{S\times
T}}.
\]

The action on the $\cO_{m}$ term factors through the canonical
quotient map
\[
G\times\Sigma_{m}\wr\Sigma_{n}\to G\times\Sigma_{m},
\] 
and the image of $\Gamma_{S\times T}$ under this quotient map is
$\Gamma_{S}$. By assumption, $\cO_{m}^{\Gamma_{S}}$ is contractible,
and hence so is $\cO_{m}^{\Gamma_{S\times T}}$.

The action on the second factor is slightly more complicated. We make
the following observation: the diagonal map $\cO_{n}\to\cO_{n}^{m}$ is
$(G\times\Sigma_{m}\times\Sigma_{n})$-equivariant, where $\Sigma_{m}$
acts trivially on the first factor and where we have identified
$\Sigma_{m}\times\Sigma_{n}$ with its image under $\Delta$. The group
$\Gamma_{S\times T}$ is contained in the subgroup
$G\times \Image(\Delta)$, and so the diagonal map is $\Gamma_{S\times
T}$-equivariant. By constructions, the action of $\Gamma_{S\times T}$
on $\cO_{n}$ is via $\Gamma_{T}$, and we therefore have fixed
points. This implies that $\cO_{n}^{m}$ has $\Gamma_{S\times T}$-fixed
points as well.

For the second part, we make a simple observation: in the arrow
category of finite sets, the automorphism group of the canonical
projection $S\times T\to S$ is isomorphic to
$\Sigma_{m}\wr\Sigma_{n}$. The $\Sigma_{m}$ acts by permuting the
base, and then the $\Sigma_{n}^{m}$ acts as the automorphisms of the
fibers. By our construction of $F$, the resulting $G$-set is the one
in which the base is the $G$-set $S$, and where all of the fibers are
the $G$-set $T$.
\end{proof}

\begin{lemma}\label{lem:SelfInduction}
The symmetric monoidal coefficient system $\mC(\cO)$ is closed under
self-induction.
\end{lemma}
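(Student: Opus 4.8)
The plan is to mimic the proof of Lemma~\ref{lem:CartesianProduct}, replacing the Cartesian product (there handled via the diagonal $\cO_n \to \cO_n^m$) with the indexed coproduct realizing $H\times_K T$. First I would reduce to $H = G$ by means of the forgetful functor from $\Ninfty$ operads on $G$ to $\Ninfty$ operads on $H$: so assume $G/K$ is an admissible $G$-set and $T$ an admissible $K$-set, and put $m = [G:K]$, $n = |T|$, so that $|G\times_K T| = mn$.

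Next I would produce the relevant subgroup of $G \times (\Sigma_m \wr \Sigma_n)$. Fix coset representatives $g_1,\dots,g_m$ of $G/K$, let $f\colon G \to \Sigma_m$ be the resulting action of $G$ on $G/K$, and write $g g_i = g_{f(g)(i)}\,k_i(g)$ with $k_i(g)\in K$; if $\rho\colon K\to\Sigma_n$ classifies the $K$-set $T$, set
\[
F\colon G \to \Sigma_m\wr\Sigma_n,\qquad F(g) = \big((\rho(k_1(g)),\dots,\rho(k_m(g)))\,;\,f(g)\big),
\]
which is a homomorphism, and let $\tilde\Gamma\subset G\times(\Sigma_m\wr\Sigma_n)$ be its graph. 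One checks that, under the block embedding $\Sigma_m\wr\Sigma_n\hookrightarrow\Sigma_{mn}$, the homomorphism $F$ is precisely the one classifying the induced $G$-set $G\times_K T$, so that $\tilde\Gamma$ is carried isomorphically to $\Gamma_{G\times_K T}$.

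Then I would pass to $\tilde\Gamma$-fixed points of the operad composition $\gamma\colon \cO_m\times\cO_n^m\to\cO_{mn}$, which is $\big(G\times(\Sigma_m\wr\Sigma_n)\big)$-equivariant. Since $\tilde\Gamma$ acts on $\cO_m$ through the quotient $G\times(\Sigma_m\wr\Sigma_n)\to G\times\Sigma_m$, under which the image of $\tilde\Gamma$ is the graph of $f$, i.e.\ $\Gamma_{G/K}$, admissibility of $G/K$ gives that $\cO_m^{\tilde\Gamma}=\cO_m^{\Gamma_{G/K}}$ is contractible. For the other factor, I would identify $\cO_n^m$ with its $\tilde\Gamma$-action as the coinduced $G$-space $\CoInd_K^G M$, where $M$ is $\cO_n$ regarded as a $K$-space via $\Gamma_T\subset G\times\Sigma_n$ (using the same coset representatives to identify $\Map_K(G,-)\cong(-)^m$); the standard identity $(\CoInd_K^G M)^G\cong M^K$ then yields $(\cO_n^m)^{\tilde\Gamma}\cong\cO_n^{\Gamma_T}$, which is nonempty because $T$ is admissible for $K$. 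Hence $(\cO_m\times\cO_n^m)^{\tilde\Gamma}\neq\varnothing$, so applying $\gamma^{\tilde\Gamma}$ gives $\cO_{mn}^{\Gamma_{G\times_K T}}=\cO_{mn}^{\tilde\Gamma}\neq\varnothing$, and this is contractible since $\cO_{mn}$ is a universal space for a family; thus $G\times_K T$ is admissible.

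The main obstacle is the coinduction identification: one must match the $\tilde\Gamma$-action on $\cO_n^m$ --- which simultaneously involves the diagonal $G$-action on $\cO_n$, the $\Sigma_m$-permutation of the $m$ factors, and the factorwise $\Sigma_n$-twists prescribed by the cocycle $k_i(g)$ --- with $\CoInd_K^G M$, and separately confirm that $F$ classifies $G\times_K T$ under the block embedding. Both are essentially bookkeeping, but the convention of left actions of symmetric groups via inversion makes them the delicate part of the argument; everything else follows the template of Lemma~\ref{lem:CartesianProduct}.
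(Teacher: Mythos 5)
Your proposal is correct and follows the paper's argument essentially verbatim: same reduction to $H=G$, same cocycle construction of the homomorphism $F\colon G \to \Sigma_m\wr\Sigma_n$ classifying $G\times_K T$, and same passage to fixed points of $\cO_m\times\cO_n^m\to\cO_{mn}$ under the graph $\tilde\Gamma$ of $F$. The only stylistic difference is that where the paper exhibits and directly verifies the explicit fixed point $\big((g_1,1)\cdot x,\dots,(g_m,1)\cdot x\big)$ in $\cO_n^m$, you recognize the $\tilde\Gamma$-action on $\cO_n^m$ as coinduction $\CoInd_K^G$ and invoke $(\CoInd_K^G M)^G\cong M^K$, which is a cleaner packaging of the same cocycle bookkeeping.
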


\begin{proof}
Without loss of generality, we may assume $H=G$, as for the proof
given, we may simply replace all instances of $G$ with $H$. Now assume
that $G/K$ is in $\cC_{G}(\cO)$, and let $T$ be in $\cC_{K}(\cO)$. Let
$n$ be the cardinality of $T$, and let $m$ be the index of $K$ in $G$.

Associated to $T$ is a homomorphism $\pi\colon K\to \Sigma_{n}$, and
by assumption, $\cO_{n}^{\Gamma_{T}}\simeq \ast$. Finally, let
$g_{1},\dots, g_{m}\in G$ be a complete set of coset representatives
for $G/K$, and let $\sigma\colon G\to\Sigma_{m}$ be the homomorphism
induced by the left action of $G$ on $G/K$. Again, by assumption,
$\cO_{m}^{\Gamma_{G/K}}\simeq \ast$.

To prove the result, we must explicitly describe the induced set
$G\times_{K}T$. The argument is standard. Since $\{g_{1},\dots,
g_{m}\}$ is a complete set of coset representatives of $G$, for $1\leq
i\leq n$, we have a homomorphism
\[
\big(\sigma,(k_1,\dots,k_n)\big)\colon G\to \Sigma_{m}\wr K,
\]
where $\sigma$ and each of the functions $k_{i}$ are defined by
\[
g\cdot g_{i}=g_{\sigma(i)} k_{i}(g).
\]
The homomorphism $G\to\Sigma_{nm}$ describing the induced set
$G\times_{K}T$ arises from this homomorphism via the map $\pi$:
\[
Ind(g)=\Big(\sigma(g),\big(\pi(k_{1}(g)), \dots,\pi(k_{m}(g))\big)\Big)\in \Sigma_{m} \wr \Sigma_{n}.
\]

We need to now analyze the fixed points of $\Gamma$, the graph of
$Ind$, on $\cO_{m}\times\big(\cO_{n}\big)^{m}$. The group
$G\times\Sigma_{m}\wr\Sigma_{n}$ acts independently on $\cO_{m}$ and
on $\cO_{n}^{m}$. On $\cO_{m}$, it acts via the canonical quotient to
$G\times\Sigma_{m}$, and on $\cO_{n}^{m}$, $G$ acts diagonally while
$\Sigma_{m}\wr\Sigma_{n}$ has the obvious action. Thus
\[
\Big(\cO_{m}\times\big(\cO_{n}\big)^{m}\Big)^{\Gamma}=\cO_{m}^{\Gamma_{G/K}}\times\big(\cO_{n}^{m}\big)^{\Gamma}.
\]
It will suffice to show that these fixed points are non-empty. The
first factor is actually contractible, by assumption, so we need only
produce a fixed point for the second factor. Since the
$\Gamma_{T}$-fixed points of $\cO_{n}$ are non-empty, we can find a
point $x\in\cO_{n}$ such that
\[
(k,\pi k)\cdot x=x
\]
for all $k\in K$. Then we quickly show that
\[
y=\big((g_{1},1)\cdot x,\dots,(g_{m},1)\cdot x\big)
\] 
is a $\Gamma$-fixed point. To streamline the typesetting, let
$\sigma=\sigma(g)$, and $k_{i}=k_{i}(g)$, and let
\[
\gamma=\Big(g,\sigma,\big(\pi(k_{1}), \dots,\pi(k_{m})\big)\Big).
\]
Then we have a chain of equalities
\begin{align*}
\gamma\cdot y
&= (g,1)\cdot \Big(\big(g_{\sigma^{-1}(1)},\pi
k_{\sigma^{-1}(1)} \big)\cdot x,\dots, \big(g_{\sigma^{-1}(m)},\pi
k_{\sigma^{-1}(m)}\big)\cdot x\Big) \\ &=
\Big(\big(g\cdot g_{\sigma^{-1}(1)},\pi k_{\sigma^{-1}(1)}\big)\cdot x,\dots, \big(g\cdot g_{\sigma^{-1}(m)},\pi k_{\sigma^{-1}(m)}\big)\cdot x\Big) \\
&=
\Big((g_{1},1)\big(k_{\sigma^{-1}(1)},\pi k_{\sigma^{-1}(1)}\big)\cdot x,\dots,(g_{m},1)\big(k_{\sigma^{-1}(m)},\pi k_{\sigma^{-1}(m)}\big)\cdot x\Big) \\
&= \big((g_{1},1)\cdot x,\dots,(g_{m},1)\cdot x\big)=y.
\end{align*}

Thus we conclude that $\big(\cO_{m}\times(\cO_{n})^{m}\big)^{\Gamma}$
is non-empty, and therefore so is $\cO_{nm}^{\Gamma}$.
\end{proof}

One way to package
Lemmata~\ref{lem:DisjointUnion}, \ref{lem:CartesianProduct},
and \ref{lem:SelfInduction} is via the $G$-symmetric monoidal
structure on the category of finite $G$-sets. Induction is actually a
special kind of disjoint union: we simply allow the group $G$ to act
on the indexing set (in this case $G/H$) for the disjoint
union. Working more generally, we see that we can easily make sense of
a disjoint union of $(-)$-sets $S_{t}$ indexed by a $G$-set $T$
provided
\begin{enumerate}
\item $S_{t}$ is a $Stab(t)$-set and
\item $S_{g\cdot t}$ is in bijective correspondence with $S_{t}$ and the action of $g$ intertwines the $Stab(t)$ and $gStab(t)g^{-1}$ actions.
\end{enumerate}
Our lemmas can then be repackaged in this language.

\begin{corollary}\label{cor:GDisjointUnion}
If $T\in\cC_{G}(\cO)$ and if for all $t\in T$, we have an admissible
$Stab(t)$-set $S_{t}$ satisfying the compatibility condition above,
then
\[
\coprod_{t\in T}S_{t}\in \cC_{G}(\cO).
\]
\end{corollary}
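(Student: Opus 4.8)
The plan is to reduce the statement to the two closure properties already established, Lemma~\ref{lem:DisjointUnion} (closure under disjoint union) and Lemma~\ref{lem:SelfInduction} (closure under self-induction). First I would decompose $T$ into its $G$-orbits, $T\cong\coprod_{j}G/K_{j}$, and for each $j$ fix a representative $t_{j}$ with $\Stab(t_{j})=K_{j}$. The two conditions in the indexed-disjoint-union recipe preceding the corollary say precisely that within a single orbit all the $S_{t}$ have a common cardinality and are glued by the $G$-action into an induced $G$-set; concretely $\coprod_{t\in G/K_{j}}S_{t}\cong G\times_{K_{j}}S_{t_{j}}$, so that
\[
\coprod_{t\in T}S_{t}\;\cong\;\coprod_{j}\bigl(G\times_{K_{j}}S_{t_{j}}\bigr).
\]
By Lemma~\ref{lem:DisjointUnion} it then suffices to prove each $G\times_{K_{j}}S_{t_{j}}$ is admissible, and by Lemma~\ref{lem:SelfInduction} this holds once we know $G/K_{j}\in\cC_{G}(\cO)$ and $S_{t_{j}}\in\cC_{K_{j}}(\cO)$; the latter is the hypothesis on $S_{t_{j}}$.

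The one remaining point is that each orbit $G/K_{j}$, being a sub-$G$-set of the admissible set $T$, is itself admissible --- equivalently, that $\mC(\cO)$ is closed under passage to sub-$G$-sets. This is where the contractibility of $\cO_{0}$ (together with the identity $1\in\cO_{1}$) enters. Given a $G$-subset $T'\subseteq T$, write $T=T'\amalg T''$ and set $N=|T|$, $N'=|T'|$, $N''=|T''|$; I would use the operadic composition
\[
\gamma\colon\cO_{N}\times\cO_{1}^{N'}\times\cO_{0}^{N''}\to\cO_{N'}
\]
in which the slots indexed by points of $T'$ are filled with copies of $\cO_{1}$ and those indexed by points of $T''$ with copies of $\cO_{0}$. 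Since $T'$ and $T''$ are $G$-stable, the graph $\Gamma_{T}$ preserves this block structure, so $\gamma$ is $\Gamma_{T}$-equivariant, with $\Gamma_{T}$ acting on the $\cO_{1}$- and $\cO_{0}$-factors through the graphs $\Gamma_{T'}$ and $\Gamma_{T''}$ of the $G$-actions on $T'$ and $T''$, and on $\cO_{N'}$ through $\Gamma_{T'}$. Taking $\Gamma_{T}$-fixed points, the $\cO_{N}$-factor contributes $\cO_{N}^{\Gamma_{T}}\simeq\ast$ by admissibility of $T$, the constant tuple at $1\in\cO_{1}$ is a fixed point of $\cO_{1}^{N'}$, and a constant tuple at a $G$-fixed point of the $G$-contractible space $\cO_{0}$ is a fixed point of $\cO_{0}^{N''}$. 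Hence the source has a $\Gamma_{T}$-fixed point, so $\cO_{N'}^{\Gamma_{T'}}\neq\varnothing$ and $T'$ is admissible. Combining this with the reduction above proves the corollary.

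I do not expect a genuine obstacle here: the only substantive ingredient is a fixed-point computation that is essentially the one already carried out in the proof of Lemma~\ref{lem:SelfInduction}, and the rest is the bookkeeping of identifying $\coprod_{t\in G/K_{j}}S_{t}$ with $G\times_{K_{j}}S_{t_{j}}$ and of tracking which subgroup of the relevant wreath product each homomorphism lands in. If one prefers not to isolate the sub-$G$-set claim, the whole argument can be run as a single fixed-point computation merging Lemmata~\ref{lem:DisjointUnion} and~\ref{lem:SelfInduction}: order $T$ so that its orbits are consecutive blocks, set $N=\sum_{t\in T}|S_{t}|$, and use the composition $\cO_{|T|}\times\prod_{t\in T}\cO_{|S_{t}|}\to\cO_{N}$; here the relevant symmetry group is $G\times\bigl(P\ltimes\prod_{t\in T}\Sigma_{|S_{t}|}\bigr)$ with $P\subseteq\Sigma_{|T|}$ the subgroup of block-preserving permutations, the induction homomorphism of $\coprod_{t}S_{t}$ lands in it because the $G$-action preserves orbits, the $\cO_{|T|}$-factor has contractible $\Gamma_{T}$-fixed points by admissibility of $T$, and a $\Gamma$-fixed point of $\prod_{t\in T}\cO_{|S_{t}|}$ is assembled block by block exactly as in Lemma~\ref{lem:SelfInduction}.
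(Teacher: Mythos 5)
Your proof is correct, and it takes the route the paper itself intends: decompose $T$ into orbits, identify the indexed disjoint union with a disjoint union of induced sets $G\times_{K_j}S_{t_j}$, and then invoke closure under disjoint union (Lemma~\ref{lem:DisjointUnion}) and self-induction (Lemma~\ref{lem:SelfInduction}). The one point you isolate and prove — that each orbit $G/K_j$ of the admissible set $T$ is itself admissible, i.e.\ that $\mC(\cO)$ is a truncation subcoefficient system — is genuinely needed for this reduction, and the paper does have this fact, but only as Lemma~\ref{lem:Summands}, which is stated \emph{after} Corollary~\ref{cor:GDisjointUnion}. Your inline argument for it (using the composition $\cO_{N}\times\cO_{1}^{N'}\times\cO_{0}^{N''}\to\cO_{N'}$, taking $\Gamma_T$-fixed points, and plugging in the $G$-fixed identity in $\cO_1$ and a $G$-fixed basepoint of the $G$-contractible space $\cO_0$) is essentially identical to the paper's proof of Lemma~\ref{lem:Summands}. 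So you have not found a different method; rather, you have correctly identified a small forward reference in the paper's organization and filled it in with the same argument the authors use a few paragraphs later. Your closing remark that one could instead run a single merged fixed-point computation, treating $\cO_{|T|}\times\prod_{t\in T}\cO_{|S_t|}\to\cO_{N}$ all at once and assembling the fixed point orbit-block by orbit-block as in Lemma~\ref{lem:SelfInduction}, is also valid and has the advantage of not requiring the truncation lemma at all; it is a legitimate alternative, though it amounts to re-running the lemmas' proofs rather than citing them.
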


\begin{warning}
While it is true that $\mC(\cO)$ forms a coefficient system and is
closed under some indexed coproducts, it is not true that $\mC(\cO)$
is always closed under {\emph{arbitrary}} induction (making it a kind
of category-valued Mackey functor). The norm machinery described in
Section~\ref{sec:NormsandInduction} can be used to produce operads
which close up $\mC(\cO)$ under certain inductions.
\end{warning}

Thus far we have used only the composition structure of the operad
(and hence, all of this would work in a non-unital context). For the
last piece of structure, we must have a unital algebra.

\begin{lemma}\label{lem:Summands}
The coefficient system $\mC(\cO)$ is a truncation subcoefficient
system of $\mSet$: if $Z=S\amalg T$ is an admissible $G$-set, then
both $S$ and $T$ are admissible.
\end{lemma}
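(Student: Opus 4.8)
The plan is to exploit the fact that an $\Ninfty$ operad has a contractible space $\cO_0$ — equivalently, a ``nullary operation'' $* \to \cO_0$ — which allows us to forget points of an admissible set by plugging in the basepoint of $\cO_0$. Concretely, suppose $Z = S \amalg T$ is an admissible $G$-set with $|S| = m_1$, $|T| = m_2$, $n = m_1 + m_2$, and let $\Gamma_Z \subset G \times \Sigma_n$ be the graph of the classifying homomorphism $f \colon G \to \Sigma_n$. Since $Z = S \amalg T$ as $G$-sets, $f$ factors through $\Sigma_{m_1} \times \Sigma_{m_2} \hookrightarrow \Sigma_n$ as $f = f_1 \times f_2$ with $f_i$ classifying $S$, $T$ respectively. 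By hypothesis $\cO_n^{\Gamma_Z} \neq \varnothing$; we must produce a point in $\cO_{m_1}^{\Gamma_S}$ (the case of $T$ is symmetric).

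First I would use the operadic composition map
\[
\gamma \colon \cO_{m_1} \times \cO_0^{\,m_2} \to \cO_{m_1},
\]
which collapses the last $m_2$ inputs; this is at least $G \times (\Sigma_{m_1} \times \Sigma_{m_2})$-equivariant, where $\Sigma_{m_2}$ permutes the $\cO_0$-factors. Picking a $G$-fixed point $* \in \cO_0$ (which exists since $\cO_0$ is $G$-contractible, hence in particular has a $G$-fixed point), we get a map
\[
\phi \colon \cO_{m_1} \to \cO_{m_1}, \qquad \phi(x) = \gamma(x, *, \dots, *),
\]
and I claim $\phi$ is $\Gamma_Z$-equivariant when $\cO_{m_1}$ carries the $\Gamma_Z$-action obtained by restricting along $G \times \Sigma_n \supset \Gamma_Z$ and then projecting $\Sigma_n \to \Sigma_{m_1}$ via the (not-a-homomorphism-but-equivariant-enough) structure — more carefully, $\Gamma_Z$ acts on the source $\cO_{m_1}$ through its image $\Gamma_S$ under $G \times \Sigma_{m_1} \times \Sigma_{m_2} \to G \times \Sigma_{m_1}$, and on the target likewise, so $\phi$ is genuinely $\Gamma_Z$-equivariant once we feed in the $G$-fixed (hence $\Gamma_Z$-fixed, since $*$ is also $\Sigma_{m_2}$-fixed in $\cO_0^{m_2}$ after taking the constant tuple) basepoints. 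Wait — the $\Sigma_{m_2}$-action permutes the factors of $\cO_0^{m_2}$, and the constant tuple $(*,\dots,*)$ is fixed by this permutation action, so indeed $(*,\dots,*) \in (\cO_0^{m_2})^{\Gamma_Z}$.

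Now the argument concludes as follows. Given a point $z \in \cO_n^{\Gamma_Z}$, I would first produce from it a $\Gamma_Z$-fixed point of $\cO_{m_1} \times \cO_0^{m_2}$; this is where I would invoke the decomposition $z = \gamma'(z_1, z_2)$ coming from the operadic structure compatible with $Z = S \amalg T$. Actually the cleanest route avoids this: since $\cO_n^{\Gamma_Z}$ is nonempty and $\cO_n$ is a universal space for $\cF_n(\cO)$, we have $\Gamma_Z \in \cF_n(\cO)$; but $\cF_n(\cO)$ is a \emph{family}, hence closed under subgroups, and... no, $\Gamma_S$ is a subgroup of $G \times \Sigma_{m_1}$, not of $G \times \Sigma_n$, so this does not apply directly. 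Therefore I would instead run the composition argument: the map $\gamma \colon \cO_2 \times \cO_{m_1} \times \cO_{m_2} \to \cO_n$ is $\Gamma_Z$-equivariant (where $\Gamma_Z$ acts on $\cO_2$ trivially via the quotient to $G$, on $\cO_{m_1}$ via $\Gamma_S$, on $\cO_{m_2}$ via $\Gamma_T$), and on passage to $\Gamma_Z$-fixed points the target is nonempty by hypothesis; but then — here is the key observation — I would instead use the composition $\gamma \colon \cO_{m_1} \times (\cO_1)^{m_1} \to \cO_{m_1}$ together with $\cO_0$-collapses to exhibit a retraction. The hard part, and the step I expect to be the main obstacle, is organizing the equivariance bookkeeping so that plugging the $G$-fixed basepoint of $\cO_0$ into the composition $\gamma \colon \cO_n \times \cO_1^{m_1} \times \cO_0^{m_2} \to \cO_n$ (retracting away the $T$-coordinates) lands, after identifying the image with $\cO_{m_1}$ via the structure maps, in $\cO_{m_1}^{\Gamma_S}$ rather than some larger fixed-point set — that is, verifying that collapsing the $T$-part of $Z$ really does intertwine $\Gamma_Z$ with $\Gamma_S$ and not merely with some subgroup of it. Once that compatibility is checked, nonemptiness of $\cO_n^{\Gamma_Z}$ transports to nonemptiness of $\cO_{m_1}^{\Gamma_S}$, i.e. $S$ is admissible, and the same with $S$ and $T$ interchanged finishes the proof. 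The general case $Z = S_1 \amalg \dots \amalg S_k$ follows by iterating.
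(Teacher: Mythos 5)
Your final landing point --- use the operadic composition
$\cO_{n}\times\cO_{1}^{m_1}\times\cO_{0}^{m_2}\to\cO_{m_1}$, feed in a
$\Gamma_Z$-fixed point of $\cO_n$ together with $G$-fixed points of the
$\cO_1$ and $\cO_0$ factors, and deduce that $\cO_{m_1}^{\Gamma_S}$ is
nonempty --- is exactly the paper's proof.  (Note the typo: in your
penultimate display the target should be $\cO_{m_1}$, not $\cO_n$.)  But
the proposal meanders through several dead ends before getting there,
and it leaves the one thing that actually needs to be checked as ``the
step I expect to be the main obstacle'' without doing it.  That
obstacle is in fact small, and since the whole lemma rests on it, here
is how it goes.

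After conjugating $\Gamma_Z$ so that $f$ factors through
$\Sigma_{m_1}\times\Sigma_{m_2}\subset\Sigma_n$, you have
$\Gamma_Z\subset G\times\Sigma_{m_1}\times\Sigma_{m_2}$.  The
composition
$\gamma\colon\cO_n\times\cO_1^{m_1}\times\cO_0^{m_2}\to\cO_{m_1}$ is
$(G\times\Sigma_{m_1}\times\Sigma_{m_2})$-equivariant, where on the
source $\cO_n$ the group acts through the inclusion into
$G\times\Sigma_n$, the $\Sigma_{m_1}$ and $\Sigma_{m_2}$ factors
permute the $\cO_1$ and $\cO_0$ copies respectively, and on the target
$\cO_{m_1}$ the action is through the projection to
$G\times\Sigma_{m_1}$.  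Restricting to $\Gamma_Z$, the target action is
therefore through the image of $\Gamma_Z$ in $G\times\Sigma_{m_1}$,
which is precisely $\Gamma_S$; hence
$\cO_{m_1}^{\Gamma_Z}=\cO_{m_1}^{\Gamma_S}$, and nonemptiness of the
$\Gamma_Z$-fixed points of the source transports to nonemptiness of
$\cO_{m_1}^{\Gamma_S}$, as desired.

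Two further cleanups.  First, the opening composition
$\cO_{m_1}\times\cO_0^{m_2}\to\cO_{m_1}$ is not an operadic structure
map (the composition $\cO_k\times\cO_{a_1}\times\dots\times\cO_{a_k}$
requires exactly $k$ secondary inputs), so that paragraph should be
deleted; likewise the detour through
$\cO_2\times\cO_{m_1}\times\cO_{m_2}\to\cO_n$, which is the map for
closure under disjoint union, not for truncation.  Second, you need a
$\Gamma_Z$-fixed point not only of $\cO_0^{m_2}$ but also of
$\cO_1^{m_1}$.  The diagonal
$\cO_1\times\cO_0\to\cO_1^{m_1}\times\cO_0^{m_2}$ is
$(\Sigma_{m_1}\times\Sigma_{m_2})$-equivariant with image landing in
the $(\Sigma_{m_1}\times\Sigma_{m_2})$-fixed points, and
$\cO_1\times\cO_0$ is $G$-equivariantly contractible (this is the place
where the unital structure --- the presence of $\cO_0$ and the
$G$-fixed identity in $\cO_1$ --- is genuinely used, which is the point
the lemma is designed to isolate), so the needed fixed point exists.
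Your observation that closure of the family $\cF_n(\cO)$ under
subgroups cannot help directly, because $\Gamma_S$ lives in
$G\times\Sigma_{m_1}$ rather than $G\times\Sigma_n$, is correct and
worth keeping.
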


\begin{proof}
We use the unit map to show this. The admissibility of $Z$ shows that
there is a map $f\colon G\to \Sigma_{|Z|}$ and
$\cO_{|Z|}^{\Gamma_{Z}}\simeq \ast$.  The disjoint union decomposition
of $Z$ into $S\amalg T$ shows that we can choose this map to factor
through the inclusion
$\Sigma_{|S|}\times\Sigma_{|T|}\subset\Sigma_{|Z|}$ (in fact, the
subgroup $\Gamma_{Z}$ corresponding to $Z$ probably does not have this
property; however, a conjugate of $\Gamma_{Z}$ will). In this case,
the projection of $\Gamma_{Z}$ onto $G\times\Sigma_{|S|}$ realizes the
subgroup $\Gamma_{S}$ corresponding to $S$, and similarly for $T$.

We now use the composition and the identity to deduce the
result. Consider the composition:
\[
\cO_{|Z|}\times\cO_{1}^{|S|}\times\cO_{0}^{|T|}\to\cO_{|S|}.
\]
This map is $(G\times\Sigma_{|S|}\times\Sigma_{|T|})$-equivariant,
where on the first factor, the action is via the obvious inclusion and
where the action on the target is via the quotient to
$G\times\Sigma_{|S|}$. Since the map defining the $G$-action on $Z$
factors through $\Sigma_{|S|}\times\Sigma_{|T|}$, the group
$\Gamma_{Z}$ is actually a subgroup of
$G\times\Sigma_{|S|}\times\Sigma_{|T|}$. The $\Gamma_{Z}$-action on
$\cO_{|S|}$ is via the quotient $\Gamma_{|S|}$, so
\[
\cO_{|S|}^{\Gamma_{Z}}=\cO_{|S|}^{\Gamma_{S}}.
\]
Since the spaces in the operad are universal spaces for a family, it
will again suffice to show that
\[
\big(\cO_{|Z|}\times\cO_{1}^{|S|}\times\cO_{0}^{|T|}\big)^{\Gamma_{Z}}=\cO_{|Z|}^{\Gamma_{Z}}\times(\cO_{1}^{|S|}\times\cO_{0}^{|T|})^{\Gamma_{Z}}\neq\emptyset.
\]

By assumption, the first factor is non-empty. For the second, the
diagonal map
\[
\cO_{1}\times\cO_{0}\to\cO_{1}^{|S|}\times\cO_{0}^{|T|}
\]
is $\Sigma_{|S|}\times\Sigma_{|T|}$-equivariant, with the image being
the fixed points. The space $\cO_{1}\times\cO_{0}$ is
$G$-equivariantly contractible, so we know that in fact
\[
\emptyset\neq(\cO_{1}^{|S|}\times\cO_{0}^{|T|})^{G\times\Sigma_{|S|}\times\Sigma_{|T|}}\subset (\cO_{1}^{|S|}\times\cO_{0}^{|T|})^{\Gamma_{Z}}.\qedhere
\]
\end{proof}

\begin{corollary}
The coefficient system $\mC(\cO)$ is closed under finite limits.
\end{corollary}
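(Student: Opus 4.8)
The plan is to deduce this formally from two structural facts already in hand: that $\mC(\cO)$ is closed under finite products, and that it is closed under passage to subobjects. The first step would be to recall the elementary categorical fact that in the category $\Set^{H}$ of finite $H$-sets every finite limit is a subobject of a finite product: for a diagram $D\colon J\to\Set^{H}$ over a finite category $J$, the canonical comparison map $\lim_{J}D\to\prod_{j\in\ob J}D(j)$ is a monomorphism (its components are the legs of a limit cone, which are jointly monic; concretely it exhibits $\lim_{J}D$ as the equalizer of the two evident maps $\prod_{j\in\ob J}D(j)\rightrightarrows\prod_{\alpha\colon j\to j'}D(j')$). So it suffices to argue that, for each subgroup $H$, the value $\cC_{H}(\cO)$ of $\mC(\cO)$ at $H$ is closed under finite products and under passage to subobjects inside $\Set^{H}$.

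Next I would dispatch finite products: the empty product --- the one-point trivial $H$-set --- is admissible by Proposition~\ref{prop:TrivialSets}, a binary product of admissible $H$-sets is admissible by Lemma~\ref{lem:CartesianProduct}, and the general finite case follows by induction on the number of factors. Because $\mC(\cO)$ is a \emph{full} subcoefficient system of $\mSet$, such a product, formed in $\Set^{H}$, is automatically also a product in $\cC_{H}(\cO)$. Closure under subobjects is then exactly Lemma~\ref{lem:Summands}: $\mC(\cO)$ is a truncation subcoefficient system of $\mSet$, so whenever $X\to Y$ is monic in $\Set^{H}$ with $Y$ admissible, $X$ is admissible as well.

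Finally I would assemble the pieces: given a finite diagram $D$ valued in $\cC_{H}(\cO)$, the product $\prod_{j\in\ob J}D(j)$ lies in $\cC_{H}(\cO)$ by the second step, and $\lim_{J}D$ is a subobject of it by the first, hence lies in $\cC_{H}(\cO)$ as well; since this holds at every level $H$ and the restriction maps preserve limits, $\mC(\cO)$ is closed under finite limits. I do not expect a genuine obstacle here --- the argument is pure bookkeeping --- the only mildly substantive point being the observation that for a full subcategory, closure under finite products together with closure under subobjects already forces closure under all finite limits, so nothing beyond Lemmata~\ref{lem:CartesianProduct} and~\ref{lem:Summands} and Proposition~\ref{prop:TrivialSets} is needed.
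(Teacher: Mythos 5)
Your proposal is correct and follows essentially the same route as the paper: the paper's proof is the one-line observation that equalizers are subobjects in $\mSet$ (invoking the truncation property of Lemma~\ref{lem:Summands}) and that Lemma~\ref{lem:CartesianProduct} supplies closure under finite products, which together yield all finite limits. You merely spell out the implicit steps --- the comparison monomorphism $\lim_J D \hookrightarrow \prod_j D(j)$, the use of Proposition~\ref{prop:TrivialSets} for the terminal object, and the fullness of $\mC(\cO)$ --- so no new idea is introduced.
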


\begin{proof}
Equalizers are subobjects in $\mSet$, and
Lemma~\ref{lem:CartesianProduct} shows that each category is also
closed under finite products.
\end{proof}

Putting together all of these lemmas, we deduce the following theorem.

\begin{theorem}\label{thm:FunctorC}
The functor $\mC$ is a functor from the homotopy category of $\Ninfty$
operads to the poset $\poset$.
\end{theorem}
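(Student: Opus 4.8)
The plan is simply to collect the structural results of this section. By construction $\mC$ already takes values in $\Coef(\Set)$, so the first task is to check that when $\cO$ is an $\Ninfty$ operad the subcoefficient system $\mC(\cO)$ in fact lands in $\poset$, i.e.\ is an indexing system. Running down the definition: $\mC(\cO)$ is a full subcoefficient system of $\mSet$ by Proposition~\ref{prop:admitfam}; it contains all trivial sets by Proposition~\ref{prop:TrivialSets}; it is closed under disjoint union by Lemma~\ref{lem:DisjointUnion}, hence is a sub \emph{symmetric monoidal} coefficient system; it is closed under Cartesian product by Lemma~\ref{lem:CartesianProduct}; it is closed under self-induction by Lemma~\ref{lem:SelfInduction}; and it is a truncation subcoefficient system by Lemma~\ref{lem:Summands}. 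These are precisely the defining conditions of an indexing system, so $\mC(\cO)\in\poset$.

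Next I would treat functoriality on the point-set level. A map $f\colon\cO\to\cO'$ of $\Ninfty$ operads has an underlying map of $\Ninfty$-symmetric sequences, and the proposition on maps of $\Ninfty$-symmetric sequences established above gives the inclusion $\mC(\cO)\subseteq\mC(\cO')$. Since $\poset$ is a poset there is at most one morphism between any two of its objects, so this inclusion is the unique morphism $\mC(\cO)\to\mC(\cO')$ and compatibility with composition and identities is automatic; thus $\mC\colon\Nop\to\poset$ is a functor.

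Finally, to descend to $\Ho(\Nop)$ I would invoke the universal property of the localization $\Nop\to\Ho(\Nop)$. A weak equivalence of $\Ninfty$ operads in the sense of Definition~\ref{defn:weak} restricts to a weak equivalence of the underlying $\Ninfty$-symmetric sequences, and the proposition recorded above shows that such a map induces an \emph{equality} $\mC(\cO)=\mC(\cO')$, hence an isomorphism (indeed the identity) in $\poset$. Therefore $\mC$ inverts every weak equivalence and factors uniquely through $\Ho(\Nop)$.

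The whole argument is bookkeeping assembled from the lemmas of this section, so there is no substantive obstacle; the only point needing a moment's care is the compatibility of the weak equivalences used to form $\Ho(\Nop)$ with those under which $\mC$ was shown invariant --- that is, the observation that a levelwise $G\times\Sigma_n$-equivalence of operads is in particular a weak equivalence of the underlying $\Ninfty$-symmetric sequences. No fullness or faithfulness is claimed here (that is the content of the following theorem), so nothing further is required.
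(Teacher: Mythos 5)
Your proof is correct and follows the same route the paper takes: the paper simply assembles Propositions~\ref{prop:admitfam} and~\ref{prop:TrivialSets}, Lemmata~\ref{lem:DisjointUnion}, \ref{lem:CartesianProduct}, \ref{lem:SelfInduction}, and~\ref{lem:Summands} to see that $\mC(\cO)$ is an indexing system, and uses the two propositions on $\Ninfty$-symmetric sequences for functoriality and for invariance under weak equivalence. Your observation that the inclusion in $\poset$ is automatically the unique morphism, and that $\mC$ factors through $\Ho(\Nop)$ by the universal property of localization, is exactly the intended bookkeeping.
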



\subsection{Application: Linear isometries and little disks}\label{sec:LUDU}

We pause here to provide a surprising application: for all but three
finite groups $G$, there are universes $U$ such that the linear
isometries and little disks (or Steiner) operads associated to $U$ are
inequivalent. To show this, we need only apply our functor $\mC$.

\begin{theorem}\label{thm:LUFamily}
For the equivariant linear isometries operad on $U$, the admissible
$H$-sets are those $T$ such that there is an $H$-equivariant embedding
\[
\bZ[T]\otimes U\to U.
\]
\end{theorem}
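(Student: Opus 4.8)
The plan is to identify the family $\cF_n(\cL(U))$ of subgroups of $G\times\Sigma_n$ directly from the definition of the linear isometries operad, using the classical fact that a space of equivariant linear isometries between countably-infinite-dimensional $G$-inner product spaces is either empty or contractible. Recall that the $n$th space is $\cL(U^n,U)$, with $G\times\Sigma_n$ acting by conjugation (on the $G$ factor) and by permuting the $n$ summands of $U^n$ (on the $\Sigma_n$ factor). Given an $H$-set $T$ with $|T|=n$, admissibility of $T$ means exactly that $\cL(U^n,U)^{\Gamma_T}\neq\varnothing$, where $\Gamma_T\subset G\times\Sigma_n$ is the graph of the homomorphism $H\to\Sigma_n$ classifying $T$. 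So the whole theorem reduces to showing: $\cL(U^n,U)^{\Gamma_T}\neq\varnothing$ if and only if there is an $H$-equivariant linear isometric embedding $\bZ[T]\otimes U\hookrightarrow U$.

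First I would unwind what a $\Gamma_T$-fixed isometry is. A point of $\cL(U^n,U)$ fixed by $\Gamma_T$ is a (nonequivariant) linear isometry $\varphi\colon U^n\to U$ that is equivariant for the $H$-actions, where $H$ acts on the source $U^n = \bigoplus_{t\in T} U$ by permuting the summands according to the $H$-set $T$ and acting on each copy of $U$ through $H\subset G$, and on the target $U$ through $H\subset G$. But the source with this $H$-action is precisely $\bZ[T]\otimes_{\bZ} U$ as an $H$-representation (the permutation module on $T$ tensored with $U$). Hence a $\Gamma_T$-fixed point of $\cL(U^n,U)$ is exactly an $H$-equivariant linear isometric embedding $\bZ[T]\otimes U\to U$. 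This gives one direction immediately, and for the converse one just observes that any $H$-equivariant isometry $\bZ[T]\otimes U\to U$ is in particular a point of $\cL(U^n,U)$, and tracing through the actions shows it is fixed by $\Gamma_T$. So nonemptiness of the fixed-point space is literally equivalent to the existence of the claimed embedding.

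The remaining point is to confirm that $\cL(U^n,U)^{\Gamma_T}$ really is either empty or contractible, so that "nonempty" is the same as "universal space condition holds" — but this is the standard argument already invoked in the excerpt's Lemma asserting $\cL(U)$ is an $\Ninfty$ operad, applied with the group $\Gamma_T$ (or equivalently $H$) in place of $G$: the space of $H$-equivariant linear isometries between two $H$-universes is contractible when nonempty, by the usual convexity/Gram–Schmidt argument, since $\bZ[T]\otimes U$ and $U$ are both countably infinite-dimensional $H$-inner product spaces. I expect the only mild subtlety — the "main obstacle," such as it is — to be bookkeeping: being careful that the permutation action on $U^n$ coming from the $\Sigma_n$-factor of $\Gamma_T$ matches the $H$-set structure of $T$ under the identification $U^n\cong\bigoplus_{t\in T}U$, and that "embedding" in the statement is understood as "isometric embedding" (equivalently, that the relevant space of isometries is nonempty) rather than merely an injective linear map; with the conventions of Definition~\ref{def:operexa} this is a direct check.
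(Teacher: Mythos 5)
Your proof is correct and takes essentially the same approach as the paper's: both identify $U^{\oplus n}$ with its $\Gamma_T$-action as $\bZ[T]\otimes U$ with its $H$-action, so that a $\Gamma_T$-fixed point of $\cL(U^n,U)$ is precisely an $H$-equivariant linear isometry $\bZ[T]\otimes U\to U$. The paper leaves the ``nonempty iff contractible'' point implicit (having already established that $\cL(U)$ is an $\Ninfty$ operad), whereas you spell it out, but the argument is the same.
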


\begin{proof}
In fact, the statement of the theorem is a restatement of definition
of the linear isometries operad. If $T$ is an admissible $H$-set, then
by definition
\[
\cL(U^{\oplus n},U)^{\Gamma_{T}}=\cL_{\Gamma_{T}}(U^{\oplus n},U)\neq \emptyset.
\]
The group $\Gamma_{T}$ acts on $U$ via the quotient $H$. The only
question is how it acts on
\[
U^{\oplus n}=\bZ\{1,\dots,n\}\otimes U.
\]
On the tensor factor $U$, the $\Gamma_{T}$-action is again via the
quotient $H$. On the other tensor factor, by the definition of $T$,
the $\Gamma_{T}$-action is the $H$-action on $\bZ[T]$. This gives the
result.
\end{proof}

The truncation and disjoint union conditions on our indexing sets
shows that admissibility is completely determined by the admissibility
of orbits $H/K$. The condition for admissibility for $\cL(U)$ then is
that there is an $H$-equivariant embedding
\[
\Ind_{K}^{H}i_{K}^{\ast}U\to i_{H}^{\ast}U.
\]
This requirement is actually a ``cofamily'' condition in $H$: if $K$
is subconjugate to some $K'$ in $H$, then $\bZ[H/K']\otimes U$
$H$-embeds into $U$ whenever $\bZ[H/K]\otimes U$ does.

\begin{theorem}\label{thm:DUFamily}
For the equivariant little disks operad on $U$, the admissible
$H$-sets are those $T$ such that there is an $H$-equivariant embedding
\[
T\to U.
\]
\end{theorem}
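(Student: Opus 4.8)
The plan is to imitate the proof of Theorem~\ref{thm:LUFamily}, replacing the tautological description of $\cL(U)$ by the configuration-space model of $\D(U)$. First I would invoke the Proposition above identifying $\D(V)_n$ with the equivariant configuration space $F(V,n)$ of injections $\underline n\hookrightarrow V$, up to $G\times\Sigma_n$-equivalence. Passing to the colimit over finite-dimensional subrepresentations $V\subset U$ (which is cofinally sequential, and fixed points for the finite group $G\times\Sigma_n$, hence for any $\Gamma\subseteq G\times\Sigma_n$, commute with such colimits along the closed inclusions $V\subseteq W$), this yields a $G\times\Sigma_n$-equivalence $\D(U)_n\simeq F(U,n)$, where $F(U,n)$ is the space of injections $\underline n\hookrightarrow U$. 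Since $\D(U)$ is an $\Ninfty$ operad, an $H$-set $T$ of cardinality $n$ is admissible for $\D(U)$ exactly when $\D(U)_n^{\Gamma_T}\neq\varnothing$, i.e.\ when $F(U,n)^{\Gamma_T}\neq\varnothing$.

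Next I would unwind the $\Gamma_T$-fixed points. Let $f\colon H\to\Sigma_n$ be the homomorphism classifying $T$, so $\Gamma_T=\{(h,f(h)):h\in H\}$ and the $H$-set $T$ is $\underline n$ with action $h\cdot i=f(h)(i)$. A point of $F(U,n)$ is an injective function $\varphi\colon\underline n\hookrightarrow U$, and $(h,f(h))$ carries $\varphi$ to the function $i\mapsto h\cdot\varphi\big(f(h)^{-1}(i)\big)$; hence $\varphi$ is $\Gamma_T$-fixed precisely when $h\cdot\varphi(i)=\varphi\big(f(h)(i)\big)$ for all $h\in H$ and $i\in\underline n$, which says exactly that $\varphi\colon T\to U$ is a map of $H$-sets. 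As $T$ is a finite discrete set, an injective $H$-map $T\to U$ is the same thing as an $H$-equivariant embedding, so $F(U,n)^{\Gamma_T}\neq\varnothing$ if and only if such an embedding exists, which is the statement.

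If I wanted to avoid the colimit-interchange step, I would argue directly with little disks: a $\Gamma_T$-fixed $n$-tuple of nonoverlapping little disks in $D(V)$ has its radius-tuple invariant under the permutation action through $f$ and its center-tuple an injective $H$-map $T\to V\subset U$ (injective since two distinct little disks sharing a center overlap), and conversely $n$ equal disks of sufficiently small radius centered at the image of an $H$-embedding $T\to U$ (whose image lies in some $D(V)$) form a $\Gamma_T$-fixed point of $\D_V(U)_n$. The only point that genuinely requires care in either route is the bookkeeping of the $\Gamma_T$-action, together with — in the first route — the compatibility of the equivalence $\D(V)_n\simeq F(V,n)$ with the structure maps $V\subseteq W$ so that it descends to the colimit; both are routine and are treated in~\cite{guilloumay}, so I do not expect a real obstacle, this theorem being essentially a restatement of the definition just as for $\cL(U)$.
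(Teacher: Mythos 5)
Your proposal is correct and is essentially the paper's argument, just stated in both directions and in two notations. The paper's proof ``fatten an $H$-equivariant embedding $T\to U$ into a tiny tubular neighborhood $T\times D\hookrightarrow U$; this is exactly a $\Gamma_T$-fixed point of $\D(\coprod_1^n D,D)$'' is precisely your direct route: equal small disks centered at the image of the $H$-embedding give a $\Gamma_T$-fixed point, and conversely the center-tuple of any $\Gamma_T$-fixed nonoverlapping configuration is an injective $H$-map $T\to V\subset U$. Your first route through the identification $\D(V)_n\simeq F(V,n)$ and passage to the colimit is a mild reformalization of the same content; it is fine, but as you note it quietly relies on the configuration-space equivalences of~\cite{guilloumay} being compatible with the stabilization maps $V\subseteq W$ --- a point worth being careful about given the paper's own remark that colimits of little disks are delicate in other respects --- so the direct route is the cleaner of the two and is what the paper actually does.
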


\begin{proof}
This is essentially due to Lewis. An embedding of $T$ into $U$ can be
fattened into a tiny equivariant neighborhood of $T$ embedded into
$U$. This is an embedding of $T\times D$ into $U$ which is
$H$-equivariant, and this is exactly what an element of the
$\Gamma_{T}$-fixed points of
\[
\D\left(\coprod_{1}^{n}D,D\right)=\D(\{1,\dots,n\}\times D,D)
\]
looks like. Just as in the linear isometries case, the existence of a
single embedding is sufficient to have a contractible space.
\end{proof}

\begin{corollary}
For any universe $U$, there is a map in the homotopy category of operads
\[
\cL(U)\to \D(U).
\]
\end{corollary}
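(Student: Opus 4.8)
The plan is to deduce the corollary from Theorems~\ref{thm:LUFamily} and~\ref{thm:DUFamily} together with the classification theorem, specifically the fact that $\mC\colon\Ho(\Nop)\to\poset$ is a fully faithful functor and that both $\cL(U)$ and $\D(U)$ are $\Ninfty$ operads. Since $\mC$ is a functor into a poset ordered by inclusion, there is a morphism $\cL(U)\to\D(U)$ in $\Ho(\Nop)$ if and only if $\mC(\cL(U))\subseteq\mC(\D(U))$ as indexing systems. So the entire content is the assertion that every admissible $H$-set for the linear isometries operad is admissible for the little disks operad; everything else is formal.

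First I would unwind the two characterizations. By Theorem~\ref{thm:LUFamily}, an $H$-set $T$ is admissible for $\cL(U)$ precisely when there is an $H$-equivariant linear isometric embedding $\bZ[T]\otimes U\to U$, and by Theorem~\ref{thm:DUFamily}, $T$ is admissible for $\D(U)$ precisely when the finite $H$-set $T$ admits an $H$-equivariant embedding $T\to U$. So it suffices to manufacture, from an embedding $\bZ[T]\otimes U\to U$, an embedding $T\to U$. Here is where the standing hypothesis $U^{G}\neq 0$ enters: fixing a nonzero $G$-fixed vector $u_{0}\in U$, the $|T|$ vectors $e_{t}\otimes u_{0}\in\bZ[T]\otimes U$ are distinct and are permuted by $H$ exactly according to the action on $T$ (since $h u_{0}=u_{0}$), so $t\mapsto e_{t}\otimes u_{0}$ is an $H$-equivariant embedding of the finite set $T$ into $\bZ[T]\otimes U$. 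Composing with the given embedding $\bZ[T]\otimes U\to U$ produces the required $H$-equivariant embedding $T\to U$. Hence $\mC(\cL(U))\subseteq\mC(\D(U))$.

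Finally I would invoke the classification theorem: the fully faithful embedding $\mC\colon\Ho(\Nop)\to\poset$ identifies $\Hom_{\Ho(\Nop)}(\cL(U),\D(U))$ with $\Hom_{\poset}(\mC(\cL(U)),\mC(\D(U)))$, and the latter is nonempty by the inclusion just established, so we obtain the desired morphism $\cL(U)\to\D(U)$ in the homotopy category. There is no real obstacle in this argument; the only points needing care are getting the variance right --- $\cL(U)$ has the \emph{fewer} admissible sets, so functoriality of $\mC$ forces the map to run \emph{from} $\cL(U)$ \emph{to} $\D(U)$, matching the intuition that a linear isometries algebra can be restricted to a little disks algebra --- and the observation that the tensor factor $U$ contains a trivial summand, which is exactly the content of the hypothesis $U^{G}\neq 0$ built into the definition of a universe.
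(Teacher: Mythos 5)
Your proof is correct and follows essentially the same route as the paper's. The paper's two-line proof observes that $T$ always $H$-equivariantly embeds into $\mathbb{R}\{T\}$, so from the embedding $\bZ[T]\otimes U \to U$ one gets $T\hookrightarrow U$; your version makes the needed trivial summand of $U$ explicit by naming the $G$-fixed vector $u_{0}$ and composing $t\mapsto e_{t}\otimes u_{0}$ with the isometry, which is exactly the same observation spelled out slightly more carefully. Both arguments then rely on the formal step (established in Section~\ref{sec:hocat}, either via the product-of-operads construction or via the full faithfulness of $\mC$) to upgrade the inclusion $\mC(\cL(U))\subseteq\mC(\D(U))$ to a map in the homotopy category, so no new ideas differ between the two.
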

\begin{proof}
For any finite $H$-set $T$, $T$ always $H$-equivariantly embeds into
$\mathbb R\{T\}$.  Thus if $T$ is admissible for $\cL(U)$, then it is 
also admissible for $\D(U)$.
\end{proof}

Since the condition on the category $\mC(\cL(U))$ described in
Theorem~\ref{thm:LUFamily} is much more stringent than the one for the
category $\mC(\D(U))$ described in Theorem~\ref{thm:DUFamily},  there is, a priori, no reason that the operads need be the same for a particular universe. We will show that in fact, they can be
different (and for most groups, hugely different, as explained in
Theorem~\ref{thm:VeryDifferent} below). We first show an important
example in which they coincide.

\begin{theorem}
If $N$ is a normal subgroup in $G$ and if $U_{N}$ is the universe
generated by $\bR[G/N]$, then $ \cL(U_{N})$ and $\D(U_{N})$ are
equivalent.
\end{theorem}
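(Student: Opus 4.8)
The plan is to use the classification via $\mC$: by Theorem~\ref{thm:FunctorC} (and the fully-faithfulness of $\mC$ on $\Ho(\Nop)$), it suffices to show that $\mC(\cL(U_N)) = \mC(\D(U_N))$ as indexing systems. By the truncation and disjoint-union closure of indexing systems, both are determined by which orbits $H/K$ are admissible, so I will compare the admissibility criteria of Theorem~\ref{thm:LUFamily} and Theorem~\ref{thm:DUFamily} on orbits. We already have the inclusion $\mC(\cL(U_N)) \subseteq \mC(\D(U_N))$ from the corollary above, so the real content is the reverse: if an orbit $H/K$ embeds $H$-equivariantly into $U_N$, then $\Ind_K^H i_K^* U_N$ also embeds $H$-equivariantly into $U_N$.

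First I would pin down what it means for $H/K$ to embed into $U_N = $ the universe generated by $\bR[G/N]$. The indecomposable summand $\bR[G/N]$ restricts (as an $H$-representation) to $\bR[H/(H\cap gNg^{-1})]$ summed over the double cosets $H\backslash G/N$; since $N$ is normal, $H\cap gNg^{-1} = H\cap N$ for all $g$, so $i_H^*\bR[G/N]$ is a sum of copies of $\bR[H/(H\cap N)]$. Hence $i_H^* U_N$ is the universe on $H$ generated by $\bR[H/(H\cap N)]$, and an $H$-set $T$ embeds into it iff every orbit $H/K$ in $T$ admits an $H$-map to $\bR[H/(H\cap N)]$, which (because $H/K$ embeds in a permutation representation iff it embeds as the basis of the corresponding $\bR[H/K]$, and an $H$-map of orbits into $\bR[H/(H\cap N)]$ picks out a fixed point) happens exactly when $H/K$ has a nonzero $(H\cap N)$-fixed ... more carefully: $H/K$ embeds $H$-equivariantly in $U_N$ iff $K \supseteq H\cap N$, i.e. $K$ contains $N\cap H$. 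So the little-disks-admissible orbits for $U_N$ are precisely those $H/K$ with $N\cap H \subseteq K$.

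Now I verify this same class is $\cL(U_N)$-admissible. Given $K \supseteq N\cap H$, I must embed $\Ind_K^H i_K^* U_N$ into $i_H^* U_N$ as $H$-representations. Using the computation above, $i_K^* U_N$ is the $K$-universe on $\bR[K/(N\cap K)] = \bR[K/(N\cap H)]$ (using $K \supseteq N\cap H$, so $N\cap K = N\cap H$), and inducing up, $\Ind_K^H \bR[K/(N\cap H)] = \bR[H/(N\cap H)]$, which is exactly the generator of $i_H^* U_N$; since inducing a countable sum is a countable sum, $\Ind_K^H i_K^* U_N$ is (isomorphic to) a universe on $\bR[H/(N\cap H)]$, and any two universes generated by the same representation are $H$-isomorphic, hence there is certainly an $H$-equivariant embedding into $i_H^* U_N$. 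This shows every little-disks-admissible orbit is linear-isometries-admissible, giving $\mC(\D(U_N)) \subseteq \mC(\cL(U_N))$ and therefore equality.

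The main obstacle is the representation-theoretic bookkeeping in the middle step: correctly computing $i_H^* \bR[G/N]$ and $\Ind_K^H i_K^* U_N$ and, crucially, using normality of $N$ to collapse the double-coset sum $H\cap gNg^{-1} = H\cap N$ so that the restricted/induced universes are again generated by a single permutation representation. Once that identification is in hand, the embedding statements are immediate because a universe is determined up to $H$-isomorphism by its generating representation and "embeds into" is then automatic. I would also double-check the elementary claim that $H/K$ embeds $H$-equivariantly into the permutation $H$-universe on $\bR[H/L]$ precisely when $K$ is subconjugate to $L$ (here $L = N\cap H$, and since $N\cap H$ is normal in $H$, subconjugacy to $N\cap H$ is just containment $N\cap H\subseteq K$) — this is the orbit-level criterion underlying both Theorems~\ref{thm:LUFamily} and~\ref{thm:DUFamily}.
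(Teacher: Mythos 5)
Your argument is correct and follows essentially the same route as the paper: both reduce to identifying the admissible orbits on each side via the explicit form of $U_N$ and then matching them, using normality of $N$ to control the restriction $i_H^*U_N$. The one genuine streamlining is that you invoke the general corollary $\mC(\cL(U)) \subseteq \mC(\D(U))$ to dispense with one inclusion, whereas the paper verifies both containments directly; your computation that $i_H^*U_N$ is the $H$-universe on $\bR[H/(H\cap N)]$ and that $\Ind_K^H i_K^*U_N$ is again a universe on $\bR[H/(H\cap N)]$ for $K \supseteq H\cap N$ is a clean way to package what the paper expresses via the fixed-point/transfer argument.
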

This universe is the $N$-fixed points of the complete universe, and
this statement should be viewed as an analogue of the symmetric
monoidal embedding of $G/N$-spectra in $G$-spectra.

\begin{proof}
We just have to show that the admissible sets are the same in both
cases, and these are the sets with stabilizer containing $N$. Since
$N$ is normal in $G$, there is no difference between restricting to
$H$ and restricting to $HN$, and in this case, $U_{N}$ restricts to
$U_{N}$ but with $G$ replaced by $HN$. It therefore suffices to look
at those $G$ sets which are admissible.

The admissible $G$-sets for $\D(U_{N})$ are those with stabilizer $H$
such that $G/H$ embeds in $U_{N}$.  Since $G$ is finite,
\[
\Emb_{G}(G/H,U)=U^{H}-\bigcup_{H<K} U^{K},
\]
where $H<K$ means $H$ is properly subconjugate to $K$. For all
subgroups $H$, the $H$ fixed points are equal to the $HN$-fixed
points, and so if $H$ does not contain $N$, there are no embeddings of
$G/H$ into $U$. On the other hand, if $H$ does contain $N$, then the
transfer shows that the $H$-fixed points of $U_{N}$ is the universe
generated by $\bR[G/H]$. This visibly contains $G/H$. Thus the
admissible $G$-sets for $\D(U_{N})$ are those with stabilizers
containing $N$.

For $ \cL(U)$, we need only to determine those $H$ such that
$\bZ[G/H]\otimes U_{N}$ embeds in $U_{N}$. The universe $U_{N}$ has
the defining feature that all of $N$ fixes $U_{N}$ and such that
larger subgroups of $G$ move points. If $H$ contains $N$, then the
desired condition obviously holds. If $H$ does not contain $N$, then
$N$ does not fix $\bZ[G/H]$, and therefore, there are no embeddings.

Thus in both cases, the admissible $G$-sets are precisely those whose
stabilizers contain $N$, and $\D_{n}(U_{N})$ and $\cL_{n}(U_{N})$ are
equivalent.
\end{proof}

We can now prove the main result in this subsection: for all but two
groups, there are universes $U$ such that $\cL(U)$ and $\D(U)$ are
inequivalent.

\begin{theorem}\label{thm:Failing}
If $G$ is a finite group of order bigger than $3$, then there is a
universe $U$ such that $\cL(U)$ and $\cD(U)$ are not equivalent.
\end{theorem}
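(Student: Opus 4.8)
The plan is to apply the functor $\mC$ and the fact that $\mC$ descends to $\Ho(\Nop)$: since $\poset$ is a poset, isomorphic objects of $\Ho(\Nop)$ have equal image under $\mC$, so weakly equivalent $\Ninfty$ operads have the same indexing system. Hence it suffices to exhibit, for each $G$ with $|G|>3$, a single universe $U$ with $\mC(\cL(U))\neq\mC(\cD(U))$; by the corollary giving a map $\cL(U)\to\cD(U)$ we already know $\mC(\cL(U))\subseteq\mC(\cD(U))$, so we only need one finite set that is admissible for $\cD(U)$ but not for $\cL(U)$. I will use the free orbit $T=G$ itself (i.e. $H=G$, $K=e$).

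By Theorem~\ref{thm:DUFamily}, $G$ is admissible for $\cD(U)$ iff $G$ embeds $G$-equivariantly in $U$; since a finite group acts with a free orbit on a real representation $V$ precisely when $V$ is faithful (the proper fixed subspaces $V^g$, $g\neq e$, never exhaust $V$ over $\bR$), this holds iff $U$ is a faithful $G$-representation. By Theorem~\ref{thm:LUFamily}, $G$ is admissible for $\cL(U)$ iff $\bZ[G]\otimes U$ embeds $G$-equivariantly in $U$; but $\bZ[G]\otimes V\cong\bR[G]^{\oplus\dim V}$ for every $V$, and the regular representation contains every irreducible, so this holds iff $U$ is complete. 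Thus the theorem reduces to the representation-theoretic assertion: for $|G|>3$ there is a faithful universe that is not complete, equivalently there is a nontrivial irreducible real $G$-representation $\rho_0$ such that the sum of all the other irreducibles is still faithful (take $U$ to be the universe on the remaining irreducibles, including the trivial one so that $U^G\neq 0$).

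To prove this last assertion: first, $|G|>3$ forces $G$ to have at least two nontrivial irreducible real representations, since a group with at most one such representation has at most three conjugacy classes and so lies in $\{e,\bZ/2,\bZ/3\}$. Now if two nontrivial irreducibles satisfy $\ker\rho\subseteq\ker\rho'$ with $\rho\neq\rho'$, then $\rho'$ is removable: the intersection of all nontrivial irreducible kernels other than $\ker\rho'$ is contained in $\ker\rho\subseteq\ker\rho'$, and hence equals the intersection of all of them, which is trivial. Otherwise the nontrivial irreducible real representations have pairwise incomparable, hence distinct and nontrivial, kernels; then each quotient $G/\ker\rho$ has exactly two irreducible real representations, so $G/\ker\rho\in\{\bZ/2,\bZ/3\}$, and since these kernels intersect trivially, $G$ embeds in a product of copies of $\bZ/2$ and $\bZ/3$ and is therefore abelian. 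For abelian $G$ one finishes directly, using that real irreducibles correspond to Galois orbits on $\widehat G$ and that a family of real irreducibles is faithful iff the corresponding characters generate $\widehat G$: having at least two nontrivial real irreducibles forces $|\widehat G|\geq 4$, and a short count (splitting on the parity of $|G|$) shows that one Galois orbit can always be deleted while keeping a generating set. In every case we obtain the desired $\rho_0$.

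The only real obstacle is the representation-theoretic case analysis, whose delicate point is ruling out small exceptional groups — but the only genuine exceptions are $\bZ/2$ and $\bZ/3$ (together with the trivial group), exactly the groups barred by the hypothesis $|G|>3$. The quantitative strengthening, that for most $G$ the two operads are in fact very far apart, is Theorem~\ref{thm:VeryDifferent} and requires a more careful study of which faithful-but-incomplete universes exist.
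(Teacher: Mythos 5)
Your reduction of the theorem to a representation-theoretic assertion is essentially the paper's: both arguments show (via Theorems~\ref{thm:DUFamily} and \ref{thm:LUFamily}) that the free orbit $G=G/\{e\}$ is admissible for $\cD(U)$ iff $U$ is faithful and admissible for $\cL(U)$ iff $U$ is complete, and so both reduce the theorem to producing, for $|G|>3$, a faithful universe that misses some nontrivial irreducible. The route to that assertion is where you diverge. The paper's Lemma~\ref{lem:FaithfulSubRep} is a short dichotomy: if $G$ has a proper nontrivial normal subgroup $N$, take $V=\Ind_N^G\bar\rho_N$ and use Frobenius reciprocity to see that a nontrivial character of $G/N$ cannot be a summand; if $G$ is simple of order $\geq 5$, every nontrivial irreducible is faithful and a conjugacy-class count supplies enough of them. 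Your version instead works directly with the kernels of the nontrivial irreducibles (remove one whenever two kernels are comparable; otherwise force $G$ abelian of a very restricted shape and finish by a counting argument in $\widehat G$). That is a legitimate alternative and conceptually attractive, but it pushes all the work into a case analysis that the paper avoids.

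As written, two steps are genuine gaps rather than elisions. First, the claim ``a group with at most one nontrivial irreducible real representation has at most three conjugacy classes, and so lies in $\{e,\bZ/2,\bZ/3\}$'' requires a computation you do not give: the number of irreducible real representations is $(r+t)/2$, where $t$ is the number of conjugacy classes and $r$ the number closed under inversion, so ``$\leq 2$ real irreducibles'' gives $r+t\leq 4$ and hence $t\leq 3$ — and one must then still rule out $S_3$, which has $t=3$ (but three real irreducibles, so is in fact excluded). Second, and more seriously, your abelian case is asserted, not proved: ``a short count (splitting on the parity of $|G|$) shows that one Galois orbit can always be deleted while keeping a generating set'' is exactly the content of the lemma in that case and needs to be written out. (It is true — for any maximal subgroup $M\subsetneq\widehat G$ one has $|\widehat G\setminus M|\geq 3$ as soon as $|\widehat G|\geq 5$ or the index is $3$, which exceeds the size $\leq 2$ of any $\{\pm 1\}$-orbit, so in fact any orbit can be removed; the remaining small cases like $(\bZ/2)^2$ must be checked by hand — but none of this is in your text.) Until those two points are filled in, the argument does not yet prove the theorem, whereas the paper's version is complete.
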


The proof follows immediately from a small, representation theoretic
lemma.

\begin{lemma}\label{lem:FaithfulSubRep}
If $G$ is a finite group of order bigger than $3$, then there is a
representation $V$ such that
\begin{enumerate}
\item $G$ embeds into $V$, and
\item there is a non-trivial irreducible representation $W$ of $G$ such that $W$ is not a summand of $V$.
\end{enumerate}
\end{lemma}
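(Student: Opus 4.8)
The plan is to prove the contrapositive: if there is no representation $V$ satisfying (i) and (ii), then $|G|\leq 3$. First I would restate the hypothesis. Saying ``$G$ embeds into $V$'' is the same as saying $V$ is a faithful $G$-representation: if $V$ is faithful then $\bigcup_{e\neq g}V^{g}$ is a finite union of proper subspaces, so $V$ has a free orbit, and conversely a free orbit forces faithfulness; adding a trivial summand changes neither condition. So let $\mathbf 1,W_{1},\dots,W_{r}$ be the real irreducibles of $G$ and set $K_{i}=\bigcap_{j\neq i}\ker(W_{j})$, the intersection over the \emph{nontrivial} irreducibles other than $W_{i}$. A representation omitting $W_{i}$ is a sum of copies of the $W_{j}$, $j\neq i$, together with trivials, while $\bR[G]=\bigoplus_{j}m_{j}W_{j}$ is faithful; hence a $V$ as in the statement exists precisely when $K_{i}=\{e\}$ for some $i$, and this is what I must establish when $|G|>3$.

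Suppose instead that every $K_{i}$ is nontrivial. Each $K_{i}$ is normal (an intersection of kernels), and $K_{i}\cap\ker(W_{i})=\bigcap_{\text{all }j}\ker(W_{j})=\{e\}$, so $K_{i}$ acts faithfully on $W_{i}$; for $i\neq j$ an element of $K_{i}\cap K_{j}$ kills every $W_{k}$, so $K_{i}\cap K_{j}=\{e\}$ and $K_{i},K_{j}$ commute. The product $N=K_{1}\cdots K_{r}$ is normal, and were it proper then $G/N$ would carry a nontrivial irreducible inflating to some $W_{i}$ with $K_{i}\leq N\leq\ker(W_{i})$, forcing $K_{i}=\{e\}$; so $N=G$. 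Since $\prod_{j\neq i}K_{j}\leq\ker(W_{i})$, we get $K_{i}\cap\prod_{j\neq i}K_{j}\leq K_{i}\cap\ker(W_{i})=\{e\}$, so $G=K_{1}\times\cdots\times K_{r}$ is an internal direct product of nontrivial subgroups, with $W_{i}$ faithful on $K_{i}$ and trivial on every other factor. In particular each irreducible of $G$ — and these are exactly $\mathbf 1,W_{1},\dots,W_{r}$ — is nontrivial on at most one of the $K_{i}$.

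Next I would rule out $r\geq 2$ by a tensor-product argument. Pick distinct nontrivial irreducibles $W_{1},W_{2}$ and view $W_{1}\otimes_{\bR}W_{2}$ as a $G$-representation. Restricting to $K_{1}$, which acts trivially on $W_{2}$, gives $(W_{1}|_{K_{1}})^{\oplus\dim W_{2}}$, a nontrivial $K_{1}$-representation; likewise $K_{2}$ acts nontrivially on $W_{1}\otimes W_{2}$. Write $W_{1}\otimes W_{2}=X\oplus Y$, where $X$ collects the irreducible summands on which $K_{1}$ acts trivially and $Y$ the rest; by the previous paragraph $K_{2}$ acts trivially on $Y$. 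But $(W_{1}\otimes W_{2})|_{K_{2}}=(W_{2}|_{K_{2}})^{\oplus\dim W_{1}}$ has no trivial $K_{2}$-summand (since $W_{2}^{K_{2}}$ is a proper $G$-subrepresentation of the irreducible $W_{2}$, the $K_{2}$-action being faithful and $K_{2}\neq\{e\}$), so $Y=0$ and $K_{1}$ acts trivially on all of $W_{1}\otimes W_{2}$ — contradiction. Hence $r\leq 1$. If $r=0$ then $G=\{e\}$; if $r=1$ then $G$ has exactly two real irreducibles, which a short argument (a group with two real irreducibles has at most three complex irreducibles, hence at most three conjugacy classes) forces $G\cong\bZ/2$ or $\bZ/3$. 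In every case $|G|\leq 3$, the desired contradiction.

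The one genuinely nonroutine step is the middle one: distilling the internal direct-product decomposition $G=K_{1}\times\cdots\times K_{r}$ from the failure hypothesis while tracking which $W_{i}$ is trivial on which factor, and then the observation — via the $W_{1}\otimes W_{2}$ computation — that a true product of two nontrivial factors must carry an irreducible that is nontrivial on both, which the list $\mathbf 1,W_{1},\dots,W_{r}$ forbids. Everything else is bookkeeping. As a reality check, the concrete situation is easy: whenever $G$ has a faithful real irreducible $W_{0}$ and some further nontrivial irreducible $W$, one may simply take $V=W_{0}\oplus\mathbf 1$ and omit $W$; the argument above is needed only for groups with no faithful irreducible real representation, such as $(\bZ/p)^{2}$.
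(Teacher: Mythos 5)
Your argument is correct, and it takes a genuinely different route from the paper's. The paper argues directly in two cases: if $G$ is simple of order at least $5$, any nontrivial real irreducible is faithful and there are at least two of them, so one can serve as $V$ and another as the omitted $W$; if $G$ has a nontrivial proper normal subgroup $N$, the paper sets $V=\Ind_N^G\bar{\rho}_N$, which is faithful, and uses Frobenius reciprocity to see that no nontrivial representation inflated from $G/N$ is a summand. You instead prove the contrapositive by structural analysis: with $K_i=\bigcap_{j\neq i}\ker W_j$, the existence of $V$ is equivalent to some $K_i$ being trivial, so assuming all $K_i$ are nontrivial you extract an internal direct-product decomposition $G=K_1\times\cdots\times K_r$ in which each nontrivial irreducible is supported on a single factor, and then you kill $r\geq 2$ by observing that $W_1\otimes_{\bR} W_2$ would need an irreducible constituent nontrivial on both $K_1$ and $K_2$, which the decomposition forbids; this leaves $G\in\{\{e\},\bZ/2,\bZ/3\}$. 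Your route is longer and needs a touch more care with the real-versus-complex bookkeeping in the final step (implicitly one must note that $S_3$, which also has three conjugacy classes, has three real irreducibles and so is excluded), but it gives a sharper picture of exactly which groups fail the lemma and why. The paper's proof is shorter and, more to the point, produces the explicit representation $V=\Ind_N^G\bar{\rho}_N$, whose precise form is reused verbatim in the proof of Theorem~\ref{thm:VeryDifferent}; substituting your argument would require supplying a different construction of $V$ there.
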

In fact, $V$ can be chosen as a faithful representation.
\begin{proof}
First note that if $G$ is a simple group of order at least $5$, then
every non-trivial representation of $G$ is faithful. By the class
equation, there are more than $2$ non-trivial irreducible complex
representations, and hence at least $2$ non-trivial, irreducible real
representations. Any one such representation will satisfy the
conditions of the lemma.


Now assume that $N$ is a non-trivial, proper normal subgroup of
$G$. Let $\bar{\rho}_{N}$ denote the quotient of the real regular
representation of $N$ by the trivial summand. Then we claim that
$V=\Ind_{N}^{G}\bar{\rho}_{N}$ satisfies the conditions of the
lemma. 

The reduced regular representation is faithful and induction
preserves this property. Since the representation is faithful, the collection of all vectors with non-trivial stabilizer is a union of proper hyperplanes of $V$, and since $G$ is finite, this is a proper subset of $V$. Thus $G$ embeds into $V$.

For the second condition, let $\lambda$
denote a non-trivial real representation of the quotient group
$G/N$. Frobenius reciprocity shows that there are no non-trivial maps
between complexifications of $V$ and $\lambda$ (since the restriction
of $\lambda$ to $N$ is always trivial), and thus $\lambda$ is not a
summand of $V$.
\end{proof}

\begin{proof}[Proof of Theorem~\ref{thm:Failing}]
Let $V$ be a faithful representation of $G$ satisfying the conditions
of Lemma~\ref{lem:FaithfulSubRep}, and let $U=\infty(1+V)$. Then by
assumption, $G$ embeds into $U$, so $G/\{e\}$ is an admissible $G$-set
for $\cD(U)$. However, $U$ is not the infinite regular representation,
since $V$ does not contain every irreducible representation of $G$,
and so $G/\{e\}$ is not an admissible $G$-set for $\cL(U)$.
\end{proof}

If $G$ has order $2$ or $3$, then this will fail: there are only two
irreducible real representations: the trivial one and multiplication
by the corresponding root of unity. Thus in these cases there are only
two universes: the trivial universe and the complete universe.

With slightly more care, we can refine the above theorem.

\begin{theorem}\label{thm:VeryDifferent}
If $G$ is not simple, then there is a universe $U$ such that $\D(U)$
is not equivalent to $\cL(W)$ for any universe $W$.
\end{theorem}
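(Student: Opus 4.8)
The plan is to compare the indexing systems $\mC(\cL(W))$ and $\mC(\D(U))$ via the two characterizations already established: $T$ is admissible for $\cL(W)$ iff $\bZ[T]\otimes W$ embeds $H$-equivariantly in $W$ (Theorem~\ref{thm:LUFamily}), while $T$ is admissible for $\D(U)$ iff $T$ embeds $H$-equivariantly in $U$ (Theorem~\ref{thm:DUFamily}). Since indexing systems are determined by which orbits $H/K$ they contain, it suffices to produce a universe $U$ and an orbit $G/K$ that is admissible for $\D(U)$ but such that no universe $W$ with $G/K$ admissible for $\cL(W)$ can have exactly the same orbit-admissibility data as $U$. The key structural asymmetry to exploit is the one noted in the remarks after Theorem~\ref{thm:LUFamily}: admissibility for $\cL(W)$ is a \emph{cofamily} condition (closed under passage to larger subconjugate $K$), because if $\bZ[H/K]\otimes W$ embeds then so does $\bZ[H/K']\otimes W$ for $K \leq_H K'$. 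By contrast, embeddability of an orbit $T\hookrightarrow U$ — the little-disks condition — is \emph{not} a cofamily condition in general; for instance $G/e \hookrightarrow V$ does not force $G/K \hookrightarrow V$ for a nontrivial normal $K$ when $V$ omits the representations of $G/K$.

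Concretely: let $N \lhd G$ be a proper nontrivial normal subgroup, and as in the proof of Lemma~\ref{lem:FaithfulSubRep} set $V=\Ind_N^G\bar\rho_N$ and $U=\infty(1+V)$, so that $G/e$ is admissible for $\D(U)$ but not for $\cL(U)$. First I would pin down exactly which orbits $G/K$ are admissible for $\D(U)$: by the computation $\Emb_G(G/K,U) = U^K \setminus \bigcup_{K<K'}U^{K'}$, the orbit $G/K$ embeds in $U$ iff $U^K$ strictly contains $\bigcup_{K<K'} U^{K'}$, which one checks happens iff $V^K \neq V^{K'}$ for all $K' > K$ — equivalently $V$ "sees" $K$. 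Since $V$ is faithful, at least $G/e$ qualifies; but because $V = \Ind_N^G \bar\rho_N$ omits all nontrivial representations of $G/N$, the orbit $G/N$ itself will \emph{fail} to be admissible for $\D(U)$ (the $N$-fixed points of $U$ are just the trivial universe on $G/N$, which doesn't contain $G/N$). So the set of $\D(U)$-admissible orbits contains $G/e$ but omits the strictly larger orbit-class $G/N$: it is not upward-closed under subconjugacy.

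Now suppose for contradiction that $\cL(W) \simeq \D(U)$ for some universe $W$; then $\mC(\cL(W)) = \mC(\D(U))$. Since $G/e$ is $\D(U)$-admissible it is $\cL(W)$-admissible, i.e.\ $\bZ[G]\otimes W$ embeds in $W$, which forces $W$ to be the complete universe (it contains every irreducible infinitely often). But then \emph{every} orbit $G/K$ is $\cL(W)$-admissible, hence every $G/K$ would be $\D(U)$-admissible — contradicting that $G/N$ is not admissible for $\D(U)$. This contradiction establishes the theorem. I expect the main obstacle to be the bookkeeping in the second paragraph: verifying cleanly that $G/N$ genuinely fails to embed in $U = \infty(1+V)$, i.e.\ that $U^N$ is too small to contain a copy of $G/N$. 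This follows because $V^N = (\Ind_N^G\bar\rho_N)^N$ is, by the transfer/double-coset analysis as in the $U_N$ theorem above, a sum of trivial $G/N$-representations only, so $U^N = \infty(1+V^N)$ is a trivial universe for $G/N$ and the orbit $G/N$ (being a nontrivial transitive $G/N$-set) cannot embed; one just has to be careful that no larger fixed subspace sneaks in a nontrivial $G/N$-representation, which is exactly the content of the Frobenius reciprocity step in Lemma~\ref{lem:FaithfulSubRep}.
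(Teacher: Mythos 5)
Your proposal is correct and takes essentially the same approach as the paper: choose $N\lhd G$ proper nontrivial, set $V=\Ind_N^G\bar\rho_N$ and $U=\infty(1+V)$, observe that $G/e$-admissibility for $\cL(W)$ forces $W$ complete (hence $\mC(\cL(W))=\mSet$), and then show $G/N$ fails to embed in $U$. The only minor difference is in how the last step is justified: the paper computes directly that $i_N^\ast V=[G:N]\bar\rho_N$ has no $N$-fixed points, so $V^N=0$ and $U^N=\mathbb{R}^\infty$ with trivial $G/N$-action, which is a bit crisper than your appeal to the transfer/Frobenius arguments to conclude $V^N$ carries only trivial $G/N$-summands — though your reasoning does reach the same (in fact vacuous, since $V^N=0$) conclusion.
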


\begin{proof}
Let $N$ be a non-trivial, proper normal subgroup of $G$, and let
$V=\Ind_N^G\bar{\rho}_N$ as in the proof of
Lemma~\ref{lem:FaithfulSubRep}. Since this is a faithful
representation of $G$, we know that $G$ embeds in $V$. If $G$ is
admissible for $\cL(W)$, then Theorem~\ref{thm:LUFamily}, $W$ must be
the compete universe. In particular, $\mC(\cL(W))=\mSet$.

We prove the theorem by showing that $G/N$ itself does not embed in
$U=\infty(1+V)$. Obviously, any such embedding lands in a finite
subrepresentation, so we show that $G/N$ does not embed into $k(1+V)$
for any $k$. A map of $G$-sets
\[
G/N\to k(1+V),
\]
is the same as an $N$-fixed point of $k(1+V)$. However, since $N$ is a
normal subgroup,
\[
i_N^\ast V=[G:N]\bar{\rho}_N,
\]
which has no fixed points. Thus any map lands entirely in the trivial
factor, and hence is a constant map on $G/N$.
\end{proof}

\begin{remark}
We do not know for which simple groups Theorem~\ref{thm:VeryDifferent}
holds. For cyclic groups of prime order, it fails: there are only two
indexing systems, the trivial and complete one, both of which
correspond to little disks and linear isometries operads. For $A_{2n+1}$,
the restriction of the quotient of the defining representation for
$\Sigma_{2n+1}$ by the trivial summand generates a universe in which
$A_{2n+1}/D_{4n+2}$ does not embed, showing that for $A_{2n+1}$,
Theorem~\ref{thm:VeryDifferent} holds.
\end{remark}

\section{The homotopy category of $\Ninfty$ operads}\label{sec:hocat}

In this section, we show that the functor $\mC$ is a fully-faithful
embedding and explain why we believe that it is fact an equivalence.

\subsection{Faithfulness}

We begin by recording some easy results about the relationships
between coefficient systems that correspond to natural constructions
on operads.

\begin{proposition}
If $\cO$ and $\cO'$ are $\Ninfty$ operads, then $\cO\times\cO'$ is an
$\Ninfty$ operad, and
\[
\mC(\cO\times\cO')=\mC(\cO)\cap\mC(\cO').
\]
\end{proposition}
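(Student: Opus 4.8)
The plan is to verify the two claims separately: first that $\cO \times \cO'$ is again an $\Ninfty$ operad, and then to identify its coefficient system as the intersection. Throughout I will use the fact (recorded just before this proposition) that $\mC$ depends only on the underlying $\Ninfty$-symmetric sequence, so all the work can be done levelwise in $G \times \Sigma_n$-spaces.

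For the first claim, note that $(\cO \times \cO')_n = \cO_n \times \cO'_n$ with the diagonal $G \times \Sigma_n$-action, and the operad structure maps are the products of those for $\cO$ and $\cO'$; the identity element is $(1,1) \in \cO_1 \times \cO'_1$. So it is a $G$-operad. For the three conditions of Definition~\ref{def:geinfop}: condition (i) holds because a product of $G$-contractible spaces is $G$-contractible; condition (ii) holds because $\Sigma_n$ already acts freely on the first factor, hence freely on the product. For condition (iii), the key point is that for any subgroup $\Gamma \subseteq G \times \Sigma_n$, $(\cO_n \times \cO'_n)^\Gamma = \cO_n^\Gamma \times (\cO'_n)^\Gamma$, which is a product of spaces each of which is empty or contractible, hence itself empty or contractible; and it is contractible precisely when both factors are, i.e.\ when $\Gamma \in \cF_n(\cO) \cap \cF_n(\cO')$. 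An intersection of families is a family (closure under subconjugacy is preserved by intersection), and it contains all $H \times \{1\}$ since each $\cF_n$ does. Thus $\cO_n \times \cO'_n$ is a universal space for $\cF_n(\cO) \cap \cF_n(\cO')$, establishing that $\cO \times \cO'$ is an $\Ninfty$ operad with $\cF_n(\cO \times \cO') = \cF_n(\cO) \cap \cF_n(\cO')$.

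For the second claim, translate the equality $\cF_n(\cO \times \cO') = \cF_n(\cO) \cap \cF_n(\cO')$ into the language of admissible sets. An $H$-set $T$ of cardinality $n$ is admissible for $\cO \times \cO'$ iff $\Gamma_T \in \cF_n(\cO \times \cO') = \cF_n(\cO) \cap \cF_n(\cO')$, which holds iff $\Gamma_T \in \cF_n(\cO)$ and $\Gamma_T \in \cF_n(\cO')$, i.e.\ iff $T$ is admissible for both $\cO$ and $\cO'$. Hence at each level $H$ the admissible $H$-sets for $\cO \times \cO'$ are exactly the $H$-sets admissible for both, so $\mC(\cO \times \cO') = \mC(\cO) \cap \mC(\cO')$ as subcoefficient systems of $\mSet$ (the intersection being formed levelwise).

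I do not expect a serious obstacle here; the whole argument is a matter of unwinding the definitions and using that fixed points commute with products and that contractibility and emptiness are each closed under products. The only point requiring a moment's care is confirming that the intersection of two indexing systems is again an indexing system (so that the right-hand side genuinely lies in $\poset$), but this is immediate since truncation, closure under Cartesian product, and closure under self-induction are all properties preserved under intersection.
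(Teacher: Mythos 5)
Your proof is correct and follows the same essential route as the paper's: observe that fixed points commute with products, so $\cO_n \times \cO'_n$ is a universal space for $\cF_n(\cO) \cap \cF_n(\cO')$, and then translate this into the language of admissible sets. The paper's proof is terser (it simply invokes the standard fact $E\cF \times E\cF' = E(\cF \cap \cF')$ and declares the operadic verifications straightforward), while you spell out the levelwise checks and the closure of $\poset$ under intersection explicitly, but the mathematics is identical.
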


\begin{proof}
The only part that requires any proof is the second part; the operadic
properties are straightforward. The second part is actually a standard
observation in equivariant homotopy theory: if $E\cF$ and $E\cF'$ are
universal spaces for families $\cF$ and $\cF'$ respectively, then
$E\cF\times E\cF'$ is a universal space for $\cF\cap\cF'$. This
follows immediately from consideration of the fixed points. The
translation to the categorical version is then as above.
\end{proof}

\begin{corollary}\label{cor:OperadComparison}
If $\mC(\cO)\subset\mC(\cO')$, then the natural projection
\[
\cO\times\cO'\to\cO
\]
is a weak equivalence.
\end{corollary}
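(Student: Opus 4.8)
The plan is to verify Definition~\ref{defn:weak} directly for the projection $p\colon \cO\times\cO'\to\cO$: one must show that $p^\Gamma\colon (\cO\times\cO')_n^\Gamma\to\cO_n^\Gamma$ is a nonequivariant weak equivalence for every $n$ and every subgroup $\Gamma\subseteq G\times\Sigma_n$. (Note that $\cO\times\cO'$ is again an $\Ninfty$ operad by the preceding proposition, so this is a well-posed statement in $\Nop$.) Since fixed points commute with products, $(\cO\times\cO')_n^\Gamma=\cO_n^\Gamma\times(\cO')_n^\Gamma$ and $p^\Gamma$ is the first projection. Because $\cO$ and $\cO'$ are $\Ninfty$ operads, each of $\cO_n$ and $(\cO')_n$ is a universal space for a family, so every $\Gamma$-fixed point space of either is either empty or contractible. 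If $\cO_n^\Gamma=\emptyset$, then the source is empty as well and $p^\Gamma$ is the identity of $\emptyset$; hence the only case requiring argument is $\cO_n^\Gamma\neq\emptyset$, where it suffices to prove $(\cO')_n^\Gamma\neq\emptyset$, since then both spaces are contractible and $p^\Gamma$ is a weak equivalence.

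To settle that case I would split on whether $\Gamma$ intersects $\Sigma_n$ trivially. If $\Gamma\cap(\{1\}\times\Sigma_n)\neq\{1\}$, then freeness of the $\Sigma_n$-action (condition (ii) in the definition of an $\Ninfty$ operad) forces $\cO_n^\Gamma=\emptyset$, contradicting the standing assumption, so this subcase is vacuous. Otherwise, Proposition~\ref{prop:graph} lets us write $\Gamma=\Gamma_T$ for an $H$-set $T$ of cardinality $n$, where $H$ is the image of $\Gamma$ under the projection $G\times\Sigma_n\to G$. The hypothesis $\cO_n^{\Gamma_T}\neq\emptyset$ says precisely that $T$ is an admissible $H$-set for $\cO$, i.e. $T\in\mC(\cO)(H)$; the inclusion $\mC(\cO)\subseteq\mC(\cO')$ then gives $T\in\mC(\cO')(H)$, which is exactly the assertion $(\cO')_n^{\Gamma_T}\neq\emptyset$. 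This completes the check.

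I do not expect a genuine obstacle here; the argument is a short bookkeeping exercise assembling the universal-space dichotomy (fixed points empty or contractible), the product formula for fixed points, freeness of the symmetric-group actions, and the dictionary between fixed-point sets of graph subgroups and admissible sets. The only place to be slightly careful is the empty cases: one should remember that when the target $\cO_n^\Gamma$ is empty the source is automatically empty, so no appeal to $\cO'$ is needed there, and that the contractibility half genuinely uses the hypothesis $\mC(\cO)\subseteq\mC(\cO')$ rather than just $\cO\times\cO'$ being an $\Ninfty$ operad.
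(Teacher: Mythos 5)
Your proof is correct and takes essentially the same approach as the paper's: the paper simply observes that $(\cO\times\cO')_n$ and $\cO_n$ are universal spaces for the same family of subgroups of $G\times\Sigma_n$, and your argument unpacks exactly why that is so (freeness rules out non-graph subgroups, Proposition~\ref{prop:graph} identifies graph subgroups with $H$-sets, and the hypothesis $\mC(\cO)\subseteq\mC(\cO')$ transfers nonemptiness of fixed points). The extra bookkeeping you do on the empty and non-graph cases is the content that the paper's one-line proof leaves implicit.
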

\begin{proof}
For all $n$, both $(\cO\times\cO')_{n}$ and $\cO_{n}$ are universal
spaces for the same family of subgroups.
\end{proof}

\begin{corollary}
If $\mC(\cO)=\mC(\cO')$, then in the homotopy category, $\cO$ and
$\cO'$ are isomorphic.
\end{corollary}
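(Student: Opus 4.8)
The plan is to obtain this as a purely formal consequence of Corollary~\ref{cor:OperadComparison}, applied symmetrically. Assume $\mC(\cO) = \mC(\cO')$. Then both inclusions $\mC(\cO) \subset \mC(\cO')$ and $\mC(\cO') \subset \mC(\cO)$ hold, so Corollary~\ref{cor:OperadComparison} shows that each of the two canonical projections
\[
\cO \xleftarrow{\,p\,} \cO \times \cO' \xrightarrow{\,p'\,} \cO'
\]
is a weak equivalence of $G$-operads. By the preceding Proposition, $\cO \times \cO'$ is again an $\Ninfty$ operad, so this is a zig-zag of weak equivalences lying entirely inside $\Nop$.

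It then remains only to pass to the homotopy category. Since $\Ho(\Nop)$ is formed by inverting precisely the maps that are levelwise $G \times \Sigma_n$-equivalences, the images of $p$ and $p'$ become isomorphisms there; hence $p' \circ p^{-1} \colon \cO \to \cO'$ is an isomorphism in $\Ho(\Nop)$, which is the assertion.

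I do not expect any genuine obstacle: all of the substance has already been packaged into the two preceding results — namely that $\cO \times \cO'$ is an $\Ninfty$ operad with $\mC(\cO \times \cO') = \mC(\cO) \cap \mC(\cO')$, and that an inclusion of associated indexing systems forces the corresponding projection to be a weak equivalence (because the relevant $n$th spaces are universal spaces for the same family of subgroups of $G \times \Sigma_n$, so the fixed-point condition of Definition~\ref{defn:weak} is automatic). What is left is the purely formal step of converting a length-two zig-zag of weak equivalences into an isomorphism in the localized category, which needs no further input.
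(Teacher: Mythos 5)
Your argument is exactly the paper's: apply Corollary~\ref{cor:OperadComparison} in both directions to obtain that the two projections in the zig-zag $\cO \from \cO \times \cO' \to \cO'$ are weak equivalences, then invert them in $\Ho(\Nop)$. No gaps, and no meaningful divergence from the published proof.
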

\begin{proof}
Apply Corollary~\ref{cor:OperadComparison} twice to the zig-zag $\cO
\from \cO \times \cO' \to \cO'$.
\end{proof}

\begin{corollary}
If $\mC(\cO)\subset\mC(\cO')$, then in the homotopy category, we have
a map
\[
\cO\to\cO'.
\]
\end{corollary}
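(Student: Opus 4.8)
The plan is to reduce the statement to the two corollaries immediately preceding it by exhibiting an explicit zig-zag realizing the desired map. First I would form the product operad $\cO\times\cO'$, which is an $\Ninfty$ operad by the preceding Proposition, together with its two projection maps of $G$-operads
\[
\cO \xleftarrow{\;p_{1}\;} \cO\times\cO' \xrightarrow{\;p_{2}\;} \cO'.
\]
Since these are genuine maps of $\Ninfty$ operads, both $p_{1}$ and $p_{2}$ have images in the homotopy category $\Ho(\Nop)$.

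Next I would observe that the hypothesis $\mC(\cO)\subset\mC(\cO')$ is exactly what is needed to apply Corollary~\ref{cor:OperadComparison}: by that corollary $p_{1}\colon \cO\times\cO'\to\cO$ is a weak equivalence, hence becomes an isomorphism in $\Ho(\Nop)$. I would then take the composite
\[
\cO \xrightarrow{\;p_{1}^{-1}\;} \cO\times\cO' \xrightarrow{\;p_{2}\;} \cO'
\]
in $\Ho(\Nop)$; this is the required map $\cO\to\cO'$.

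There is essentially no obstacle here — the argument is purely formal once the two corollaries are in hand — but the one point that deserves a sentence of care is the same one flagged in the proof of the preceding corollary: one must use that the intermediate object $\cO\times\cO'$ genuinely lies in $\Nop$ (so that the localization at weak equivalences makes $p_{1}$ invertible there), which is guaranteed by the Proposition stating that a product of $\Ninfty$ operads is again an $\Ninfty$ operad. The only mild asymmetry with the $\mC(\cO)=\mC(\cO')$ case is that here we invert $p_{1}$ but not $p_{2}$, since $p_{2}$ need not be a weak equivalence when the inclusion of indexing systems is proper.
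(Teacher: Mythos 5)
Your proof is correct and is precisely the argument the paper intends: the paper leaves this corollary without an explicit proof, but the preceding corollary's proof (applying Corollary~\ref{cor:OperadComparison} to the zig-zag $\cO \leftarrow \cO\times\cO' \to \cO'$) makes clear that inverting $p_1$ and composing with $p_2$ is the expected reasoning, and your remark that $p_2$ need not be a weak equivalence correctly identifies the asymmetry with the $\mC(\cO)=\mC(\cO')$ case.
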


In order to go further, we calculate the derived space of maps between
two operads $\cO$ and $\cO'$.

\begin{proposition}\label{prop:mapcontract}
The derived mapping space from any $G$-operad $\aO$ to an $N_\infty$
operad $\aO'$ is either empty or contractible.
\end{proposition}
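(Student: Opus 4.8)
The plan is to compute the derived mapping space $\Map(\aO, \aO')$ by first replacing $\aO$ with a cofibrant $G$-operad and $\aO'$ with a fibrant one, then analyzing the resulting space level by level using the universal space property of $\aO'_n$. The key observation is that a map of operads $\aO \to \aO'$ is, in each arity $n$, a $G \times \Sigma_n$-equivariant map $\aO_n \to \aO'_n$, and since $\aO'_n$ is a universal space for the family $\cF_n(\aO')$, the space of $G \times \Sigma_n$-maps $\Map^{G \times \Sigma_n}(\aO_n, \aO'_n)$ is either empty (if some isotropy subgroup of $\aO_n$ fails to lie in $\cF_n(\aO')$) or contractible (if all isotropy of $\aO_n$ lies in $\cF_n(\aO')$, since mapping a $G \times \Sigma_n$-CW complex with isotropy in a family $\cF$ into a universal space $E\cF$ gives a contractible mapping space). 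Thus each arity contributes either the empty space or a contractible space, and the dichotomy is uniform across arities once it holds in one arity where the relevant obstruction appears.

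First I would set up the model-categorical framework: the category of $G$-operads carries a model structure (transferred from $G$-symmetric sequences via the free-forgetful adjunction) in which weak equivalences are the levelwise $G \times \Sigma_n$-equivalences, and cofibrant operads are retracts of cellular ones built by iteratively attaching free cells along boundary inclusions. For the derived mapping space I would use a cofibrant replacement $Q\aO \to \aO$; the spaces $(Q\aO)_n$ are then built from $G \times \Sigma_n$-CW complexes with free $\Sigma_n$-action (one can arrange this since $\aO$ is an $\Ninfty$ operad and the operadic cells can be taken $\Sigma$-free), and $\aO'$ is already fibrant in the relevant sense. The derived mapping space is then the simplicial mapping space $\Map_{G\text{-}\mathrm{Op}}(Q\aO, \aO')$, and I would analyze it via the tower of matching objects / the skeletal filtration of $Q\aO$.

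The main technical step — and the one I expect to be the principal obstacle — is controlling how the operadic structure maps constrain the extension of a partial map. When one builds $Q\aO$ cell by cell and tries to extend a map to $\aO'$ over a newly attached free cell in arity $n$, the operadic composition axioms force the values to be compatible with already-chosen values in lower arities; the relevant obstruction space is the space of $G \times \Sigma_n$-maps from the cell (of the form $G \times \Sigma_n \times D^k$, or rather an induced cell $(G \times \Sigma_n) \times_\Lambda D^k$ with $\Lambda$ intersecting $\Sigma_n$ trivially, since the action is $\Sigma$-free) into $\aO'_n$, computed relative to the boundary. Because $\aO'_n = E\cF_n(\aO')$ and $\Lambda$ is a graph subgroup, this obstruction space is either empty — precisely when $\Lambda \notin \cF_n(\aO')$, i.e.\ when some admissible-set condition fails — or contractible, in which case the extension exists and is unique up to contractible choice. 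Running the induction: either we hit an empty obstruction at some stage, forcing $\Map(\aO,\aO') = \varnothing$, or every stage is contractible, and the inverse limit of the tower of contractible spaces (with contractible fibers) is contractible. I would phrase the conclusion by noting that $\mC(\aO) \subseteq \mC(\aO')$ is exactly the condition that no obstruction ever vanishes nontrivially, recovering compatibility with the earlier corollaries. The one point requiring care is verifying that the tower of fibrations in the obstruction-theory argument genuinely has contractible (not merely nonempty) fibers at each stage, which follows from the stronger statement that $\Map^{G \times \Sigma_n}(X, E\cF)$ is contractible for any $G \times \Sigma_n$-CW complex $X$ with isotropy in $\cF$, applied relative to the boundary inclusion.
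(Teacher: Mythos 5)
Your proposal is essentially the same argument as the paper's: cofibrantly replace $\aO$ by a cellular operad, filter by the cell attachments, reduce via the pushout/skeletal tower to mapping out of free operad cells, and invoke the universal-space property of the $\aO'_n$ to see that each stage (and hence the limit) is empty or contractible. The only structural difference is cosmetic: the paper passes through $G$-operads in simplicial sets via the $\Sing / |{-}|$ adjunction in order to cite Rezk's transferred model structure directly, whereas you work topologically throughout; this does not change the substance.

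One small slip worth flagging: you justify taking the cells of $Q\aO$ to have $\Sigma$-free stabilizers ``since $\aO$ is an $\Ninfty$ operad,'' but the proposition concerns an arbitrary $G$-operad $\aO$ as the source; only $\aO'$ is assumed $\Ninfty$. The fix is cheap and you have essentially already said it: an operadic cell $(G\times\Sigma_n)\times_\Lambda D^k$ with $\Lambda\cap\Sigma_n$ nontrivial admits no $G\times\Sigma_n$-map into $\aO'_n$ (since $\Sigma_n$ acts freely on the $N_\infty$ operad $\aO'$), so such a cell contributes the empty space, which is allowed by the dichotomy. You don't need, and in general cannot assume, $\Sigma$-freeness of the source cells.
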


\begin{proof}
We perform the calculation in the category of $G$-operads in
simplicial sets.  Since $G$ is discrete, there is a model structure on
$G \times \Sigma_n$-simplicial sets where the weak equivalences and
fibrations are detected on passage to fixed point spaces (and the
cofibrations are the monomorphisms)~\cite[3.1.9]{Rezk}.  Let
$\Coll_{\gsset}$ denote the category of symmetric sequences of
$G$-simplicial sets.  Since this is equivalent to the product (over
$n \geq 0$) of the categories of $G \times \Sigma_n$-simplicial sets,
there is a levelwise model structure on $\Coll_{\gsset}$ in which the
weak equivalences and fibrations are detected pointwise.  The
forgetful functor from the category of $G$-operads in simplicial sets
to $\Coll_{\gsset}$ has a left adjoint free functor, and the transfer
argument of~\cite[3.2.10]{Rezk} applies to lift the model structure on
$\Coll_{\gsset}$ to one on $G$-operads in simplicial sets.  Note that
these model structures are simplicial and cofibrantly-generated.

Let $\Gop(\aT)$ denote the category of $G$-operads in topological
spaces and let $\Gop(\sset)$ denote the category of $G$-operads in
simplicial sets.  The geometric realization and singular complex
functors preserve products and so induce an adjoint pair
\[
\Sing \colon \Gop(\aT) \rightleftarrows \Gop(\sset) \colon |-|.
\]  
Furthermore, since both of these functors preserve weak
equivalences~\cite[3.1.10]{Rezk}, we can compute the derived mapping
space in either category.  More precisely, the fact that $|-|$ and
$\Sing$ preserve equivalences and are such that the unit and co-unit
of the adjunction are natural weak equivalences implies that there is
a weak equivalence
\[
L^H \Gop(\aT) (\aO, \aO') \htp L^H \Gop(\sset)
(\Sing \aO, \Sing \aO'),
\]
where $L^H$ denotes the Dwyer-Kan simplicial mapping space.

This latter can be computed as the internal mapping space in the model
category of operads in $G$-simplicial sets after replacing the source
with a cofibrant object and the target with a fibrant object.  In this
model structure a cofibrant replacement of a $G$-operad can be
computed as a retract of a cell operad.  Moreover, the fibrant objects
are precisely the levelwise fibrant objects and so in particular
$\Sing \aO'$ is fibrant.

Thus, we can compute the mapping space by resolving the $G$-operad
$\Sing \aO$ as a cell object.  That is, $\Sing \aO = \colim_n X_n$,
where each stage $X_n$ can be described as the (homotopy) pushout
\[
\xymatrix{
\mathbb{F} A_n \ar[r] \ar[d] & X_{n-1} \ar[d] \\
\mathbb{F} B_n \ar[r] & X_{n}.
}
\]  
Here $\mathbb{F}$ is the free functor from $\Coll_{\gsset}$ to
$\Gop(\sset)$.  Therefore, there is an equivalence
\[
\Map(\Sing \aO,-) \htp \holim_n \Map(X_n, -).
\]
It now suffices to show that $\Map(X_n, -)$ is contractible.  Inductively, we can use the
pushout description of $X_n$ above to reduce to the case of free
$G$-operads.  Finally, observe that maps from a free operad into any
$N_\infty$ operad are contractible or empty: by adjunction, they are
computed on the level of symmetric sequences, and any $N_\infty$
operad is made up of universal spaces.
\end{proof}


\begin{corollary}
The functor $\mC$ is a faithful embedding of $\Ho(\Nop)$ into
$\poset$.
\end{corollary}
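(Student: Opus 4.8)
The plan is to deduce this directly from Proposition~\ref{prop:mapcontract} together with the corollaries already in hand. The key observation is that $\poset$ is a poset, so for any two $\Ninfty$ operads $\cO,\cO'$ the hom-set $\poset\big(\mC(\cO),\mC(\cO')\big)$ has at most one element; hence a functor into $\poset$ is faithful exactly when its source category has hom-sets of cardinality at most one. So everything reduces to checking that $\Ho(\Nop)(\cO,\cO')$ has at most one element.

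First I would recall that $\Ho(\Nop)(\cO,\cO')$ is computed as $\pi_0$ of the derived mapping space of $G$-operads between $\cO$ and $\cO'$ --- this is precisely the framework built in the proof of Proposition~\ref{prop:mapcontract}, using the model structure on $G$-operads in simplicial sets transferred from the levelwise model structure on $\Coll_{\gsset}$. Proposition~\ref{prop:mapcontract} then says this derived mapping space is empty or contractible, so $\pi_0$ of it has at most one element, which gives faithfulness. To identify which of the two cases occurs --- and thus also see that $\mC$ is an embedding on isomorphism classes of objects, matching the word ``embedding'' in the statement --- I would invoke Corollary~\ref{cor:OperadComparison} and its consequences: when $\mC(\cO)\subset\mC(\cO')$ the projection $\cO\times\cO'\to\cO$ is a weak equivalence, producing a map $\cO\to\cO'$ in $\Ho(\Nop)$, so the mapping space is nonempty; and when $\mC(\cO)\not\subset\mC(\cO')$ the earlier proposition that any operad map $\cO\to\cO'$ forces $\mC(\cO)\subset\mC(\cO')$ on admissible sets shows there is no map at all, so the mapping space is empty.

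I expect no real obstacle here: all of the genuine difficulty has been absorbed into Proposition~\ref{prop:mapcontract}, whose proof (the cellular resolution of $\Sing\cO$ and the reduction to free operads mapping into a collection of universal spaces) is the technical heart. The only points needing care in writing out the corollary are bookkeeping: that $\mC$ descends to $\Ho(\Nop)$ at all, which is already established since weakly equivalent $\Ninfty$ symmetric sequences have equal $\mC$-values, and the identification of $\Ho(\Nop)(\cO,\cO')$ with $\pi_0$ of the Dwyer--Kan mapping space in the transferred model structure --- neither of which requires new argument.
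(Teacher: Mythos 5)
Your proof is correct and takes the same route the paper intends: the corollary follows directly from Proposition~\ref{prop:mapcontract} (which makes each hom-set in $\Ho(\Nop)$ have at most one element) together with the earlier corollaries of Corollary~\ref{cor:OperadComparison} (which establish that the derived mapping space is nonempty precisely when $\mC(\cO)\subset\mC(\cO')$, and hence that $\mC(\cO)=\mC(\cO')$ forces $\cO\cong\cO'$). The paper leaves this unpacking implicit, and your spelling out of the poset/thin-category reduction and the two cases is exactly the right bookkeeping.
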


\subsection{Towards fullness}\label{sec:fullness}

We now explain why we believe that in fact $\mC$ is an equivalence of
categories.  We will use the categorical Barratt-Eccles operad of
Guillou-May~\cite[2.3]{guilloumay}.  To produce operads in spaces, we
simply take the geometric realization of the nerve.

\begin{definition}
The categorical Barratt-Eccles operad is defined by
\[
\mathbb O_{n}=i^*Map(G,\Sigma_{n}),
\]
where $i^*\colon \Set\to\Cat$ is the right-adjoint to the ``object''
functor.

The operadic structure maps are simply induced by the embeddings of
products of symmetric groups into bigger ones.
\end{definition}

The functor $i^*$ assigns to each set the category whose objects are
the set and for which there is a unique morphism in each direction
between any pair of objects.

\begin{remark}
The operad $\mathbb O$ is the norm from trivial categories to
$G$-categories of the Barratt-Eccles operad $\underline{\Sigma}$,
defined by
\[
\underline{\Sigma}_{n}=i^*\Sigma_{n}.
\]
From this perspective, it is immediate that $\mathbb O_{n}$ has fixed
points for all subgroups $H$ of $G\times\Sigma_{n}$ for which
$H\cap\Sigma_{n}=\{e\}$.
\end{remark}

Associated to an element of $\Coef(\Set)$ is a collection of families
$\cF_{n}$ of subgroups of $G\times\Sigma_{n}$: $T$ is an $H$-set in
our coefficient system if and only if $\Gamma_{T}$ is in
$\cF_{|T|}$. Using this, we can build a sub-symmetric sequence in
categories of $\mathbb O$.

\begin{definition}
If $\cF_{\ast}$ is a sequence of families of subgroups of
$G\times\Sigma_{\ast}$, then let
\[
\mathbb O^{\cF}_{n}=i^*\{f\in\mathbb O_{n} \,|\, Stab(f)\in\cF_{n}\}.
\]
\end{definition}

Since the family is closed under conjugation, for each $n$, $\mathbb
O^{\cF}_{n}$ is a $G\times\Sigma_{n}$-subcategory of $\mathbb O$. By
construction, the geometric realization of $\mathbb O^{\cF}$ is an
$\Ninfty$ symmetric sequence, and similarly, we immediately have the
following.

\begin{proposition}
Let $\cF_{\ast}$ be the sequence of families of subgroups associated
to an $\Ninfty$ symmetric sequence $\cO$. Then we have
\[
\mC(|\mathbb O^{\cF}|)=\mC(\cO).
\]
\end{proposition}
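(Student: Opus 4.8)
The plan is to reduce the claim to the levelwise statement that $|\mathbb O^{\cF}_n|$ is a universal space for $\cF_n$, and then to prove that by a direct fixed-point computation, using that the nerve of an indiscrete category is contractible when it is nonempty and empty otherwise.

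First I would note that, by the very definition of admissibility and of $\mC$, the functor $\mC$ on $\Ninfty$ symmetric sequences depends only on the associated sequence of families: an $H$-set $T$ with $|T|=n$ is admissible for a symmetric sequence $\cP$ exactly when $\cP_n^{\Gamma_T}\neq\varnothing$, i.e. when $\Gamma_T\in\cF_n(\cP)$. Since $|\mathbb O^{\cF}|$ has already been seen to be an $\Ninfty$ symmetric sequence, it therefore suffices to show $\cF_n(|\mathbb O^{\cF}|)=\cF_n=\cF_n(\cO)$ for every $n$. Fix $\Gamma\subseteq G\times\Sigma_n$. As $G$ is finite, geometric realization and the nerve commute with the finite limit $(-)^\Gamma$, so $|\mathbb O^{\cF}_n|^\Gamma=|N\bigl((\mathbb O^{\cF}_n)^\Gamma\bigr)|$, and $(\mathbb O^{\cF}_n)^\Gamma$ is the full subcategory of the indiscrete category $i^*\Map(G,\Sigma_n)$ on the $f$ with $\Gamma\subseteq\Stab(f)\in\cF_n$; being full in an indiscrete category, it is itself indiscrete. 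Hence $|\mathbb O^{\cF}_n|^\Gamma$ is contractible when some such $f$ exists and empty otherwise. If it is nonempty, then $\Gamma$ is subconjugate to the member $\Stab(f)$ of the family $\cF_n$, so $\Gamma\in\cF_n$; this gives the inclusion $\cF_n(|\mathbb O^{\cF}|)\subseteq\cF_n$ for free.

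For the reverse inclusion I would argue as follows. Given $\Gamma\in\cF_n$, freeness of the $\Sigma_n$-action forces $\Gamma$ to meet $\Sigma_n$ trivially, so by Proposition~\ref{prop:graph} $\Gamma=\Gamma_T$ for an $H$-set $T$ with $|T|=n$, and I must exhibit $f\in\Map(G,\Sigma_n)$ with $\Gamma_T\subseteq\Stab(f)\in\cF_n$. Here one uses that $\mathbb O_n=N_e^G\underline{\Sigma}_n$ is a universal space for the full family of graph subgroups (the content of the Remark above; cf.~\cite{HHR}), so that the orbits of the $G\times\Sigma_n$-set $\Map(G,\Sigma_n)$ have graph stabilizers and exhaust all graph subgroups up to subconjugacy, together with the closure properties of $\cF_n$ — it contains all the subgroups $K\times\{1\}$ and is closed under the operations recorded in Lemmata~\ref{lem:DisjointUnion}--\ref{lem:SelfInduction} — to produce the required $f$: for a trivial $H$-set a constant function works (its stabilizer is $G\times\{1\}\in\cF_n$), and the general case is assembled from these via disjoint unions and inductions. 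I expect this last step — showing that every $\Gamma\in\cF_n$ sits below the stabilizer of some point of $\Map(G,\Sigma_n)$ that still lies in $\cF_n$ — to be the only real work; the rest (that $\mC$ sees only the families, and that the relevant fixed points are nerves of indiscrete categories) is formal. Once it is done, $\cF_n(|\mathbb O^{\cF}|)=\cF_n$ and the proposition follows.
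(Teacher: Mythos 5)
Your reduction of the claim to the levelwise statement about families, the observation that $\mC$ on $\Ninfty$ symmetric sequences is determined by the associated sequence $\cF_\ast$, and the computation that $(\mathbb O^{\cF}_n)^\Gamma$ is an indiscrete category whose realization is contractible or empty according to whether some $f$ has $\Gamma\subseteq\Stab(f)\in\cF_n$ — all of this is correct, and so is the easy inclusion $\cF_n(|\mathbb O^{\cF}|)\subseteq\cF_n$. You are also right that the reverse inclusion is where the content lies; the paper passes over this as "immediate," but it is not a formality.

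However, your plan for the reverse inclusion has two problems. The first is a mismatch of hypotheses: the closure properties you cite, Lemmata~\ref{lem:DisjointUnion}--\ref{lem:SelfInduction}, are proved using the operad composition maps, and are therefore not available for the families attached to a bare $\Ninfty$ \emph{symmetric sequence}, which is all the Proposition assumes. The second is more serious. Your assertion that "the orbits of $\Map(G,\Sigma_n)$ \ldots exhaust all graph subgroups up to subconjugacy" conflates two different statements: every graph subgroup $\Gamma$ satisfies $\Map(G,\Sigma_n)^{\Gamma}\neq\emptyset$, but $\Gamma$ need not itself occur as a stabilizer. Already for $G=C_2$, $n=2$, the trivial subgroup $\{(e,1)\}$ is the stabilizer of no point of $\Map(C_2,\Sigma_2)$; the only stabilizers that occur are $C_2\times\{1\}$ and the graph of the isomorphism $C_2\cong\Sigma_2$. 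So picking $f$ with $\Gamma\subseteq\Stab(f)$ gives you no control over $\Stab(f)$, which may properly contain $\Gamma$, and there is no a priori reason for \emph{any} such $\Stab(f)$ to lie in $\cF_n$. Concretely, take $G=C_2\times C_2=\{e,a,b,ab\}$, $n=2$, and $\Gamma_\phi$ the graph of the nontrivial map $\{e,a\}\to\Sigma_2$. A direct computation shows that every $\Gamma_\phi$-fixed $f\colon G\to\Sigma_2$ has $\Stab(f)$ equal to the graph of a nontrivial homomorphism defined on all of $G$. If $\cF_2$ is the smallest family containing $\Gamma_\phi$ and all $H\times\{1\}$, none of those stabilizers lie in $\cF_2$, so $\Gamma_\phi\in\cF_2$ while $\Gamma_\phi\notin\cF_n(|\mathbb O^{\cF}|)$. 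Thus the reverse inclusion does not follow by "assembling from trivial sets via disjoint unions and inductions," and in fact requires either genuine extra hypotheses on $\cF_\ast$ or a substantively different argument; your sketch leaves the real work untouched.
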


We make the following conjecture, which would establish an equivalence
of categories between $\Ho(\Nop)$ and $\poset$.

\begin{conjecture}\label{thm:Surjective}
If $\cC$ is an indexing system and if $\cF$ is the associated sequence
of families of subgroups, then $\mathbb O^{\cF}$ is a sub-operad of
$\mathbb O$.
\end{conjecture}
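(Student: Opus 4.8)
The plan is to show that the subcategories $\mathbb O^{\cF}_n \subset \mathbb O_n$ are closed under the operadic composition maps, so that they form a sub-operad. Since the operadic structure on $\mathbb O$ is induced by the block-sum inclusions $\Sigma_k \wr \Sigma_n \hookrightarrow \Sigma_{kn}$ (and more generally $\Sigma_{n_1} \times \cdots \times \Sigma_{n_k} \hookrightarrow \Sigma_{n_1+\cdots+n_k}$ composed with the wreath structure), what must be checked is: given a point $f \in \mathbb O_k$ with $\mathrm{Stab}(f) \in \cF_k$ and points $g_i \in \mathbb O_{n_i}$ with $\mathrm{Stab}(g_i) \in \cF_{n_i}$, the composite $\gamma(f; g_1,\dots,g_k) \in \mathbb O_{n_1+\cdots+n_k}$ has stabilizer in $\cF_{n_1+\cdots+n_k}$. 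Translating through Proposition~\ref{prop:graph}, the stabilizer of $f$ (with respect to the $G$-action, which is via $\mathrm{Map}(G,\Sigma_k)$ precomposed with left translation) records an $H$-set structure $T_f$ on $\underline{k}$ for $H = \mathrm{Stab}(f) \cap G$, and similarly each $g_i$ records an $H_i$-set $T_{g_i}$.

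The key observation I would isolate is that the stabilizer of the operadic composite is, up to the identification of subgroups with $H$-sets, exactly the \emph{indexed coproduct} $\coprod_{t \in T_f} T_{g_t}$ in the sense of Corollary~\ref{cor:GDisjointUnion}: the base $H$-set is $T_f$, and over each point $t$ we place the $\mathrm{Stab}(t)$-set $T_{g_i}$ for the appropriate $i$, with the compatibility condition across the $H$-orbit of $t$ automatically satisfied because the composite is defined by the wreath-product formula. Once this identification is made, the conclusion is immediate: $\cF$ arises from an indexing system $\cC$, which by Definition~\ref{def:poset} (via closure under disjoint union, Cartesian product, self-induction, and passage to subobjects) is closed precisely under the indexed coproducts of Corollary~\ref{cor:GDisjointUnion}; hence $\coprod_{t\in T_f} T_{g_t} \in \cC$, so its graph lies in $\cF_{n_1+\cdots+n_k}$, which is exactly the statement that the composite lies in $\mathbb O^{\cF}_{n_1+\cdots+n_k}$. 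I would also note the identity element $1 \in \mathbb O_1$ lies in $\mathbb O^{\cF}_1$ trivially since $\cF_1$ contains all $H \times \{1\}$, and that unitality and associativity of $\mathbb O^{\cF}$ are inherited from $\mathbb O$ since the inclusion is levelwise full and $G\times\Sigma_n$-equivariant.

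The main obstacle I anticipate is purely bookkeeping: making the identification of $\mathrm{Stab}\big(\gamma(f;g_1,\dots,g_k)\big)$ with the graph of the homomorphism classifying $\coprod_{t\in T_f} T_{g_t}$ completely explicit. This is precisely the computation carried out (in the linear isometries / little disks setting) inside the proofs of Lemma~\ref{lem:CartesianProduct} and especially Lemma~\ref{lem:SelfInduction} — the formula $\mathrm{Ind}(g) = \big(\sigma(g),(\pi(k_1(g)),\dots,\pi(k_m(g)))\big) \in \Sigma_m \wr \Sigma_n$ is exactly the homomorphism classifying the relevant composite permutation — so the combinatorics is already in hand; one must simply check that the same formula computes the stabilizer of the operadic composite in $\mathbb O$. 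A secondary subtlety is handling the case involving $\cO_0$ (empty fibers, i.e. the $T_{g_i}$ of cardinality zero), but this is controlled by Proposition~\ref{prop:TrivialSets} and the truncation condition (Lemma~\ref{lem:Summands}) exactly as in the proof that $\mC(\cO)$ is a truncation subcoefficient system, so no new ideas are needed there.
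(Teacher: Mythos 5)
This statement is labeled a \emph{conjecture} in the paper --- the authors explicitly do not prove it, so there is no proof in the paper to compare against. That said, your approach is in the right spirit (and is broadly the strategy that was later used by others who did resolve the conjecture), so the comparison I can give is against what a correct proof actually has to handle.

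The genuine gap is in your central claim, ``the stabilizer of the operadic composite is, up to the identification of subgroups with $H$-sets, exactly the indexed coproduct $\coprod_{t\in T_f} T_{g_t}$,'' with compatibility ``automatically satisfied because the composite is defined by the wreath-product formula.'' This is not quite right, and the failure is not mere bookkeeping. First, the $G$-projection $H'$ of $\mathrm{Stab}(\gamma(f;g_1,\dots,g_k))$ can be a \emph{proper} subgroup of the intersection of the corresponding projections for $f$ and the $g_i$: an element $g\in H_f$ whose induced block permutation moves $i$ to $j$ but for which $g_i$ and $g_j$ do not intertwine coherently under $g$ is simply absent from $\mathrm{Stab}(h)$. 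So ``$\coprod_{t\in T_f}T_{g_t}$'' as you describe it need not be a well-defined $H_f$-set at all; the compatibility hypothesis of Corollary~\ref{cor:GDisjointUnion} is precisely what carves out $H'$, and one has to \emph{prove} that over $H'$ the resulting fiber over each block $i$ coincides with the restriction of $T_{g_i}$ to $\mathrm{Stab}_{H'}(i)$ (this does go through once the conventions in Definition~\ref{def:goper} are respected, but it is a computation, not an automaticity). Second, and more seriously, your reference to Lemma~\ref{lem:SelfInduction} and ``the wreath-product formula'' implicitly assumes all the inner arities $n_1,\dots,n_k$ are equal. In the general composition the blocks have different sizes, and if $T_f$ has an orbit along which the $n_i$ are not constant, the element $\sigma_g\in\Sigma_N$ need not lie in the block-preserving subgroup $\Sigma_{n_1}\times\cdots\times\Sigma_{n_k}\rtimes(\text{block permutations})$, the ``block map'' $T_h\to i_{H'}^*T_f$ is not $H'$-equivariant, and the indexed-coproduct description of $T_h$ breaks down --- the would-be fibers over a single $T_f$-orbit have different cardinalities. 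A correct argument must handle this mixed-arity case (for instance by a more careful orbit-by-orbit bookkeeping or a different inductive scheme), and your proposal does not acknowledge it.

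So: right strategy, but the central identification is stated too strongly, the ``automatic'' compatibility needs to be an actual lemma, and the unequal-arity case is an unaddressed (and non-cosmetic) issue. For calibration, that the authors left this as an open conjecture is a signal that the bookkeeping is not as routine as your proposal suggests.
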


An interesting question (about which we do not have a conjectural
answer) is whether or not all homotopy types in $\Nop$ are realized by
the operads that ``arise in nature'', i.e., the equivariant Steiner
and linear isometries operads.

\section{The structure of $\Ninfty$-algebras}\label{sec:Algebras}

Although we can consider algebras over an $\Ninfty$ operad $\aO$ in
any symmetric monoidal category enriched over $G$-spaces, we are most
interested in the examples of orthogonal $G$-spectra with the smash
product and $G$-spaces with the Cartesian product.  In both of these
examples, the notion of weak equivalence of operads given in
Definition~\ref{defn:weak} is validated by the fact that a weak
equivalence of $\Ninfty$ operads induces a Quillen equivalence of the
associated categories of algebras.  (See
Appendix~\ref{sec:monadicalgebras} for details.)  Therefore, the
associated data of the coefficient system captures all of the relevant
structure.  We now turn to describing this structure in geometric
terms.

Specifically, the name $\Ninfty$ refers to the additional structure
encoded by an $\Ninfty$ operad: norms, or more precisely indexed
products.  In spectra with the smash product, these arise as the
Hill-Hopkins-Ravenel norm, and the operadic structure encodes the
analogue of the counit of the adjunction between the norms and the
forgetful functors for commutative ring spectra.  In spaces with the
Cartesian product, these arise as coinduction, and the operadic
structure maps encode the transfer in algebras over the Steiner
operads.

In the following definition, we use the technical device of exploiting
the equivalence of categories between orthogonal $G$-spectra on the
complete universe and orthogonal $G$-spectra on a trivial
universe~\cite[\S VI.1]{MM}, as pioneered in the Hill-Hopkins-Ravenel
construction of the norm.  Specifically, given an orthogonal
$G$-spectrum $X$ on a complete universe, we forget to the trivial
universe, perform the construction indicated in the formula, and then
left Kan extend back to the complete universe.

\begin{definition}
Let $T$ be an $G$-set.
\begin{enumerate}
\item If $E$ is an orthogonal $G$-spectrum, then let
\[
N^{T}E=\left(G\times\Sigma_{|T|}/\Gamma_{T+}\right)\wedge_{\Sigma_{|T|}}
E^{\wedge |T|}.
\]
\item If $X$ is a $G$-space, then let
\[
N^{T}X=\left(G\times\Sigma_{|T|}/\Gamma_{T}\right)\times_{\Sigma_{|T|}}
X^{\times |T|}.
\]
\end{enumerate}
\end{definition}

As stated, there is a potential conflict of notation --- $N^{T}E$
could refer to the preceding definition or to the Hill-Hopkins-Ravenel
norm. This ambiguity is resolved by the following proposition, which
uses the fact that $G$-spaces and orthogonal $G$-spectra are tensored
over $G$-spaces.  If $X$ and $Y$ are $G$-spaces, we write $F(X,Y)$ to
denote the space of all continuous maps from $X$ to $Y$, given the
conjugation $G$-action.

\begin{proposition}
Let $T$ be an $H$-set.
\begin{enumerate}
\item  Let $E$ be an orthogonal $G$-spectrum. Then a decomposition
$T=\coprod_{i} H/K_{i}$ gives a homeomorphism
\[
\left(G\times\Sigma_{|T|}/\Gamma_{T+}\right)\wedge_{\Sigma_{|T|}} E^{|T|}\cong G_{+}\wedge_{H}\bigwedge_{i} N_{K_{i}}^{H}i_{K_{i}}^{\ast} E,
\]
where $N_{K_{i}}^{H}$ is the Hill-Hopkins-Ravenel norm.
\item Let $X$ be a $G$-space. Then we have a homeomorphism
\[
\left(G\times\Sigma_{|T|}/\Gamma_{T}\right)\times_{\Sigma_{|T|}} X^{\times |T|}\cong G\times_{H}F(T,X).
\]
\end{enumerate}
\end{proposition}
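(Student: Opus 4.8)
The plan is to verify both homeomorphisms by writing everything in terms of the same underlying combinatorial data, namely an $H$-set $T$ of cardinality $n$ together with its classifying homomorphism $\phi\colon H\to\Sigma_n$ and the corresponding graph subgroup $\Gamma_T\subset G\times\Sigma_n$. The fundamental point is that $G\times\Sigma_n/\Gamma_T$, as a $(G\times\Sigma_n)$-set, decomposes according to the orbits of $\Sigma_n$ acting on it; since $\Gamma_T\cap(\{1\}\times\Sigma_n)=\{1\}$ (Proposition~\ref{prop:graph}), $\Sigma_n$ acts freely on the right, and the quotient $(G\times\Sigma_n/\Gamma_T)/\Sigma_n\cong G/H$. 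Thus $G\times\Sigma_n/\Gamma_T$ is a free $\Sigma_n$-set over $G/H$, and choosing coset representatives $g_1,\dots,g_m$ for $G/H$ (with $m=[G:H]$) trivializes it as $\coprod_{i=1}^m \Sigma_n$ with a twisted $G$-action. I would first record this structural description carefully, since it drives both parts.

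For part (ii), I would compute directly with the $G$-space $X$. The key identity is the classical untwisting adjunction: for a based or unbased $G$-space and a free $\Sigma_n$-set $P$ over $G/H$, one has $P\times_{\Sigma_n} X^{\times n}\cong G\times_H \operatorname{Map}(\underline n, X)$ where the $H$-action on $\operatorname{Map}(\underline n,X)=F(T,X)$ uses $\phi$ to permute the source coordinates. Concretely, a point of the left side is an equivalence class of tuples $(g\Gamma_T\cdot\sigma, x_1,\dots,x_n)$; using the coset representatives to pick the $G/H$-coordinate and the residual $\Sigma_n$ to normalize, each class has a canonical representative supported over $g_1$, and the data that remains is exactly an element of $F(T,X)$, with the $H$-ambiguity in the choice of representative over $g_1$ inducing precisely the conjugation/permutation $H$-action on $F(T,X)$. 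Checking that the $G$-actions match on both sides is then a bookkeeping exercise with the $g\cdot g_i=g_{\sigma(i)}k_i(g)$ formula that already appears in the proof of Lemma~\ref{lem:SelfInduction}; I would cite that computation rather than redo it.

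For part (i), the strategy is the same but now the smash product replaces the Cartesian product and the Hill-Hopkins-Ravenel norm appears. Given a decomposition $T=\coprod_i H/K_i$ into $H$-orbits, the classifying homomorphism $\phi$ factors through $\prod_i (\text{wreath pieces})$ and $\Gamma_T$ sits inside $G\times(\Sigma_{|H/K_1|}\times\cdots)$ in block form. I would first handle a single orbit $T=H/K$: in that case the graph subgroup construction is precisely the one appearing in the definition of $N_K^H E = (H\times\Sigma_{|H/K|}/\Gamma_{H/K})_+\wedge_{\Sigma}E^{\wedge}$, so the $H$-level statement is a tautology, and passing from $H$ to $G$ is the identity $G_+\wedge_H(-)$ coming from the observation that $G\times\Sigma_n/\Gamma_T\cong G\times_H(H\times\Sigma_n/\Gamma_T)$. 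For a general $T$, smash distributes over the block decomposition: $E^{\wedge n}\cong\bigwedge_i E^{\wedge|H/K_i|}$ compatibly with the wreath-product action, and $(G\times\Sigma_n/\Gamma_{T})_+\wedge_{\Sigma_n}(-)$ breaks up as $G_+\wedge_H\bigwedge_i(H\times\Sigma_{|H/K_i|}/\Gamma_{H/K_i})_+\wedge_{\Sigma_{|H/K_i|}}(-)$, which is $G_+\wedge_H\bigwedge_i N_{K_i}^H i_{K_i}^* E$. I would phrase this via the "indexed smash over $T$" formalism so the distributivity is clean.

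The main obstacle, I expect, is not any single algebraic identity but keeping the three layers of action straight simultaneously: the outer $G$-action (which is "twisted" via the coset formula $g\cdot g_i = g_{\sigma(i)}k_i(g)$), the residual $H$-action that survives on $F(T,X)$ or on the indexed norm after the $\Sigma_n$ is quotiented out, and the internal $\Sigma_n$ (or wreath) action that gets used up in forming $\wedge_{\Sigma_n}$ or $\times_{\Sigma_n}$. Getting the $H$-equivariance of the final identification right --- i.e.\ confirming that the conjugation action on $F(T,X)$ and the induced action on $\bigwedge_i N_{K_i}^H i_{K_i}^* E$ are the correct ones and not some twist thereof --- is where care is needed; I would anchor this by reusing the explicit fixed-point computation from Lemma~\ref{lem:SelfInduction} rather than reconstructing it, and by checking functoriality in $T$ on orbits, where the statement is forced.
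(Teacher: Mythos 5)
Your proposal is correct and matches the paper's approach, which the paper compresses into a single sentence per part ("the first statement is essentially the definition of the norm; the second follows immediately from the Cartesian product endowing $G$-spaces with a symmetric monoidal structure"). You have simply made explicit the untwisting $G\times\Sigma_n/\Gamma_T\cong G\times_H\Sigma_n$ and the orbit-by-orbit reduction that the paper treats as immediate.
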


\begin{proof}
The first statement is essentially the definition of the norm. The second follows
immediately from the Cartesian product endowing $G$-spaces with a
symmetric monoidal structure.
\end{proof}

\begin{proposition}
The assignments
\[
(T,E)\mapsto N^{T}(E)
\quad\text{ and }\quad
(T,X)\mapsto N^{T}(X)
\]
specify strong symmetric monoidal functors in both factors, and
moreover we have natural homeomorphisms
\[
N^{S\times T}(E)\cong N^{S}N^{T}(E)
\quad\text{ and }\quad
N^{S\times T}(X)\cong N^{S}N^{T}(X).
\]
\end{proposition}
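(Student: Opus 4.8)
The plan is to deduce every assertion from the orbit-wise descriptions of $N^T$ recorded in the preceding proposition, thereby reducing the statement to standard structural properties of the Hill--Hopkins--Ravenel norm (in the spectrum case) and of coinduction of $G$-spaces (in the space case). Conceptually, $N^{(-)}(-)$ is the indexed monoidal product along a finite $G$-set, internal to the $G$-symmetric monoidal category of orthogonal $G$-spectra (resp.\ of $G$-spaces with the Cartesian product), and the three claims are instances of the general facts that such indexed products are strong symmetric monoidal in each variable and satisfy a Fubini-type composition law; the proof below simply makes this explicit.

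I would dispose of the space case first, where everything is formal. By the preceding proposition applied with $H = G$ (so that $G \times_G(-)$ is the identity) there is a natural homeomorphism $N^T(X) \iso F(T,X)$. The exponential identities $F(S \amalg T, X) \iso F(S,X) \times F(T,X)$ and $F(T, X \times X') \iso F(T,X) \times F(T,X')$, together with $F(\varnothing, X) \iso \ast$ and $F(T, \ast) \iso \ast$, exhibit $N^{(-)}(-)$ as strong symmetric monoidal in each variable, the associativity, unit, and symmetry constraints being the evident ones inherited from the Cartesian product of $G$-spaces. Finally, the currying homeomorphism $F(S \times T, X) \iso F(S, F(T, X))$, which is compatible with the diagonal $G$-action on $S \times T$, gives $N^{S \times T}(X) \iso N^S N^T(X)$.

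For the spectrum case, I would fix the orbit decomposition $T = \coprod_i G/K_i$; the preceding proposition supplies a natural homeomorphism $N^T(E) \iso \bigwedge_i N_{K_i}^G i_{K_i}^\ast E$, and a direct computation gives $N^\varnothing(E) \iso \bS$. Since the orbits of $S \amalg T$ are those of $S$ together with those of $T$, strong monoidality in the set variable, $N^{S \amalg T}(E) \iso N^S(E) \wedge N^T(E)$, is immediate. For strong monoidality in the spectrum variable one combines the strong symmetric monoidality of restriction with that of each norm $N_{K_i}^G$, from \cite[App.~B]{HHR}: rewriting $N^T(E \wedge E') \iso \bigwedge_i N_{K_i}^G(i_{K_i}^\ast E \wedge i_{K_i}^\ast E') \iso \bigwedge_i \big(N_{K_i}^G i_{K_i}^\ast E \wedge N_{K_i}^G i_{K_i}^\ast E'\big)$ and reordering the smash factors yields $N^T(E) \wedge N^T(E')$, while $N^T(\bS) \iso \bS$ is the corresponding unit statement. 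For the composition formula, one reduces by monoidality in the first variable to $S = G/K$. Here one uses the standard identification of $G$-sets $G/K \times T \iso \Ind_K^G i_K^\ast T$; writing $i_K^\ast T \iso \coprod_j K/L_j$ this identifies the left side with $\coprod_j G/L_j$, so $N^{G/K \times T}(E) \iso \bigwedge_j N_{L_j}^G i_{L_j}^\ast E$. On the other side, $N^{G/K} N^T(E) \iso N_K^G\, i_K^\ast N^T(E) \iso N_K^G N^{i_K^\ast T}(i_K^\ast E) \iso N_K^G \bigwedge_j N_{L_j}^K i_{L_j}^\ast E \iso \bigwedge_j N_K^G N_{L_j}^K i_{L_j}^\ast E \iso \bigwedge_j N_{L_j}^G i_{L_j}^\ast E$, using that restriction commutes with the norm functor (the double coset formula for restricting a norm) and the transitivity $N_K^G N_L^K \iso N_L^G$, both from \cite[App.~B]{HHR}. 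Comparing the two expressions proves $N^{S \times T}(E) \iso N^S N^T(E)$.

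I expect the main obstacle to be the bookkeeping in the composition formula for spectra: invoking the double coset formula for the restriction of a norm and the transitivity $N_K^G N_L^K \iso N_L^G$ at the point-set level, and arranging the identifications so that the resulting homeomorphism is coherent with the symmetric monoidal structures. A related point requiring care is that $N^T$ is defined via the device of forgetting to a trivial universe, performing the construction, and left Kan extending back, so one must be sure the cited properties of the Hill--Hopkins--Ravenel norm are applied to the same point-set model; since \cite[App.~B]{HHR} establishes those properties with precisely this device, this is a matter of care rather than a genuine difficulty.
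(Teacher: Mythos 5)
Your proof is correct, but it follows a genuinely different route from the paper's. The paper's proof works directly from Definition~6.1, using the explicit identification of $\Gamma_{S\times T}$ established in Lemma~\ref{lem:CartesianProduct} as the graph of $\Delta\circ(f\times h)\colon G\to\Sigma_{m}\wr\Sigma_{n}\subset\Sigma_{mn}$; once one sees that $\Gamma_{S\times T}$ sits inside the wreath product this way, the homeomorphism
$\big(G\times\Sigma_{mn}/\Gamma_{S\times T}\big)_+\wedge_{\Sigma_{mn}}E^{\wedge mn}\cong N^{S}N^{T}(E)$
falls out of the wreath-product structure of the symmetric group action, uniformly for spaces and spectra. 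You instead pass through the orbit decomposition supplied by the preceding proposition and reduce everything to cited properties of the HHR norm (strong symmetric monoidality, transitivity $N_{K}^{G}N_{L}^{K}\cong N_{L}^{G}$, the double-coset restriction formula) together with the exponential laws for $F(T,-)$ in spaces. Your approach makes the dependence on the structural results of \cite[App.~B]{HHR} fully explicit and is more modular, at the cost of having to track coherence through the many orbit-level identifications; the paper's argument gives a single homeomorphism directly from the definition of $\Gamma_{S\times T}$, treats the two cases in parallel, and avoids choosing an orbit decomposition, but is terser and demands that the reader hold the group-theoretic picture of $\Sigma_m\wr\Sigma_n\subset\Sigma_{mn}$ in mind. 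Both are valid; yours is a reasonable alternative proof.
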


\begin{proof}
The first part is immediate from the definition. For the second,
unpacking Lemma~\ref{lem:CartesianProduct} makes the above
isomorphisms very clear. The identification of the subgroup of
$\Sigma_{|S\times T|}$ associated to $\Gamma_{S\times T}$ shows that
the two sides are the same.
\end{proof}

\subsection{The structure of $\cO$-algebras}

We focus on the general structure of $\cO$-algebras in $G$-spaces and
orthogonal $G$-spectra.  For brevity of exposition, we will describe
all of our maps and structure for orthogonal $G$-spectra herein, using
the smash product. Everything we say holds {\emph{mutatis mutandis}}
for $G$-spaces using the Cartesian product.

We start with the most basic structure: an algebra over an $\Ninfty$
operad looks like an ordinary, classical algebra over a
non-equivariant $E_{\infty}$ operad.

\begin{proposition}\label{prop:Naive}
If $R$ is an $\cO$-algebra in spectra, then $R$ is a naive
$E_{\infty}$ ring spectrum in the sense that $R$ has a multiplication
that is unital, associative, and commutative up to all higher
homotopy.
\end{proposition}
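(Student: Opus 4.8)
The plan is to show that the structure of an $\cO$-algebra restricts along a map of operads from a non-equivariant $E_\infty$ operad to $\cO$, so that $R$ inherits a naive $E_\infty$ structure. First I would use Definition~\ref{def:geinfop}: since the underlying non-equivariant operad of $\cO$ is an $E_\infty$ operad (by the Lemma immediately following that definition), the $G$-fixed subspaces $\cO_n^G$ are nonempty. More precisely, condition (iii) guarantees that $\cO_n$ is a universal space for a family containing all subgroups $H\times\{1\}$, so in particular $\cO_n^{G\times\{1\}}$ is contractible; these $G$-fixed spaces assemble into a (non-equivariant) sub-operad $\cO^G$ of the underlying operad of $\cO$, with free $\Sigma_n$-action and $\cO_n^G\simeq *$, hence an $E_\infty$ operad.

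Next I would observe that there is a map of $G$-operads $\cE \to \cO$ for any $E_\infty$ operad $\cE$ of spaces with trivial $G$-action, or at least a zig-zag: one can take $\cE' = \cE \times \cO^G$ (with $\cO^G$ regarded as having trivial $G$-action), which is again an $E_\infty$ operad with trivial $G$-action, mapping to $\cO^G \hookrightarrow \cO$ and to $\cE$. Alternatively, and more simply, one takes $\cO$ itself with its $G$-action forgotten: restricting the $\cO$-algebra structure on $R$ along the inclusion of $G$-fixed points — i.e., viewing the structure maps $\cO_n{}_+ \wedge_{\Sigma_n} R^{\wedge n} \to R$ and restricting to the $G$-trivial sub-symmetric-sequence $\cO^G$ — exhibits $R$ as an algebra over the $E_\infty$ operad $\cO^G$ in the category of $G$-spectra, which is precisely the assertion that $R$ is a naive $E_\infty$ ring spectrum (unital, associative, commutative up to all higher homotopy). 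The unit comes from $\cO_0$ being $G$-contractible (in particular $\cO_0^G \neq \emptyset$) and the identity $1 \in \cO_1$ being $G$-fixed.

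The only genuinely nontrivial point — and thus the main obstacle — is verifying that $\cO^G$, the levelwise $G$-fixed-point symmetric sequence, is actually a sub-operad: one must check that the operadic composition maps $\cO_k \times \cO_{n_1}\times\cdots\times\cO_{n_k} \to \cO_{n_1+\cdots+n_k}$, which are $G$-equivariant, carry the product of $G$-fixed subspaces into the $G$-fixed subspace of the target. This is immediate from $G$-equivariance of $\gamma$ together with the observation that a point in $\cO_k^G \times \prod \cO_{n_i}^G$ is fixed by the diagonal $G$-action, hence its image is $G$-fixed. Combined with the fact that $\Sigma_n$ acts freely on $\cO_n$ (so a fortiori on $\cO_n^G$) and $\cO_n^G \simeq *$ for all $n$, this makes $\cO^G$ a non-equivariant $E_\infty$ operad. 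Restriction of algebras along $\cO^G \hookrightarrow \cO$ then finishes the proof; this restriction is harmless precisely because it only forgets structure, and the resulting $\cO^G$-algebra structure on the underlying spectrum of $R$ is by definition a naive $E_\infty$ ring structure.
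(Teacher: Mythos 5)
Your proposal is correct, and it takes a genuinely different and more elementary route than the paper. The paper's own proof chooses an arbitrary non-equivariant $E_\infty$ operad $\cE$ with trivial $G$-action, notes $\mC(\cE)$ is the initial object of $\poset$, and then invokes the machinery of Section~\ref{sec:hocat} (specifically the corollary to Corollary~\ref{cor:OperadComparison}, which produces a zig-zag $\cE \from \cE \times \cO \to \cO$ in which the left map is a weak equivalence because $\mC(\cE) \subseteq \mC(\cO)$) to obtain a map $\cE \to \cO$ in the homotopy category; restriction then finishes the argument. You instead work directly from condition~(iii) of Definition~\ref{def:geinfop}: since $G\times\{1\} \in \cF_n(\cO)$, the space $\cO_n^G$ is nonempty and contractible, the levelwise $G$-fixed points $\cO^G$ form a sub-operad of $\cO$ by $G$-equivariance of the composition, $\Sigma_n$ acts freely on the $\Sigma_n$-stable subspace $\cO_n^G$, and hence $\cO^G$ is a non-equivariant $E_\infty$ operad sitting inside $\cO$ as a $G$-trivial sub-$G$-operad. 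Restricting the $\cO$-action to $\cO^G$ yields the claim. Your construction is self-contained and avoids the poset machinery entirely, at the cost of having to verify that $\cO^G$ is in fact a sub-operad (which you do correctly); the paper's argument is shorter given that the $\mC$-formalism is already in hand. Both approaches produce the same kind of output — a $G$-trivial $E_\infty$ operad acting on $R$ — so they establish the same statement.
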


\begin{proof}
Choose an ordinary, non-equivariant $E_{\infty}$ operad $\cE$ and
endow it with a trivial $G$-action. Since $\mC(E)$ is the initial
object in $\poset$, we know that we have a map from (an operad
equivalent to) $\cE$ to $\cO$.  Thus any $\cO$-algebra is by
restriction an $\cE$-algebra.
\end{proof}

The other admissible sets appear as extra structure. 

\begin{construction}\label{cons:OperadMaps}
For an orthogonal $G$-spectrum $E$ and $T$ an admissible $H$-set for
$\cO$ with associated subgroup $\Gamma_{T}$, then by definition of
admissibility, we are given a $(G\times\Sigma_{|T|})$-contractible
space of maps
\[
(G\times\Sigma_{|T|})/\Gamma_{T}\to\cO_{|T|},
\]
and smashing over $\Sigma_{|T|}$ with $E^{\wedge |T|}$ yields a
contractible space of maps
\[
G_{+}\wedge_{H}N^{T}i_{H}^{\ast}E\to \cO_{|T|+}\wedge_{\Sigma_{|T|}}E^{\wedge
|T|}.
\]
\end{construction}

This contractible space of maps gives us extra structure for an
$\cO$-algebra.

\begin{lemma}\label{lem:Norms}
If $R$ is an $\cO$-algebra and $T$ is an admissible $H$-set, then
there is a contractible space of maps
\[
G_{+}\wedge_{H}N^{T}i_{H}^{\ast}R\to R
\]
built from the maps of Construction~\ref{cons:OperadMaps}.
\end{lemma}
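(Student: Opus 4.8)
The plan is to take the contractible space of maps produced in Construction~\ref{cons:OperadMaps},
\[
G_{+}\wedge_{H}N^{T}i_{H}^{\ast}R\to \cO_{|T|+}\wedge_{\Sigma_{|T|}}R^{\wedge |T|},
\]
and postcompose with the structure map of the $\cO$-algebra $R$. Recall that an $\cO$-algebra structure on $R$ in particular supplies, for each $n\geq 0$, a $(G\times\Sigma_n)$-equivariant action map
\[
\theta_n\colon \cO_{n+}\wedge_{\Sigma_{n}}R^{\wedge n}\to R.
\]
Taking $n=|T|$ and smashing the Construction~\ref{cons:OperadMaps} family with $R^{\wedge|T|}$ over $\Sigma_{|T|}$ — which is legitimate since the map of $(G\times\Sigma_{|T|})$-spaces $(G\times\Sigma_{|T|})/\Gamma_T\to\cO_{|T|}$ is part of a contractible space and smashing with the fixed spectrum $R^{\wedge|T|}$ is a functorial operation that does not alter the homotopy type of the parametrizing space — we obtain a contractible space of maps
\[
G_{+}\wedge_{H}N^{T}i_{H}^{\ast}R\to \cO_{|T|+}\wedge_{\Sigma_{|T|}}R^{\wedge|T|}\xrightarrow{\ \theta_{|T|}\ }R.
\]
Composition with the single fixed map $\theta_{|T|}$ is a continuous operation on the mapping space, so the image is again parametrized by a contractible space; this is the asserted contractible space of maps $G_{+}\wedge_{H}N^{T}i_{H}^{\ast}R\to R$.

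In slightly more detail, the first step is to unwind what ``contractible space of maps'' means in Construction~\ref{cons:OperadMaps}: by definition of admissibility, $\cO_{|T|}^{\Gamma_T}$ is nonempty and $(G\times\Sigma_{|T|})$-contractible, hence the space
\[
\Map^{G\times\Sigma_{|T|}}\!\big((G\times\Sigma_{|T|})/\Gamma_T,\ \cO_{|T|}\big)\ \cong\ \cO_{|T|}^{\Gamma_T}
\]
is contractible. The functor $(-)_{+}\wedge_{\Sigma_{|T|}}R^{\wedge|T|}$ applied to maps of $(G\times\Sigma_{|T|})$-spaces over $\cO_{|T|}$ carries this contractible space into a space of maps of orthogonal $G$-spectra $G_{+}\wedge_H N^T i_H^*R\to \cO_{|T|+}\wedge_{\Sigma_{|T|}}R^{\wedge|T|}$; we should note that the source here is exactly $\big((G\times\Sigma_{|T|})/\Gamma_{T+}\big)\wedge_{\Sigma_{|T|}}R^{\wedge|T|}$, which is $G_{+}\wedge_H N^T i_H^* R$ by the homeomorphism in the Proposition preceding this lemma. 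The second step is simply to record that postcomposition with $\theta_{|T|}$ gives the desired family.

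The only genuine subtlety — and the step I would be most careful about — is the assertion that the relevant mapping space is literally controlled by $\cO_{|T|}^{\Gamma_T}$ rather than merely being nonempty; i.e.\ that we really get a \emph{contractible} space of maps and not just one map up to homotopy. This requires knowing that we are working at the point-set level with cofibrant enough objects (so that the smash powers $R^{\wedge|T|}$ and the induction/orbit constructions are homotopically meaningful), and that ``space of maps'' is taken in the derived sense. Since the surrounding discussion has already set up the convention that $\cO$-algebras are formed in the model-categorical sense with suitably cofibrant inputs (cf.\ the positive cofibrancy hypotheses appearing in the Motivating Example and the appendix), this is a matter of citing that framework rather than proving anything new; the contractibility of the parametrizing space is then inherited directly from condition (iii) in Definition~\ref{def:geinfop} via the identification of admissibility with nonemptiness of $\cO_{|T|}^{\Gamma_T}$.
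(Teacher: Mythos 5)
Your proof is correct and follows the same route as the paper: take any map from the contractible parametrizing space of Construction~\ref{cons:OperadMaps} and postcompose with the algebra structure map $\theta_{|T|}\colon \cO_{|T|+}\wedge_{\Sigma_{|T|}}R^{\wedge|T|}\to R$, noting that the construction's maps are governed by the contractible space $F_{G\times\Sigma_{|T|}}(G\times\Sigma_{|T|}/\Gamma_T,\cO_{|T|})\cong\cO_{|T|}^{\Gamma_T}$. The paper states this without elaborating the cofibrancy caveat you raise, but the mathematical content is the same.
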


\begin{proof}
The maps in question are the composite
\[
G_{+}\wedge_{H}N^{T}i_{H}^{\ast}R\to \cO_{|T|+}\wedge_{\Sigma_{|T|}}R^{\wedge
|T|}\to R,
\]
where the first map is any of the maps in
Construction~\ref{cons:OperadMaps} arising from the contractible space
\[
F_{G\times\Sigma_{|T|}}(G\times\Sigma_{|T|}/\Gamma_{T},\cO_{|T|})=\cO_{|T|}^{\Gamma_{T}}.\qedhere
\]
\end{proof}

\begin{remark}
By convention, we assume that the empty set is always admissible. In
this case, we can again construct a contractible space of maps
\[
G_{+}\wedge_{H} N^{\emptyset}i_{H}^{\ast}R\to R,
\]
since by assumption, $N^{\emptyset} i_{H}^{\ast} R$ is the symmetric
monoidal unit.
\end{remark}

We can strengthen these results.  Recall that the category of algebras
over an $E_{\infty}$ operad is homotopically tensored over finite sets
in the sense that given an algebra $R$ and a map $T\to S$ of finite
sets, we have a contractible space of maps $R^{|T|}\to R^{|S|}$
encoding the multiplication.  An analogous result holds in this
context, where here the algebras over an $\Ninfty$ operad $\cO$ are
homotopically tensored over $\cC_{G}(\cO)$.

\begin{theorem}\label{thm:NormsasTransfers}
If $T$ and $S$ are admissible $G$-sets and $f\colon T\to S$ is a
$G$-map, then for any $\cO$-algebra $R$, we can construct a
contractible space of maps $N^{T}R\to N^{S}R$ encoding the
multiplication.
\end{theorem}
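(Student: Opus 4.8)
The plan is to reduce the statement to Lemma~\ref{lem:Norms} by decomposing the map $f$ over the orbits of its target. Since $T\mapsto N^{T}R$ is strong symmetric monoidal in the $T$-variable, carrying disjoint union to smash product, we may write $S=\coprod_{j}G/H_{j}$, set $T_{j}=f^{-1}(G/H_{j})$, and note that $f=\coprod_{j}f_{j}$ with $f_{j}\colon T_{j}\to G/H_{j}$; correspondingly $N^{T}R\cong\bigwedge_{j}N^{T_{j}}R$ and $N^{S}R\cong\bigwedge_{j}N^{G/H_{j}}R$, so it suffices to produce, for each $j$, a contractible space of maps $N^{T_{j}}R\to N^{G/H_{j}}R$ and then smash these together (a finite product of contractible spaces is contractible). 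Each $T_{j}$, being a $G$-set lying over the single orbit $G/H_{j}$, is isomorphic to $G\times_{H_{j}}U_{j}$ with $U_{j}=f_{j}^{-1}(eH_{j})$ an $H_{j}$-set; moreover $T_{j}$ is admissible for $\cO$ because $\mC(\cO)$ is a truncation subcoefficient system (Lemma~\ref{lem:Summands}), hence closed under summands, and then $U_{j}$ is admissible for $i_{H_{j}}^{\ast}\cO$, since it occurs as a summand of the restriction $i_{H_{j}}^{\ast}T_{j}$ (Proposition~\ref{prop:admitfam}(i) together again with Lemma~\ref{lem:Summands}).

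We are thus reduced to the following: given $H\subset G$ and an $H$-set $U$ admissible for $i_{H}^{\ast}\cO$, produce a contractible space of maps $N^{G\times_{H}U}R\to N^{G/H}R$. Here we use the identifications $N^{G/H}R\cong N_{H}^{G}i_{H}^{\ast}R$ and $N^{G\times_{H}U}R\cong N_{H}^{G}\bigl(N^{U}i_{H}^{\ast}R\bigr)$, both of which follow from the defining orbit decomposition of $N^{(-)}$ together with the strong monoidality of the Hill--Hopkins--Ravenel norm and the standard composition law $N_{H}^{G}\circ N_{K}^{H}\cong N_{K}^{G}$ for iterated norms (decompose $U$ into $H$-orbits and compare orbit by orbit). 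Now $i_{H}^{\ast}R$ is an algebra over the $\Ninfty$ operad $i_{H}^{\ast}\cO$ in $H$-spectra, and $U$ is an admissible $H$-set for it, so Lemma~\ref{lem:Norms}, applied in the category of $H$-spectra, furnishes a contractible space of $H$-equivariant maps $N^{U}i_{H}^{\ast}R\to i_{H}^{\ast}R$; concretely this parameter space is the fixed-point space $\cO_{|U|}^{\Gamma_{U}}$, each of whose points determines the map built in Construction~\ref{cons:OperadMaps} followed by the operadic structure map of $R$. Applying the functor $N_{H}^{G}$ to this family yields a map from the contractible space $\cO_{|U|}^{\Gamma_{U}}$ into $\Map\bigl(N^{G\times_{H}U}R,\,N^{G/H}R\bigr)$, that is, the required contractible space of maps; smashing over the orbits $G/H_{j}$ of $S$ then gives the contractible space of maps $N^{T}R\to N^{S}R$.

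That these maps encode the multiplication is visible by specialization: taking $S=\ast=G/G$ recovers, for every admissible $G$-set $T$, the maps $N^{T}R\to R$ of Lemma~\ref{lem:Norms}, and further forgetting the $G$-action with $T$ trivial recovers the iterated multiplication $R^{\wedge n}\to R$ of the underlying naive $E_{\infty}$ structure of Proposition~\ref{prop:Naive}; naturality in $T$ and in $f$ is automatic, since everything is built functorially from the operad spaces and the structure maps of $R$, the only choices being points of the contractible spaces $\cO_{|U_{j}|}^{\Gamma_{U_{j}}}$. The identical argument with coinduction $F(T,-)$ in place of $N^{T}$, the Cartesian product in place of $\wedge$, and the space-level form of Lemma~\ref{lem:Norms} proves the statement for $\cO$-algebras in $G$-spaces. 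The main obstacle is the bookkeeping of the second paragraph: verifying that the norm of an induced $G$-set is canonically the $G$-norm of the $H$-norm, $N^{G\times_{H}U}R\cong N_{H}^{G}N^{U}i_{H}^{\ast}R$, in a way compatible enough with the operadic structure that the construction assembles into a genuine contractible space of maps rather than a bare existence statement; granting that identification, the remainder is the orbit decomposition plus a direct appeal to Lemma~\ref{lem:Norms}.
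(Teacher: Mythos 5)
Your argument is correct, and it follows the same basic outline as the paper's proof — reduce to $S$ a single orbit using the strong monoidality of $N^{(-)}$ in the $T$-variable, then use Lemma~\ref{lem:Norms} after pushing the problem down to a subgroup via the norm — but you package the reduction differently, and a bit more efficiently.

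The paper, after reducing to a surjection $T\to G/K$ onto a single orbit, further factors this as $T\to\coprod_{|T/G|}G/K\to G/K$ and handles two separate cases: (a) a surjection of orbits $G/H\to G/K$, treated by writing it as $G\times_{K}(K/H\to K/K)$ and applying $N_{K}^{G}$ to the map of Lemma~\ref{lem:Norms} for the admissible $K$-set $K/H$, and (b) the fold map $\coprod_{|T/G|}G/K\to G/K$, treated by applying $N^{G/K}$ to the map of Lemma~\ref{lem:Norms} for the trivial $G$-set of size $|T/G|$. You instead handle the whole preimage of the orbit in a single step: write $T_{j}\to G/H_{j}$ as $G\times_{H_{j}}(U_{j}\to\ast)$ with $U_{j}$ the fiber, invoke Lemma~\ref{lem:Norms} for the full $H_{j}$-set $U_{j}$, and transport along $N_{H_{j}}^{G}$ using the identity $N^{G\times_{H}U}R\cong N_{H}^{G}N^{U}i_{H}^{\ast}R$. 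This eliminates the separate fold step at the cost of invoking the iterated norm identity in slightly more generality (the paper only needs $N_{K}^{G}$ of a single-orbit norm plus strong monoidality of a fixed $N^{G/K}$). Your version also handles non-surjective $f$ uniformly — an empty $U_{j}$ just contributes the unit map via $\cO_{0}$ — whereas the paper deals with the non-surjective case a little later in the proof of Theorem~\ref{thm:TransfersinSpaces}. Your admissibility bookkeeping ($T_{j}$ is a summand of $T$; $U_{j}$ is a summand of $i_{H_{j}}^{\ast}T_{j}$ via the double-coset formula, with the $g=e$ term) is correct and parallels the paper's ``$K/H$ is a summand of $i_{K}^{\ast}(G/H)$''. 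The issue you flag at the end — that applying a continuous functor like $N_{H}^{G}$ to the contractible parameter space $\cO_{|U_{j}|}^{\Gamma_{U_{j}}}$ yields a contractible family of maps rather than a mere existence statement — is indeed the only place where one must be a little careful, and the paper treats it with the same brevity (``applying the functor $N_{K}^{G}$ produces a contractible space of maps''), so you are in good company.
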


\begin{proof}
For $S$ a trivial $G$-set, this is the content of
Lemma~\ref{lem:Norms}. For the general case, we observe that a general
map between $G$ sets can be written as a disjoint union of surjective
maps onto orbits inside $S$. Disjoint unions correspond to external
smash products, and hence, it suffices to consider $S$ a single orbit
and $T\to S$ surjective. This, however, can be rewritten as
\[
T\to \coprod_{|T/G|}S\to S,
\]
where the first map is the disjoint union of the surjection restricted
to each orbit of $T$ and the second is just the fold map. It will
therefore suffice to show two things:

\begin{enumerate}
\item That associated to the fold map we can construct a contractible
space of maps, and
\item associated to a surjective map $G/H\to G/K$, we can construct a
contractible space of maps.
\end{enumerate}

The fold map in turn is just $S$ times the fold map sending $|T/G|$
points with trivial $G$ action to a single point. We have a
contractible space of maps
\[
R^{\wedge |T/G|}\to R
\]
by Lemma~\ref{lem:Norms} again, applied to the trivial $G$-set. Taking
the norm $N^{S}(-)$ of these produces the required contractible space
of maps for the fold.

Now consider $T=G/H$ and $S=G/K$. By possibly composing with an
automorphism of $T$, we may assume that $H$ is a subgroup of $K$ and
that the map is the canonical quotient. In this case, the map is
\[
G\times_{K}(K/H\to K/K).
\]
Since $K/H$ is a summand of $i_{K}^{\ast}(G/H)$, we know that $K/H$ is
an admissible $K$-set. Lemma~\ref{lem:Norms} gives us a contractible
space of maps
\[
N^{K/H}(i_{K}^{*}R)\to i_{K}^{*}R.
\]
Applying the functor $N_{K}^{G}$ produces a contractible space of maps
\[
N^{G/H}(R)\to N^{G/K}(R),
\]
as required.
\end{proof}

\begin{remark}\label{rem:OperadsonGSets}
One way of interpreting Theorem~\ref{thm:NormsasTransfers} is that
equivariant operads should really be indexed on finite $G$-sets, not
just (a skeleton of) finite sets.  Such a definition is very natural
using the perspective on $\infty$-operads developed in Lurie~\cite{HA}
--- instead of working with fibrations over Segal's category $\Gamma$,
equivariant $\infty$-operads should be defined as fibrations over the
equivariant analogue $\Gamma_G$.  We intend to return to explore this
perspective in future work.
\end{remark}

\begin{corollary}\label{cor:Functorial}
If $S$, $S'$, and $S''$ are finite admissible $G$-sets, and
\[
S\xrightarrow{f} S'\xrightarrow{f'} S''
\]
are maps of $G$-sets and if $R$ is an $\cO$-algebra, then for any
choice of maps coming from Theorem~\ref{thm:NormsasTransfers}, the
following diagram commutes up to homotopy
\[
\xymatrix{{N^{S}} R\ar[r]^{f_{\sharp}} \ar[rd]_{(f'\circ f)_{\sharp}} & {N^{S'}R} \ar[d]^{f'_{\sharp}} \\
& {N^{S''}R.}}
\]
\end{corollary}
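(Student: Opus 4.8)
The plan is to reduce the claimed homotopy-commutativity of the triangle to a statement about the contractibility of the relevant space of maps, which we already know from Theorem~\ref{thm:NormsasTransfers}. First I would observe that, by that theorem, for each of the three arrows $f$, $f'$, and $f'\circ f$ there is not merely a single map but a \emph{contractible} space of maps: $\Map(N^S R, N^{S'} R)_f$, $\Map(N^{S'} R, N^{S''} R)_{f'}$, and $\Map(N^S R, N^{S''} R)_{f'\circ f}$, where the subscript records that we restrict to the component of maps built from the operadic structure in the manner of the proof of Theorem~\ref{thm:NormsasTransfers}. Composition gives a map of spaces $\Map(N^{S'}R, N^{S''}R)_{f'} \times \Map(N^S R, N^{S'} R)_f \to \Map(N^S R, N^{S''} R)$, and the key point is that this composite lands in the component indexed by $f'\circ f$.

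The heart of the argument is therefore to identify why the composite of maps built for $f$ and for $f'$ lies in the same component as a map built directly for $f'\circ f$. I would argue this by unwinding the construction: both sides are assembled, via the functoriality of $N^{(-)}(-)$ and the decomposition of $G$-maps into fold maps and quotient maps $G/H \to G/K$ used in the proof of Theorem~\ref{thm:NormsasTransfers}, from maps out of the operad spaces $\cO_{|T|}$. Since each relevant fixed-point space $\cO_{|T|}^{\Gamma_T}$ is contractible, any two choices of such structure maps are homotopic through structure maps, and the composites built from them therefore agree up to homotopy through maps of the prescribed form; in particular, the composite for $(f, f')$ and the direct map for $f'\circ f$ are both legitimate outputs of the (contractible) space of maps associated to $f'\circ f$. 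Because that space is path-connected, any two such maps are homotopic, which is exactly the assertion that the triangle commutes up to homotopy.

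The main obstacle I expect is bookkeeping: making precise the claim that the composite of a map-built-for-$f$ with a map-built-for-$f'$ is again a ``map built from the operadic structure for $f'\circ f$'' in the sense of Theorem~\ref{thm:NormsasTransfers}. This requires tracking the normed operadic composition: applying $N_{K}^{G}$ and $N^{S}(-)$ to the basic multiplication maps $\cO_{|T|+}\wedge_{\Sigma_{|T|}} R^{\wedge|T|}\to R$ and checking that the associativity of these constructions (the homeomorphisms $N^{S\times T}\cong N^S N^T$ and the operad composition axioms) makes the two ways of composing agree on the nose up to a canonical homotopy. Once one commits to working inside the contractible mapping spaces rather than with point-set maps, however, this becomes essentially formal: it suffices to exhibit both composites as points of one and the same contractible space, and connectivity does the rest. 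I would present the proof in roughly three steps --- (1) pass from maps to contractible mapping spaces via Theorem~\ref{thm:NormsasTransfers}; (2) show the composition map respects the component indexing, using the operad composition axioms and the naturality of $N^{(-)}$; (3) conclude from path-connectedness --- and only sketch the verification in step (2), since it is a routine diagram chase.
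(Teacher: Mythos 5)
The paper states this corollary without proof; it is offered as an immediate consequence of Theorem~\ref{thm:NormsasTransfers}. Your sketch fills in the intended argument correctly, and the three-step structure you propose (pass to the contractible mapping spaces, verify the composite lands in the component indexed by $f'\circ f$, conclude by path-connectedness) is the right way to organize it.

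One point of emphasis, since your wording in the middle paragraph slightly elides it: contractibility of the fixed-point spaces $\cO_{|T|}^{\Gamma_T}$ by itself only tells you that any two structure maps built from the \emph{same} operad level are homotopic. It does not, on its own, tell you that the two-stage composite $f'_\sharp\circ f_\sharp$ (which uses the operadic action twice, say through $\cO_{|S'|/|S''|}$ and $\cO_{|S|/|S'|}$, after factoring into fold maps and orbit surjections) is homotopic to the one-stage map $(f'\circ f)_\sharp$ built from a single level. That identification is exactly what the operad composition maps $\gamma$ and the algebra associativity axiom provide, together with the homeomorphisms $N^{S\times T}\cong N^S N^T$. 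You do flag this in your final paragraph as the real content of step (2), which is correct; just make sure the write-up makes it clear that operad associativity, not contractibility, is what puts the composite into the right component, with contractibility then finishing the job. With that caveat your proposal matches the argument the paper has in mind.
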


\begin{theorem}\label{thm:ExistenceofNorms}
An $\cO$-algebra $R$ is an orthogonal $G$-spectrum with maps
\[
G_{+}\wedge_{H}N^{T}i_{H}^{\ast}R\to R
\]
for all admissible $H$-sets $T$ such that the following conditions
hold.
\begin{enumerate}
\item For all admissible $G$-sets $S$ and $T$, the following diagram
homotopy commutes
\[
\xymatrix{
{N^{S\amalg T}Y\simeq N^{S}R\wedge N^{T}R}\ar[r]\ar[rd] & {R\wedge
R}\ar[d] \\ {} & {R.}}
\]
\item For all admissible $G$-sets $S$ and $T$, the following diagram
homotopy commutes

\[
\xymatrix{
{N^{S\times T}R\simeq N^{S}N^{T}R}\ar[r]\ar[rd] & {N^{S}R}\ar[d] \\ {}
& {R.}}
\]
\item For all admissible sets $S$ and $T$ such that for some $K\subset
G$, $i_{K}^{\ast}(S)\cong i_{K}^{\ast}(T)$, the following diagram
homotopy commutes
\[
\xymatrix{
{i_{K}^{\ast}N^{S}R\simeq N^{i_{K}^{\ast}S}i_{K}^{\ast}
R}\ar@{<->}[rr] \ar[dr] & & {N^{i_{K}^{\ast}T}i_{K}^{\ast}R\simeq
i_{K}^{\ast}N^{T}R}\ar[dl] \\ {} & {i_{K}^{\ast}R.} & {}}
\]
\end{enumerate}
In fact, all of these diagrams commute up to coherent homotopy; this
coherence data is precisely the information encoded by the operad.
\end{theorem}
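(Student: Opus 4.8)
The plan is to recognize the first half of the statement as a restatement of Lemma~\ref{lem:Norms}, to obtain the three displayed diagrams as special cases of the functoriality of Corollary~\ref{cor:Functorial}, and then to treat the closing ``moreover'' sentence about coherence separately. The asserted maps $G_{+}\wedge_{H}N^{T}i_{H}^{\ast}R\to R$ for admissible $H$-sets $T$ are exactly the maps produced in Lemma~\ref{lem:Norms}, so nothing new is needed for that part.

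Next I would derive conditions (i)--(iii) by writing each displayed diagram as the image, under the functor $N^{(-)}$ composed with the counit of Lemma~\ref{lem:Norms}, of a factorization of a $G$-map, and quoting Corollary~\ref{cor:Functorial}. For (i): the $G$-map $S\amalg T\to\ast$ factors as $(S\to\ast)\amalg(T\to\ast)$ followed by the fold map $\ast\amalg\ast\to\ast$; since $N^{(-)}$ is strong symmetric monoidal (proved earlier in this section) the first leg becomes $N^{S}R\wedge N^{T}R\to R\wedge R$ and the second becomes $R\wedge R\to R$, and all three $G$-sets are admissible by Lemmata~\ref{lem:DisjointUnion} and~\ref{lem:Summands}, so Corollary~\ref{cor:Functorial} gives the diagram. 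For (ii): the $G$-map $S\times T\to S\to\ast$, together with the identification $N^{S\times T}\iso N^{S}N^{T}$ and the admissibility of $S\times T$ (Lemma~\ref{lem:CartesianProduct}), produces the diagram $N^{S}N^{T}R\to N^{S}R\to R$, again by Corollary~\ref{cor:Functorial}. For (iii): one restricts everything to $K$; by Proposition~\ref{prop:admitfam} the sets $i_{K}^{\ast}S$ and $i_{K}^{\ast}T$ are admissible $K$-sets, and being isomorphic they have conjugate graph subgroups, so that (using the restriction compatibilities $i_{K}^{\ast}N^{S}R\simeq N^{i_{K}^{\ast}S}i_{K}^{\ast}R$ asserted in the statement) both legs of the triangle are maps $N^{i_{K}^{\ast}S}i_{K}^{\ast}R\to i_{K}^{\ast}R$ of the form produced by Lemma~\ref{lem:Norms} over $K$; these all lie in the single contractible fixed-point space $(i_{K}^{\ast}\cO_{|S|})^{\Gamma_{i_{K}^{\ast}S}}$ and hence are homotopic. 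In every case the homotopy-commutativity is forced, exactly as in the proof of Theorem~\ref{thm:NormsasTransfers}, by the fact that each operation lies in a contractible space --- a fixed-point space of a component of $\cO$.

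Finally, the ``moreover'': the assertion is that the homotopies witnessing (i)--(iii), and the higher homotopies among those, are not chosen ad hoc but are precisely the data of the operad action. I would make this precise using the cell resolution of $\cO$ from the proof of Proposition~\ref{prop:mapcontract}. Because each $\cO_{n}$ is a universal space for the family $\cF_{n}(\cO)$ with free $\Sigma_{n}$-action, a cell $G$-operad equivalent to $\cO$ is built from cells $(G\times\Sigma_{n}/\Gamma_{T})\times D^{k}$ with $T$ admissible; its $0$-cells contribute the norm maps $N^{T}R\to R$, its $1$-cells contribute the homotopies appearing in (i)--(iii) and in Corollary~\ref{cor:Functorial}, and its higher cells contribute the coherences among them. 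Equivalently, one may pass through the monadic description of Appendix~\ref{sec:monadicalgebras}: fixed-point space by fixed-point space, the monad $X\mapsto\bigvee_{n}\cO_{n+}\wedge_{\Sigma_{n}}X^{\wedge n}$ is assembled from the functors $N^{T}$ with $T$ admissible, so an $\cO$-algebra is precisely a coherently-structured system of such norm maps. The main obstacle is exactly this last step: the construction of the maps and the homotopy-commutativity of (i)--(iii) are essentially repackagings of Corollary~\ref{cor:Functorial}, but turning ``coherence is precisely the operad'' into a clean equivalence-of-homotopy-theories statement requires either a delicate cell-by-cell bookkeeping of the cofibrant operad or the $\infty$-operadic formalism indicated in Remark~\ref{rem:OperadsonGSets}; in the present point-set treatment one is content to say that the operad action simply \emph{is} this coherence data.
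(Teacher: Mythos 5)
The paper states Theorem~\ref{thm:ExistenceofNorms} without a written proof, presenting it as a summary of the structure established in the preceding material, which is exactly how you reconstruct it: the maps come from Lemma~\ref{lem:Norms}, conditions (i)--(iii) come from Corollary~\ref{cor:Functorial} together with the admissibility closure Lemmata~\ref{lem:DisjointUnion}, \ref{lem:CartesianProduct}, \ref{lem:Summands} and Proposition~\ref{prop:admitfam}, and homotopy commutativity is forced by the contractibility of the relevant fixed-point subspaces of the $\cO_n$. Your treatment of the closing ``coherence is precisely the operad'' sentence as a sketch resting on the cell resolution of Proposition~\ref{prop:mapcontract} is appropriate, since the paper likewise leaves that clause as an informal assertion rather than a proved statement.
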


The first two conditions express compatibility with the multiplication
and with the other norms. The third part shows that the structure is
well-behaved upon passage to fixed points. We spell out a short,
illuminating, example.

\begin{example}

Let $G=C_{2}$ (although any finite group will work here), and let
$\cO$ denote an $\Ninfty$ operad weakly equivalent to the Steiner
operad on the complete universe.  By assumption, $\cO_{2}$ is the
universal space $E_{C_{2}}\Sigma_{2}$ for $\Sigma_{2}$-bundles in
$C_{2}$-spaces. If we let $\rho_{2}$ denote the regular representation
of $C_{2}$ and $\tau$ denote the sign representation of $\Sigma_{2}$,
then a cofibrant model for $\cO_{2}$ is given by
\[
S\big(\infty (\rho_{2}\otimes\tau)\big)=\lim_{\rightarrow}
S(n\rho_{2}\otimes\tau).
\]

Inside of this is of course $S(\rho_{2}\otimes\tau)$. This has a cell
structure given by
\[
\big((C_{2}\times\Sigma_{2})/C_{2}\amalg (C_{2}\times\Sigma_{2})/\Delta\big)\cup_{f} (C_{2}\times\Sigma_{2})\times e^{1},
\]
where $\Delta$ is the diagonal copy of $C_{2}=\Sigma_{2}$, and $f$ is
the canonical quotient
\[
f\colon (C_{2}\times\Sigma_{2})\times
S^{0}=(C_{2}\times\Sigma_{2})\amalg (C_{2}\times\Sigma_{2})\to
(C_{2}\times\Sigma_{2})/C_{2}\amalg (C_{2}\times\Sigma_{2})/\Delta.
\]

Thus if we have an $\cO$-algebra $R$, then the zero cells together
give a map
\[
R^{\wedge 2}\vee N^{C_{2}}i_{e}^{\ast} R\to R,
\]
while the attaching map for the one-cell identifies the restriction of
the map on the first factor with the restriction of the map on the
second factor.
\end{example}

\subsection{Norms, coinductions, and cotensors of $\Ninfty$ operads}\label{sec:NormsandInduction}

We now describe the behavior of $\Ninfty$ operads and
characterizations of their collections of admissible sets under
various standard functors.  Our basic tool is the following standard
result:

\begin{proposition}\label{prop:lax}
Let $F \colon \aC \to \aD$ be a lax symmetric monoidal functor between
symmetric monoidal categories $\aC$ and $\aD$.  Given an operad $\aO$
in $\aC$, then $F \aO$ is an operad in $\aD$, and $F$ induces a
functor
\[
\aC[\aO] \to \aD[F\aO]
\]
connecting the categories of $\aO$-algebras and $F\aO$-algebras.
\end{proposition}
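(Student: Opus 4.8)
The plan is to unwind the definitions of ``operad'' and ``operad algebra'' in a symmetric monoidal category and to check that applying $F$ levelwise, together with the structure maps of the lax symmetric monoidal structure, produces the required data and that all the axioms are inherited. Write $\phi_{X,Y}\colon FX\otimes FY\to F(X\otimes Y)$ and $\epsilon\colon\mathbf 1_{\aD}\to F(\mathbf 1_{\aC})$ for the lax structure maps; laxness says that $\phi$ is associative and two-sided unital with respect to $\epsilon$, so that for any finite list $X_{1},\dots,X_{m}$ there is a well-defined comparison $\phi\colon FX_{1}\otimes\cdots\otimes FX_{m}\to F(X_{1}\otimes\cdots\otimes X_{m})$, and lax \emph{symmetric} monoidality says these comparisons commute with the symmetry isomorphisms.

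First I would define $F\aO$ by $(F\aO)_{n}=F(\aO_{n})$, with $\Sigma_{n}$ acting through $F$ applied to the action on $\aO_{n}$; the unit is the composite $\mathbf 1_{\aD}\xrightarrow{\epsilon}F(\mathbf 1_{\aC})\xrightarrow{F(1)}F(\aO_{1})$; and the composition maps are
\[
F(\aO_{k})\otimes F(\aO_{n_{1}})\otimes\cdots\otimes F(\aO_{n_{k}})\xrightarrow{\ \phi\ }F\!\big(\aO_{k}\otimes\aO_{n_{1}}\otimes\cdots\otimes\aO_{n_{k}}\big)\xrightarrow{\ F(\gamma)\ }F(\aO_{n_{1}+\cdots+n_{k}}).
\]
Then I would verify the operad axioms. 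Each axiom of $F\aO$ is obtained by applying $F$ to the corresponding axiom of $\aO$ and, on the left-hand sides, using the associativity and unitality coherence of $\phi$ to identify the various iterated comparison maps, together with naturality of $\phi$ to slide it past $F(\gamma)$'s and $F$ of symmetry maps. The associativity and unit axioms for the operad thus reduce to those of $\aO$; the $\Sigma$-equivariance axioms use precisely the compatibility of $\phi$ with the symmetry isomorphisms, i.e.\ the hypothesis that $F$ is lax \emph{symmetric} monoidal.

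Next I would treat algebras. Given an $\aO$-algebra $A$ in $\aC$ with structure maps $\alpha_{n}\colon\aO_{n}\otimes A^{\otimes n}\to A$, I endow $FA$ with the $F\aO$-algebra structure
\[
F(\aO_{n})\otimes (FA)^{\otimes n}\xrightarrow{\ \phi\ }F\!\big(\aO_{n}\otimes A^{\otimes n}\big)\xrightarrow{\ F(\alpha_{n})\ }FA,
\]
and send a morphism $f\colon A\to A'$ of $\aO$-algebras to $Ff$. That $FA$ satisfies the algebra associativity, unit, and equivariance axioms again follows by applying $F$ to the corresponding axioms for $A$ and invoking the coherence of $\phi$; that $Ff$ is a map of $F\aO$-algebras is naturality of $\phi$; and functoriality ($F(\mathrm{id})=\mathrm{id}$, $F(g\circ f)=Fg\circ Ff$) is inherited from functoriality of $F$.

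I do not expect a genuine obstacle here: the entire statement is a bookkeeping exercise in the coherence axioms of a lax symmetric monoidal functor, and the only point deserving emphasis is that \emph{symmetric} laxness, rather than mere laxness, is what makes the symmetric-group equivariance of both the operad composition maps and the algebra action maps go through. In the write-up I would record the data explicitly as above and then observe that the axiom checks are routine diagram chases, singling out the equivariance verification as the representative case.
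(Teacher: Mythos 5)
Your proof is correct and arrives at the same structure the paper uses, but the packaging differs in two places. For the operad structure the paper simply cites the standard fact (Westerland) that lax symmetric monoidal functors send monoids in symmetric sequences to monoids in symmetric sequences, whereas you unwind this to a direct verification of the operad axioms using Mac Lane coherence for the iterated comparison maps $\phi$; your version is more self-contained, the paper's is more conceptual and shorter. For the algebra part, the paper constructs a natural transformation $(F\aO)F \to F\circ(\aO\text{-free})$ between the relevant monads and lets that carry algebra structures across, while you directly endow $FA$ with the composite structure maps $F(\aO_n)\otimes (FA)^{\otimes n} \to F(\aO_n\otimes A^{\otimes n}) \to FA$ and verify axioms; these are the same maps before and after assembling over $n$ and passing to $\Sigma_n$-orbits, and the content is identical. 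One point you articulate more explicitly than the paper is exactly where the \emph{symmetric} part of lax symmetric monoidality enters (equivariance of the operad composition and of the algebra action); the paper's appeal to orbits $\otimes_{\Sigma_n}$ implicitly requires this same equivariance check in order for the maps to descend. In short: correct, and substantively the same argument in a different and somewhat more elementary presentation.
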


\begin{proof}
The fact that $F \aO$ forms an operad in $\aD$ is a standard
consequence of regarding operads as monoids in symmetric sequences;
e.g., see~\cite[3.3]{westerland} for a more detailed discussion.  To
see that $F$ induces a functor on algebras, it suffices to exhibit a
natural map
\[
(F \aO) FX \to F(\aO X)
\]
in $\aD$, where $(F \aO) X$ denotes the free $F \aO$-algebra on
$X$. Writing this out, we want a natural map
\[
\coprod^{\infty}_{n = 0} F\aO(n) \otimes_{\Sigma_n} (FX)^{n} \to
F\left(\coprod^{\infty}_{n=0} \aO(n) \otimes_{\Sigma_n} X^n\right).
\]
The lax symmetric monoidal structure of $F$ induces a composite
\[
F\aO(n) \otimes (FX)^{n} \to F\aO(n) \otimes F(X^n) \to
F(\aO(n) \otimes X^n),
\]
and now we map this into the orbits and then the coproduct.  By the
universal property of the coproduct, as $n$ varies these maps assemble
into the desired map.
\end{proof}

\subsubsection{Coinduction and $\Ninfty$ operads}

Just as restriction of an $\Ninfty$ operad is again an $\Ninfty$
operad, coinduction preserves the collection of $\Ninfty$ operads. 

\begin{definition}
If $\cO$ is an $H$-$\Ninfty$ operad, then let $N_{H}^{G}\cO=F_{H}(G,\cO)$ be the
$\Ninfty$ operad defined by
\[
F_{H}(G,\cO)_{n}=F_{H}(G,\cO_{n})\cong
F_{H\times\Sigma_{n}}(G\times\Sigma_{n},\cO_{n}).
\]
\end{definition}

These spaces assemble into an operad using the diagonal map on $G$ to
see that coinduction is lax symmetric monoidal.  The last equality
shows that this is actually a universal space for a family of
subgroups of $G\times\Sigma_{n}$.  Identifying the family is fairly
straightforward and lets us identify the admissible sets.

\begin{proposition}
For any finite group $G$, if $\cF$ is a family of subgroups of
$H\subset G$, then $F_{H}(G,E\cF)$ is a universal space for the family
of subgroups of $G$ corresponding to the sieve
${i_{H}^{\ast}}^{-1} \Set_{\cF}$.
\end{proposition}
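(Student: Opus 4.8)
The plan is to compute the fixed-point spaces of $F_{H}(G,E\cF)$ directly, using the coinduction--restriction adjunction, and to read off the family from the answer. First I would record the adjunction: coinduction $F_{H}(G,-)$ is right adjoint to the restriction functor $i_{H}^{\ast}$ on $G$-spaces, so for any $G$-space $A$ and any $H$-space $X$ there is a natural homeomorphism $F(A,F_{H}(G,X))^{G}\iso F(i_{H}^{\ast}A,X)^{H}$. Taking $A=G/K$ for a subgroup $K\subseteq G$ and using $F(G/K,Y)^{G}\iso Y^{K}$, this yields
\[
F_{H}(G,E\cF)^{K}\iso F\big(i_{H}^{\ast}(G/K),E\cF\big)^{H}.
\]

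Next I would decompose the restricted orbit into $H$-orbits by double cosets, $i_{H}^{\ast}(G/K)\iso\coprod_{[g]\in H\backslash G/K}H/(H\cap gKg^{-1})$, so that an $H$-map out of it is a product of $H$-maps out of the summands; combined with $F(H/L,Y)^{H}\iso Y^{L}$ this gives
\[
F_{H}(G,E\cF)^{K}\iso\prod_{[g]\in H\backslash G/K}(E\cF)^{H\cap gKg^{-1}}.
\]
Now I invoke the universal-space property of $E\cF$: the factor indexed by $[g]$ is (weakly) contractible when $H\cap gKg^{-1}\in\cF$ and is empty otherwise. Since $H\backslash G/K$ is finite, the product is contractible when every $H\cap gKg^{-1}\in\cF$ and is empty as soon as one of them fails to lie in $\cF$. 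Hence $F_{H}(G,E\cF)^{K}\htp\ast$ if and only if $H\cap gKg^{-1}\in\cF$ for all $g\in G$, and $F_{H}(G,E\cF)^{K}=\varnothing$ otherwise; that is, $F_{H}(G,E\cF)$ is a universal space for
\[
\cF_{G}=\{\,K\subseteq G\ :\ H\cap gKg^{-1}\in\cF\ \text{for all}\ g\in G\,\}.
\]

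Finally I would identify $\cF_{G}$ with the family determined by the sieve $(i_{H}^{\ast})^{-1}\Set_{\cF}$. Under the dictionary of Proposition~\ref{prop:Families}, the stabilizers of the points of $i_{H}^{\ast}(G/K)$ are exactly the subgroups $H\cap gKg^{-1}$ (up to $H$-conjugacy), so $K\in\cF_{G}$ precisely when every such stabilizer lies in $\cF$, i.e.\ when $i_{H}^{\ast}(G/K)\in\Set_{\cF}$, i.e.\ when $G/K$ lies in the sieve $(i_{H}^{\ast})^{-1}\Set_{\cF}$; and because $i_{H}^{\ast}$ preserves monomorphisms, the preimage of a sieve is again a sieve, so $\cF_{G}$ is automatically a family. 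I do not expect a genuine obstacle here — the whole argument is two adjunction identities plus the defining property of $E\cF$. The one point to get exactly right, and the only place real content enters, is the double-coset decomposition of $i_{H}^{\ast}(G/K)$ and the resulting identification of point stabilizers, since this is what turns the abstract sieve $(i_{H}^{\ast})^{-1}\Set_{\cF}$ into the concrete subgroup condition defining $\cF_{G}$.
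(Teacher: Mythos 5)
Your proof is correct and follows essentially the same route as the paper's: the coinduction--restriction adjunction $F(A,F_H(G,E\cF))^G \iso F(i_H^*A,E\cF)^H$ is the whole argument, and the paper likewise just applies this adjunction to a finite $G$-set $T$ and reads off contractibility versus emptiness from whether $i_H^*T \in \Set_\cF$. Your double-coset decomposition of $i_H^*(G/K)$ is merely a concrete unpacking of that membership condition, so the two arguments are the same in substance.
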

\begin{proof}
By the adjunction, for any finite $G$-set $T$ (in fact, for any
$G$-space), we have a homeomorphism
\[
F^{G}(T,F_{H}(G,E\cF))\cong F^{H}(i_{H}^{\ast} T,E\cF).
\]
This space is either contractible or empty according to whether
$i_{H}^{\ast} T$ is in $\Set_{\cF}$ or not, respectively.
\end{proof}

Specializing to the families which arise from an $\Ninfty$ operad, we
conclude the following.

\begin{proposition}
Let $\cO$ be an $H$-$\Ninfty$ operad. For any $K\subset G$, a $K$ set
$T$ is admissible if any only if for all $g\in G$,
\[
i_{H\cap gKg^{-1}} g\cdot T\in \cC(\cO)(H\cap gKg^{-1}).
\]
\end{proposition}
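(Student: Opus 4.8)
The statement to be proved describes the admissible sets of the coinduced operad $N_H^G \cO$ (where $\cO$ is an $H$-$\Ninfty$ operad) in terms of the admissible sets of $\cO$ itself. The plan is to combine the preceding proposition---which identifies $F_H(G, E\cF)$ as a universal space for the family corresponding to the sieve ${i_H^*}^{-1}\Set_\cF$---with the double-coset analysis of a restriction of a coinduced object. First I would fix $K \subset G$ and a finite $K$-set $T$, and recall that by definition $T$ is admissible for $N_H^G\cO$ (as a $K$-set) if and only if the $\Gamma_T$-fixed points of $(N_H^G\cO)_{|T|} = F_H(G, \cO_{|T|})$ are nonempty, equivalently contractible. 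By Proposition~\ref{prop:Families} and the previous result, this is equivalent to asking whether $i_H^* \tilde T$ lies in $\Set_{\cF_{|T|}(\cO)}$, where $\tilde T$ is the $G$-set $G/\Gamma_T$-flavored data---more precisely, translating through the graph correspondence, we must unwind what the $K$-set $i_K^* G \times_? T$ restricts to under $i_H^*$.

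**Key steps, in order.** (1) Reduce the claim about fixed points of $F_H(G, \cO_{|T|})$ under $\Gamma_T \subset K \times \Sigma_{|T|}$ to a statement about $F^K(T', F_H(G, E\cF))$ for an appropriate finite $K$-set $T'$ built from $T$, using the adjunction $F^G(-, F_H(G, -)) \cong F^H(i_H^*(-), -)$ exactly as in the proof of the preceding proposition but now relativized to the subgroup $K$ rather than $G$. (2) Apply the double-coset formula: $i_H^* \Ind$-type restrictions decompose, so $i_H^* i_K^*(\text{the relevant }G\text{-set})$ breaks up as a disjoint union indexed by the double cosets $H \backslash G / K$, with the summand at $g$ being an $(H \cap gKg^{-1})$-set of the form $i_{H \cap gKg^{-1}}^*(g \cdot T)$. (3) Since $\cF$ (and hence $\Set_\cF$) is a sieve---closed under subobjects and disjoint union, by the truncation and disjoint-union properties of $\mC(\cO)$ established in Lemmata~\ref{lem:Summands} and \ref{lem:DisjointUnion}---the whole disjoint union lies in $\Set_{\cF}$ if and only if \emph{each} summand does, which gives precisely the stated condition $i_{H \cap gKg^{-1}} g \cdot T \in \cC(\cO)(H \cap gKg^{-1})$ for all $g \in G$. (4) Conclude by noting that the family $\cF_{|T|}(\cO)$ is itself closed under conjugacy (it is a family), so one need only range over a set of double-coset representatives, but stating it for all $g \in G$ is harmless and matches the proposition as written.

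**Main obstacle.** The delicate point is step (2): correctly identifying, at the level of graphs of homomorphisms, the $K$-set structure on $\underline{|T|}$ that $\Gamma_T$ encodes, restricting the ambient coinduced $G$-object along $K \hookrightarrow G$, and then recognizing the Mackey-style double-coset decomposition in terms of the $(H \cap gKg^{-1})$-sets $g \cdot T$. This is bookkeeping of the same character as the proof of Lemma~\ref{lem:SelfInduction}, and the temptation is to hand-wave it; the honest approach is to pass through $T' = $ the finite $G$-set whose stabilizer data is $\Gamma_T$, use $i_H^* i_K^* T' \cong \coprod_{g \in H\backslash G/K} \Ind_{H\cap gKg^{-1}}^{H}(\text{something})$ restricted appropriately, and check the stabilizers match $g \cdot T$ up to the graph identification. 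Once the sieve-membership of a disjoint union is reduced to sieve-membership of each summand (immediate from the sieve property), the rest is formal. I do not expect any genuinely hard homotopy theory here---the content is entirely in the previous proposition plus careful manipulation of induction/restriction of finite group actions.
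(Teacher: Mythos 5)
Your plan is the paper's: invoke the preceding proposition to reduce admissibility for $F_H(G, \cO)$ to a sieve-membership condition, decompose via double cosets, and identify each summand through the graph correspondence as the condition on $i_{H\cap gKg^{-1}}^*(g\cdot T)$. Steps (3) and (4) of your outline --- that membership in the sieve is tested orbit by orbit, and conjugacy-closure of the family handles ranging over all of $G$ --- are correct as stated.

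But the bookkeeping you flag as the obstacle is precisely where your sketch goes wrong. There is no nested restriction ``$i_H^* i_K^*$'' anywhere in the argument. The $K$-set $T$ enters only through the subgroup $\Gamma_T \subset G\times\Sigma_n$, and the single restriction the proof uses is of the $(G\times\Sigma_n)$-set $(G\times\Sigma_n)/\Gamma_T$ along the inclusion $H\times\Sigma_n \hookrightarrow G\times\Sigma_n$. The adjunction from the preceding proposition thus reduces $(N_H^G\cO)_n^{\Gamma_T}$ directly to
\[
F^{H\times\Sigma_n}\big(i_{H\times\Sigma_n}^*\big((G\times\Sigma_n)/\Gamma_T\big),\,\cO_n\big),
\]
not to any ``$F^K(T',-)$'' --- there is no intermediate $K$-equivariant mapping space and no finite $K$-set $T'$ to build. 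The double-coset set to decompose over is $(H\times\Sigma_n)\backslash(G\times\Sigma_n)/\Gamma_T$; it is in bijection with $H\backslash G/K$ because $\Gamma_T$ is a graph and $\{e\}\times\Sigma_n$ is normal, but the actual content of the paper's argument is the graph computation: for a representative $(g,\sigma)$, the conjugate $(g,\sigma)\Gamma_T(g,\sigma)^{-1}$ is the graph of the homomorphism describing the $gKg^{-1}$-set $g\cdot T$ (the $\sigma$ merely provides an isomorphism of $gKg^{-1}$-sets), and its intersection with $H\times\Sigma_n$ is the graph of the restricted homomorphism with domain $H\cap gKg^{-1}$. Your step (2) asserts this endpoint, but your step (1) sets up to compute something else, so as written the sketch does not assemble into a proof; the fix is to discard the $F^K$ detour and carry out the single $H\times\Sigma_n$ restriction and graph identification directly, as the paper does.
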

\begin{proof}
Let $n$ be the cardinality of a finite $K$ set $T$. Consider
$G\times\Sigma_{n}/\Gamma_{T}$. By the previous proposition, we need
only check that the restriction of this to $H\times\Sigma_{n}$ is in
the family associated to $\cO_{|T|}$. By the double coset formula,
this restriction is a disjoint union of $H\times\Sigma_{n}$-sets of
the form
\[
H\times\Sigma_{n}/\big(H\times\Sigma_{n}\cap (g,\sigma)\Gamma_{T}
(g,\sigma)^{-1}\big).
\]
The conjugates of $\Gamma_{T}$ are all again graphs of functions. In
this case, the conjugate of $\Gamma_{T}$ is the graph of the function
describing the $gKg^{-1}$-set $g\cdot T$ (with $\sigma$ here just
providing an isomorphism of this $gKg^{-1}$-set with
another). Intersecting this with $H\times\Sigma_{n}$ is again the
graph of a homomorphism, this one with domain $H\cap gKg^{-1}$. The
result follows.
\end{proof}

\begin{corollary}
An $H$-set $T$ is admissible for $F_{H}(G,\cO)$ if and only if it is
admissible for $\cO$.
\end{corollary}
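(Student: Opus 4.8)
The plan is to read the corollary off the preceding proposition, specialized to $K=H$: there, an $H$-set $T$ of cardinality $n$ is admissible for $F_H(G,\cO)$ precisely when $i_{H\cap gHg^{-1}}(g\cdot T)\in\mC(\cO)(H\cap gHg^{-1})$ for every $g\in G$. So the whole task is to see that this apparently stronger family of conditions, ranged over all $g\in G$, is equivalent to the single condition $T\in\mC(\cO)(H)$, i.e.\ to admissibility of $T$ for $\cO$.

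The forward implication is free: take $g=e$, so $H\cap gHg^{-1}=H$ and the condition becomes $i_H(T)=T\in\mC(\cO)(H)$. (One can also phrase this more conceptually. Restriction to $H$ carries a universal space for a family $\cF$ of subgroups of $G\times\Sigma_n$ to a universal space for $\cF\cap(H\times\Sigma_n)$, so an $H$-set is admissible for the $G$-operad $F_H(G,\cO)$ exactly when it is admissible for the $H$-operad $i_H^*F_H(G,\cO)$; the counit $i_H^*F_H(G,\cO)\to\cO$ is a map of $H$-$\Ninfty$ operads, so functoriality of $\mC$ gives $\mC(i_H^*F_H(G,\cO))\subseteq\mC(\cO)$.) For the converse, suppose $T$ is admissible for $\cO$ and fix $g\in G$. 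Passing to graph subgroups, one checks that $\Gamma_{\,i_{H\cap gHg^{-1}}(g\cdot T)}=(g,1)\,\Gamma_{\,i_{g^{-1}Hg\cap H}(T)}\,(g,1)^{-1}$; that is, $i_{H\cap gHg^{-1}}(g\cdot T)$ is obtained from $T$ by first restricting along $g^{-1}Hg\cap H\subseteq H$ and then conjugating by $g$. The restriction is admissible for $\cO$ (Proposition~\ref{prop:admitfam}(i), or simply because its graph sits inside $\Gamma_T\in\cF_n(\cO)$ and families are subgroup-closed), and its $g$-conjugate is then admissible by the conjugacy-stability of the families $\cF_n(\cO)$ (Proposition~\ref{prop:admitfam}(ii)). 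Hence the condition holds for every $g$, so $T$ is admissible for $F_H(G,\cO)$.

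The one point that requires real care is this last conjugation. The conjugate $(g,1)\,\Gamma\,(g,1)^{-1}$ does land back inside $H\times\Sigma_n$ — it is the graph of an $(H\cap gHg^{-1})$-set — but when $g\notin H$ it is a conjugation by an element outside $H\times\Sigma_n$, so that stability of $\cF_n(\cO)$ under it is what one must genuinely invoke (this is exactly the content of the conjugacy clause built into the definition of a family and of Proposition~\ref{prop:admitfam}(ii)). Once that is granted, everything else is a formal dictionary-chase between families of subgroups of $G\times\Sigma_n$ and admissible $(-)$-sets, together with the double-coset bookkeeping already carried out in the proof of the preceding proposition.
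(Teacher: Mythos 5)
Your forward direction is fine, and so is the graph-subgroup identity $\Gamma_{i_{H\cap gHg^{-1}}(g\cdot T)}=(g,1)\,\Gamma_{i_{g^{-1}Hg\cap H}(T)}\,(g,1)^{-1}$. The problem is the final step, and you flagged exactly the right worry before dismissing it incorrectly. The family $\cF_n(\cO)$ attached to an $H$-$\Ninfty$ operad $\cO$ is a family of subgroups of $H\times\Sigma_n$, and the ``conjugacy clause'' in Definition~\ref{def:Family} (and correspondingly Proposition~\ref{prop:admitfam}(ii)) only gives closure under conjugation by elements of the ambient group $H\times\Sigma_n$ of which $\cF_n(\cO)$ is a family. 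For $g\notin H$, $(g,1)$ lies outside $H\times\Sigma_n$, so even though $(g,1)\,\Gamma\,(g,1)^{-1}$ happens to land back inside $H\times\Sigma_n$, neither the family axiom nor Proposition~\ref{prop:admitfam}(ii) applied to the $H$-operad $\cO$ tells you it lies in $\cF_n(\cO)$. Similarly, at the level of coefficient systems, $\mC(\cO)$ is a functor on the orbit category of $H$; conjugation by $g\in N_G(H)\setminus H$ is an \emph{outer} automorphism of $H$ and does not correspond to any morphism in that orbit category, so nothing in the indexing-system axioms forces $\mC(\cO)(H)$ to be stable under it.

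To see the potential failure concretely: take $H$ a Klein four subgroup of $G=S_4$ generated by $(12)$ and $(34)$, and $g=(13)(24)$, which normalizes $H$ but lies outside it and swaps $\langle(12)\rangle$ with $\langle(34)\rangle$. One can write down an indexing system on $H$ in which $H/\langle(12)\rangle$ is admissible but $H/\langle(34)\rangle$ is not (the orbit category of the abelian group $H$ does not relate these two orbits, and the closure axioms are satisfied). If $\cO$ realizes this indexing system, then $T=H/\langle(12)\rangle$ is admissible for $\cO$, yet the $g$-condition in the preceding proposition asks for $g\cdot T\cong H/\langle(34)\rangle$ to be admissible, which fails. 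So your argument genuinely cannot close without an additional hypothesis. The extra conjugation-stability you need does hold automatically when $\cO=i_H^\ast\cO'$ for some $G$-$\Ninfty$ operad $\cO'$ (because then $\cF_n(\cO)=\cF_n(\cO')\cap\{\text{subgroups of }H\times\Sigma_n\}$, and $\cF_n(\cO')$ is closed under $G\times\Sigma_n$-conjugation), so your proof would be correct under that added assumption; but as written it does not cover an arbitrary $H$-$\Ninfty$ operad $\cO$, and appealing to Proposition~\ref{prop:admitfam}(ii) does not repair it.
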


\begin{corollary}
If $N$ is a normal subgroup of $G$, then the condition is simply that
a $K$-set is admissible if and only if its restriction to $N\cap K$ is
admissible.
\end{corollary}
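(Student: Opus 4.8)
The plan is to deduce this directly from the preceding proposition, showing that when $H=N$ is normal the quantifier over $g\in G$ there becomes redundant. First I would unwind what must be proved: by the preceding proposition applied with $H=N$, a $K$-set $T$ is admissible for $F_{N}(G,\cO)$ exactly when $i_{N\cap gKg^{-1}}^{\ast}(g\cdot T)\in\cC(\cO)(N\cap gKg^{-1})$ for \emph{every} $g\in G$. Taking $g=e$ already shows that this forces $i_{N\cap K}^{\ast}T$ to be admissible for $\cO$, so one direction is immediate; the content is the converse, namely that admissibility of the single set $i_{N\cap K}^{\ast}T$ implies the whole $G$-indexed family of conditions.

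The key step is a group-theoretic identity together with conjugation-invariance of admissibility. Since $N$ is normal, $g^{-1}Ng=N$, so
\[
N\cap gKg^{-1}=g\bigl(N\cap K\bigr)g^{-1}.
\]
I would then observe that, as a $g(N\cap K)g^{-1}$-set, $i_{N\cap gKg^{-1}}^{\ast}(g\cdot T)$ is precisely the conjugate $g\cdot\bigl(i_{N\cap K}^{\ast}T\bigr)$ of the $(N\cap K)$-set $i_{N\cap K}^{\ast}T$ — that is, restriction to $N\cap K$ and conjugation by $g$ commute in this situation, which is a routine check once the subgroup identity above is in hand. Granting this, Proposition~\ref{prop:admitfam}(ii) says conjugation carries admissible sets to admissible sets (and, applied to $g^{-1}$, conversely), so $i_{N\cap K}^{\ast}T\in\cC(\cO)(N\cap K)$ implies $i_{N\cap gKg^{-1}}^{\ast}(g\cdot T)\in\cC(\cO)(N\cap gKg^{-1})$ for all $g$. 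By the preceding proposition this is exactly admissibility of $T$ for $F_{N}(G,\cO)$, so the two conditions are equivalent.

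The only real obstacle is bookkeeping rather than mathematics: one must make sure that $N\cap gKg^{-1}$ and $g(N\cap K)g^{-1}$ are literally the same subgroup — this is the single place normality of $N$ is used, and the statement genuinely fails for a non-normal $H$ — and that the conjugation isomorphism identifies the restricted conjugate set with the conjugated restricted set. With those two points pinned down, the rest is an immediate appeal to Proposition~\ref{prop:admitfam}(ii) and the preceding proposition.
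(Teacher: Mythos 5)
Your argument correctly isolates the main steps: applying the preceding proposition with $H=N$, observing that $g=e$ gives one direction immediately, using normality to write $N\cap gKg^{-1}=g(N\cap K)g^{-1}$, and checking that restriction and conjugation commute so that $i_{N\cap gKg^{-1}}^{\ast}(g\cdot T)=g\cdot\bigl(i_{N\cap K}^{\ast}T\bigr)$. All of that is right.

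The gap is in the final appeal to Proposition~\ref{prop:admitfam}(ii). That proposition is a statement about a $G$-operad and conjugation by $g\in G$; it is the translation of the fact that $\cF_n(\cO)$ is, by definition of a family, closed under conjugation in $G\times\Sigma_n$. In the setting of this corollary, however, $\cO$ is an $N$-operad, so Proposition~\ref{prop:admitfam}(ii) applied to $\cO$ only gives you invariance of $\cC(\cO)$ under conjugation by elements of $N$. The $g$ in the quantifier ranges over all of $G$, and for $g\notin N$ the family $\cF_n(\cO)\subset N\times\Sigma_n$ has no built-in reason to be closed under $(g,\sigma)$-conjugation: normality of $N$ ensures only that such a conjugation is an automorphism of $N\times\Sigma_n$, not that it preserves the family determined by the $N\times\Sigma_n$-homotopy type of $\cO_n$. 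So the step ``$i_{N\cap K}^{\ast}T$ admissible $\Rightarrow$ $g\cdot\bigl(i_{N\cap K}^{\ast}T\bigr)$ admissible'' is not a consequence of Proposition~\ref{prop:admitfam}(ii) as stated.

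To close the gap you need an additional input: that $\cF_n(\cO)$ is invariant under conjugation by all of $G\times\Sigma_n$, not merely $N\times\Sigma_n$. This holds in the situations of interest (for instance when $\cO$ arises by restricting a $G$-operad to $N$, or when $\cO_n$ admits a compatible $G$-action), and the preceding corollary in the paper (``an $H$-set $T$ is admissible for $F_H(G,\cO)$ iff it is admissible for $\cO$'') quietly leans on the same kind of invariance for $g\notin H$. So the missing hypothesis is likely implicit in the paper's exposition, but your proof should flag it explicitly rather than attributing the needed invariance to Proposition~\ref{prop:admitfam}(ii), which does not supply it for an $N$-operad.
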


\begin{corollary}
If $N$ is a normal subgroup of $G$, then $\cC(N_{N}^{G}\cO)$ is the
closure of $\cC(\cO)$ under the operation
\[
\Ind_{(-)\cap N}^{(-)}\colon \Set_{(-)\cap N}\to \Set_{(-)}.
\]
\end{corollary}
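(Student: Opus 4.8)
The plan is to read everything off the immediately preceding corollary, which identifies the admissible $K$-sets of $N_N^G\cO = F_N(G,\cO)$ as exactly those finite $K$-sets $T$ for which $i_{K\cap N}^{\ast}T$ is admissible for $\cO$, and then to translate this ``restriction'' criterion into the asserted ``generation'' criterion. Write $\cD$ for the closure of $\mC(\cO)$ under the operation $\Ind_{(-)\cap N}^{(-)}$, i.e. the smallest indexing system for $G$ such that $\Ind_{K\cap N}^{K}$ carries $\mC(\cO)(K\cap N)$ into $\cD(K)$ for every $K\le G$. Since coinduction takes $\Ninfty$ operads to $\Ninfty$ operads and $\mC$ lands in $\poset$ by Theorem~\ref{thm:FunctorC}, both $\cD$ and $\mC(N_N^G\cO)$ lie in $\poset$, so it suffices to prove the two inclusions.

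For $\cD\subseteq\mC(N_N^G\cO)$ it is enough to check that $\mC(N_N^G\cO)$ is itself closed under $\Ind_{(-)\cap N}^{(-)}$. Fix $K\le G$ and $S\in\mC(\cO)(K\cap N)$; by the preceding corollary I must show that $i_{K\cap N}^{\ast}\Ind_{K\cap N}^{K}S$ is admissible for $\cO$. Normality of $N$ makes $K\cap N$ normal in $K$, so the double-coset formula collapses to a natural isomorphism
\[
i_{K\cap N}^{\ast}\Ind_{K\cap N}^{K}S \;\cong\; \coprod_{g(K\cap N)\,\in\, K/(K\cap N)} g\cdot S,
\]
a finite disjoint union of conjugates of $S$. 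Each summand is admissible for $\cO$ by conjugation-invariance of admissibility (Proposition~\ref{prop:admitfam}), and $\mC(\cO)$ is closed under disjoint union (Lemma~\ref{lem:DisjointUnion}), so the union is admissible, as needed.

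For the reverse inclusion, let $T$ be an admissible $K$-set for $N_N^G\cO$. Since indexing systems are closed under finite coproducts and under passage to subobjects (Lemmata~\ref{lem:DisjointUnion} and~\ref{lem:Summands}), we may assume $T=K/L$ is a single orbit. By the preceding corollary $i_{K\cap N}^{\ast}(K/L)$ is admissible for $\cO$; decomposing it into $(K\cap N)$-orbits and using the truncation property of $\mC(\cO)$ shows in particular that $(K\cap N)/(L\cap N)\in\mC(\cO)(K\cap N)$. Applying $\Ind_{(-)\cap N}^{(-)}$ at level $K$ then places $K/(L\cap N)$ in $\cD(K)$, and it remains to recover $K/L$ from $K/(L\cap N)$ inside $\cD$, using the internal operations of an indexing system together with the indexed-coproduct formalism of Corollary~\ref{cor:GDisjointUnion} and the normality of $L\cap N$ in $L$.

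I expect this last reduction --- from the ``purely $N$'' orbit $K/(L\cap N)$ back to $K/L$ --- to be the main obstacle, since it is the one step where $\Ind_{(-)\cap N}^{(-)}$ must interact non-formally with the closure structure on indexing systems; the case $L\subseteq N$ is immediate, because there $K/L$ is literally a coproduct summand of $\Ind_{K\cap N}^{K}i_{K\cap N}^{\ast}(K/L)$, and the general case should follow by feeding the orbit decomposition of $i_{K\cap N}^{\ast}(K/L)$ through that formalism. Everything else is a formal consequence of the preceding corollary and the closure properties of $\mC(\cO)$ established in Section~\ref{sec:Properties}.
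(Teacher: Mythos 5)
Your first inclusion is correct: the double-coset computation of $i_{K\cap N}^{\ast}\Ind_{K\cap N}^{K}S$ as a disjoint union of conjugates $g\cdot S$ is right (using normality of $K\cap N$ in $K$), and admissibility then follows from conjugation invariance and closure under coproducts. (One small caveat: this invokes conjugation by $g\in K\le G$, whereas for an $N$-operad Proposition~\ref{prop:admitfam} only gives conjugation invariance of $\cC(\cO)$ under elements of $N$. This is an ambient assumption that the preceding corollary already makes silently, so it is not a new gap introduced by you.)

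The real problem is the step you flag as ``the main obstacle,'' and you are too optimistic about it. Passing from $K/(L\cap N)$ to $K/L$ means taking a quotient by the free $L/(L\cap N)$-action, and indexing systems are closed under subobjects, coproducts, Cartesian products, and self-induction --- but \emph{not} under quotients or surjective images. There is no way to manufacture $K/L$ from $K/(L\cap N)$ with the allowed operations, and the proposal does not actually do so. In fact, with the natural reading of ``closure'' (smallest element of $\poset$ containing the $\Ind$-images) the stated corollary appears to be false. Take $G=C_8$, $N=C_2$, and let $\cO$ be the trivial $N$-$\Ninfty$ operad, so $\cC(\cO)$ is the trivial indexing system for $C_2$. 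By the preceding corollary $C_8/C_4$ is admissible for $N_{N}^{G}\cO$, since $C_2\subseteq C_4$ makes its $C_2$-restriction trivial. On the other hand, every set $\Ind_{(-)\cap N}^{(-)}$ produces at the top level is a coproduct of copies of $C_8/C_2$ and of trivial sets, and there is no $C_8$-map $C_8/C_4\to C_8/C_2$ (it would require a $C_4$-fixed point of $C_8/C_2$, i.e.\ $C_4\subseteq C_2$). Subobjects, coproducts, Cartesian products, and self-inductions of sets whose orbits are $C_8/C_8$ and $C_8/C_2$ again only have those orbit types, so $C_8/C_4$ is not in the closure, which is therefore a proper sub-indexing-system of $\cC(N_{N}^{G}\cO)$.

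Since the paper gives no proof of this corollary, there is nothing to compare against, but the preceding corollary actually identifies $\cC(N_{N}^{G}\cO)(K)$ as the \emph{preimage} $(i_{K\cap N}^{\ast})^{-1}\big(\cC(\cO)(K\cap N)\big)$; this pullback generally properly contains the $\Ind$-closure. You were right to single out the final reduction as the crux --- it is not a technicality but the place where the argument, and apparently the statement itself under the plain reading, breaks down.
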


We can now explain the connection between norms of algebras and algebras over the norm of an $\Ninfty$ operad $\cO$. One of the defining features of the norm in spectra is a homeomorphism
\[
N_{H}^{G} \Sigma^{\infty}(X_{+}) \cong \Sigma^{\infty}\big(F_{H}(G,X)_{+}\big),
\]
which follows immediately from the fact that $\Sigma^{\infty}_{+}$ is
a symmetric monoidal functor from spaces with Cartesian product to
spectra with the smash product. Thus we expect a close connection
between algebras in spaces or spectra over an $\Ninfty$ operad and
those over its norm.  The following corollary is an immediate
consequence of Proposition~\ref{prop:lax}. 

\begin{corollary}\label{cor:NormsofAlgebras}
If $R$ is an $\cO$-algebra in spaces or spectra for an $\Ninfty$
$H$-operad $\cO$, then $N_{H}^{G}(R)$ is naturally a
$N_{H}^{G}\cO$-algebra.
\end{corollary}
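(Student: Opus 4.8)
The plan is to specialize Proposition~\ref{prop:lax} to the functor $F = N_H^G$. Here $\aC$ is the category of $H$-spaces with Cartesian product (resp.\ orthogonal $H$-spectra with the smash product) and $\aD$ is the category of $G$-spaces (resp.\ orthogonal $G$-spectra) with the corresponding symmetric monoidal structure. The one point that is not purely formal is that $N_H^G$ is lax symmetric monoidal. For spaces, $N_H^G = F_H(G,-)$ is coinduction, and precomposing a pair of $H$-equivariant maps $G \to X$ and $G \to Y$ with the diagonal $G \to G\times G$ produces a natural map $F_H(G,X)\times F_H(G,Y)\to F_H(G,X\times Y)$; this, together with $F_H(G,*)\cong *$, is exactly the lax (in fact strong) symmetric monoidal structure already used above to assemble $F_H(G,\cO)$ into a $G$-operad. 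For spectra, the fact that the Hill--Hopkins--Ravenel norm is strong, hence lax, symmetric monoidal for the smash product is part of its basic theory; see~\cite[App.~B]{HHR}.

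Granting this, Proposition~\ref{prop:lax} yields at once that $N_H^G\cO$ is an operad in $\aD$. Unwinding the definition $N_H^G\cO = F_H(G,\cO)$ and invoking the identification of its levelwise families recorded earlier, $N_H^G\cO$ is a $G$-$\Ninfty$ operad, so the statement makes sense. Proposition~\ref{prop:lax} also provides a functor $\aC[\cO]\to\aD[N_H^G\cO]$ on categories of algebras; applying it to $R$ equips $N_H^G R$ with the asserted $N_H^G\cO$-algebra structure. Naturality in $R$ is immediate: the functor furnished by Proposition~\ref{prop:lax} is built out of the natural lax structure maps of $N_H^G$, so the construction $R\mapsto N_H^G R$ carries maps of $\cO$-algebras to maps of $N_H^G\cO$-algebras.

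The closest thing to an obstacle is bookkeeping in the spectral case: the norm is a point-set level construction, so one should verify that its lax monoidal structure maps satisfy the coherence needed for Proposition~\ref{prop:lax} strictly rather than merely up to homotopy. This is precisely what the monoidal properties of the norm in~\cite[App.~B]{HHR} supply, so no new argument is required; if one prefers, one can instead pass to the associated homotopical categories of algebras using the comparison results of Appendix~\ref{sec:monadicalgebras}, but the point-set assertion is already exactly the output of Proposition~\ref{prop:lax}.
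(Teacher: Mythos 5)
Your proof is correct and follows exactly the paper's argument: the corollary is stated there as an immediate consequence of Proposition~\ref{prop:lax} applied to the lax symmetric monoidal functor $N_H^G$ (coinduction on spaces, the HHR norm on spectra). The extra care you take to spell out the lax structure maps and their coherence is sound but just unpacks what the paper cites.
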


\subsubsection{Cotensoring and $\Ninfty$ operads}

We close this subsection with a small result of independent interest:
cofree naive commutative $G$-ring spectra are automatically genuine
commutative $G$-ring spectra. This follows from the cotensoring
operation of spaces on $\Ninfty$ operads.

\begin{proposition}\label{prop:UniversalCoTensor}
Let $E$ be a universal space for a finite group $G$. If $X$ is a $G$-space, then
$
F(X,E)
$
is again a universal space for $G$.
\end{proposition}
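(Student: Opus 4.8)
The plan is to verify the fixed-point characterization of Definition~\ref{def:Family} directly: a $G$-space $Y$ is a universal space for the family $\{H\le G : Y^H\ne\varnothing\}$ precisely when each $Y^H$ is empty or contractible, so it suffices to prove this for $Y = F(X,E)$. First I would rewrite the fixed points as an equivariant mapping space: for $H\le G$ there is a natural homeomorphism
\[
F(X,E)^H\;\cong\;F^H\big(i_H^\ast X,\, i_H^\ast E\big),
\]
the space of $H$-equivariant maps, just as in the coinduction computations above. Since $(i_H^\ast E)^K = E^K$ for all $K\le H$, the restriction $i_H^\ast E$ is again a universal space, now for the family $\mathcal F_H = \{K\le H : E^K\ne\varnothing\}$ of subgroups of $H$; so the problem becomes a statement internal to $H$.

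Next I would invoke the basic universal property of $E\mathcal F$: for any family $\mathcal G$ of subgroups of a finite group $H$ and any $H$-space $Y$, the mapping space $F^H(Y, E\mathcal G)$ is contractible when every isotropy group of $Y$ lies in $\mathcal G$ and empty otherwise. Choosing a $G$-CW model for $E\mathcal G$ and replacing $Y$ by a $G$-CW approximation --- which alters neither the collection of isotropy groups occurring nor $F^H(Y,E\mathcal G)$ up to equivalence --- the empty case is immediate: an $H$-map $Y\to E\mathcal G$ restricted to an orbit $H/K$ with $K\notin\mathcal G$ would give a point of $(E\mathcal G)^K=\varnothing$. In the contractible case one proceeds up the skeleta $Y_0\subseteq Y_1\subseteq\cdots$; each restriction map $F^H(Y_n,E\mathcal G)\to F^H(Y_{n-1},E\mathcal G)$ is a base change of the (trivial) fibration $\prod_i\mathrm{Map}\big(D^n,(E\mathcal G)^{K_i}\big)\to\prod_i\mathrm{Map}\big(S^{n-1},(E\mathcal G)^{K_i}\big)$, whose source and target are contractible since each $(E\mathcal G)^{K}\simeq\ast$, hence is itself a weak equivalence. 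Therefore the tower consists of weak equivalences and
\[
F^H(Y,E\mathcal G)=\lim_n F^H(Y_n,E\mathcal G)\;\simeq\;F^H(Y_0,E\mathcal G)=\prod_i (E\mathcal G)^{K_i}\;\simeq\;\ast .
\]

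Applying this with $Y = i_H^\ast X$ and $\mathcal G = \mathcal F_H$ shows $F(X,E)^H$ is contractible or empty according to whether every $H$-isotropy group $H\cap G_x$ (for $x\in X$) lies in $\mathcal F_H$; either way the condition of Definition~\ref{def:Family} holds for all $H$, so $F(X,E)$ is a universal space --- for the family $\{H\le G : H\cap G_x\in\mathcal F\text{ for all }x\in X\}$, where $\mathcal F$ is the family for $E$. I expect no substantive obstacle: the only point requiring care is the usual one of choosing point-set models so the equivariant mapping spaces are homotopically meaningful, and the remaining content --- the fixed-point identification together with the universal property of $E\mathcal F$ --- is routine.
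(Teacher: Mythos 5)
Your proof is correct, and it rests on the same key fact as the paper's: that for a universal space $E\mathcal{F}$ and a (CW) $G$-space $Y$, the equivariant mapping space $F^G(Y,E\mathcal{F})$ is empty or contractible. The difference is purely in packaging. You pass to $H$-fixed points one subgroup at a time, identify $F(X,E)^H$ with $F^H(i_H^\ast X, i_H^\ast E)$, and then re-derive the empty-or-contractible property by an explicit skeletal induction. The paper instead takes the empty-or-contractible property of $F^G(Y,E)$ for all $G$-spaces $Y$ as the \emph{defining characterization} of a universal space and applies the single adjunction $F\big(Y,F(X,E)\big)^G\cong F(Y\times X,E)^G$, which disposes of all subgroups $H$ at once via $Y=G/H$. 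The paper's route is one line; yours unpacks the same input and also records the resulting family explicitly $\big(\{H\le G : H\cap G_x\in\mathcal F\text{ for all }x\in X\}\big)$, which the paper leaves implicit. One small point worth tightening: the claim that a CW approximation of $Y$ alters neither the occurring isotropy groups nor $F^H(Y,E\mathcal G)$ up to equivalence is only true in the homotopical sense (isotropy detected by nonempty fixed points, mapping space invariance requiring cofibrancy of the source), but this is exactly the same level of care the paper itself implicitly requires, so it is not a gap.
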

\begin{proof}
A universal space is determined by the property that for any $G$-space $Y$, the space of $G$-equivariant maps
\[
F(Y,E)^{G}
\]
is either empty or contractible. Using the adjunction
\[
F\big(Y,F(X,E)\big)^{G}\cong F(Y\times X,E)^{G},
\]
we see that $F(X,E)$ again has the desired property.
\end{proof}

\begin{proposition}
If $X$ is a non-empty $G$-space, then for any $\Ninfty$ operad $\cO$, there is
an $\Ninfty$ operad $F(X,\cO)$ defined by 
\[
F(X,\cO)_n=F(X,\cO_n),
\]
where $X$ is viewed as a $G\times\Sigma_{n}$-space with trivial $\Sigma_{n}$ action and and with coordinatewise structure maps.
\end{proposition}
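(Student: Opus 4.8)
The plan is to verify that each of the three defining conditions of an $\Ninfty$ operad (Definition~\ref{def:geinfop}) is preserved under the cotensor $F(X,-)$, given that $X$ is a non-empty $G$-space regarded as having trivial $\Sigma_n$-action in level $n$. The operadic structure itself is the easy part: since $F(X,-)$ is lax symmetric monoidal (via the diagonal $X\to X\times X$, exactly as in the proof for $N_H^G\cO$) it carries operads to operads by Proposition~\ref{prop:lax}, and the identity element and structure maps of $F(X,\cO)$ are simply the images of those of $\cO$ under $F(X,-)$. The $G\times\Sigma_n$-action on $F(X,\cO_n)$ is the conjugation action, and because $\Sigma_n$ acts trivially on $X$ this action restricts on the $\Sigma_n$-factor to post-composition with the $\Sigma_n$-action on $\cO_n$.

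First I would check freeness of the $\Sigma_n$-action (condition (ii)): if $\sigma\in\Sigma_n$ fixes some $f\in F(X,\cO_n)$, then since $\Sigma_n$ acts trivially on $X$ we have $\sigma\cdot f(x)=f(x)$ for every $x\in X$, and as $X$ is non-empty this forces $\sigma$ to fix a point of $\cO_n$; by freeness of the $\Sigma_n$-action on $\cO_n$, $\sigma=1$. Next, conditions (i) and (iii) are both instances of the universal-space statement: condition (i) says $\cO_0$ is $G$-contractible, i.e.\ a universal space for the family of all subgroups of $G$, and condition (iii) says $\cO_n$ is a universal space for the family $\cF_n(\cO)\subseteq G\times\Sigma_n$. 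So it suffices to show that $F(X,E\cF)$ is again a universal space for the \emph{same} family $\cF$, for any $X$. This is essentially Proposition~\ref{prop:UniversalCoTensor}, but applied to the group $G\times\Sigma_n$ with $X$ given the trivial $\Sigma_n$-action: for any $G\times\Sigma_n$-space $Y$ the adjunction gives
\[
F\big(Y,F(X,E\cF)\big)^{G\times\Sigma_n}\cong F(Y\times X,E\cF)^{G\times\Sigma_n},
\]
and since $X$ has trivial $\Sigma_n$-action the isotropy of points of $Y\times X$ (as $G\times\Sigma_n$-space) coincides with the isotropy of the corresponding points of $Y$; hence $Y\times X$ maps into $E\cF$ contractibly iff $Y$ does, so $F(X,E\cF)$ detects exactly the family $\cF$. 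Passing fixed points through this shows $F(X,\cO_n)^{\Gamma}$ is contractible when $\Gamma\in\cF_n(\cO)$ and empty otherwise; in particular $\cF_n(\cO)$ still contains all $H\times\{1\}$, and $\cO_0$, $\cO_1$ stay $G$-contractible.

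The one point requiring a little care — and the place I expect the only genuine subtlety — is the interaction between the triviality of the $\Sigma_n$-action on $X$ and the fact that the \emph{family} $\cF_n(\cO)$ generally contains subgroups $\Gamma$ that project nontrivially to $\Sigma_n$ (these are precisely the graphs $\Gamma_T$ of admissible $H$-sets). I need that the isotropy-comparison above is valid for such $\Gamma$, not merely for $\Gamma\subseteq G\times\{1\}$; but because $X$ carries the trivial $\Sigma_n$-action, every point of $X$ has isotropy containing $\{1\}\times\Sigma_n$, so for any $\Gamma\subseteq G\times\Sigma_n$ and any $(y,x)\in Y\times X$ we get $\mathrm{Stab}_\Gamma(y,x)=\mathrm{Stab}_\Gamma(y)\cap\mathrm{Stab}_{G\times\Sigma_n}(x)\supseteq \mathrm{Stab}_\Gamma(y)\cap(\{1\}\times\Sigma_n)\cdot(\Gamma\cap(\text{isotropy of }x))$ — the upshot being that $\mathrm{Stab}_\Gamma(y,x)=\mathrm{Stab}_\Gamma(y)$ because the $\Sigma_n$-coordinate imposes no new constraint from $x$. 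This confirms that $F(X,\cO)$ sees exactly the same admissible sets as $\cO$, i.e.\ $\mC(F(X,\cO))=\mC(\cO)$, which both completes the verification of condition (iii) and gives the natural corollary that $F(X,-)$ induces the identity on the poset $\poset$. Once these checks are in place, assembling them yields that $F(X,\cO)$ is an $\Ninfty$ operad.
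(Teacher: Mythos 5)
Your overall strategy matches the paper's: use the lax monoidality of the cotensor together with Proposition~\ref{prop:lax} to get the operad structure, use Proposition~\ref{prop:UniversalCoTensor} (applied to $G\times\Sigma_n$) for the universal-space property, and then verify freeness of the $\Sigma_n$-action directly. Your freeness argument is correct and is the same argument the paper gives, phrased more concretely. Your observation that condition (iii) requires only that the new family contain all $H\times\{1\}$ is also the right thing to aim for.

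However, the final paragraph contains a genuine error. You assert that for every $(y,x)\in Y\times X$ and every $\Gamma\subseteq G\times\Sigma_n$, one has $\mathrm{Stab}_\Gamma(y,x)=\mathrm{Stab}_\Gamma(y)$, on the grounds that the $\Sigma_n$-coordinate of $\mathrm{Stab}(x)$ is all of $\Sigma_n$. This conflates the two coordinates: while it is true that $\mathrm{Stab}_{G\times\Sigma_n}(x) = H_x\times\Sigma_n$ with $H_x$ the $G$-stabilizer of $x$, the \emph{$G$-coordinate} $H_x$ can be a proper subgroup of $G$, and then $\mathrm{Stab}_\Gamma(y,x) = \mathrm{Stab}_\Gamma(y)\cap(H_x\times\Sigma_n)$ is strictly smaller than $\mathrm{Stab}_\Gamma(y)$ whenever $\mathrm{Stab}_\Gamma(y)$ contains an element with $G$-coordinate outside $H_x$. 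Consequently your conclusion $\mC(F(X,\cO))=\mC(\cO)$ is false in general. In fact it is contradicted two propositions later in the paper: Proposition~\ref{prop:whoa} shows that $F(EG,\cO)$ is a \emph{complete} $\Ninfty$ operad for \emph{every} $\cO$, so starting from the trivial $G$-$E_\infty$ operad the admissible sets grow from the trivial indexing system to the complete one. The correct statement is only the inclusion $\mC(\cO)\subseteq\mC(F(X,\cO))$: if $\Gamma\in\cF_n(\cO)$ then every isotropy group $g\Gamma g^{-1}\cap(H_x\times\Sigma_n)$ appearing in $(G\times\Sigma_n)/\Gamma\times X$ is a subgroup of a conjugate of $\Gamma$, hence in $\cF_n(\cO)$ by the family axiom; but the family may enlarge when $X$ has points with small $G$-isotropy.

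Fortunately, the proposition you are proving does not require the family to be preserved; it only requires that the new family contain every $H\times\{1\}$. This is immediate and does not need the isotropy calculation at all: since $\cF_n(\cO)$ contains $K\times\{1\}$ for every $K\subseteq H$, the restriction of $\cO_n$ to $H\times\{1\}\cong H$ is $H$-contractible, and therefore $F(X,\cO_n)^{H\times\{1\}}=\mathrm{Map}_H(X,i_H^*\cO_n)$ is nonempty (indeed contractible). The same observation with $n=0$ (and with $n=1$) gives that $F(X,\cO)_0$ and $F(X,\cO)_1$ remain $G$-contractible. If you delete the $\mC(F(X,\cO))=\mC(\cO)$ claim and the supporting isotropy computation, and replace them with this one-line check, the proof is complete and agrees with the paper's.
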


\begin{proof}
Since the cotensor is lax monoidal (using the diagonal map on $X$), Proposition~\ref{prop:lax} implies
that $F(X,\cO)$ forms an operad.  Proposition~\ref{prop:UniversalCoTensor} then implies that all space are universal spaces for some family of subgroups of $G\times\Sigma_{n}$. We need only show that $\Sigma_{n}$ acts freely.

Let $H\subset\Sigma_{n}$ be non-trivial, and consider the $H$-fixed points of $F(X,\cO_{n})$. Since $\Sigma_{n}$ acted trivially on $X$ and since $X$ was non-empty, the restriction of $X$ to $H$ is built entirely out of cells with stabilizer $H$. Since $i_{\Sigma_{n}}^{\ast}\cO_{n}=E\Sigma_{n}$, freeness follows.
\end{proof}

Naturality of the function object immediately gives the following
proposition.

\begin{proposition}\label{prop:NaturalityofOperadNorms}
If $f\colon X\to Y$ is a map of non-empty $G$-spaces, then
$f^{\ast}$ is a map of $G$-operads
\[
F(Y,\cO)\to F(X,\cO),
\]
and hence any $F(X,\cO)$-algebra is naturally a $F(Y,\cO)$-algebra.
\end{proposition}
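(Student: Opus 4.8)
The plan is to deduce the proposition from the single observation that precomposition with $f$ is a \emph{monoidal} natural transformation between the lax symmetric monoidal functors $F(Y,-)$ and $F(X,-)$, together with the fact (already invoked in the proof of Proposition~\ref{prop:lax}) that operads are monoids in symmetric sequences and that lax symmetric monoidal functors carry such monoids to operads. First I would recall the setup from the preceding construction: $F(X,\cO)$ is the operad obtained by applying the lax symmetric monoidal functor $F(X,-)\colon (G\text{-spaces},\times)\to(G\text{-spaces},\times)$ to $\cO$, with lax structure map $F(X,A)\times F(X,B)\to F(X\times X,A\times B)\xrightarrow{\Delta_X^{\ast}} F(X,A\times B)$ and unit $\ast\to F(X,\ast)$, and likewise for $Y$; here $X$ and $Y$ carry the trivial $\Sigma_n$-action at level $n$.

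Next I would verify that $f^{\ast}$ assembles into a map of operads. It is plainly natural in the second variable, it is $G$-equivariant because $f$ is a $G$-map (so $f^{\ast}(g\cdot\phi)=g\cdot f^{\ast}(\phi)$ for $\phi\in F(Y,A)$), and at level $n$ it is $G\times\Sigma_n$-equivariant because $\Sigma_n$ acts only through the $\cO_n$-variable. The one point with content is compatibility with the lax structure maps, i.e.\ commutativity of
\[
\begin{CD}
F(Y,A)\times F(Y,B) @>>> F(Y,A\times B)\\
@Vf^{\ast}\times f^{\ast}VV @VVf^{\ast}V\\
F(X,A)\times F(X,B) @>>> F(X,A\times B),
\end{CD}
\]
which follows immediately from naturality of the diagonal, $\Delta_Y\circ f=(f\times f)\circ\Delta_X$; compatibility with the units is the triviality that $\ast$ is terminal, so $f^{\ast}\colon F(Y,\ast)\to F(X,\ast)$ carries the distinguished point to the distinguished point. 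Thus $f^{\ast}$ is a monoidal natural transformation, and applying it to the monoid $\cO$ in symmetric sequences of $G$-spaces yields a map of monoids, that is, a map of $G$-operads $F(Y,\cO)\to F(X,\cO)$.

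The final clause then follows formally: restriction of algebras along the operad map $F(Y,\cO)\to F(X,\cO)$ turns any $F(X,\cO)$-algebra into an $F(Y,\cO)$-algebra, functorially in the $F(X,\cO)$-algebra. I do not expect any real obstacle here; the only step meriting a sentence is the ``monoidal'' verification, and as indicated it reduces entirely to naturality of the diagonal, the rest being routine equivariance bookkeeping made trivial by the triviality of the $\Sigma_n$-action on $X$ and $Y$.
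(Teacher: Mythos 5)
Your proof is correct and is essentially the argument the paper intends — the paper simply says ``Naturality of the function object immediately gives the following proposition,'' and your unwinding (checking $G\times\Sigma_n$-equivariance via the trivial $\Sigma_n$-action on the source, and compatibility with the lax structure via $\Delta_Y\circ f=(f\times f)\circ\Delta_X$) is precisely that naturality, made explicit through the lens of Proposition~\ref{prop:lax}.
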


In particular, the map to the terminal space $\ast$ shows that any
$F(X,\cO)$-algebra is naturally a $\cO$-algebra. 

When the $\Ninfty$ $H$-operad is the restriction of a $\Ninfty$
$G$-operad, then we can combine
Proposition~\ref{prop:NaturalityofOperadNorms}
and Corollary~\ref{cor:NormsofAlgebras}.

\begin{corollary}\label{cor:PreservationofAlgebra}
If $R$ is an $\cO$-algebra in spaces or spectra for a $\Ninfty$
$G$-operad $\cO$, then $N_{H}^{G}i_{H}^{\ast}(R)$ is again naturally
an $\cO$-algebra.

More generally, if $T$ is a finite $G$-set, then $N^{T}(R)$ is
naturally an $\cO$-algebra.
\end{corollary}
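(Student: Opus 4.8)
The plan is to deduce both assertions from Corollary~\ref{cor:NormsofAlgebras} and Proposition~\ref{prop:NaturalityofOperadNorms}, reducing the case of a general finite $G$-set $T$ to the case of a single orbit $T = G/H$.

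First I would handle $N_{H}^{G}i_{H}^{\ast}R$. Restricting along $i_{H}^{\ast}$, the spectrum $i_{H}^{\ast}R$ is an algebra over the $H$-$\Ninfty$ operad $i_{H}^{\ast}\cO$, so Corollary~\ref{cor:NormsofAlgebras} makes $N_{H}^{G}(i_{H}^{\ast}R)$ naturally an algebra over the $G$-operad $N_{H}^{G}(i_{H}^{\ast}\cO) = F_{H}(G, i_{H}^{\ast}\cO)$. The next step is to recognize this operad as a cotensor: the natural isomorphisms $F^{G}(Z, F_{H}(G, i_{H}^{\ast}Y)) \cong F^{H}(i_{H}^{\ast}Z, i_{H}^{\ast}Y) \cong F^{G}(Z\times G/H, Y) = F^{G}(Z, F(G/H, Y))$, valid for all $G$-spaces $Y, Z$ (using $Z\times G/H \cong G\times_{H}i_{H}^{\ast}Z$), identify $F_{H}(G, \cO_{n})$ with $F(G/H, \cO_{n})$ compatibly with the diagonal-induced lax monoidal structures, hence $N_{H}^{G}(i_{H}^{\ast}\cO) \cong F(G/H, \cO)$ as $G$-operads. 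Since $G/H$ is non-empty, Proposition~\ref{prop:NaturalityofOperadNorms} applied to $G/H \to \ast$ supplies a map of $G$-operads $\cO = F(\ast, \cO) \to F(G/H, \cO)$, and restricting the algebra structure along this map exhibits $N_{H}^{G}i_{H}^{\ast}R$ as an $\cO$-algebra. Each construction used is functorial in $R$, which gives the naturality.

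For an arbitrary finite $G$-set $T$ I would use the orbit decomposition. The case $T = \varnothing$ is immediate, since $N^{\varnothing}R$ is the unit sphere spectrum, which carries its canonical (initial) $\cO$-algebra structure coming from the $G$-contractibility of $\cO_{0}$. Otherwise write $T = \coprod_{i=1}^{k}G/H_{i}$; the orbit decomposition of the norm recorded above, specialized to a $G$-set, gives a natural homeomorphism $N^{T}R \cong \bigwedge_{i=1}^{k}N_{H_{i}}^{G}i_{H_{i}}^{\ast}R$, each factor of which is an $\cO$-algebra by the previous paragraph. It then suffices to observe that a finite smash product of $\cO$-algebras is again an $\cO$-algebra: if each $A_{i}$ is an $\cO$-algebra then $\bigwedge_{i}A_{i}$ is naturally an algebra over the product operad $\cO^{\times k}$, and restriction along the diagonal map of operads $\cO \to \cO^{\times k}$ equips it with an $\cO$-algebra structure. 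Applying this to $N^{T}R$ completes the argument, and again everything is natural in $R$.

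Since the proof is essentially an assembly of results already established, I do not expect a serious obstacle; the point that most repays care is the identification $N_{H}^{G}i_{H}^{\ast}\cO \cong F(G/H, \cO)$ at the level of $G$-operads (and not merely of $G$-symmetric sequences), together with the check that the $F(G/H, \cO)$-algebra structure extracted from Corollary~\ref{cor:NormsofAlgebras} is the one restricted from Proposition~\ref{prop:NaturalityofOperadNorms}. An alternative that sidesteps this bookkeeping is to note that for a finite $G$-set $T$ the spectrum-level norm $N^{T}$ is strong symmetric monoidal and compatible with the $G$-space cotensor $F(T, -)$, so that the tensored analogue of Proposition~\ref{prop:lax} takes the $\cO$-algebra $R$ directly to an $F(T, \cO)$-algebra $N^{T}R$; Proposition~\ref{prop:NaturalityofOperadNorms} then produces the $\cO$-algebra structure exactly as before.
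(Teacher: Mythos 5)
Your argument is correct and is essentially the paper's: the text immediately preceding the corollary says to combine Corollary~\ref{cor:NormsofAlgebras} (which makes $N_H^G(i_H^*R)$ an $N_H^G(i_H^*\cO)$-algebra) with Proposition~\ref{prop:NaturalityofOperadNorms} (via the identification $N_H^G i_H^*\cO\cong F(G/H,\cO)$ and the map $G/H\to *$), which is exactly your first paragraph. For the general finite $G$-set the paper leaves the reduction implicit, and your orbit decomposition together with restriction along the diagonal $\cO\to\cO^{\times k}$ supplies a clean way to fill it in; your ``alternative'' using the strong monoidality of $N^T$ and the cotensor $F(T,-)$ directly is equally faithful to the paper's viewpoint.
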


There is an extremely important (and somewhat surprising) case when $X
= EG$ --- the cotensor $F(EG,\aO)$ is then a genuine $G$-$E_{\infty}$ operad
for any $\aO$.  To make sense of this claim, consider the mapping
space $F(EG,E\Sigma_n)$, regarded as a $G \times \Sigma_n$-space where
$G \times \Sigma_n$ acts on $EG$ via the projection to $G$ and on
$E\Sigma_n$ via the projection to $\Sigma_n$.  Regarded as a universal
space for a family of subgroups of $G \times \Sigma_n$, $E\Sigma_n$
can admit maps only from spaces with isotropy contained entirely in
$G \times \{1\}$ --- but this is precisely the case for $EG$.

\begin{proposition}\label{prop:whoa}
For any $\Ninfty$ operad $\cO$, the $\Ninfty$ operad $F(EG,\cO)$ is a
$G$ $E_\infty$ operad. 
\end{proposition}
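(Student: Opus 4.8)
The plan is to reduce the statement to the defining condition of a $G$ $E_\infty$ operad and then to invoke the mapping-out property of universal spaces for families. Since the preceding proposition already shows that $F(EG,\cO)$ is an $\Ninfty$ operad, each space $F(EG,\cO)_n = F(EG,\cO_n)$ is the universal space for some family $\cF_n'$ of subgroups of $G\times\Sigma_n$; the only thing left to verify is that each $\cF_n'$ contains \emph{every} subgroup $\Gamma\subseteq G\times\Sigma_n$ with $\Gamma\cap\Sigma_n=\{1\}$. Because these spaces are universal spaces, $\Gamma\in\cF_n'$ is equivalent to $F(EG,\cO_n)^{\Gamma}$ being nonempty (it is then automatically contractible), so the whole argument comes down to producing a $\Gamma$-fixed point of $F(EG,\cO_n)$ for each such $\Gamma$.

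First I would apply Proposition~\ref{prop:graph} to write $\Gamma=\Gamma_T$, the graph of a homomorphism $f\colon H\to\Sigma_n$ with $H\subseteq G$ and $n=|T|$, and rewrite $F(EG,\cO_n)^{\Gamma_T}$ as the $\Gamma_T$-equivariant mapping space $F^{\Gamma_T}(EG,\cO_n)$, where $\Gamma_T$ acts on $EG$ through the projection $\Gamma_T\to G$ and on $\cO_n$ by restriction of the $G\times\Sigma_n$-action. The crucial observation is that this projection is injective (with image $H$), so $\Gamma_T$ acts freely on the contractible space $EG$; hence the restriction of $EG$ to $\Gamma_T$ is a free contractible $\Gamma_T$-CW complex, i.e. a model for $E\Gamma_T$.

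Then I would note that restricting fixed points along $\Gamma_T\hookrightarrow G\times\Sigma_n$ preserves the universal-space property, so $i_{\Gamma_T}^{*}\cO_n$ is the universal space for the family $\{K\subseteq\Gamma_T : K\in\cF_n(\cO)\}$ of subgroups of $\Gamma_T$; this family is nonempty, as it contains the trivial subgroup because $\cF_n(\cO)$ contains all $K\times\{1\}$ by clause (iii) of Definition~\ref{def:geinfop}. Since the restriction of $EG$ to $\Gamma_T$ has all of its isotropy (namely, the trivial group) in this family, the standard cellular induction applies — a free cell $\Gamma_T\times D^k$ contributes the contractible space $F(D^k, i_{\Gamma_T}^{*}\cO_n)$, and a relative free cell contributes an iterated loop space of it — and shows that $F^{\Gamma_T}(EG,\cO_n)$ is nonempty and contractible. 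Thus every subgroup $\Gamma_T$ meeting $\Sigma_n$ trivially lies in $\cF_n'$, which is precisely the condition that $F(EG,\cO)$ is a $G$ $E_\infty$ operad.

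The one point needing care — the ``main obstacle,'' such as it is — lies in the reduction of the last two paragraphs: one must check honestly that passing to $\Gamma_T$-fixed points genuinely turns $F(EG,\cO_n)$ into a mapping space out of a free (hence trivially-universal) $\Gamma_T$-space, and that $i_{\Gamma_T}^{*}\cO_n$ is still a universal space, now for an evidently nonempty family of subgroups of $\Gamma_T$. Once this is set up correctly the conclusion is immediate from the standard fact that $F^{\Gamma}(Z,E\cF)$ is nonempty and contractible whenever $Z$ is a $\Gamma$-CW complex whose isotropy lies in $\cF$. This is the precise form of the heuristic in the paragraph preceding the statement: $\cO_n$ receives a contractible space of maps from any $G\times\Sigma_n$-space whose isotropy avoids $\Sigma_n$, and along the graph inclusion $EG$ carries no isotropy at all.
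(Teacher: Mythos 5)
Your proof is correct and is in essence the same obstruction-theoretic argument as the paper's: both come down to checking that a mapping space out of $EG$ into a universal space is nonempty via cellular induction, using that the isotropy on the source side is small enough. The only difference is the equivariance level at which you work. The paper first reduces to the trivial $\Ninfty$ operad $\cO^{tr}$, then applies the untwisting adjunction to rewrite $F(EG,\cO^{tr}_n)^{\Gamma}$ as $G\times\Sigma_n$-equivariant maps out of $(G\times\Sigma_n/\Gamma)\times EG$, and verifies by an explicit double-coset computation that this product is built from free $\Sigma_n$-cells of the form $(G/H)\times\Sigma_n$. You instead stay at the level of $\Gamma=\Gamma_T$-equivariant maps out of $EG$, observing that the graph projection $\Gamma_T\to G$ is injective, hence $EG$ is $\Gamma_T$-free, and that $i_{\Gamma_T}^{*}\cO_n$ is a universal space whose family contains the trivial group (by clause (iii) of Definition~\ref{def:geinfop}, which also makes the paper's reduction to $\cO^{tr}$ unnecessary in your version). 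Both routes are sound; yours avoids the explicit decomposition of the product of orbits and is arguably a bit cleaner, while the paper's exposes the combinatorics of $\Sigma_n$-free $G\times\Sigma_n$-sets more directly.
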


\begin{proof}
It suffices to show this for the trivial $\Ninfty$ operad $\cO^{tr}$,
since $F(EG,\cO^{tr})$ maps to $F(EG,\cO)$ for any other $\Ninfty$
operad $\cO$.

Let $\Gamma$ be any subgroup of $G\times\Sigma_n$ that intersects
$\Sigma_n$ trivially.  To show that the $\Gamma$ fixed points of the
cotensor are nonempty, by adjunction we need only show that
\[
(G\times\Sigma_n/\Gamma)\times EG
\]
can be built out of cells of the form $G/H\times\Sigma_n$. The
cellular filtration of $EG$ shows that it in turn suffices to show
that $G\times\Sigma_n$-equivariantly, we have an isomorphism
\[
(G\times\Sigma_n/\Gamma)\times (G\times\Sigma_{n}/\Sigma_{n})\cong \coprod G\times\Sigma_n
\]
This follows immediately from the equivalences
\begin{align*}
G\times (G\times\Sigma_n/\Gamma)\cong (G\times\Sigma_n/\{e\}\times\Sigma_n)\times (G\times\Sigma_n/\Gamma)\\ 
\cong G\times\Sigma_n\times_{\{e\}\times\Sigma_n} i_{\{e\}\times\Sigma_n}^{\ast} (G\times\Sigma_n/\Gamma).
\end{align*}
Since $\{e\}\times\Sigma_n$ is normal and since by assumption
\[
\Gamma\cap \{e\}\times\Sigma_{n}=\{e\},
\]
we have an equivariant isomorphism
\[
i_{\{e\}\times\Sigma_n}^{\ast} (G\times\Sigma_n/\Gamma) \cong \coprod_{|G/H|} \{e\}\times\Sigma_n,
\]
where $H$ is the image of $\Gamma$ under the projection to $G$.
\end{proof}

This now gives the following theorem.

\begin{theorem}
If $R$ is an algebra in orthogonal $G$-spectra over any $\Ninfty$
operad $\cO$, then the localized orthogonal $G$-spectrum
\[
F(EG_{+},R)
\]
is automatically an algebra over the terminal $\Ninfty$
operad. Moreover, the map  
\[
R\to F(EG_{+},R)
\]
is a map of $\cO$-algebras, where the target is an $\cO$-algebra by
the diagonal map $\cO\to F(EG_{+},\cO)$.  Analogous results hold for
an algebra over $\cO$ in $G$-spaces.
\end{theorem}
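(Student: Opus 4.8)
The plan is to combine Proposition~\ref{prop:whoa}---which identifies $F(EG,\cO)$ as a $G$-$E_\infty$ operad---with the general fact that cotensoring an operadic algebra by $EG$ yields an algebra over the cotensored operad. First I would record this structural fact: if $R$ is an $\cO$-algebra in orthogonal $G$-spectra, then $F(EG_+,R)$ is naturally an $F(EG,\cO)$-algebra, and if $X$ is an $\cO$-algebra in $G$-spaces, then $F(EG,X)$ is naturally an $F(EG,\cO)$-algebra. This is the very mechanism already used to build the operad $F(EG,\cO)$: the cotensor $F(EG,-)$ is lax symmetric monoidal via the diagonal $EG\to EG\times EG$, compatibly with the tensoring of $\Sp_G$ over $G$-spaces, so that---exactly as in the proof that $F(X,\cO)$ is an operad and in Corollary~\ref{cor:NormsofAlgebras}---Proposition~\ref{prop:lax} carries the structure maps $\cO_{n+}\wedge_{\Sigma_n}R^{\wedge n}\to R$ to structure maps $F(EG,\cO_n)_+\wedge_{\Sigma_n}F(EG_+,R)^{\wedge n}\to F(EG_+,R)$ after precomposing with the assembly map built from the diagonal on $EG$. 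For $G$-spaces the argument is identical, with $\times$ in place of $\wedge$ and no basepoint bookkeeping.

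Granting this, the first assertion is immediate from Proposition~\ref{prop:whoa}. Since $F(EG,\cO)$ is a $G$-$E_\infty$ operad, every finite $H$-set is admissible, so $\mC\big(F(EG,\cO)\big)=\mSet$, the terminal object of $\poset$; hence $\mC\big(F(EG,\cO)\big)=\mC(\cE_G)$ for any fixed $G$-$E_\infty$ (``terminal'') $\Ninfty$ operad $\cE_G$, and by Corollary~\ref{cor:OperadComparison} the two operads are connected by a zig-zag of weak equivalences. By the comparison of Appendix~\ref{sec:monadicalgebras}, weakly equivalent $\Ninfty$ operads have equivalent homotopical categories of algebras, so the $F(EG,\cO)$-algebra $F(EG_+,R)$ corresponds to an $\cE_G$-algebra; the same reasoning applies to $F(EG,X)$ in $G$-spaces.

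For the comparison map I would argue by naturality in the parameter $G$-space. The collapse $EG\to\ast$ induces, via Proposition~\ref{prop:NaturalityofOperadNorms}, a map of $G$-operads $\cO=F(\ast,\cO)\to F(EG,\cO)$---the map the statement calls the diagonal $\cO\to F(EG_+,\cO)$---and restriction along it is how $F(EG_+,R)$ is to be regarded as an $\cO$-algebra. The natural transformation $\mathrm{id}_{\Sp_G}\Rightarrow F(EG_+,-)$, with component at $E$ adjoint to the collapse $EG_+\wedge E\to E$, is lax symmetric monoidal and induces exactly this operad map; a lax symmetric monoidal transformation takes a map of $\cO$-algebras to a map of $\cO$-algebras over the induced operad map, so its component $R\to F(EG_+,R)$ is a map of $\cO$-algebras. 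Replacing $\Sp_G$ by $G$-spaces throughout, with $F(EG,-)$ and the collapse $EG\to\ast$, gives the statement for algebras in spaces.

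I expect the only genuine work to be in the first step: checking that the lax-monoidal compatibility between the cotensor on $\Sp_G$ and the cotensor on $G$-spaces is tight enough that the assembled maps on $F(EG_+,R)$ really form a coherent $F(EG,\cO)$-algebra---routine diagonal-and-basepoint bookkeeping. Once that is in hand, Proposition~\ref{prop:whoa} supplies all the substance.
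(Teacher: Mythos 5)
Your proposal is correct and follows essentially the same route as the paper's proof: both reduce everything to the observation that the cotensor $F(EG_+,-)$ is lax symmetric monoidal via the diagonal on $EG$, so that $F(EG_+,R)$ inherits an action of the cotensored operad, and then invoke Proposition~\ref{prop:whoa} to identify $F(EG,\cO)$ as a $G$-$E_\infty$ operad. The paper is a bit more explicit about one piece of bookkeeping you call ``routine'': it first records that $F(EG_+,R)$ is an algebra over the operad $F(EG_+,\Sigma^\infty_+\aO)$ in \emph{spectra}, and then uses the continuity map $\Sigma^\infty F(EG,\aO)_+ \to F(\Sigma^\infty EG_+,\Sigma^\infty\aO_+)$ of operads to pull this back to an $F(EG,\aO)$-action. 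Conversely, you spell out two things the paper leaves implicit: the use of $\mC$ and Corollary~\ref{cor:OperadComparison} to interpret ``the terminal $\Ninfty$ operad,'' and the lax-monoidal-natural-transformation argument for why $R\to F(EG_+,R)$ is a map of $\cO$-algebras (the paper simply calls this ``immediate''). Both write-ups are sound; yours is slightly more detailed on the formal consequences and slightly less explicit on the assembly-map mechanics.
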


\begin{proof}
We give the proof for spectra; the case of spaces is analogous.
First, observe that $F(EG_+,R)$ is an algebra over the operad in
spectra specified by the cotensor $F(EG_+,\Sigma^{\infty}_+ \aO)$,
since $F(EG_+,-)$ is lax monoidal (using the diagonal map on $EG$).
Next, there is a natural map of operads
\[
\Sigma^{\infty} F(EG,\aO)_+ \to F(\Sigma^\infty
EG_+, \Sigma^\infty \aO_+)
\]
induced by the continuity of the functor $\Sigma^\infty (-)_+$.  The
first assertion now follows from Proposition~\ref{prop:whoa}, and the
second is immediate.
\end{proof}

\subsection{Multiplicative action maps}\label{sec:inter}

Based on the example of algebras over the commutative operad, one
expects that the operations parametrized by $\Ninfty$ operads are
multiplicative in the sense that for any point $o \in \aO(n)$, the
induced map 
\[
\mu_{o} \colon X^{\sma n} \to X
\] 
is itself a map of $\aO$-algebras, where the domain is given the
diagonal action of $\aO$.  More generally, we would expect this also
to hold equivariantly, where now the maps described in Lemma~\ref{lem:Norms}
and Theorem~\ref{thm:NormsasTransfers} are maps of appropriate
algebras. 

Classically, this situation is described via the formalism of
interchange of operads~\cite[\S 1]{Dunn}, which we review below.  To
study the case of Theorem~\ref{thm:NormsasTransfers}, wherein we
consider the norm of a map of $\cO$-algebras, we need to also address
the connection between algebras over the norm of an operad and the
norm of algebras over an operad.

Recall that given an object $X$ which is simultaneously an
$\aO$-algebra and an $\aO'$-algebra, we say that the two actions
interchange if for each point $x \in \aO_n$, the map $X^{n} \to X$ is
a map of $\aO'$-algebras and vice-versa.  We can express this
relationship by requiring that the diagram 
\[
\xymatrix{
(X^{n})^{m} \ar[r]^{\cong} \ar[d]^{\alpha^m} & (X^{m})^{n} \ar[r]^{\beta^n} & X^n \ar[d]^{\alpha} \\
X^m \ar[rr]^{\beta} & & X\\
}
\]
commute for each $\alpha \in \aO(n)$ and $\beta \in \aO'(m)$, where
the homeomorphism is given by the permutation that takes lexicographic
order to other lexicographic order.  

Interchange of operads is described by the tensor product of operads; by construction, $X$ is an $\aO$-algebra and an $\aO'$-algebra such that the actions interchange if and only if $X$ is an $\aO \otimes \aO'$-algebra~\cite[\S 1]{Dunn}.  The universal property of the tensor product of operads can also be described in terms of the
theory of pairings of operads~\cite{Maypairing} (see~\cite[\S 6.1]{guilloumay} for a discussion in the equivariant setting); a pairing 
\[
(\aO, \aO') \to \aO''
\] 
is a collection of suitable coherent maps $\aO_n \times \aO'_m \to \aO''_{nm}$.  In this language, the tensor product is the universal recipient for pairings.

The $\Ninfty$-condition is a homotopical one, parameterizing (as we
saw above) the ways to coherently multiply elements where we allow the
group to act on both the elements and on the coordinates. We therefore
expect that the tensor product of $\Ninfty$ operads will always be
$\Ninfty$: 

\begin{conjecture}
If $\cO$ and $\cO'$ are $\Ninfty$ operads, then (subject to suitably
cofibrancy conditions) $\cO\otimes \cO'$ is an $\Ninfty$ operad and moreover
\[
\mC(\cO\otimes\cO')=\mC(\cO)\vee\mC(\cO'),
\]
where $\vee$ denotes the least upper bound in the poset $\poset$.
\end{conjecture}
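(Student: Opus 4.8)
The plan is to work throughout with cofibrant $G$-operads, which is the point of the cofibrancy hypothesis: replacing $\cO$ and $\cO'$ by cofibrant models changes neither their homotopy categories of algebras nor, via a weak equivalence of $\Ninfty$ symmetric sequences, the value of $\mC$; and for cofibrant inputs the point-set Boardman--Vogt tensor product $\cO\otimes\cO'$ computes the derived tensor product, the spaces $(\cO\otimes\cO')_n$ are $G\times\Sigma_n$-CW complexes, and the two canonical maps $\iota\colon\cO\to\cO\otimes\cO'$ and $\iota'\colon\cO'\to\cO\otimes\cO'$ are maps of $G$-operads. Assuming for the moment that $\cO\otimes\cO'$ is an $\Ninfty$ operad, one of the two inclusions is then immediate: by Theorem~\ref{thm:FunctorC} the value $\mC(\cO\otimes\cO')$ is an indexing system, and applying $\mC$ to $\iota$ and $\iota'$ (functoriality of $\mC$ on maps of $\Ninfty$ symmetric sequences) gives $\mC(\cO)\subseteq\mC(\cO\otimes\cO')$ and $\mC(\cO')\subseteq\mC(\cO\otimes\cO')$, so $\mC(\cO)\vee\mC(\cO')\subseteq\mC(\cO\otimes\cO')$, the join being the smallest indexing system containing both. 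The real content is therefore (a) that $\cO\otimes\cO'$ satisfies Definition~\ref{def:geinfop}, and (b) the reverse inclusion $\mC(\cO\otimes\cO')\subseteq\mC(\cO)\vee\mC(\cO')$; these are two faces of one computation, the identification of the isotropy that occurs in the Boardman--Vogt tensor product.

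Conditions (i) and (ii) of Definition~\ref{def:geinfop} are readily checked from the construction: $(\cO\otimes\cO')_0$ is built, up to homotopy, from the $G$-contractible spaces $\cO_0$ and $\cO'_0$ and is therefore $G$-contractible, while the underlying nonequivariant operad of $\cO\otimes\cO'$ is the classical Boardman--Vogt tensor product of the underlying $E_\infty$ operads, hence again $E_\infty$, so $\Sigma_n$ acts freely on $(\cO\otimes\cO')_n$ (using cofibrancy). For condition (iii), observe first that for any $G\times\Sigma_n$-space the set of subgroups with nonempty fixed points is automatically closed under subconjugacy, and that $\iota$ forces it to contain every $H\times\{1\}$; so the task reduces to showing each fixed-point space $(\cO\otimes\cO')_n^{\Gamma}$ is empty or contractible. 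I would establish this by filtering the $W$-construction (equivalently, the tree-indexed bar construction) that computes $\cO\otimes\cO'$: its generating cells are products $\cO_k\times\cO'_\ell$ attached along the two operadic compositions and along the interchange relation, so $(\cO\otimes\cO')_n$ is a $G\times\Sigma_n$-CW complex all of whose cell stabilizers meet $\{1\}\times\Sigma_n$ trivially, hence are graphs $\Gamma_S$ by Proposition~\ref{prop:graph}, and an induction over the filtration pins down exactly which graphs occur. Contractibility of the nonempty $(\cO\otimes\cO')_n^{\Gamma}$ then follows from this cell description together with the standard operadic cofinality argument (multiplying a $\Gamma$-fixed point into itself by a $\Gamma$-fixed binary operation is $\Gamma$-equivariantly homotopic to the identity), just as in the proofs that the Steiner and linear isometries operads are $\Ninfty$.

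To prove (b) one runs the same bookkeeping on the level of finite $H$-sets rather than subgroups, via the dictionary of Section~\ref{sec:Properties}: the two operadic compositions contribute precisely the disjoint union, Cartesian product, and self-induction closures (Lemmata~\ref{lem:DisjointUnion}, \ref{lem:CartesianProduct}, and \ref{lem:SelfInduction}), the unit contributes the truncation (subobject) closure (Lemma~\ref{lem:Summands}), and --- the one genuinely new ingredient --- the interchange relation $(X^n)^m\cong(X^m)^n$ contributes nothing beyond the Cartesian product closure, since the subgroup of $\Sigma_{nm}$ it introduces is exactly the one attached to $S\times T$ in Lemma~\ref{lem:CartesianProduct}. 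Hence every admissible $H$-set of $\cO\otimes\cO'$ is obtained from admissible sets of $\cO$ and of $\cO'$ by the indexing-system operations and so lies in $\mC(\cO)\vee\mC(\cO')$. An alternative route to (b) bypasses the cell structure: an $(\cO\otimes\cO')$-algebra is exactly an object carrying interchanging $\cO$- and $\cO'$-structures, and by Theorem~\ref{thm:ExistenceofNorms} its admissible sets are those $T$ for which the norm map $G_+\wedge_H N^T i_H^*R\to R$ is forced on every such $R$; testing against free algebras on generators such as $\Sigma^\infty G_+$, whose available norms are read off from geometric fixed points as in the Motivating Example and Appendix~\ref{sec:appfix}, should show that no norms arise beyond those obtained from $\mC(\cO)$ and $\mC(\cO')$ by composition. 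In either approach the main obstacle is the same: controlling the interaction of the interchange relation with the equivariant cell structure of the Boardman--Vogt tensor product tightly enough to show simultaneously that the fixed-point spaces are contractible (not merely nonempty) and that no unexpected graph subgroups appear --- the nonequivariant homotopy theory of $\cO\otimes\cO'$ is already delicate, and here it must be refined to keep track of all subgroups of $G\times\Sigma_n$ at once.
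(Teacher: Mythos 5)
The statement you are attempting to prove is posed as an open conjecture in the paper; the authors explicitly defer a ``general analysis of the theory of the tensor product of $G$-operads'' to future work, and that is no accident. Your outline correctly identifies the two substantive claims --- that $\cO\otimes\cO'$ satisfies conditions (i)--(iii) of Definition~\ref{def:geinfop}, and the reverse inclusion $\mC(\cO\otimes\cO')\subseteq\mC(\cO)\vee\mC(\cO')$ --- and your derivation of the easy inclusion $\mC(\cO)\vee\mC(\cO')\subseteq\mC(\cO\otimes\cO')$ from the canonical maps $\cO\to\cO\otimes\cO'\leftarrow\cO'$ is fine. But the proposal does not close the real gap, which you yourself flag in the final sentence.

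The pivotal step --- that each nonempty fixed-point space $(\cO\otimes\cO')_n^\Gamma$ is \emph{contractible}, not merely nonempty --- is asserted to follow from a ``standard operadic cofinality argument'' by analogy with the proofs for the Steiner and linear isometries operads. No such abstract argument exists. For those two operads contractibility is a concrete geometric fact about spaces of isometries and of Steiner paths, not a formal consequence of reducedness and $\Sigma_n$-freeness; a reduced $\Sigma$-free operad with nonempty levels need not be $E_\infty$, and ``multiplying a $\Gamma$-fixed point into itself by a $\Gamma$-fixed binary operation'' lands in $\cO_{2n}$, not back in $\cO_n^\Gamma$, so it is not a contracting homotopy. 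Moreover the Boardman--Vogt tensor product is exactly the setting where one cannot hope for formal control: the Eckmann--Hilton collapse of the associativity operad tensored with itself onto the commutativity operad --- destroying $\Sigma_n$-freeness entirely --- shows that even your condition (ii) requires a genuine cofibrancy argument rather than being ``readily checked,'' and it is for this reason that even the nonequivariant additivity theorems for tensor products of little-cubes operads are hard. The companion claim needed for (b), that the only cell stabilizers created by the interchange identifications in the $W$-construction are the graphs $\Gamma_{S\times T}$ already produced by Lemma~\ref{lem:CartesianProduct} so that no new admissible sets appear, is precisely what would need to be proved and is stated without argument. Until both points are actually established, what you have is a plausible roadmap rather than a proof, which is consistent with the authors leaving the statement open.
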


In particular, the conjecture implies that for any algebra over an
$\Ninfty$ operad $\cO$, the operad action interchanges with itself.

An immediate corollary of the definition of interchange is that when
the operadic action interchanges with itself, the maps in
Lemma~\ref{lem:Norms} are maps of $\cO$-algebras: 

\begin{proposition}
Let $R$ be an algebra over an $\Ninfty$ operad $\cO$, and assume that
the $\cO$-action interchanges with itself. Then for any surjective
maps $S\to T$ of admissible $H$-sets, the structure maps in
Theorem~\ref{thm:NormsasTransfers} 
\[
N^{S} i_{H}^{\ast}R\to N^{T}i_{H}^{\ast} R
\]
are maps of $N^{T}i_{H}^{\ast}\cO$-algebras.
\end{proposition}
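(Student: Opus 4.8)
The plan is to reduce the whole statement to a single clean assertion about the elementary structure maps of Lemma~\ref{lem:Norms}, and then recover the proposition by the orbit/disjoint-union bookkeeping already used in Theorem~\ref{thm:NormsasTransfers} together with Proposition~\ref{prop:lax}. \emph{The key lemma to isolate} is: assuming $H=G$, for every admissible $K$-set $W$ the structure map $\mu^{W}\colon N^{W}i^{*}_{K}R\to i^{*}_{K}R$ of Lemma~\ref{lem:Norms} is a map of $i^{*}_{K}\cO$-algebras, where the source carries the $i^{*}_{K}\cO$-algebra structure of Corollary~\ref{cor:PreservationofAlgebra}. (Here $i^{*}_{K}\cO$ is again an $\Ninfty$ operad, and since restriction preserves the tensor product of operads, the $i^{*}_{K}\cO$-action on $i^{*}_{K}R$ still interchanges with itself.) By Construction~\ref{cons:OperadMaps}, $\mu^{W}$ is the composite of a $K$-map $(K\times\Sigma_{m}/\Gamma_{W})_{+}\wedge_{\Sigma_{m}}R^{\wedge m}\to\cO_{m+}\wedge_{\Sigma_{m}}R^{\wedge m}$ arising from a point of the contractible space $\cO_{m}^{\Gamma_{W}}$ ($m=|W|$) with the operad action; that is, $\mu^{W}$ is ``the action of a contractible, $\Gamma_{W}$-twisted family $\bar o$ of $m$-ary operations.'' Unwinding Propositions~\ref{prop:lax} and~\ref{prop:NaturalityofOperadNorms} and Corollary~\ref{cor:NormsofAlgebras}, the $i^{*}_{K}\cO$-algebra structure on $N^{W}i^{*}_{K}R$ provided by Corollary~\ref{cor:PreservationofAlgebra} is the ``coordinatewise'' one: $p\in\cO_{n}$ acts on all $m$ smash-factors of $R^{\wedge m}$ simultaneously by $p$, compatibly with the $\Gamma_{W}$-twist. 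Hence the identity ``$\mu^{W}$ is an algebra map'' requires, namely $\mu^{W}\circ\theta_{N^{W}R}(p;-)=\theta_{R}(p;\mu^{W}(-),\dots,\mu^{W}(-))$ for $p\in\cO_{n}$, unwinds precisely to the commuting square of Section~\ref{sec:inter} with $\bar o$ in the role of one operad and $p$ in the role of the other; it is harmless that $\bar o$ is only $\Gamma_{W}$-equivariant rather than $K\times\Sigma_{m}$-equivariant, because that square is equivariant for the full $G\times\Sigma_{m}\times\Sigma_{n}$ action.

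\emph{Assembling the proposition.} Given a surjection $f\colon S\to T$ of admissible $G$-sets, decompose $T=\coprod_{j}G/K_{j}$ into orbits and put $W_{j}=f^{-1}(eK_{j})$, a nonempty admissible $K_{j}$-set (being a summand of $i^{*}_{K_{j}}S$). Then $S\cong\coprod_{j}G\times_{K_{j}}W_{j}$ over $T$, and, exactly as in the proof of Theorem~\ref{thm:NormsasTransfers} (using transitivity of the norm and $N^{G\times_{K}W}\cong N^{G}_{K}N^{W}i^{*}_{K}(-)$, which follow from the Proposition on iterated norms), the map $f_{\sharp}$ is identified with $\bigwedge_{j}N^{G}_{K_{j}}(\mu^{W_{j}})$. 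By the key lemma each $\mu^{W_{j}}$ is a map of $i^{*}_{K_{j}}\cO$-algebras; applying the strong symmetric monoidal norm $N^{G}_{K_{j}}$ and Proposition~\ref{prop:lax} makes $N^{G}_{K_{j}}(\mu^{W_{j}})$ a map of $N^{G}_{K_{j}}i^{*}_{K_{j}}\cO = N^{G/K_{j}}\cO$-algebras; and smashing over $j$ — using $N^{S'\amalg S''}=N^{S'}(-)\wedge N^{S''}(-)$, so that $N^{T}\cO$ is levelwise the smash of the $N^{G/K_{j}}\cO$ and a smash of algebra maps is a map of algebras over the smash operad — yields that $f_{\sharp}\colon N^{S}R\to N^{T}R$ is a map of $N^{T}\cO$-algebras. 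All of this holds up to coherent homotopy, the relevant spaces of choices being contractible, so ``map of algebras'' is understood in the homotopical sense; and the case of a general subgroup $H$ is identical with $G$ replaced by $H$ throughout.

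\emph{The main obstacle} is the step in the key lemma that identifies the algebra structure on $N^{W}i^{*}_{K}R$ of Corollary~\ref{cor:PreservationofAlgebra} with the coordinatewise, $\Gamma_{W}$-twisted action: one must check that the diagonal used to make $N^{W}$ (equivalently the norm) lax symmetric monoidal is the same diagonal that governs the coordinatewise operad action, and keep the $G\times\Sigma_{m}\times\Sigma_{n}$-equivariance consistent along the chain Propositions~\ref{prop:lax}, \ref{prop:NaturalityofOperadNorms} and Corollaries~\ref{cor:NormsofAlgebras}, \ref{cor:PreservationofAlgebra}. Once this identification is in hand the interchange identity is purely formal, and the orbit/disjoint-union reduction of the second step is routine.
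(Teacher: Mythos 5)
Your proposal is correct, and it supplies details the paper omits: the paper states this proposition without a proof environment, offering only the sentence that it is ``an immediate corollary of the definition of interchange'' that the maps of Lemma~\ref{lem:Norms} are maps of $\cO$-algebras. Your ``key lemma'' is precisely that parenthetical assertion, and your justification of it is the right one: $\mu^W$ is the action of a point $\bar o$ in the contractible space $\cO_m^{\Gamma_W}$, the $i_K^*\cO$-action on $N^W i_K^* R$ from Corollary~\ref{cor:PreservationofAlgebra} is the diagonal/coordinatewise one, and the interchange square of Section~\ref{sec:inter}, applied with $\bar o$ as one coordinate and an arbitrary $p \in \cO_n$ as the other, is literally the algebra-map diagram for $\mu^W$. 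Your handling of the $\Gamma_W$-twist is also right: the interchange square holds for every pair $(\alpha,\beta)$ and is $G\times\Sigma_m\times\Sigma_n$-equivariant, so it descends to $(G\times\Sigma_m/\Gamma_W)_+\wedge_{\Sigma_m}R^{\wedge m}$.

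The one place you depart from the paper's machinery is the bookkeeping in the assembly step. The proof of Theorem~\ref{thm:NormsasTransfers} reduces a surjection to single-orbit targets and then factors $T\to S$ through $\coprod_{|T/G|}S\to S$ (a fold composed with orbit quotients); you instead decompose $S$ as $\coprod_j G\times_{K_j}W_j$ with $W_j=f^{-1}(eK_j)$ and apply $N_{K_j}^G$ to each $\mu^{W_j}$ directly. These are equivalent, with yours arguably cleaner for the present purpose since it directly identifies $f_\sharp$ with $\bigwedge_j N_{K_j}^G(\mu^{W_j})$ and makes visible the $N^T\cO$-algebra structure on both sides. That the two decompositions produce the same map up to homotopy is, as you note, automatic from the contractibility of the relevant choice spaces. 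No gap; the proof is complete.
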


We intend to return to a general analysis of the theory of the tensor
product of $G$-operads elsewhere.  However, for the cases of most
interest in applications, namely the equivariant Steiner and linear
isometries operads, it is possible to verify the necessary interchange
relations directly.

In~\cite[\S 10]{guilloumay}, it is shown that there is a pairing of
operads  
\[
\big(\K(V), \K(W)\big) \to \K(V \oplus W),
\] 
relying on an interchange map
\[
\theta \colon \K_n(U) \times \K_m(U) \to \K_{nm}(U \oplus U)
\]
that takes $n$ Steiner paths $\{k_1, \ldots, k_n\}$ and $m$ Steiner
paths $\{k'_1, \ldots , k'_m\}$ to the collection of the $nm$
product paths 
\[
k_i \times k_j' \colon I \to R_U \times R_U \subset R_{U \oplus U}.
\]
ordered lexicographically.  Choosing an equivariant homeomorphism
$U \oplus U \to U$, we deduce the following consequence:

\begin{proposition}\label{prop:Steiner}
Let $X$ be an algebra over the equivariant Steiner operad on $U$.
Then the operad action satisfies interchange with itself.  
\end{proposition}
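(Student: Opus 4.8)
\textbf{Proof proposal for Proposition~\ref{prop:Steiner}.}

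The plan is to promote the Guillou--May interchange map to a pairing of $G$-operads $(\K(U),\K(U))\to\K(U)$ and then feed it through the tensor-product formalism. By~\cite[\S 10]{guilloumay}, the maps $\theta\colon\K_n(U)\times\K_m(U)\to\K_{nm}(U\oplus U)$ recalled above are the components of a pairing: they are $G$-equivariant (a product of Steiner paths is formed coordinatewise and this commutes with the conjugation action), they carry the $\Sigma_n\times\Sigma_m$-action to the block-permutation action of $\Sigma_n\times\Sigma_m\subset\Sigma_n\wr\Sigma_m$ on $\K_{nm}$ via the lexicographic ordering, and they are compatible with the operadic composition maps and the units on both sides. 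Since $U$ contains each finite-dimensional subrepresentation of $G$ infinitely often, so does $U\oplus U$, and a standard argument yields a $G$-equivariant linear isometric isomorphism $\psi\colon U\oplus U\to U$; conjugation by $\psi$ carries distance-reducing embeddings to distance-reducing embeddings and Steiner paths to Steiner paths, compatibly over the finite-dimensional $V\subset U$, hence induces an isomorphism of $G$-operads $\psi_*\colon\K(U\oplus U)\to\K(U)$. Composing, $\psi_*\circ\theta$ is a pairing $(\K(U),\K(U))\to\K(U)$.

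Next I would invoke the universal property of the tensor product of operads (the tensor product being the universal recipient of pairings, cf.~\cite[\S 1]{Dunn}): the pairing $\psi_*\circ\theta$ is classified by a map of $G$-operads $\Phi\colon\K(U)\otimes\K(U)\to\K(U)$. Given any $\K(U)$-algebra $X$, restriction along $\Phi$ makes $X$ a $\K(U)\otimes\K(U)$-algebra, which by the defining property of the tensor product is exactly a pair of interchanging $\K(U)$-algebra structures on $X$: the given action precomposed with $\Phi\circ i_1$ and with $\Phi\circ i_2$, where $i_1,i_2\colon\K(U)\to\K(U)\otimes\K(U)$ are the two canonical maps corresponding to the two factors. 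Now each $\Phi\circ i_j$ is levelwise a $G\times\Sigma_n$-equivariant self-map of $\K_n(U)$; since $\K(U)$ is an $\Ninfty$ operad, $\K_n(U)$ is a universal space for $\cF_n(\K(U))$, and any self-map of a universal space for a family is automatically a $G\times\Sigma_n$-equivalence, its fixed-point spaces being either empty or contractible. So $\Phi\circ i_1$ and $\Phi\circ i_2$ are weak equivalences of $\Ninfty$ operads --- equivalently, by the contractibility of the endomorphism space of an $\Ninfty$ operad (Proposition~\ref{prop:mapcontract}), they are homotopic to $\id_{\K(U)}$ --- so the two interchanging $\K(U)$-structures on $X$ agree with the given one up to weak equivalence of $\Ninfty$ operads, which by the comparison results of Appendix~\ref{sec:monadicalgebras} is precisely the assertion that the $\K(U)$-action on $X$ interchanges with itself.

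The step requiring the most care, and the main conceptual point, is the last one. The operad map $\Phi$ supplied by the universal property does not restrict to $\id_{\K(U)}$ along $i_1$ or $i_2$ on the nose --- the value of the pairing on a point $o\in\K_n(U)$ and the unit crosses the Steiner paths making up $o$ with the identity embedding of the second copy of $U$ --- so one genuinely needs the $\Ninfty$ structure of $\K(U)$ to identify the self-interchange it produces with the given action; this is exactly where the automatic-equivalence property of universal spaces (equivalently, Proposition~\ref{prop:mapcontract}) enters. The remaining steps are citations to~\cite{guilloumay} and~\cite{Dunn} together with routine equivariant bookkeeping.
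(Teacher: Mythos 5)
Your argument follows the same route the paper (tersely) takes: the Guillou--May pairing $\theta$, an equivariant isometric isomorphism $U \oplus U \cong U$, and the universal property of the tensor product of $G$-operads. The paper's entire ``proof'' is the paragraph preceding the proposition plus the phrase ``we deduce the following consequence''; you have filled in the universal-property step, and --- importantly --- you have identified a genuine subtlety the paper glosses over, namely that the factor maps $\Phi\circ i_1$ and $\Phi\circ i_2$ are not the identity of $\cK(U)$. Your resolution, that each is a levelwise $G\times\Sigma_n$-equivariant self-map of a universal space (hence a weak equivalence of $\Ninfty$ operads and hence, by Proposition~\ref{prop:mapcontract}, homotopic to $\mathrm{id}$), is the right way to close that gap, and it goes beyond what the paper says.

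The one place to be careful is the final inference, which you yourself flag. What your argument literally produces is a $\cK(U)\otimes\cK(U)$-algebra structure whose two factor actions $\mu\circ\Phi\circ i_1$ and $\mu\circ\Phi\circ i_2$ interchange strictly, each of which is related to the given $\mu$ by a self-equivalence of $\cK(U)$. The paper's displayed definition of interchange, read on the nose, would require $\gamma(\beta;\alpha,\dots,\alpha)=\gamma(\alpha;\beta,\dots,\beta)\cdot(\text{lex permutation})$ in $\cK_{nm}(U)$ for all $\alpha,\beta$, and this literal equality fails in the Steiner operad. What you actually get --- and what Corollary~\ref{cor:TransfersAreLoopMaps} and the downstream transfer results actually need --- is interchange up to coherent homotopy parameterized by the contractible universal spaces. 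The appeal to Appendix~\ref{sec:monadicalgebras} at the end is therefore not quite the right citation: the appendix gives Quillen equivalences of categories of algebras, not an upgrade from ``factor actions weakly equivalent to $\mu$'' to ``$\mu$ strictly interchanges with itself.'' The honest statement is that the proposition should be read homotopically, which is how the paper uses it; with that reading your proof matches the paper's intent, and the step you singled out is precisely where the implicit loosening occurs rather than something the appendix closes.
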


\begin{corollary}\label{cor:TransfersAreLoopMaps}
If $X$ is an algebra over $\cK(U)$, then for any admissible $H$-set
$T$, the structure maps 
\[
N^{T}i_{H}^{\ast}X\to i_{H}^{\ast}X
\]
are maps of $\cK(U)$-algebras.
\end{corollary}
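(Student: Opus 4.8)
The plan is to obtain this as the special case of the self-interchange proposition above (the one asserting that when the $\cO$-action on $R$ interchanges with itself, the maps of Theorem~\ref{thm:NormsasTransfers} attached to surjections of admissible $H$-sets are maps of $N^{T}i_{H}^{\ast}\cO$-algebras) in which the target admissible set is the one-point $H$-set; the interchange hypothesis is exactly what Proposition~\ref{prop:Steiner} provides.

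First I would pass to $H$. Restricting along $i_{H}^{\ast}$, the operad $i_{H}^{\ast}\cK(U) = \cK(i_{H}^{\ast}U)$ is again an equivariant Steiner operad and $i_{H}^{\ast}X$ is an algebra over it, so Proposition~\ref{prop:Steiner} (applied to the group $H$) shows that the $i_{H}^{\ast}\cK(U)$-action on $i_{H}^{\ast}X$ interchanges with itself; equivalently, the interchange diagrams for $X$ restrict verbatim. Next I would identify the map in the statement: the one-point $H$-set $\ast$ is admissible by Proposition~\ref{prop:TrivialSets}, and $N^{\ast}$ is the identity functor --- for $|\ast| = 1$ the group $\Sigma_{1}$ is trivial and $\Gamma_{\ast} = H$, so $N^{\ast}E = (H/H)_{+}\wedge E = E$, and likewise $N^{\ast}$ fixes operads --- so the structure map $N^{T}i_{H}^{\ast}X \to i_{H}^{\ast}X$ of Lemma~\ref{lem:Norms} is precisely the map $N^{T}i_{H}^{\ast}X \to N^{\ast}i_{H}^{\ast}X$ produced by Theorem~\ref{thm:NormsasTransfers} (for the group $H$) from the canonical surjection $T \to \ast$.

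Finally I would invoke the self-interchange proposition with $S = T$ and target $\ast$: since the $i_{H}^{\ast}\cK(U)$-action on $i_{H}^{\ast}X$ interchanges with itself, that proposition gives that $N^{T}i_{H}^{\ast}X \to N^{\ast}i_{H}^{\ast}X$ is a map of $N^{\ast}i_{H}^{\ast}\cK(U)$-algebras, and unwinding $N^{\ast} = \mathrm{id}$ this is exactly the assertion that $N^{T}i_{H}^{\ast}X \to i_{H}^{\ast}X$ is a map of $i_{H}^{\ast}\cK(U)$-algebras (the $\cK(U)$-algebras of the statement). That $N^{T}i_{H}^{\ast}X$ carries such an algebra structure in the first place is Corollary~\ref{cor:PreservationofAlgebra} applied to $H$, so the claim is well-posed. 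I expect no genuine obstacle here: all of the content sits in Proposition~\ref{prop:Steiner}, and the remaining steps are the bookkeeping of restriction and the identification of $N^{\ast}$ with the identity functor.
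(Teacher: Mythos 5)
Your proposal is correct and matches the paper's (implicit) argument: the corollary is obtained by feeding Proposition~\ref{prop:Steiner} into the preceding interchange proposition with target the one-point $H$-set $\ast$, and unwinding $N^{\ast} = \mathrm{id}$ so that $N^{\ast}i_{H}^{\ast}\cK(U)$-algebras are just $i_{H}^{\ast}\cK(U)$-algebras. Your preliminary restriction-to-$H$ step is slightly redundant since the interchange proposition is already phrased over $H$-sets with a $G$-level interchange hypothesis, but it is harmless and in fact makes explicit the observation (needed in spirit) that the interchange diagrams restrict verbatim along $i_{H}^{\ast}$.
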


Essentially the same construction works for the linear isometries
operad.  To be precise, given $f \in \cL_n(U)$ and $g \in \cL_m(U)$,
we can decompose these into their components --- $f \colon U^n \to U$
gives rise to $f_1, f_2, \ldots, f_n \colon U \to U$ and $g \colon
U^m \to U$ gives rise to $g_1, g_2, \ldots, g_m \colon U \to U$.  The
interchange map here takes $\{f_i\}, \{g_i\}$ to the map 
\[
(U \oplus U)^{mn} \to U\oplus U
\] 
by the lexicographic pairings $\{f_i \oplus g_j\}$.
Therefore, using again a chosen homeomorphism $U \oplus U \to U$, we
have the following result.

\begin{proposition}
Let $R$ be an algebra over the equivariant linear isometries operad on $U$.
Then the operad action satisfies interchange with itself.  
\end{proposition}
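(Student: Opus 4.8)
The plan is to run the argument used for the Steiner operad (Proposition~\ref{prop:Steiner}), adapting it from Steiner paths to linear isometries. Recall from Dunn~\cite[\S 1]{Dunn} that an $\cO$-algebra $R$ has an action interchanging with itself exactly when the action extends over the tensor product $\cO\otimes\cO$ compatibly with the two canonical maps $\cO\to\cO\otimes\cO$, and that a pairing of operads $(\cO,\cO')\to\cO''$ --- a coherent family of maps $\cO_n\times\cO'_m\to\cO''_{nm}$ --- is the same datum as an operad map out of $\cO\otimes\cO'$. So it suffices to build a pairing $\big(\cL(U),\cL(U)\big)\to\cL(U)$ whose composites with the two units recover the identity (at worst up to the contractible ambiguity supplied by Proposition~\ref{prop:mapcontract}).

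First I would construct the interchange map $\theta\colon\cL_n(U)\times\cL_m(U)\to\cL_{nm}(U\oplus U)$. Given $f\in\cL(U^n,U)$ and $g\in\cL(U^m,U)$, write $f_i\colon U\to U$ and $g_j\colon U\to U$ for their component isometries. Identifying $(U\oplus U)^{nm}$ with $U^{nm}\oplus U^{nm}$ by collecting the first and the second summands of the $nm$ lexicographically ordered copies of $U\oplus U$, I would let $\theta(f,g)$ be the isometric embedding that on the first copy of $U^{nm}$ is the operadic composite $\gamma_{\cL}(f;g,\dots,g)$ and on the second copy of $U^{nm}$ is $\gamma_{\cL}(g;f,\dots,f)$ (after the evident relabelling of lexicographic orders). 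This is the linear-isometries analogue of the Steiner interchange map $\theta\colon\K_n(U)\times\K_m(U)\to\K_{nm}(U\oplus U)$, $\{k_i\},\{k'_j\}\mapsto\{k_i\times k'_j\}$, of~\cite[\S 10]{guilloumay}; note, however, that unlike in the Steiner case the naive block-diagonal prescription $(i,j)\mapsto f_i\oplus g_j$ does \emph{not} assemble into an isometric embedding --- two such blocks with the same index $i$ already share the subspace $\mathrm{im}(f_i)\oplus 0$ --- which is why one must route one direct-sum coordinate through $f$ and the other through $g$.

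Next I would check that $\theta=\{\theta_{n,m}\}$ is $\big(G\times\Sigma_n\times\Sigma_m\big)$-equivariant --- the $\Sigma$'s permuting the lexicographic blocks, $G$ acting by conjugation on every isometry in sight --- and that it satisfies the associativity and unitality identities of a pairing of operads relative to the two operadic multiplications on $\cL(U)$; this is a direct unwinding of definitions, of exactly the same shape as the verification that $\big(\K(V),\K(W)\big)\to\K(V\oplus W)$ is a pairing. Choosing a $G$-equivariant isometric isomorphism $U\oplus U\cong U$ and postcomposing, I would obtain a pairing $\big(\cL(U),\cL(U)\big)\to\cL(U)$, i.e.\ an operad map $\cL(U)\otimes\cL(U)\to\cL(U)$; its restrictions along the two units are endomorphisms of $\cL(U)$, hence homotopic to the identity by Proposition~\ref{prop:mapcontract}, so for any $\cL(U)$-algebra $R$ the two actions agree with the original one up to coherent homotopy. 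By Dunn's theorem this is precisely the assertion that the $\cL(U)$-action on $R$ interchanges with itself.

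The step I expect to be the main obstacle is the middle one: setting up the interchange map so that $\theta(f,g)$ is genuinely an isometric embedding --- the point just flagged, absent from the Steiner case --- and then grinding out the pairing coherence identities equivariantly. Everything else (the reduction via Dunn's theorem, the passage through $U\oplus U\cong U$, and the clean-up via Proposition~\ref{prop:mapcontract}) is formal.
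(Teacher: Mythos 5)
Your diagnosis of the paper's explicit formula is correct, and it is a genuine issue: for a linear isometry $f\colon U^n\to U$ the component images $\mathrm{im}(f_i)$ and $\mathrm{im}(f_{i'})$ are mutually orthogonal for $i\neq i'$, but $\mathrm{im}(f_i\oplus g_j)$ and $\mathrm{im}(f_i\oplus g_{j'})$ (same $i$, different $j$) both contain $\mathrm{im}(f_i)\oplus 0$, so the collection $\{f_i\oplus g_j\}$ cannot assemble into a linear isometry $(U\oplus U)^{nm}\to U\oplus U$. This is precisely the disanalogy with the Steiner case (where only \emph{disjointness}, not orthogonality, is needed, and $k_i\times k'_j$ and $k_i\times k'_{j'}$ do have disjoint image). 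Your reduction to a pairing followed by clean-up via Proposition~\ref{prop:mapcontract} is also the same strategy the paper uses.

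However, your replacement $\theta(f,g)=\gamma_{\cL}(f;g^n)\oplus\gamma_{\cL}(g;f^m)$, while indeed a linear isometry, does not satisfy the coherence you'd need. For any pairing arising from an operad map out of $\cL(U)\otimes\cL(U)$, one must have $\theta(f,g)=\gamma\bigl(\theta(f,1);\theta(1,g)^n\bigr)$. Computing: $\theta(f,1)=f\oplus f$ and $\theta(1,g)=g\oplus g$, so $\gamma\bigl(\theta(f,1);\theta(1,g)^n\bigr)=(f\circ g^n)\oplus(f\circ g^n)$, whereas $\theta(f,g)=(f\circ g^n)\oplus(g\circ f^m)$. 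These agree only when $f\circ g^n=\tau(g\circ f^m)$ --- i.e.\ exactly the interchange relation in $\cL(U)$ you are trying to prove. Equivalently, the induced maps $\phi_1(f)=f\oplus f$ and $\phi_2(g)=g\oplus g$ from $\cL(U)$ to $\cL(U\oplus U)$ coincide, and a single operad map into a target whose elements are not generically self-interchanging cannot interchange with itself. So the coherence step you flag as routine in fact fails.

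The fix that actually works replaces $\oplus$ with $\otimes$. Since $\mathrm{im}(f_i)\perp\mathrm{im}(f_{i'})$ for $i\neq i'$ and likewise for $g$, the images $\mathrm{im}(f_i)\otimes\mathrm{im}(g_j)$ are pairwise orthogonal for distinct $(i,j)$, so $\{f_i\otimes g_j\}$ assembles to $f\otimes g\colon U^n\otimes U^m\cong(U\otimes U)^{nm}\to U\otimes U$. The two operad maps $\phi_1(f)=f\otimes\mathrm{id}$ and $\phi_2(g)=\mathrm{id}\otimes g$ interchange on the nose, since both $\phi_1(f)\circ\phi_2(g)^n$ and $\phi_2(g)\circ\phi_1(f)^m$ equal $f\otimes g$ up to the transposition reordering of $(U\otimes U)^{nm}$, and the pairing identity $\lambda(\gamma(f;f_\bullet),\gamma(g;g_\bullet))=\gamma(\lambda(f,g);\lambda(f_i,g_s))\sigma$ follows from $(f\circ\bigoplus f_i)\otimes(g\circ\bigoplus g_s)=(f\otimes g)\circ\bigoplus(f_i\otimes g_s)$. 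One then postcomposes with a chosen equivariant isometric isomorphism $U\otimes U\cong U$. Note that this last step is automatic for the complete universe but is an honest hypothesis on $U$ in general (whereas $U\oplus U\cong U$ always holds), which is worth flagging.
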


\begin{corollary}\label{cor:NormsAreRingMaps}
If $R$ is an algebra over $\cL(U)$, then for any admissible $H$-set $T$, the structure maps
\[
N^{T}i_{H}^{\ast}R\to i_{H}^{\ast}R
\]
are maps of $\cL(U)$-algebras.
\end{corollary}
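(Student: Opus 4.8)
The plan is to obtain this as a formal consequence of two results already in hand. The Proposition immediately preceding it asserts that the action of $\cL(U)$ on any $\cL(U)$-algebra $R$ interchanges with itself; and the general Proposition of this section says that whenever the action of an $\Ninfty$ operad $\cO$ interchanges with itself, then for every surjection $S\to T$ of admissible $H$-sets the structure map $N^{S}i_{H}^{\ast}R\to N^{T}i_{H}^{\ast}R$ supplied by Theorem~\ref{thm:NormsasTransfers} is a map of $N^{T}i_{H}^{\ast}\cO$-algebras. Taking $\cO=\cL(U)$, the first Proposition provides exactly the hypothesis of the second, so the argument is parallel to that of Corollary~\ref{cor:TransfersAreLoopMaps}, with the Steiner operad replaced throughout by $\cL(U)$.

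Concretely, I would apply the general Proposition with $S$ the given admissible $H$-set $T$ of the Corollary and with its target taken to be the one-point $H$-set $\ast$. This is legitimate: $\ast$ is an admissible $H$-set by Proposition~\ref{prop:TrivialSets}, and for $T\neq\emptyset$ the canonical map $T\to\ast$ is surjective. The indexed product $N^{\ast}(-)$ over a one-point set is the identity functor, on operads as well as on algebras, so the conclusion reads that
\[
N^{T}i_{H}^{\ast}R\to N^{\ast}i_{H}^{\ast}R=i_{H}^{\ast}R
\]
is a map of $N^{\ast}i_{H}^{\ast}\cL(U)=i_{H}^{\ast}\cL(U)=\cL(i_{H}^{\ast}U)$-algebras; and by the proof of Theorem~\ref{thm:NormsasTransfers} this map, having a one-point target, is exactly the structure map of Lemma~\ref{lem:Norms}. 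The degenerate case $T=\emptyset$ is handled separately and trivially: by the Remark following Lemma~\ref{lem:Norms}, $N^{\emptyset}i_{H}^{\ast}R=\bS$ is the monoidal unit and hence the initial $\cL(i_{H}^{\ast}U)$-algebra, so the map $N^{\emptyset}i_{H}^{\ast}R\to i_{H}^{\ast}R$, which is just the unit of the algebra, is automatically a map of $\cL(i_{H}^{\ast}U)$-algebras.

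I do not expect a genuine obstacle: all of the work sits in the two Propositions being invoked. The point that deserves the most care is the identification $N^{\ast}=\mathrm{id}$ together with the tacit fact that the $\cL(U)$-algebra structure that the general Proposition places on $N^{T}i_{H}^{\ast}R$ agrees with the one coming from Corollary~\ref{cor:PreservationofAlgebra}; verifying this unwinds the lax symmetric monoidal structure on $N^{T}$ and the chosen equivariant isometry $U\oplus U\cong U$ used to define the self-interchange of $\cL(U)$, and tracks the attendant permutations. If one preferred a self-contained argument avoiding the general Proposition, one would instead chase directly the square expressing that $N^{T}i_{H}^{\ast}R\to i_{H}^{\ast}R$ commutes with the action of each $o\in\cL_{m}(U)$, using the explicit interchange datum $\{f_{i}\},\{g_{j}\}\mapsto\{f_{i}\oplus g_{j}\}$; the only fiddly bookkeeping there is the lexicographic re-indexing that matches the $\Sigma_{m}$-action on the $m$-fold smash power against the wreath-product structure built into $\Gamma_{T}$, which is precisely what Lemma~\ref{lem:CartesianProduct} already organizes.
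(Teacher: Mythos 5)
Your proposal is correct and follows exactly the route the paper takes: the corollary is an immediate consequence of the preceding proposition (the $\cL(U)$-action on any $\cL(U)$-algebra interchanges with itself) together with the general proposition on interchange, and the paper offers no further argument. The extra care you take in specializing to the target $\ast$, checking that $\ast$ is admissible and that $T\to\ast$ is surjective when $T\neq\emptyset$, identifying $N^{\ast}$ with the identity, and treating $T=\emptyset$ separately, fills in precisely the bookkeeping the paper leaves tacit.
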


\section{$\Ninfty$-spaces and $\Ninfty$-ring spectra: Transfers and
norms}\label{sec:TransfersAndNorms}

In this section, we interpret the structure on algebras over $\Ninfty$
operads in the two cases of most interest: $G$-spaces and orthogonal
$G$-spectra.  In the former, the admissible sets control which
transfer maps exist; this provides a conceptual interpretation of the
way in which $\Ninfty$ operads controls the structure of equivariant
infinite loop spaces.  In the latter, the admissible sets control
which norms exist; this provides a conceptual interpretation of the
way in which $\Ninfty$ operads controls the structure of equivariant
commutative ring spectra.

\subsection{$\Ninfty$ algebras in spaces and the
transfer}\label{sec:transfers}

We begin by applying the machinery developed above to produce the
transfer in algebras over an $\Ninfty$ operad in spaces. The most
important examples of $\Ninfty$ operads from the point of view of
spaces are the equivariant Steiner operads $\cK(U)$, which model
equivariant infinite loop spaces. The goal of this section is to
describe how the transfer naturally arises from the operadic structure
maps.

In this section, we state our results in terms of an operad $\cO$ such
that the action of $\cO$ on any $\cO$-algebra $X$ interchanges with
itself.  (Recall that Proposition~\ref{prop:Steiner} tells us this is
true for $\cK(U)$.)  The following is a restatement of
Theorem~\ref{thm:NormsasTransfers} in the context of $G$-spaces.

\begin{theorem}\label{thm:GeneralTransfers}
If $\cO$ is an $\Ninfty$ operad, $S$ and $T$ are admissible $H$-sets,
and $f\colon T\to S$ is an $H$-map, then for any $\cO$-algebra $X$ in
$G$-spaces, we have a contractible space of maps 
\[
F(T,i_{H}^{\ast}X)\to F(S,i_{H}^{\ast}X),
\]
and if the map $f$ is surjective, then any choice is homotopic to a map of $N^{S}(\cO)$-algebras.
\end{theorem}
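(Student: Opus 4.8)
The plan is to deduce both assertions from Theorem~\ref{thm:NormsasTransfers}, of which the present statement is the $G$-space incarnation, using the proposition that identifies the indexed product $N^{T}$ with a function space.

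First I would reduce from $G$ to $H$. Restriction along the forgetful functor makes $i_{H}^{\ast}\cO$ an $H$-$\Ninfty$ operad and $i_{H}^{\ast}X$ an $i_{H}^{\ast}\cO$-algebra in $H$-spaces, and since $\Gamma_{T}$ and $\Gamma_{S}$ already lie in $H\times\Sigma_{n}$, an $H$-set is admissible for $\cO$ if and only if it is admissible for $i_{H}^{\ast}\cO$. Applying the $G$-space form of Theorem~\ref{thm:NormsasTransfers}, now with ambient group $H$, to the algebra $i_{H}^{\ast}X$ and the $H$-map $f\colon T\to S$ produces a contractible space of maps $N^{T}(i_{H}^{\ast}X)\to N^{S}(i_{H}^{\ast}X)$, depending functorially on $f$ up to homotopy by Corollary~\ref{cor:Functorial}. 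Reading the proposition $N^{T}Y\cong G\times_{H}F(T,Y)$ over the group $H$, where it becomes $N^{T}Y\cong H\times_{H}F(T,Y)=F(T,Y)$, rewrites this contractible space as the asserted contractible space of maps $F(T,i_{H}^{\ast}X)\to F(S,i_{H}^{\ast}X)$; this gives the first assertion.

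For the second assertion, assume $f$ surjective, and recall the standing hypothesis of this section that the $\cO$-action interchanges with itself (true for $\cK(U)$ by Proposition~\ref{prop:Steiner}). Then the proposition asserting that, for a surjection of admissible $H$-sets, the structure maps of Theorem~\ref{thm:NormsasTransfers} are maps of the relevant norm of $\cO$ applies to $f\colon T\to S$ and shows that $N^{T}i_{H}^{\ast}X\to N^{S}i_{H}^{\ast}X$ is a map of $N^{S}i_{H}^{\ast}\cO$-algebras. Under the identification of the previous paragraph this is a map $F(T,i_{H}^{\ast}X)\to F(S,i_{H}^{\ast}X)$ of $N^{S}(\cO)$-algebras, where $N^{S}(\cO)$ denotes $N^{S}i_{H}^{\ast}\cO$ and its algebra structure on the target comes from Corollary~\ref{cor:NormsofAlgebras} applied over $H$. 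Since by the first assertion the space of all such maps is contractible, any choice of the map is homotopic to this one, and the proof is complete.

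There is no new homotopy-theoretic input here beyond Theorem~\ref{thm:NormsasTransfers} and the interchange proposition, so the only real care is in the bookkeeping of the ambient-group reduction: one must be precise that over $H$ the indexed product $N^{T}$ of an $H$-set $T$ is literally $F(T,-)$, and keep track of which $\Ninfty$ operad, namely $N^{S}i_{H}^{\ast}\cO$, acts on the target. I expect this identification, together with the matching of the notation $N^{S}(\cO)$, to be the main, though modest, obstacle.
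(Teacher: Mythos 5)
Your proposal is correct and takes essentially the same approach as the paper: the paper offers no independent proof, stating only that Theorem~\ref{thm:GeneralTransfers} is a restatement of Theorem~\ref{thm:NormsasTransfers} in the context of $G$-spaces, under the section-wide standing assumption that the $\cO$-action interchanges with itself. You have filled in exactly the bookkeeping that such a ``restatement'' presumes --- restricting the ambient group to $H$ so that $i_H^{\ast}\cO$ is an $H$-$\Ninfty$ operad with the same admissible $H$-sets, observing that over $H$ the indexed product $N^T(Y)\cong H\times_H F(T,Y)=F(T,Y)$, and invoking the interchange proposition of Section~\ref{sec:inter} for the algebra-map clause when $f$ is surjective.
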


Applying fixed points and passing to homotopy groups, we produce
interesting maps:

\begin{theorem}\label{thm:TransfersinSpaces}
If $S$ and $T$ are admissible $H$-sets, if $f\colon T\to S$ is an
$H$-map, and if $X$ is an $\cO$-algebra in $G$-spaces, then there is
unique, natural (in $X$) map of abelian groups 
\[
f_{\ast}\colon\pi_{k}\big(F(T,i_{H}^{\ast}X)^{H}\big)\to \pi_{k}\big(F(S,i_{H}^{\ast}X)^{H}\big)
\]
for all $k\geq 0$.
\end{theorem}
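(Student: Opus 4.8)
The plan is to build $f_\ast$ out of the contractible space of operadic realizations supplied by Theorem~\ref{thm:GeneralTransfers}, to observe that any such realization is, up to homotopy, a map of $E_\infty$-spaces, and then to read off that the induced map on homotopy groups is a well-defined natural homomorphism. Throughout I use the standing hypothesis of this section that the $\cO$-action interchanges with itself (true for $\cK(U)$ by Proposition~\ref{prop:Steiner}).

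First I would fix the algebraic structure on source and target. For a nonempty finite $H$-set $T$, regarding $T$ as a space with trivial $\Sigma_n$-action, the cotensor $F(T,i_H^\ast\cO)$ is again an $\Ninfty$ operad, just as was shown above for $F(X,\cO)$; hence its fixed-point operad $F(T,i_H^\ast\cO)^H$ is an ordinary $E_\infty$ operad, since its $n$th space $F(T,\cO_n)^H$ is a finite product of the spaces $\cO_n^{K}$, each contractible by condition (iii) of Definition~\ref{def:geinfop} and carrying a free $\Sigma_n$-action. Because $F(T,-)$ is strong symmetric monoidal for the Cartesian product, Proposition~\ref{prop:lax} gives $F(T,i_H^\ast X)$ the structure of an $F(T,i_H^\ast\cO)$-algebra, so $F(T,i_H^\ast X)^H$ is an algebra over this $E_\infty$ operad. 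This is what endows $\pi_k\big(F(T,i_H^\ast X)^H\big)$ with a natural abelian group structure for $k\geq 1$ and a natural commutative monoid structure for $k=0$ (which is a group in the grouplike situations relevant to equivariant infinite loop space theory), and likewise with $S$ in place of $T$; the case $T=\varnothing$ is trivial since then the source is a point.

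Next I would construct the map. By Theorem~\ref{thm:GeneralTransfers} the space of operadically built maps $F(T,i_H^\ast X)\to F(S,i_H^\ast X)$ is contractible, so in particular its $H$-fixed points are contractible and nonempty; choosing one such map, restricting to $H$-fixed points, and applying $\pi_k$ yields $f_\ast$, and any two choices are $H$-homotopic, so $f_\ast$ is independent of all choices. For naturality in $X$, I would note that this map is assembled — via Construction~\ref{cons:OperadMaps}, Lemma~\ref{lem:Norms}, and the orbitwise reduction in the proof of Theorem~\ref{thm:NormsasTransfers} — from the operad structure maps together with combinatorial data, namely chosen $G\times\Sigma_n$-maps $(G\times\Sigma_n)/\Gamma_T\to\cO_n$, that do not involve $X$. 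Since a map of $\cO$-algebras commutes strictly with the operad action, fixing this combinatorial data makes the naturality square commute on the nose; in any case it commutes after applying $\pi_k$.

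The remaining and hardest step is to show $f_\ast$ is a homomorphism, which I would reduce to showing the chosen map $F(T,i_H^\ast X)^H\to F(S,i_H^\ast X)^H$ is homotopic to a map of $E_\infty$-algebras. Following the reduction in the proof of Theorem~\ref{thm:NormsasTransfers}, and using $F(S'\amalg S'',-)=F(S',-)\times F(S'',-)$ to split over the orbits of $S$ and to identify disjoint unions with products of $E_\infty$-algebras, one reduces to $S$ a single orbit and $f$ either empty (giving the unit inclusion of a point, trivially a homomorphism) or surjective; a surjection then factors as a fold map after a disjoint union of surjections of single orbits. For the fold map, the transfer $F(S,i_H^\ast X)^{\times r}\to F(S,i_H^\ast X)$ is, by Lemma~\ref{lem:Norms} and uniqueness of operadic operations, homotopic to the $r$-fold $E_\infty$-multiplication, a homomorphism by the homotopy-associativity and -commutativity of the $E_\infty$-structure. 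For a surjection $H/H'\to H/K$, which is $H\times_K$ applied to $K/H'\to K/K$, the transfer is $\CoInd_K^H$ of the $K$-equivariant structure map $F(K/H',i_K^\ast X)\to i_K^\ast X$; by the self-interchange hypothesis (Corollary~\ref{cor:TransfersAreLoopMaps} and the surrounding results) this structure map is a map of $i_K^\ast\cO$-algebras up to homotopy, and this is preserved by $\CoInd_K^H$ followed by restriction along the natural operad map $i_H^\ast\cO\to\CoInd_K^H i_K^\ast\cO$, so the transfer is a map of $i_H^\ast\cO$-algebras up to homotopy; passing to $H$-fixed points turns it into a map of $E_\infty$-algebras, hence a homomorphism on $\pi_k$. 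Since composites and products of homomorphisms are homomorphisms and these pieces compose to $f_\ast$ by Corollary~\ref{cor:Functorial}, we conclude. I expect this last step to be the main obstacle: the bookkeeping required to see the transfer as a map of $E_\infty$-algebras is exactly where self-interchange is used, and it demands care in tracking which operad acts on $F(T,i_H^\ast X)$, on the norms $F(K/H',i_K^\ast X)$, and on their coinductions.
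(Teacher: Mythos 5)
Your proposal is correct and follows the same overall strategy as the paper: reduce to $H=G$ and to $f$ surjective, build the map from the contractible space of operadic realizations in Theorem~\ref{thm:GeneralTransfers}, and use self-interchange to see the induced map on $\pi_k$ is a homomorphism. The genuine difference is in the homomorphism step. The paper invokes, in one line, the unlabeled proposition at the end of \S\ref{sec:inter} (``for any surjective map $S \to T$ of admissible $H$-sets, the structure map $N^S i_H^\ast R \to N^T i_H^\ast R$ is a map of $N^T i_H^\ast \cO$-algebras'') and is done; you instead re-derive that proposition inline by running the orbit decomposition from Theorem~\ref{thm:NormsasTransfers} again, treating fold maps and single-orbit surjections separately and then composing with Corollary~\ref{cor:Functorial}. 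That is sound but redundant given what's already proved in \S\ref{sec:inter}. Two smaller contrasts worth noting: the paper dispatches $k \geq 1$ by observing the group structure is the loop-space one (Eckmann--Hilton), so any continuous map is automatically a homomorphism there, leaving only $k=0$ needing interchange; you treat all $k$ uniformly, which is also fine but obscures that interchange is really only needed at $\pi_0$. On the other hand, your explicit identification of the ordinary $E_\infty$ operad $F(T, i_H^\ast \cO)^H = \prod_i \cO_n^{K_i}$ acting on $F(T, i_H^\ast X)^H$ is a cleaner account of where the abelian monoid structure on $\pi_k$ comes from than the paper's bare citation of Proposition~\ref{prop:Naive}, and it is a nice addition. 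Finally, be a bit more careful in the non-surjective reduction: the paper is precise that $f_\ast$ factors as $f\colon T\to \mathrm{Im}(f)$ followed by the unit inclusion into the $S'$-summand of $F(S,X)\cong F(\mathrm{Im}(f),X)\times F(S',X)$; your phrase ``$f$ either empty or surjective'' only covers this after the orbit splitting and should be stated with that reduction in place.
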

\begin{proof}
Without loss of generality, we may assume that $H=G$.  If the map $f$
is not surjective, then we may use the splitting 
\[
S=Im(f)\amalg S'
\]
to produce a decomposition
\[
F(S,X)\cong F(Im(f),X)\times F(S',X).
\]
Proposition~\ref{prop:Naive} guarantees that for all $k\geq 0$, the
induced decomposition on homotopy groups of fixed points is a
splitting of abelian monoids.  Our map $f_{\ast}$ is the composite of
the map induced by $f\colon T\to Im(f)$ with the inclusion of the
summand associated to $S'$. We therefore may assume that $f$ is
surjective. 

Since the spaces in our operad are contractible, there is a unique homotopy class for the structure map given by Theorem~\ref{thm:GeneralTransfers}
\[
f_{\sharp}^{G}\colon F(T,X)^{G}\to F(S,X)^{G},
\]
which gives rise to a unique map of homotopy groups:
\[
f_{\ast}\colon \pi_{k}\big(F(T,X)^{G}\big)\to \pi_{k}\big(F(S,X)^{G}\big).
\] 
Proposition~\ref{prop:Naive} guarantees that for all $k\geq 0$, the homotopy groups of all fixed points of $N^{T}(X)$ are abelian monoids. It is obvious that $f_{\ast}$ is a map of abelian groups for $k\geq 1$. Since we may assume that the $f_{\sharp}$ comes from a surjective map,  our interchange assumption guarantees that the map $f_{\sharp}$ is a map of $N^{S}(\cO)$-algebras. Thus $f_{\ast}$ is a map of abelian monoids for all $k$.
\end{proof}

\begin{corollary}
If $H/K$ is an admissible $H$-set, then associated to the canonical
projection map  
\[
\pi_{K}^{H}\colon H/K\to H/H
\]
we have a natural map of abelian monoids
\[
tr_{K}^{H}=\pi_{K*}^{H}\colon \pi_{k} X^{K}\to \pi_{k}X^{H}.
\]
\end{corollary}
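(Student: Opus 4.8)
The plan is to read this corollary off Theorem~\ref{thm:TransfersinSpaces} by specializing to the orbit $T = H/K$ and the terminal $H$-set $S = H/H$. First I would note that $H/K$ is admissible by hypothesis, while $H/H$ --- the trivial $H$-set of cardinality one --- is admissible by Proposition~\ref{prop:TrivialSets}. Hence $\pi_{K}^{H} \colon H/K \to H/H$ is an $H$-map between admissible $H$-sets, so Theorem~\ref{thm:TransfersinSpaces} supplies, for every $k \geq 0$, a unique natural map of abelian monoids
\[
(\pi_{K}^{H})_{\ast} \colon \pi_{k}\big(F(H/K, i_{H}^{\ast} X)^{H}\big) \to \pi_{k}\big(F(H/H, i_{H}^{\ast} X)^{H}\big).
\]

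Second, I would identify the two mapping spaces with honest fixed-point spaces. For any $H$-space $Y$, evaluation at the identity coset is a natural homeomorphism $F(H/K, Y)^{H} \cong Y^{K}$, since an $H$-equivariant map $H/K \to Y$ is precisely a point of $Y$ fixed by $K$; taking $Y = i_{H}^{\ast} X$ gives $F(H/K, i_{H}^{\ast} X)^{H} \cong X^{K}$, and likewise $F(H/H, i_{H}^{\ast} X)^{H} \cong X^{H}$. Transporting $(\pi_{K}^{H})_{\ast}$ across these homeomorphisms then yields the asserted natural map of abelian monoids $tr_{K}^{H} = \pi_{K\ast}^{H} \colon \pi_{k} X^{K} \to \pi_{k} X^{H}$.

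There is essentially no obstacle: naturality in $X$ is inherited from the corresponding clause of Theorem~\ref{thm:TransfersinSpaces} together with the evident naturality of the evaluation homeomorphisms, and the only point deserving a line of verification is that the identification $F(H/K, i_{H}^{\ast} X)^{H} \cong X^{K}$ respects the abelian-monoid structures. These structures both arise from the naive $E_{\infty}$-multiplication of Proposition~\ref{prop:Naive}, restricted along $K \hookrightarrow G$ and $H \hookrightarrow G$ respectively, and compatibility holds because the evaluation map underlies a map of the relevant operadic algebras.
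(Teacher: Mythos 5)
Your proposal is correct and matches the paper's (implicit) argument: the corollary is stated without proof as a direct specialization of Theorem~\ref{thm:TransfersinSpaces} to $T=H/K$, $S=H/H$, combined with the standard evaluation homeomorphism $F(H/K,Y)^H\cong Y^K$. The only wrinkle worth flagging is that Theorem~\ref{thm:TransfersinSpaces} is nominally phrased for ``abelian groups'' while its proof (and the corollary) actually deliver abelian monoids at $k=0$; your reading correctly follows the proof rather than the slightly misstated theorem header.
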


This map has the feel of the transfer map: on homotopy groups, we have
a map that goes from the fixed points for a subgroup back to the fixed
points for a larger group.  We shall shortly verify that upon passage
to spectra that this does give the usual transfer.  Before doing so,
we deduce some very nice structural corollaries from
Theorem~\ref{thm:ExistenceofNorms}.

\begin{proposition}\label{prop:DoubleCoset}
If $H/K$ is an admissible $H$-set, then the double coset formula determining the restriction of $tr_{K}^{H}$ to any subgroup $K'$ of $H$ holds:
\[
res_{K'}^{H}tr_{K}^{H}=\bigoplus_{g\in K'\backslash H/K} tr_{K'\cap gKg^{-1}}^{K'} res_{K'\cap gKg^{-1}}^{K}.
\]
\end{proposition}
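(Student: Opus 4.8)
The plan is to exhibit $res_{K'}^{H}tr_{K}^{H}$ as the map on homotopy groups induced by a single structure map of Theorem~\ref{thm:NormsasTransfers}, and then to factor the underlying map of $H$-sets so that Corollary~\ref{cor:Functorial} and the coherence conditions of Theorem~\ref{thm:ExistenceofNorms} supply the rest. As in the proof of Theorem~\ref{thm:TransfersinSpaces} we may replace $G$ by $H$, so assume $H=G$. Recall that, under the evaluation-at-the-identity-coset identification $\big(N^{H/K}X\big)^{H}\cong F(H/K,X)^{H}\cong X^{K}$, the transfer $tr_{K}^{H}$ is the map on $\pi_{k}$ induced by the structure map $(\pi_{K}^{H})_{\sharp}$ attached to the projection $\pi_{K}^{H}\colon H/K\to H/H$ of admissible $H$-sets, while $res_{K'}^{H}$ is induced by the fixed-point inclusion $X^{H}\hookrightarrow X^{K'}$.

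First I would record the standard $K'$-equivariant decomposition
\[
i_{K'}^{\ast}(H/K)\;\cong\;\coprod_{g\in K'\backslash H/K}K'/(K'\cap gKg^{-1}),
\]
noting that each summand is admissible for $i_{K'}^{\ast}\cO$ (the whole set is admissible by Proposition~\ref{prop:admitfam}, and its summands are admissible by the truncation property of Lemma~\ref{lem:Summands}), and that under this decomposition the restricted map $i_{K'}^{\ast}(\pi_{K}^{H})$ factors as the disjoint union $\coprod_{g}\pi_{g}$ of the canonical projections $\pi_{g}\colon K'/(K'\cap gKg^{-1})\to K'/K'$ followed by the fold map $\nabla\colon\coprod_{g}K'/K'\to K'/K'$. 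The structural input I would invoke here is that the operadic structure maps of Construction~\ref{cons:OperadMaps} are natural under restriction of the ambient group --- this is built into, and is more elementary than, condition (iii) of Theorem~\ref{thm:ExistenceofNorms} --- so that restricting the structure map of $\pi_{K}^{H}$ to $K'$ yields the structure map of $i_{K'}^{\ast}(\pi_{K}^{H})$ for the $\Ninfty$-$K'$-operad $i_{K'}^{\ast}\cO$.

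Next I would pass to $\pi_{k}$ of $K'$-fixed points along the square expressing naturality of the structure map of $\pi_{K}^{H}$ under restriction to $K'$; one side of this square computes $res_{K'}^{H}tr_{K}^{H}$. The left-hand edge is the fixed-point inclusion $X^{K}\cong\big(N^{H/K}X\big)^{H}\hookrightarrow\big(i_{K'}^{\ast}N^{H/K}X\big)^{K'}\cong\prod_{g}X^{K'\cap gKg^{-1}}$ (the last identification using the decomposition above together with the symmetric monoidality of $N^{(-)}$ in the $G$-set variable), and unwinding the evaluation identifications shows that its $g$-component is exactly $res_{K'\cap gKg^{-1}}^{K}$, i.e.\ conjugation by $g$ followed by a restriction, as dictated by the double-coset conventions. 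The bottom edge is the structure map attached to $\nabla\circ\coprod_{g}\pi_{g}$, which by Corollary~\ref{cor:Functorial} is the composite of the structure maps of $\coprod_{g}\pi_{g}$ and of $\nabla$; by the symmetric monoidality of $N^{(-)}$ and condition (i) of Theorem~\ref{thm:ExistenceofNorms} the former is $\prod_{g}(\pi_{g})_{\sharp}$, which on $K'$-fixed points (using $\big(N^{K'/L}X\big)^{K'}\cong X^{L}$ for $L\le K'$) is $\prod_{g}tr_{K'\cap gKg^{-1}}^{K'}$, while the latter --- the fold of copies of the one-point $K'$-set --- is the iterated multiplication, hence by Proposition~\ref{prop:Naive} and the standing interchange hypothesis on $\cO$ the sum in the abelian monoid $\pi_{k}(X^{K'})$. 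Composing the two edges gives
\[
res_{K'}^{H}tr_{K}^{H}=\bigoplus_{g\in K'\backslash H/K}tr_{K'\cap gKg^{-1}}^{K'}\,res_{K'\cap gKg^{-1}}^{K}.
\]

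The main obstacle will be the bookkeeping underpinning this naturality square: verifying carefully that the operadic structure maps commute with restriction of the ambient group --- so that the double-coset decomposition of $i_{K'}^{\ast}N^{H/K}X$ is compatible with the structure maps on both source and target --- and keeping the conjugations straight so that the $g$-th summand produces precisely $tr_{K'\cap gKg^{-1}}^{K'}\,res_{K'\cap gKg^{-1}}^{K}$ and not a $K'$-conjugate of it (the subgroups $K'\cap gKg^{-1}$ are well defined only up to $K'$-conjugacy as $g$ ranges over double-coset representatives, but the composite is invariant under this). The remaining steps are formal consequences of Theorem~\ref{thm:ExistenceofNorms}, Corollary~\ref{cor:Functorial}, and the contractibility of the operad spaces, which guarantees that all the maps in sight are well defined up to homotopy.
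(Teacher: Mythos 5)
Your proposal is correct and takes essentially the same route as the paper. The paper's own ``proof'' is a one-sentence remark: the restriction to $K'$ of the transfer associated to $H/K$ is the transfer associated to the $K'$-set $i_{K'}^{\ast}(H/K)$, and this is ``an immediate consequence of Theorem~\ref{thm:ExistenceofNorms}~(iii).'' Your argument makes precise exactly the chain of identifications that remark compresses: the identification of $tr_K^H$ with a structure map, the double-coset decomposition of $i_{K'}^{\ast}(H/K)$, the admissibility of its orbit summands via Proposition~\ref{prop:admitfam} and Lemma~\ref{lem:Summands}, the factorization through a disjoint union followed by a fold, Corollary~\ref{cor:Functorial} for composing structure maps, and condition~(i) plus the abelian monoid structure from Proposition~\ref{prop:Naive} to turn the fold into the sum. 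The one small quibble is that the interchange hypothesis is not needed for ``fold~$=$~sum'' (that is just the definition of the addition on $\pi_k$); interchange enters earlier, to know that the individual transfer maps are monoid homomorphisms, as in the proof of Theorem~\ref{thm:TransfersinSpaces}.
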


This proposition, often called the ``Mackey double coset formula'' really has a simpler interpretation: the restriction to a subgroup $K'$ of the transfer associated to an $H$-set $T$ is the transfer associated to the $K'$-set $i_{K'}^{\ast} T$. As such, this is an immediate consequence of Theorem~\ref{thm:ExistenceofNorms} (iii).

\begin{corollary}
For an $\cO$-algebra $X$ for which the $\cO$-action interchanges with itself, the abelian group valued coefficient system
\[
\underline{\pi_{k}(X)}\colon\mSet\to\Ab
\]
defined by
\[
(T\in\Set^{H})\mapsto \pi_{k}\big(F(T,X)^{H}\big)
\]
has transfers for any admissible sets. 
\end{corollary}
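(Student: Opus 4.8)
The plan is to recognise the corollary as a formal repackaging of Theorem~\ref{thm:TransfersinSpaces} together with the coherence results already in hand, so that the only work is organizational: one records the underlying coefficient-system structure, installs the transfer maps, and checks that they satisfy the expected functoriality, additivity, and compatibility with restriction.

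First I would describe the underlying coefficient system. Fix a subgroup $H$. For a finite $H$-set $T$, precomposition with an $H$-map $g\colon T'\to T$ gives an honest point-set map $g^{\ast}\colon F(T,i_{H}^{\ast}X)^{H}\to F(T',i_{H}^{\ast}X)^{H}$, hence a homomorphism on $\pi_{k}$; since this comes from an actual map of function spaces it is strictly functorial, and it is additive in $T$ because $F(S\amalg S',X)\cong F(S,X)\times F(S',X)$ and $\pi_{k}$ preserves products. Restriction along $K\subseteq H$ and the conjugation isomorphisms are likewise induced by strict maps of function spaces, so $T\mapsto\pi_{k}\big(F(T,i_{H}^{\ast}X)^{H}\big)$ assembles, as $H$ varies, into a coefficient system valued in abelian groups (in abelian monoids when $k=0$, by Proposition~\ref{prop:Naive} and its analogue for spaces).

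Next I would install the transfers and verify their compatibilities. For admissible $H$-sets $T,S$ and an $H$-map $f\colon T\to S$, Theorem~\ref{thm:TransfersinSpaces} supplies the natural homomorphism $f_{\ast}$; it remains to see that these fit together. Functoriality $(f'\circ f)_{\ast}=f'_{\ast}\circ f_{\ast}$ follows from Corollary~\ref{cor:Functorial}, whose triangle of norm maps commutes up to homotopy; since the operad spaces $\cO_{n}^{\Gamma}$ are contractible there is a unique homotopy class of structure map, so the triangle commutes on $\pi_{k}$, and $(\mathrm{id})_{\ast}=\mathrm{id}$ is immediate. Additivity, $(f\amalg f')_{\ast}=f_{\ast}\oplus f'_{\ast}$ under the product decompositions above, follows from Theorem~\ref{thm:ExistenceofNorms}(i). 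Compatibility of transfer with restriction is Proposition~\ref{prop:DoubleCoset}, hence Theorem~\ref{thm:ExistenceofNorms}(iii): for $K'\subseteq H$ the restriction $i_{K'}^{\ast}f_{\ast}$ is the transfer attached to $i_{K'}^{\ast}f$, which is a map of admissible $K'$-sets by Proposition~\ref{prop:admitfam}(i), and expanding $i_{K'}^{\ast}(H/K)$ into $K'$-orbits via the double-coset decomposition produces the classical formula. Together these statements say exactly that $\big(\underline{\pi_{k}(X)},\{f_{\ast}\}\big)$ is a coefficient system equipped with transfers for the admissible sets --- an incomplete Mackey functor --- which is the assertion.

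I expect the only non-bookkeeping point to be the passage from diagrams that commute merely up to homotopy to ones that commute strictly, i.e.\ well-definedness of the structure. The maps of Construction~\ref{cons:OperadMaps} form only a \emph{contractible space} of maps, and both Corollary~\ref{cor:Functorial} and Theorem~\ref{thm:ExistenceofNorms} assert commutativity only up to homotopy; what makes this enough is precisely that $\cO$ is an $\Ninfty$ operad, so each $\cO_{|T|}^{\Gamma_{T}}$ is contractible and there is a unique homotopy class of transfer, giving a genuinely well-defined, natural map on homotopy groups for which the coherence diagrams commute on the nose. The one further subtlety is the case $k=0$, where the values are abelian monoids rather than groups; there one invokes the interchange hypothesis, exactly as in the proof of Theorem~\ref{thm:TransfersinSpaces}, to conclude that $f_{\ast}$ is still additive. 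Neither point demands anything beyond the already-established results.
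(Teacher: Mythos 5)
Your proposal is correct and matches the paper's (unstated) proof: the corollary is indeed just a synthesis of Theorem~\ref{thm:TransfersinSpaces} for the transfer maps, Corollary~\ref{cor:Functorial} for functoriality, and Proposition~\ref{prop:DoubleCoset} (equivalently, Theorem~\ref{thm:ExistenceofNorms}(iii)) for compatibility with restriction, exactly as you lay out. Your remarks about contractibility of $\cO_{|T|}^{\Gamma_{T}}$ forcing well-definedness on $\pi_{k}$, and about the $k=0$ monoid-versus-group issue handled by the interchange hypothesis, accurately reproduce the reasoning already embedded in the proof of Theorem~\ref{thm:TransfersinSpaces}.
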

These are therefore incomplete Mackey functors, studied by Lewis during his analysis of incomplete universes \cite{LewisHurewicz, LewisChange}. 

\begin{remark}
The forgetful functor on abelian group valued coefficient systems has a right adjoint: coinduction. By the universal property of the product, we have a natural isomorphism
\[
\underline{\pi_{k} F(G/H,X)}\cong \CoInd_{H}^{G}i_{H}^{\ast} \underline{\pi_{k}(X)}.
\]
This can be further simplified, using the construction of coinduction:
\[
\CoInd_{H}^{G}i_{H}^{\ast}\mM(T)=\mM(G/H\times T).
\]
This final formulation has an obvious extension to more general $G$-sets than orbits, and we follow Lewis's notation
\[
\mM_{S}(T)=\mM(S\times T)
\]
for a fixed $G$-set $S$.
  
The $\cO$-algebra structure that interchanges with itself endows the homotopy coefficient system of an $\cO$-algebra $X$ with natural transformations 
\[
\underline{\pi_{k}(X)}_{T}\to\underline{\pi_{k}(X)}
\]
for all admissible sets $T$ and which commute with restriction. If all sets are admissible, then this is equivalent to a Mackey functor structure on $\underline{\pi_{k}(X)}$~\cite{HillHopkins}. 
\end{remark}

\begin{remark}
One of the classical ways to package the data of a Mackey functor is
via additive functors from the Burnside category of spans of finite
$G$-sets into some other category.  There is an ``incomplete'' version
of these that can be used in our context.  The appropriate notion of a
``span'' for our incomplete Mackey functors is an isomorphism class of
a pair of maps $S\leftarrow U\rightarrow T$, where $U\to T$ is a
pull-back of a map between admissible sets.  These objects forms a
subcategory of the Burnside category. A full treatment of this
approach also engages with the issues from
Remark~\ref{rem:OperadsonGSets} of indexing our operads on finite
$G$-sets rather than on natural numbers.  We intend to return to this
issue in a subsequent paper.
\end{remark}

Having seen that the homotopy groups of an $\cO$-algebra in $G$-spaces
have transfers analogous to those possessed by the homotopy groups of
genuine spectra, we restrict attention to $\cO=\cK(U)$ or $\cO=\cD(U)$
for a universe $U$ and show that we are in fact constructing the usual
transfer. Recall that an equivariant $\aO$-algebra $X$ is
``group-like'' if $\pi_{0}(X^{H})$ is an abelian group for all
$H\subset G$.  We have the following delooping result:

\begin{proposition}[\cite{costenoblewaner}]
If $X$ is a group-like $\cK(U)$-algebra or $\cD(U)$-algebra then there
is an equivariant spectrum $\mathfrak X$ indexed on $U$ for which $X$
is the zero space. Similarly, a map of $\cK(U)$-algebras $X\to Y$
deloops to a map $\mathfrak X\to\mathfrak Y$ of spectra indexed on
$U$.
\end{proposition}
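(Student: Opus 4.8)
The plan is to deduce this from the equivariant recognition principle, which manufactures a $U$-indexed spectrum out of a grouplike algebra over a monad that maps compatibly to the iterated loop--suspension monads. First I would recall that the Steiner operad was built precisely so that its associated monad $\mathbb{K}_U$ on based $G$-spaces carries compatible maps of monads $\mathbb{K}_U\to\Omega^V\Sigma^V$ for every finite $V\subset U$, and hence a map of monads $\alpha\colon\mathbb{K}_U\to Q_U:=\colim_{V\subset U}\Omega^V\Sigma^V$; this is the content of \cite[\S10]{guilloumay}, and the compatibility with the colimit over $V$ is exactly the property that fails for the bare little disks operad (cf.\ the remark following Definition~\ref{def:operexa}), which is why one routes the argument through $\cK(U)$. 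To get the $\cD(U)$ statement from the $\cK(U)$ one, I would observe that $\cD(U)$ and $\cK(U)$ have the same indexing system (their $n$th spaces are $G\times\Sigma_n$-equivalent to the same configuration space), so they are connected by a zig-zag of weak equivalences of $\Ninfty$ operads and, by Appendix~\ref{sec:monadicalgebras}, have equivalent homotopical categories of algebras; thus it suffices to treat $\cK(U)$.

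Next, for a $\cK(U)$-algebra $X$, viewed as a $\mathbb{K}_U$-algebra, I would form for each finite $V\subset U$ the two-sided bar construction $\mathfrak X(V):=B(\Sigma^V,\mathbb{K}_U,X)$, using that $\Sigma^V$ is a right $\mathbb{K}_U$-functor via $\alpha$ (it factors through $\Omega^W\Sigma^W$ for $W\supseteq V$). For $V\subseteq W$ the monad structure maps furnish natural maps $\Sigma^{W-V}\mathfrak X(V)\to\mathfrak X(W)$, and passing to adjoints exhibits $\{\mathfrak X(V)\}_{V\subset U}$ as a $G$-spectrum $\mathfrak X$ indexed on $U$. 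Since the bar construction is functorial in the algebra variable, a map $X\to Y$ of $\cK(U)$-algebras at once induces a map $\mathfrak X\to\mathfrak Y$ of $U$-indexed spectra, which settles the final sentence of the proposition once the zeroth-space identification is in place.

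The remaining and crucial point is to identify $X$ with the zeroth space of $\mathfrak X$. There is a natural group-completion map $\gamma\colon X\to\Omega^\infty_U\mathfrak X$, and I would show it is a $G$-equivalence by checking fixed points: for each $H\subseteq G$, $X^H$ is a grouplike algebra over the monad $\mathbb{K}_U^H$, whose approximation theorem is governed by the fixed-point configuration spaces, so the nonequivariant recognition principle and group-completion theorem apply on $H$-fixed points. Granting this, $\gamma^H\colon X^H\to(\Omega^\infty_U\mathfrak X)^H$ is a weak equivalence for every $H$, hence $X\simeq\Omega^\infty_U\mathfrak X$, i.e.\ $X$ is the zeroth space of $\mathfrak X$; functoriality then deloops maps as above.

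The hard part will be exactly this last step. The equivariant group-completion statement genuinely uses the grouplikeness hypothesis \emph{at every subgroup of $G$}, not merely at $G$, and it requires a careful fixed-point analysis of the Steiner approximation, since for $V$ carrying a nontrivial $G$-action the functor $\Omega^V$ does not commute with fixed points on the nose. Rather than reproduce this analysis, I would cite \cite{costenoblewaner}, where the full argument (for both $\cK(U)$ and $\cD(U)$) is carried out.
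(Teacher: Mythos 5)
The paper does not actually prove this proposition; it is imported by citation to Costenoble--Waner, so your ultimate deference to that reference is consistent with the paper's treatment. The sketch you provide of the May-style recognition argument (approximation theorem, two-sided bar construction, group-completion, all with close attention to fixed points) is the right outline, and you correctly single out both the need for grouplikeness at every subgroup and the reason the Steiner operad is used in place of little disks.

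A few imprecisions are worth tightening if you are going to present this sketch rather than merely cite. The compatible maps of monads are $\mathbb{K}_V \to \Omega^V \Sigma^V$, indexed over finite $V \subset U$, and these assemble to $\mathbb{K}_U \to Q_U$ only after passage to the colimit; there is no map of monads $\mathbb{K}_U \to \Omega^V \Sigma^V$ for finite $V$, since $\mathbb{K}_U$ encodes strictly more structure than a $V$-fold loop space supports. Correspondingly, the bar construction should be $B(\Sigma^V, \mathbb{K}_V, X)$, with $X$ regarded as a $\mathbb{K}_V$-algebra by restriction along $\mathbb{K}_V \to \mathbb{K}_U$: $\Sigma^V$ is a right $\mathbb{K}_V$-functor via $\mathbb{K}_V \to \Omega^V\Sigma^V$ and the evaluation $\Sigma^V\Omega^V\Sigma^V \to \Sigma^V$, but it is not a right $\mathbb{K}_U$-functor, so $B(\Sigma^V, \mathbb{K}_U, X)$ is not defined as written. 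Finally, note that Appendix~\ref{sec:monadicalgebras} is stated for algebras in $\Sp_G$ rather than in $G$-spaces, so your reduction of the $\cD(U)$ case to the $\cK(U)$ case via that appendix is not literally covered (though the analogous comparison for spaces is expected and true); in any event Costenoble--Waner handle $\cD(U)$ as well, so the reduction, while clean, is not needed to obtain the stated result.
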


We can now deloop any of our structure maps since
Corollary~\ref{cor:TransfersAreLoopMaps} implies that they are
infinite loop maps.

\begin{corollary}
Fix some universe $U$, and let $H/K$ be an admissible $H$-set for
$\cK(U)$.  If $X$ is a grouplike $\cK(U)$-algebra, then we have a map
of spectra indexed by $U$:  
\[
F_{K}(H,\mathfrak X)\to i_{H}^{\ast}\mathfrak X,
\]
where $\mathfrak X$ is the spectrum whose zero space is $X$, and where
$F_{K}(H,\mathfrak X)$ is the coinduced spectrum. Moreover, the
homotopy class is unique. 
\end{corollary}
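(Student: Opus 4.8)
The plan is to deloop the space-level transfer map furnished by Theorem~\ref{thm:GeneralTransfers}, using the Costenoble--Waner recognition principle, and then to identify the two resulting spectra as the coinduced spectrum $F_K(H,\mathfrak X)$ and the restriction $i_H^*\mathfrak X$. First I would apply Theorem~\ref{thm:GeneralTransfers} to the canonical projection $\pi_K^H\colon H/K\to H/H$ of admissible $H$-sets. Since $H/H$ is a one-point $H$-set we have $F(H/H,i_H^*X)\cong i_H^*X$, and since $\pi_K^H$ is surjective, Corollary~\ref{cor:TransfersAreLoopMaps}, applied after restriction to $H$ (so that $i_H^*\cK(U)=\cK(i_H^*U)$), tells us that any representative of the resulting contractible space of maps
\[
\phi\colon F(H/K,i_H^*X)\to i_H^*X
\]
is a map of $\cK(i_H^*U)$-algebras.

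Next I would check that the source is again a grouplike $\cK(i_H^*U)$-algebra. For $L\subset H$ one has $i_L^*(H/K)\cong\coprod_j L/L_j$, whence $F(H/K,i_H^*X)^L\cong\prod_j X^{L_j}$, so $\pi_0$ of each fixed-point space is a finite product of the abelian groups $\pi_0(X^{L_j})$ and hence an abelian group. Now I would apply the delooping proposition of~\cite{costenoblewaner} over the group $H$ and the universe $i_H^*U$: both $F(H/K,i_H^*X)$ and $i_H^*X$ deloop to $i_H^*U$-spectra, and $\phi$ deloops to a map of $i_H^*U$-spectra. It remains to identify the two deloopings. The spectrum $i_H^*\mathfrak X$ has zero space $i_H^*X$ because restriction commutes with $\Omega^\infty$, so by uniqueness of deloopings it is the delooping of $i_H^*X$. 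For the source, coinduction and $\Omega^\infty$ are both right adjoints and hence commute, giving
\[
\Omega^\infty F_K(H,\mathfrak X)\simeq F_K(H,\Omega^\infty i_K^*\mathfrak X)=F_K(H,i_K^*X)\cong F(H/K,i_H^*X)
\]
as $\cK(i_H^*U)$-algebras; so, again by uniqueness of deloopings, $F_K(H,\mathfrak X)$ is the delooping of $F(H/K,i_H^*X)$. The delooped map is therefore a map $F_K(H,\mathfrak X)\to i_H^*\mathfrak X$ of $i_H^*U$-spectra, as claimed.

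For uniqueness of the homotopy class: the space of $\cK(i_H^*U)$-algebra maps produced by Theorem~\ref{thm:GeneralTransfers} is contractible (it is governed by $\cK(U)_{|H/K|}^{\Gamma_{H/K}}$), so there is a single homotopy class of maps $\phi$ upstairs; since the Costenoble--Waner machine induces an equivalence of the relevant homotopy categories, this passes to a unique homotopy class of maps of $i_H^*U$-spectra downstairs. The main obstacle I anticipate is precisely the compatibility of the infinite loop space machine with coinduction --- establishing the equivalence $\Omega^\infty F_K(H,\mathfrak X)\simeq F_K(H,i_K^*X)$ as $\cK(i_H^*U)$-algebras, not merely as $H$-spaces. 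I would dispatch it via the adjunction argument above together with the naturality of the Costenoble--Waner equivalence and the uniqueness of deloopings of grouplike operadic algebras, rather than by analyzing the machine by hand.
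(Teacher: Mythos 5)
Your proposal is correct and follows essentially the same route as the paper: the paper's proof is the one sentence preceding the corollary, namely that Corollary~\ref{cor:TransfersAreLoopMaps} makes the transfer an infinite loop map, so the Costenoble--Waner delooping of $\cK(U)$-algebras applies. You have simply spelled out the details the paper leaves implicit --- verifying that $F(H/K,i_H^*X)$ is grouplike, identifying its delooping with $F_K(H,\mathfrak X)$ via the fact that coinduction and $\Omega^\infty$ are both right adjoints, and deducing uniqueness from the contractibility of $\cK(U)_{|H/K|}^{\Gamma_{H/K}}$ --- and these elaborations are accurate.
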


In this context, we see another interpretation of
Theorem~\ref{thm:ExistenceofNorms} (iii). The relevant spaces in the
operad $\cO$ parameterize the homotopies making the diagrams 
\[
\xymatrix{
{i_{K}^{\ast}N^{S}\mathfrak X\simeq N^{i_{K}^{\ast}S}i_{K}^{\ast} \mathfrak X}\ar@{<->}[rr] \ar[dr] & & {N^{i_{K}^{\ast}T}i_{K}^{\ast}\mathfrak X\simeq i_{K}^{\ast}N^{T}\mathfrak X}\ar[dl] \\
{} & {i_{K}^{\ast}\mathfrak X.} & {}}\]
commute. This is again an incarnation of the double-coset formula. 

When $\cO$ is $\cK(U)$ for some universe $U$, then these transfers
recover the classical transfers.

\begin{proposition}
If $X$ is a group-like $\cK(U)$-algebra, then the operadic transfer 
map associated to an admissible set $G/H$ gives rise to the
ordinary transfer. 
\end{proposition}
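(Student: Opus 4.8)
The plan is to identify the operadic transfer of Theorem~\ref{thm:NormsasTransfers}, specialized to the projection $G/H\to G/G$, with the Pontryagin--Thom construction of the classical transfer, by noting that both are assembled from the \emph{same} geometric input: a $G$-equivariant embedding of $G/H$ into the universe $U$. Write $n=|G/H|$. Admissibility of $G/H$ for $\cK(U)$ means exactly that $\cK(U)_n^{\Gamma_{G/H}}$ is nonempty, and (as in the proof of Theorem~\ref{thm:DUFamily}, applied to the Steiner operad) a point of that space amounts to an equivariant tubular neighborhood $G\times_H D(i_H^{\ast}V)\hookrightarrow D(V)$ of an equivariant embedding $G/H\hookrightarrow V$ for some finite $V\subset U$. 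This is precisely the datum from which Lewis~\cite{lewisincomplete} and Costenoble--Waner~\cite{costenoblewaner} build the classical transfer for $\cK(U)$-algebras: one forms the collapse map $S^{V}\to G_{+}\wedge_{H}S^{i_H^{\ast}V}$ and feeds it into the $\Omega^{V}\Sigma^{V}$-structure carried by a grouplike $\cK(U)$-algebra. So the first step is bookkeeping: write out the structure map of Construction~\ref{cons:OperadMaps} for $T=G/H$, namely
\[
N^{G/H}X \;=\; F(G/H,X)\;\longrightarrow\;\cK(U)_{n+}\times_{\Sigma_n}X^{\times n}\;\longrightarrow\;X
\]
determined by a chosen point of $\cK(U)_n^{\Gamma_{G/H}}$, and check that on passage to $G$-fixed points it unwinds to exactly the composite defining the classical transfer $X^{H}\to X^{G}$.

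Uniqueness lets me cut down the verification. Since the $n$th Steiner space is $G\times\Sigma_n$-equivalent to $F(V,n)$ and all relevant fixed-point spaces are contractible, the operadic transfer is the \emph{unique} homotopy class of map of this form --- this is exactly how Theorem~\ref{thm:TransfersinSpaces} produced a canonical $f_{\ast}$. Hence it suffices to exhibit the classical transfer as \emph{one} such map. Passing to deloopings via the Costenoble--Waner machinery (so $X=\Omega^{\infty}\mathfrak X$ for a $U$-spectrum $\mathfrak X$, and the transfer becomes a map of $U$-spectra $F_{H}(G,\mathfrak X)\to\mathfrak X$), one then only needs agreement in $\pi_{\ast}$. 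Here I would use naturality in $X$ to reduce to the universal grouplike example $X=\Omega^{\infty}\Sigma^{\infty}_{U}S^{0}$, where $\pi_0(X^{H})\to\pi_0(X^{G})$ is a map of Burnside rings $A(H)\to A(G)$; under the standard identification $\pi_0^{G}\Sigma^{\infty}_{+}(G/H)\cong A(H)$ it is enough that the operadic and the classical transfer both correspond to the canonical fundamental class. For the classical transfer this is its definition; for the operadic one it follows from unwinding the collapse map, since $G/H$ embedding in $U$ is, by Lewis' criterion, exactly what makes $G/H_{+}$ dualizable in the $U$-stable category, and the operadic datum is precisely the Pontryagin--Thom collapse witnessing this dualizability.

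I expect the real obstacle to be the diagram-level identification in the first step rather than anything conceptual: one must match the ``operadic'' model of the transfer (the $n$-ary Steiner multiplication applied to an equivariant configuration representing the cover $G/H\to\ast$) with the ``geometric'' model (an equivariant tubular neighborhood together with a collapse map), tracking basepoints, the compatibility between the colimit over $V\subset U$ defining $\cK(U)$ and the colimit defining $\Omega^{V}\Sigma^{V}$ on a grouplike algebra, and the equivariant tubular neighborhood theorem for $G/H\hookrightarrow V$. This is the equivariant refinement of the classical identification of the operadic transfer in infinite loop space theory with the stable Becker--Gottlieb transfer, and I would rely on~\cite{costenoblewaner} and~\cite[\S10]{guilloumay} for the equivariant delooping and operad-pairing input needed to run it cleanly.
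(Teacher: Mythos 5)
Your proposal has the same geometric backbone as the paper's proof: use admissibility of $G/H$ for $\cK(U)$ to produce an equivariant embedding $G/H\times D(V)\hookrightarrow D(V)$ (via the comparison with the little disks operad), deloop $X$ using Costenoble--Waner so that the operadic structure map can be read off on $\Omega^V Y$, and identify the resulting map with the Pontryagin--Thom collapse $S^V\to G/H_+\wedge S^V$, which \emph{is} the classical transfer. The paper stops there: once one sees that the point of $\cK(U)_n^{\Gamma_{G/H}}$ used in Construction~\ref{cons:OperadMaps} is literally an equivariant tubular neighborhood and that the structure map applied to $\Omega^V Y$ is the induced collapse-then-evaluate, the two transfers are seen to be the same map, not merely maps inducing the same thing in $\pi_*$.

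Your extra reduction to the universal example $\Omega^\infty\Sigma^\infty_U S^0$ and the $\pi_0$/Burnside-ring comparison is a genuinely different verification route, and it has a gap you should be aware of. Uniqueness via contractibility tells you the operadic transfer is the unique homotopy class \emph{among maps arising from points of $\cK(U)_n^{\Gamma_{G/H}}$}; it does not by itself say anything about the classical transfer until you have exhibited the classical transfer as one of these. The paper does exactly that --- the tubular-neighborhood point produces both the operadic structure map and the classical collapse, so they coincide. Your alternative plan instead tries to compare the two as natural transformations by checking $\pi_0$ of a universal example; for this to close, you would need to know that the operadic transfer, after delooping, is natural in the $U$-spectrum $\mathfrak X$ (not merely in grouplike $\cK(U)$-algebras) and is represented by a stable map $G/H_+\to S^0$, which is essentially the dualizability/Wirthm\"uller content you mention at the end. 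That can be made to work, and it recovers the alternative argument the paper sketches in the remark following the proposition (transfer as inverse Wirthm\"uller followed by the action map), but as written it is more indirect than the ``bookkeeping'' identification you correctly flag as the real content and which the paper carries out directly.
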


\begin{proof}
This identification essentially follows from the definition of the
action of the little disks operad on $\Omega^V S^V$.  Due to the
problems with suspension in the context of the little disks operad, we
will have to shift between $\cK(U)$ and $\cD(V)$ in the following
argument.

First, observe that if $G/H$ is an admissible $G$-set for $\cK(U)$,
then it is an admissible $G$-set for $\cD(U)$ and so for some finite
dimensional subspace $V \subset U$, we have a $G$-equivariant embedding  
\[
G/H\times D(V)\hookrightarrow D(V).
\]
For a particular subspace $V$, these choices can be inequivalent, but
letting the dimension grow yields our contractible space of maps 
\[
G/H\times D(U)\hookrightarrow D(U).
\]
Thus in the limit, any choices we made becomes equivalent, and we can
restrict attention to some finite dimensional $V$ and the $V$-fold
loops.

Since $X$ is a $\cK(U)$-space, delooping~\cite{costenoblewaner}
implies that $X \htp \Omega^V Y$ as a $\cK(U)$-space for some $Y$.
Changing operads, we can regard $X$ as having a $\cD(V)$ action which
is compatible with the $\cK(U)$ action.  Any embedding of the form
$G/H\times D(V)\hookrightarrow D(V)$ induces a Pontryagin-Thom map
\[
S^{V}\to G/H_{+}\wedge S^{V}.
\]
Taking maps out of this produces a map of algebras
\[
F_H(G_+,i_H^{\ast}\Omega^{V}Y)\cong F(G/H_+ \sma
S^{V},Y)\to \Omega^{V}Y,
\]
which in this case manifestly represents the same homotopy class as
the map constructed in Theorem~\ref{thm:NormsasTransfers}; the
Pontryagin-Thom collapse yields precisely the operadic structure map
in this case.  But of course this collapse is also the same as the
classical construction of the transfer map~\cite{Adams}.
\end{proof}

\begin{remark}
One can also deduce the preceding comparison of transfers from the
fact the description of the transfer as the composite of the inverse
of the Wirthmuller isomorphism and the action map $G \sma_H X \to
X$~\cite[4.15]{Schwede}.  Specifically, the result follows from this
characterization along with the fact that the delooping of the
operadic multiplication of a group-like $\aO$-space produces the fold
map of $G$-spectra. 
\end{remark}

\subsection{$\Ninfty$-ring spectra and the norm}

We now study the case of $\Ninfty$ algebras in orthogonal $G$-spectra.
The arguments are essentially the same as in the preceding subsection,
but the interpretation is different.  The proof of
the following is identical to the proof of
Theorem~\ref{thm:TransfersinSpaces} and
Proposition~\ref{prop:DoubleCoset}, so we omit it.

\begin{theorem}
If $R$ is an algebra over an $\Ninfty$ operad $\cO$, then
\[
\underline{\pi_{0}(R)}
\]
is a commutative Green functor.
 
If the $\cO$ action interchanges with itself, then for any admissible
$H$-set $H/K$ we have a ``norm map'' 
\[
\underline{\pi_{0}(R)}(G/K)\xrightarrow{n_{K}^{H}}\underline{\pi_{0}(R)}(G/H)
\]
which is a homomorphism of commutative multiplicative monoids. 

The maps $n_{K}^{H}$ satisfy the multiplicative version of the Mackey double-coset formula.
\end{theorem}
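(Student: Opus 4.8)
The plan is to run, \emph{mutatis mutandis}, the arguments that prove Theorem~\ref{thm:TransfersinSpaces} and Proposition~\ref{prop:DoubleCoset}, with the additive structure of equivariant infinite loop spaces replaced by the multiplicative structure of the smash product, the coinduction $F(T,-)$ replaced by the indexed smash product $N^{T}(-)$, and additive transfers replaced by Hill--Hopkins--Ravenel norms; the homotopical behaviour of the latter is the one established in \cite[App.~B]{HHR}.

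For the first assertion, observe that $R$ is in the first place an object of genuine orthogonal $G$-spectra, so $\underline{\pi_{0}(R)}$ is automatically a Mackey functor, with restrictions induced by the functors $i_{K}^{\ast}$ and transfers supplied by the genuine structure. By Proposition~\ref{prop:Naive}, $R$ is a naive $E_{\infty}$ ring spectrum, hence in particular a homotopy-associative, homotopy-commutative, unital ring object in $\Sp_{G}$; therefore each $\pi_{0}^{H}(R)$ is a commutative ring and every restriction map is a ring homomorphism. Frobenius reciprocity --- that $\mathrm{tr}_{K}^{H}$ is a map of $\pi_{0}^{H}(R)$-modules --- is the usual projection formula for homotopy-ring $G$-spectra, which follows from the compatibility of the Wirthm\"uller isomorphism and of the action map $G_{+}\sma_{H}i_{H}^{\ast}R\to R$ with the multiplication. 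Thus $\underline{\pi_{0}(R)}$ is a commutative Green functor; this step uses nothing about interchange.

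For the norm maps, fix an admissible $H$-set $H/K$. Applying Lemma~\ref{lem:Norms} to the restricted operad $i_{H}^{\ast}\cO$ (still an $\Ninfty$ operad) and the admissible $H$-set $H/K$, and using the identification $N^{H/K}(i_{H}^{\ast}R)\cong N_{K}^{H}i_{K}^{\ast}R$, we obtain a map of $H$-spectra
\[
\nu\colon N_{K}^{H}i_{K}^{\ast}R\longrightarrow i_{H}^{\ast}R,
\]
canonical up to homotopy because the pertinent space of operad maps is contractible. Since $N_{K}^{H}S^{0}\cong S^{0}$, we may set
\[
n_{K}^{H}\colon \pi_{0}^{K}(R)=[S^{0},i_{K}^{\ast}R]^{K}\longrightarrow [S^{0},i_{H}^{\ast}R]^{H}=\pi_{0}^{H}(R),\qquad x\longmapsto\bigl[\,\nu\circ N_{K}^{H}(x)\,\bigr].
\]
This is well defined since $N_{K}^{H}$ preserves homotopies, and $n_{K}^{H}(1)=1$ since $N_{K}^{H}$ and $\nu$ are unital. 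Multiplicativity is the one place the interchange hypothesis is used, exactly paralleling the role of interchange in the proof of Theorem~\ref{thm:TransfersinSpaces}: as $N_{K}^{H}$ is strong symmetric monoidal it carries the homotopy-ring $i_{K}^{\ast}R$ to a homotopy-ring and satisfies $N_{K}^{H}(x\cdot y)=N_{K}^{H}(x)\cdot N_{K}^{H}(y)$; and since the $\cO$-action interchanges with itself, the structure map $\nu$ --- attached to the surjection $H/K\to H/H$ --- is a map of $i_{H}^{\ast}\cO$-algebras, hence a ring homomorphism on $\pi_{0}$, so that $n_{K}^{H}(x\cdot y)=n_{K}^{H}(x)\cdot n_{K}^{H}(y)$. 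Pinning down this ring-map property of $\nu$ (via the interchange formalism, as in Corollary~\ref{cor:NormsAreRingMaps}) is the only genuinely substantive point; the rest is formal.

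Finally, the multiplicative double coset formula is the exact analogue of Proposition~\ref{prop:DoubleCoset} and follows from Theorem~\ref{thm:ExistenceofNorms}(iii): for $L\subseteq H$ the restriction to $L$ of the structure map $\nu$ attached to the $H$-set $H/K$ coincides with the structure map attached to the $L$-set
\[
i_{L}^{\ast}(H/K)\cong\coprod_{g\in L\backslash H/K}L/(L\cap gKg^{-1}),
\]
while Theorem~\ref{thm:ExistenceofNorms}(i) rewrites the structure map of a disjoint union of admissible sets as the product, taken with the multiplication of $R$, of the structure maps of the summands. Combining this with the standard compatibility of the norm with restriction (\cite[App.~B]{HHR}), which expresses $i_{L}^{\ast}N_{K}^{H}i_{K}^{\ast}R$ as the indexed smash product over $g$ of the $N_{L\cap gKg^{-1}}^{L}$ of the conjugated restrictions of $R$, one obtains on $\pi_{0}$
\[
\mathrm{res}_{L}^{H}\,n_{K}^{H}(x)=\prod_{g\in L\backslash H/K}n^{L}_{L\cap gKg^{-1}}\bigl(c_{g}\,\mathrm{res}^{K}_{g^{-1}Lg\cap K}(x)\bigr),
\]
the multiplicative version of the Mackey double coset formula. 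As the relevant diagrams were already seen in Theorem~\ref{thm:ExistenceofNorms} to commute coherently, the argument is formally the same as for the additive statement in $G$-spaces, so the remaining verifications are routine.
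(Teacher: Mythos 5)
Your proposal is correct and takes essentially the same approach as the paper, which simply asserts that the proof is identical to those of Theorem~\ref{thm:TransfersinSpaces} and Proposition~\ref{prop:DoubleCoset} and omits the details. You have executed that adaptation faithfully: the Mackey and restriction structure come from the genuine spectrum, the ring structure from Proposition~\ref{prop:Naive}, the norms from Lemma~\ref{lem:Norms} with uniqueness up to homotopy from contractibility of the operad spaces, multiplicativity from the interchange hypothesis (the relevant general statement is the unnumbered Proposition preceding Corollary~\ref{cor:NormsAreRingMaps}, rather than that corollary itself, which is specific to $\cL(U)$), and the double coset formula from Theorem~\ref{thm:ExistenceofNorms}(iii).
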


Thus just as the homotopy groups of algebras in spaces over the Steiner operad on an incomplete universe gave incomplete Mackey functors with only some transfers, the zeroth homotopy group of an algebra in spectra over the linear isometries operad on an incomplete universe gives incomplete Tambara functors with only some norms.

\appendix

\section{The homotopy theory of algebras over $\Ninfty$ operads in
$\Sp_{G}$}\label{sec:monadicalgebras}

In this section, we quickly present some technical results about the
abstract homotopy theory of categories of algebras over
$\Ninfty$ operads.

\subsection{Model structure and comparison results}

Given an $\Ninfty$ operad $\aO$, there is an associated monad $\bO$ on
$\Sp_{G}$ formed in the usual fashion: for an object $X$ in $\Sp_{G}$, the
free $\aO$-algebra can be described as
\[
\bO X = \bigvee_{n} \aO(n)_{+} \sma_{\Sigma_n} X^{\sma n}.
\]
The category of $\aO$-algebras is the category $\Sp_{G}[\bO]$ of algebras
over the monad $\bO$.  For our model category results, we require a
mild hypothesis on the spaces in the operad (we could equivalently
assume that the operads arise as the geometric realization of
simplicial operads).  We believe that in fact this sort of hypothesis
is unnecessary, but we do not study that issue here.

\begin{proposition}
Let $\aO$ be an $\Ninfty$ operad for which each space $\cO_n$ is of
the homotopy type of a $G \times \Sigma_n$-CW complex.  Then the
category of $\aO$-algebras has a model structure in which the weak
equivalences and fibrations are lifted from $\Sp_{G}$.
\end{proposition}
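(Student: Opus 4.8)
The plan is to produce the model structure by transferring the cofibrantly generated stable model structure on $\Sp_G$ along the free--forgetful adjunction
\[
\bO\colon \Sp_G \rightleftarrows \Sp_G[\bO]\colon U,\qquad \bO X=\bigvee_{n\ge 0}\cO(n)_+\sma_{\Sigma_n}X^{\sma n}.
\]
Fix a set $I$ of generating cofibrations and a set $J$ of generating acyclic cofibrations for $\Sp_G$, declare a map of $\aO$-algebras to be a weak equivalence or fibration precisely when $U$ of it is, and verify the hypotheses of the standard criterion for lifting a cofibrantly generated model structure along an adjunction, with $\bO(I)$ and $\bO(J)$ as the generating sets: (a) $\Sp_G[\bO]$ is bicomplete; (b) the domains of $\bO(I)$ and $\bO(J)$ are small relative to the corresponding cell complexes; and (c) $U$ carries every relative $\bO(J)$-cell complex to a weak equivalence.

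Conditions (a) and (b) are formal. The forgetful functor $U$ creates limits, and since the monad $\bO$ is built from coproducts, smash powers, and orbits it preserves filtered colimits and reflexive coequalizers, so $\Sp_G[\bO]$ is cocomplete by the usual argument for categories of algebras over a monad; indeed $\Sp_G$ is locally presentable and $\bO$ is accessible, so $\Sp_G[\bO]$ is locally presentable and every object is small, which gives (b).

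The content is in (c), and here I would invoke the classical pushout filtration for free maps of operadic algebras. Given $A\to B$ in $J$, an $\aO$-algebra $R$, and a structure map $\bO A\to R$, the underlying $G$-spectrum of the pushout $R'=R\amalg_{\bO A}\bO B$ in $\Sp_G[\bO]$ is the colimit of a sequential filtration $R=R_0\to R_1\to R_2\to\cdots$ in which each $R_{k-1}\to R_k$ is a pushout in $\Sp_G$ of a map of the form
\[
X_k\sma_{\Sigma_k}Q^k_{k-1}(A,B)\longrightarrow X_k\sma_{\Sigma_k}B^{\sma k},
\]
where $Q^k_{k-1}(A,B)\to B^{\sma k}$ is the $k$-fold iterated pushout--product of $A\to B$, regarded $G\times\Sigma_k$-equivariantly, and $X_k=\bigvee_{p\ge0}\cO(k+p)_+\sma_{\Sigma_p}R^{\sma p}$ carries a residual $\Sigma_k$-action; this is the same bookkeeping as for operadic and ring algebras in orthogonal $G$-spectra in \cite{MM} and \cite{HHR}. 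The essential point is that $X_k$ is $\Sigma_k$-free: $\cO(k+p)$ is $\Sigma_{k+p}$-free, hence $\Sigma_k$-free, and passing to $\Sigma_p$-orbits preserves freeness of the commuting $\Sigma_k$-action since $\Sigma_k\cap\Sigma_p=\{e\}$ in $\Sigma_{k+p}$. Now, since $A\to B$ is a cofibration, $Q^k_{k-1}(A,B)\to B^{\sma k}$ is a cofibration of orthogonal $G\times\Sigma_k$-spectra, and since it is also a weak equivalence the iterated pushout--product is acyclic; because $X_k$ is equivalent to a free $\Sigma_k$-CW object --- this is where both the freeness clause of Definition~\ref{def:geinfop} and the hypothesis on the $G\times\Sigma_n$-homotopy type of $\cO_n$ are used --- the functor $X_k\sma_{\Sigma_k}(-)$ carries this map to an acyclic cofibration of orthogonal $G$-spectra. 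Hence each $R_{k-1}\to R_k$, and therefore the transfinite composite $R\to R'$, is an acyclic cofibration in $\Sp_G$. Since an arbitrary relative $\bO(J)$-cell complex is a transfinite composite of such pushouts and acyclic cofibrations in $\Sp_G$ are closed under pushout and transfinite composition, (c) holds and the lifting criterion applies.

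I expect the main obstacle to be the equivariant homotopical input in the penultimate sentence: one must know that the iterated pushout--product of an acyclic cofibration of orthogonal $G$-spectra is an acyclic cofibration of orthogonal $G\times\Sigma_k$-spectra, with enough freeness over $\Sigma_k$ that smashing with the free $\Sigma_k$-object $X_k$ and passing to orbits preserves acyclicity. This is precisely the type of statement worked out in the appendix of \cite{HHR}, and it is the reason for the hypothesis on the homotopy type of $\cO_n$; observe that here the freeness of the $\Sigma_n$-action lets one stay with the ordinary stable model structure, in contrast to $\Comm$, where the absence of such freeness forces a positive model structure. An alternative that sidesteps a direct verification of (c) is to exhibit a functorial path object and fibrant replacement for $\aO$-algebras and invoke the path-object form of the lifting criterion, but this uses the same homotopical inputs.
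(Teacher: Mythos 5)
Your proof takes essentially the same route as the paper's: both lift the model structure along the free--forgetful adjunction using the criterion of \cite[5.13]{MMSS}, and both reduce to showing that cobase change in $\aO$-algebras along free maps on generating acyclic cofibrations yields weak equivalences of underlying $G$-spectra, which is established via the filtration argument from \cite[App.~B]{HHR}; the paper simply delegates the details to the proofs of \cite[B.113, B.115]{HHR}, while you spell out the filtration and the freeness observation explicitly. One small caveat worth recording: for a general $\aO$-algebra $R$ the object $X_k$ in your pushout filtration is the $k$-th space of the enveloping operad of $R$ --- a reflexive coequalizer, not the wedge you wrote, which is only correct for free $R$ --- and checking that the residual $\Sigma_k$-action on that coequalizer is still free takes a bit more care than your wedge computation, though it does hold and is exactly the bookkeeping you rightly defer to the cited appendix.
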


\begin{proof}
We use the criteria of~\cite[5.13]{MMSS}, which gives conditions for a
monad on a topological model category to generate lifted model
structures on the associated category of algebras.  As observed in the
argument for~\cite[B.130]{HHR}, the nontrivial aspect of verifying
these criteria is showing that given a generating acyclic cofibration
$A \to B$ and a map $\bO A \to X$, the map $X \to Y$ in the pushout
square 
\[
\xymatrix{
\bO A \ar[r] \ar[d] & \bO B \ar[d] \\
X \ar[r] & Y\\
}
\] 
is a weak equivalence.  It is easy to see that $\bO A \to \bO B$ is an
$h$-cofibration (i.e., it satisfies the homotopy extension property)
and so it suffices to show that $\bO A \to \bO B$ is a weak
equivalence.  By our hypotheses on $\aO$, the proof
of~\cite[B.115]{HHR} applies here to establish the analogue
of~\cite[B.113]{HHR}, which yields the result.
\end{proof}

\begin{warning}
The verification that $\bO$ takes the acyclic cofibrations to weak
equivalences is not trivial and can fail in other seemingly similar
situations.  For example, if we localize the category of orthogonal
spectra at the $E\tilde{F}$-equivalences, the free commutative algebra
monad does not preserve equivalences and so the construction of the
model structure on commutative ring objects fails.  This subtlety is
closely related to the localization phenomena discussed
in~\cite{HillHopkins}.
\end{warning}

Associated to a map $f \colon \cO \to \cO'$ of operads is an adjoint
pair 
\[
\xymatrix{
{f_{!}\colon \Sp_{G}[\cO]} \ar@<1ex>[r] & {\Sp_{G}[\cO']\colon f^{\ast}} \ar@<1ex>[l],
}
\]
where $f^*$ is the pullback and $f_!$ is the coequalizer
\[
\xymatrix{
{\bO' \bO X} \ar@<1ex>[r] \ar@<-1ex>[r]  & {\bO' X},
}
\]
where one map is the action map on $X$ and the other is the composite
of $f$ and the multiplication on $\bO'$.  In the standard model
structures on $\Sp_{G}[\cO]$ and $\Sp_{G}[\cO']$, it is clear that this pair
forms a Quillen adjunction since $f^*$ clearly preserves fibrations
and weak equivalences.

The following result justifies the notion of weak equivalence of
$\Ninfty$ operad.  The argument is a standard cellular induction
starting from the easy comparison on free algebras; e.g.,
see~\cite[3.14]{ABGHR}.

\begin{theorem}\label{thm:operadcompare}
Let $f \colon \cO \to \cO'$ be a weak equivalence of
$\Ninfty$ operads.  Assume that $\cO_1$ and $\cO'_1$ have
nondegenerate $G$-fixed basepoints and each $\cO_n$ and $\cO'_n$ are
of the homotopy type of $G \times \Sigma_{n}$ CW-complexes.  Then the
adjoint pair $(f_!, f^*)$ is a Quillen equivalence.
\end{theorem}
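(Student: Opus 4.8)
The plan is to reduce the statement to a homotopical comparison on free algebras and then propagate it up the cellular filtration of a cofibrant $\cO$-algebra. Since the lifted model structures on $\Sp_G[\cO]$ and $\Sp_G[\cO']$ have fibrations and weak equivalences created by the forgetful functors to $\Sp_G$, the right adjoint $f^\ast$ detects and reflects weak equivalences and preserves fibrations; hence $(f_!, f^\ast)$ is a Quillen adjunction, and to see it is a Quillen equivalence it is enough to show that for every cofibrant $\cO$-algebra $A$ the unit $\eta_A \colon A \to f^\ast f_! A$ is a weak equivalence of underlying $G$-spectra. Every cofibrant $\cO$-algebra is a retract of a cell $\cO$-algebra, i.e.\ a (possibly transfinite) sequential colimit $A = \colim_\alpha A_\alpha$ in which each $A_{\alpha+1}$ is obtained from $A_\alpha$ as a pushout of $\bO K \to \bO L$ along a map $\bO K \to A_\alpha$, for $K \to L$ a generating cofibration of $\Sp_G$. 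Because $f_!$ is a left adjoint it commutes with these pushouts and colimits, and because the maps involved are $h$-cofibrations (exactly as in the proof of the model-structure proposition above) both $A$ and $f^\ast f_! A$ are computed as genuine homotopy colimits of their filtration stages; so it suffices to check that $\eta$ is a weak equivalence on free algebras and on each single cell attachment.

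On free algebras $\eta$ is the map $\bO X \to \bO' X$, i.e.\ the wedge over $n$ of
\[
\cO(n)_+ \wedge_{\Sigma_n} X^{\wedge n} \longrightarrow \cO'(n)_+ \wedge_{\Sigma_n} X^{\wedge n}.
\]
Here I would invoke the structural input that makes the whole argument run: by condition (ii) of Definition~\ref{def:geinfop} the $\Sigma_n$-actions on $\cO(n)$ and $\cO'(n)$ are \emph{free}, and by hypothesis these spaces have the $G \times \Sigma_n$-CW homotopy type, so $\cO(n)_+ \wedge_{\Sigma_n}(-)$ and $\cO'(n)_+ \wedge_{\Sigma_n}(-)$ both model the homotopy-orbit construction; consequently a $G \times \Sigma_n$-equivalence $f_n \colon \cO(n) \to \cO'(n)$ induces a weak equivalence after smashing over $\Sigma_n$ with any $\Sigma_n$-cofibrant $G \times \Sigma_n$-spectrum, in particular with $X^{\wedge n}$ for $X$ taken $G$-cofibrant in the cell induction. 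This is precisely the equivariant statement established in \cite[App.~B, esp.~B.113 and B.115]{HHR}. A single cell attachment is handled by the same computation applied term-by-term to the standard ``pushout filtration'' of the $\cO$-algebra $Y$: its associated graded pieces are of the form $\cO(n)_+ \wedge_{\Sigma_n} Q^n_j$ for explicit $\Sigma_n$-spectra $Q^n_j$ built functorially out of $K \to L$ and $A_\alpha$ (see the filtration analyses in \cite[App.~B]{HHR} and \cite{MMSS}), and $f_!$ carries the filtration for $\cO$ to that for $\cO'$ compatibly, so the homotopy-orbit comparison above applies at each stage. The nondegenerate $G$-fixed basepoint hypothesis on $\cO_1$ and $\cO'_1$ is what guarantees the $h$-cofibrancy needed for these filtration steps.

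Putting this together, a transfinite induction on the cell structure --- using that sequential colimits along $h$-cofibrations of $G$-spectra preserve weak equivalences --- shows $\eta_A$ is a weak equivalence for every cell $\cO$-algebra $A$, hence for every cofibrant $\cO$-algebra by the retract argument; since $f^\ast$ creates weak equivalences and fibrations, this proves $(f_!, f^\ast)$ is a Quillen equivalence. The one genuinely nontrivial point is the homotopy-invariance of $\cO(n)_+ \wedge_{\Sigma_n}(-)$ under the equivalence $f_n$: a weak equivalence of $\Ninfty$ operads in the sense of Definition~\ref{defn:weak} only compares \emph{all} fixed points of $\cO(n)$ and $\cO'(n)$, so one must verify this suffices to compare the homotopy-orbit (norm/bar) constructions entering the cell filtration. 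The resolution, as indicated above, is the freeness clause in the definition of an $\Ninfty$ operad together with the $G \times \Sigma_n$-CW hypothesis, which place us exactly in the situation covered by the homotopical lemmas of \cite[App.~B]{HHR}; granting that input, everything else is the standard cellular bookkeeping, carried out for example in \cite[3.14]{ABGHR}.
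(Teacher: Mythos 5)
Your proposal is correct and fills in exactly the argument the paper sketches: a cellular induction reducing to the free-algebra case, with the key homotopical input being the $\Sigma_n$-freeness and $G\times\Sigma_n$-CW hypothesis on the operad spaces, pushed through the cell filtration via the lemmas of \cite[App.~B]{HHR} and the bookkeeping of \cite[3.14]{ABGHR} (which the paper itself cites as the model). One small imprecision worth noting: $X^{\sma n}$ is not itself a $\Sigma_n$-cofibrant $G\times\Sigma_n$-spectrum even for cofibrant $X$; the correct statement, which your cited references supply, is that the $\Sigma_n$-freeness and CW-structure of $\cO(n)$ (so that $f_n$ is a genuine $G\times\Sigma_n$-homotopy equivalence by the equivariant Whitehead theorem) together with cofibrancy of $X$ make $\cO(n)_+ \sma_{\Sigma_n} X^{\sma n}$ homotopically meaningful and natural in $\cO(n)$, which is what the induction actually needs.
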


\subsection{Comparison to rigid commutative monoids}\label{sec:rigid}

In the category of orthogonal spectra, the symmetric
monoidal structure given by the smash product is constructed so that a
commutative monoid encodes the classical homotopy-coherence data of an
$E_\infty$ ring spectrum~\cite{MMSS}.  The key technical underpinning
of this comparison is the equivalence 
\begin{equation}\label{eq:sympow}
(E\Sigma_i)_+ \sma_{\Sigma_i} X^{\sma i} \to X^{\sma i} / \Sigma_i,
\end{equation}
for a positive cofibrant orthogonal spectrum $X$~\cite[15.5]{MMSS}.
Furthermore, since the category of orthogonal spectra is enriched in
spaces, we can consider $E_\infty$ objects in orthogonal spectra;
these have a homotopy theory equivalent to that of commutative monoids
and hence classical $E_\infty$ ring spectra~\cite[13.2]{Mayrant}.

The category of orthogonal $G$-spectra is also symmetric monoidal, and
we have the following analogue of equation~\eqref{eq:sympow} 
\begin{equation}\label{eq:eqsympow}
(E_G \Sigma_i)_+ \sma_{\Sigma_i} X^{\sma i} \to X^{\sma i} / \Sigma_i,
\end{equation}
for a positive cofibrant orthogonal $G$-spectrum~\cite[III.8.4]{MM}
(see also~\cite[B.117]{HHR}).  Once again, this implies that the
homotopy theory of commutative monoids is equivalent to the homotopy
theory of classical $E_\infty$ ring spectra (over
the linear isometries operad).  Moreover, we have the following
comparison between algebras over complete $\Ninfty$ operads and
commutative monoids in the category of orthogonal $G$-spectra, which
we follows from the same kind of inductive argument as 
Theorem~\ref{thm:operadcompare}, using the equivalence of
equation~\eqref{eq:sympow} to start the induction (i.e., to do the
comparison on the free algebras).

\begin{theorem}\label{thm:rigidcompare}
Let $X$ be an algebra in orthogonal $G$-spectra over a complete
$\Ninfty$ operad $\aO$.  Assume that $\aO$ has a nondegenerate
$G$-fixed basepoint and each $\aO_n$ has the homotopy type of a
$G \times \Sigma_{n}$ CW-complex.  Then there exists a commutative
monoid $\tilde{X}$ in orthogonal $G$-spectra such that
$X \htp \tilde{X}$ as algebras over $\aO$.  (Here we are using the
pullback along the terminal map from $\aO$ to the commutative operad
to give $\tilde{X}$ the structure of an $\aO$-algebra).  This
correspondence is functorial, and there is a zig-zag of equivalences
on Dwyer-Kan simplicial localizations between the category
$\Sp_G[\bP]$ of commutative monoids in $\Sp_G$ and the category
$\Sp_G[\bO]$ of $\aO$-algebras in $\Sp_G$.
\end{theorem}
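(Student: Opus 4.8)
The plan is to reduce the theorem to the single assertion that the terminal map of operads $\pi\colon\aO\to\Comm$ — where $\Comm$ is the commutative operad, $\Comm_n=\ast$ — induces a Quillen equivalence
\[
\pi_{!}\colon \Sp_{G}[\bO] \rightleftarrows \Sp_{G}[\bP] \colon \pi^{\ast}.
\]
Here I would work throughout with the positive stable model structure on $\Sp_{G}$ (Quillen equivalent to the genuine one via the identity) and the model structures it lifts to $\aO$-algebras, as above, and to commutative monoids, by~\cite[App.~B]{HHR}. Since $\pi^{\ast}$ is simply restriction of the structure maps, it preserves fibrations and all weak equivalences, so $(\pi_{!},\pi^{\ast})$ is automatically a Quillen adjunction. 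Granting that it is a Quillen equivalence: for an arbitrary $\aO$-algebra $X$, take a cofibrant replacement $X^{c}$ and a fibrant replacement $\tilde X$ of the commutative monoid $\pi_{!}X^{c}$; then $\pi^{\ast}\tilde X\htp X$ as $\aO$-algebras via the derived unit, the assignment is functorial because cofibrant and fibrant replacements are, and a Quillen equivalence induces the asserted zig-zag of Dwyer--Kan equivalences of simplicial localizations.

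The only geometric input is the comparison on free algebras, and this is where completeness of $\aO$ is used. For a positive cofibrant orthogonal $G$-spectrum $Z$ one has $\pi_{!}(\bO Z)=\bP Z$, and the natural map $\bO Z\to\bP Z$ is a levelwise weak equivalence: on the $n$th wedge summand it factors as
\[
(\aO_{n})_{+}\sma_{\Sigma_{n}}Z^{\sma n}\xrightarrow{\ \simeq\ }(E_{G}\Sigma_{n})_{+}\sma_{\Sigma_{n}}Z^{\sma n}\xrightarrow{\ \simeq\ }Z^{\sma n}/\Sigma_{n}.
\]
The first map is an equivalence because $\aO$ complete means each $\aO_{n}$ is a universal space for the family of subgroups of $G\times\Sigma_{n}$ meeting $\Sigma_{n}$ trivially, i.e. $\aO_{n}\to E_{G}\Sigma_{n}$ is a $G\times\Sigma_{n}$-equivalence, and both sides smash it against $Z^{\sma n}$; the second is the equivariant symmetric-power equivalence, equation~\eqref{eq:eqsympow} (see~\cite[III.8.4]{MM}, \cite[B.117]{HHR}).

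From here I would run the same cellular induction as in the proof of Theorem~\ref{thm:operadcompare}. A cofibrant $\aO$-algebra is a retract of a cell algebra $A=\colim_{\alpha}A_{\alpha}$, with each $A_{\alpha+1}$ a pushout in $\Sp_{G}[\bO]$ of a free map $\bO(\partial)\to\bO(\mathrm{cell})$ along a generating positive cofibration; since $\pi_{!}$ is a left adjoint and takes free algebras to free algebras, it carries this to the corresponding cell presentation of $\pi_{!}A$ in $\Sp_{G}[\bP]$. Using the filtration analysis behind the model-structure proposition above (the analogues of~\cite[B.113, B.115]{HHR}), at each stage the pushouts in both categories are homotopy pushouts because the attaching maps $\bO(\partial)\to\bO(\mathrm{cell})$ and $\bP(\partial)\to\bP(\mathrm{cell})$ are $h$-cofibrations; combining this with the free-algebra comparison, induction on $\alpha$ shows the derived unit $A_{\alpha}\to\pi^{\ast}\pi_{!}A_{\alpha}$ is a weak equivalence at every stage, hence for $A$. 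A symmetric argument (or two-out-of-three applied to a fibrant and cofibrant $\aO$-algebra) handles the derived counit, so $(\pi_{!},\pi^{\ast})$ is a Quillen equivalence.

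I expect the real work to be exactly the bookkeeping flagged in the Warning: $\bP$ does \emph{not} preserve weak equivalences of arbitrary $G$-spectra, and equation~\eqref{eq:eqsympow} holds only on positive cofibrant inputs, so the cellular induction must be organised so that every object appearing stays positive cofibrant — equivalently, that the cells are free on positive cofibrant objects — and so that the pushouts along $\bO$ and $\bP$ are genuinely homotopy pushouts. This is precisely what the filtration arguments of~\cite[App.~B]{HHR} provide; modulo that input the comparison is formal, and the $G\times\Sigma_{n}$-CW and nondegenerate-basepoint hypotheses on $\aO$ are exactly what is needed to invoke them.
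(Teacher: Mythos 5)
Your proposal matches the paper's own (very brief) argument: the paper says exactly that the result follows by the same cellular induction as Theorem~\ref{thm:operadcompare}, seeded by the free-algebra comparison through $\aO_n \simeq E_G\Sigma_n$ and the equivariant symmetric-power equivalence~\eqref{eq:eqsympow}, with the positive-cofibrancy bookkeeping handled by the filtration results of~\cite[App.~B]{HHR}. You have correctly identified both the key geometric input (completeness of $\aO$ giving the $G\times\Sigma_n$-equivalence $\aO_n \to E_G\Sigma_n$) and the technical subtlety (that $\bP$ only behaves on positive cofibrant inputs), so this is essentially the paper's proof spelled out.
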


In fact, using the same argument we can obtain a more general
comparison result on the category of orthogonal $G$-spectra indexed on
an incomplete universe $U$.  Specifically, there is a zig-zag of Dwyer-Kan
equivalence between algebras over the commutative operad and algebras
over any $\Ninfty$ operad weakly equivalent to the $G$-linear
isometries operad indexed on $U$.

\section{Operadic algebras and geometric fixed points}\label{sec:appfix}

One of the most important constructions in equivariant stable homotopy
theory is that of geometric fixed points for a normal subgroup
$N$ (e.g., see~\cite[\S V.4]{MM}).  We finish our general analysis of
$\cO$-algebras by describing the structure carried by their
$N$-geometric fixed points.  We let $\Phi^{N}(-)$ denote the point-set
$N$-geometric fixed point functor~\cite[\S V.4]{MM}. 

We first address the effect of fixed points on the operad and the
admissible sets. 

\begin{lemma}
Let $N$ be a normal subgroup of $G$, and let $\cO$ be an $\Ninfty$ operad. Then
\begin{enumerate}
\item $\cO^{N}$ is a $\Ninfty$ $G/N$-operad and
\item the admissible $H/N$-sets for $\cO^{N}$ are the admissible $H$ sets for $\cO$ which are fixed by $N$.
\end{enumerate}
\end{lemma}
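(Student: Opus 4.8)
The plan is to realize $\cO^{N}$ as the image of $\cO$ under a strong symmetric monoidal functor and then extract all the isotropy data from a single fixed-point identity. First I would observe that $N$-fixed points define a functor $(-)^{N}$ from $G$-spaces to $G/N$-spaces that is strong symmetric monoidal for the Cartesian product, since $(X\times Y)^{N}=X^{N}\times Y^{N}$ and $\ast^{N}=\ast$. Applying Proposition~\ref{prop:lax} with $F=(-)^{N}$ shows that the symmetric sequence $\cO^{N}$ is a $G/N$-operad, with composition maps and the $G/N$-fixed unit $1\in\cO_{1}^{N}$ inherited from $\cO$. It then remains only to check the three conditions of Definition~\ref{def:geinfop}. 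Freeness of the $\Sigma_{n}$-action on $\cO_{n}^{N}$ is immediate, as it is the restriction to a subspace of the free $\Sigma_{n}$-action on $\cO_{n}$; and $\cO_{0}^{N}$, likewise $\cO_{1}^{N}$, is $G/N$-contractible because for every $H$ with $N\subseteq H\subseteq G$ one has $(\cO_{0}^{N})^{H/N}=\cO_{0}^{H}\simeq\ast$.

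The heart of part (i) is the universal-space property. Let $\pi\colon G\times\Sigma_{n}\to (G/N)\times\Sigma_{n}$ denote the quotient by the normal subgroup $N\times\{1\}$. For every subgroup $\Lambda\subseteq (G/N)\times\Sigma_{n}$ there is a natural identification
\[
(\cO_{n}^{N})^{\Lambda}=\cO_{n}^{\pi^{-1}(\Lambda)},
\]
since $N\times\{1\}$ is normal in $\pi^{-1}(\Lambda)$ with quotient $\Lambda$, so iterated fixed points collapse. Because $\cO_{n}$ is a universal space for $\cF_{n}(\cO)$, the right-hand side is contractible when $\pi^{-1}(\Lambda)\in\cF_{n}(\cO)$ and empty otherwise; hence $\cO_{n}^{N}$ is a universal space for
\[
\cF_{n}(\cO^{N}):=\{\Lambda\subseteq (G/N)\times\Sigma_{n} \ :\ \pi^{-1}(\Lambda)\in\cF_{n}(\cO)\}.
\]
This is a family, since $\pi^{-1}$ preserves subgroup inclusions and conjugation while $\cF_{n}(\cO)$ is a family, and it contains every $(H/N)\times\{1\}$ because $\pi^{-1}\big((H/N)\times\{1\}\big)=H\times\{1\}\in\cF_{n}(\cO)$ by condition (iii) for $\cO$. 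This establishes (i).

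For (ii) I would use the equivalence between $H/N$-sets and $H$-sets on which $N$ acts trivially (the $H$-sets that the statement calls ``fixed by $N$''), given by inflation along $H\to H/N$. If $T$ is an $H/N$-set of cardinality $n$ classified by $f\colon H/N\to\Sigma_{n}$, then $\Gamma_{T}\subseteq (H/N)\times\Sigma_{n}$ is the graph of $f$, and $\pi^{-1}(\Gamma_{T})\subseteq H\times\Sigma_{n}$ is the graph of the composite $H\to H/N\xrightarrow{f}\Sigma_{n}$; that is, $\pi^{-1}(\Gamma_{T})=\Gamma_{T'}$, where $T'$ is the inflation of $T$. By the description of $\cF_{n}(\cO^{N})$ above, $T$ is admissible for $\cO^{N}$ iff $\Gamma_{T}\in\cF_{n}(\cO^{N})$, iff $\Gamma_{T'}=\pi^{-1}(\Gamma_{T})\in\cF_{n}(\cO)$, iff $T'$ is admissible for $\cO$; conversely every $N$-fixed admissible $H$-set is the inflation of a unique $H/N$-set, necessarily admissible for $\cO^{N}$. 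This identifies the admissible $H/N$-sets for $\cO^{N}$ with the $N$-fixed admissible $H$-sets for $\cO$.

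The argument is essentially formal. The only point I expect to require care in the write-up is the fixed-point identity $(\cO_{n}^{N})^{\Lambda}=\cO_{n}^{\pi^{-1}(\Lambda)}$ together with the attendant bookkeeping of preimages of graphs under $\pi$; once these are in hand there is no genuine obstacle.
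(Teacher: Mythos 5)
Your proof is correct and follows the same route as the paper: the key is the fixed-point identity $(\cO_n^N)^\Lambda = \cO_n^{\pi^{-1}(\Lambda)}$, which immediately gives both the universal-space property of $\cO^N$ and the identification of its admissible sets via pullback of graphs along $\pi$. Your write-up simply makes explicit two points the paper leaves as "obvious" (the lax/strong-monoidality argument via Proposition~\ref{prop:lax} for the operad structure on $\cO^N$, and the inflation correspondence between $H/N$-sets and $N$-fixed $H$-sets).
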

\begin{proof}
It is obvious that $\cO^{N}$ still forms a $G$-operad, and it is also clear that there are no fixed points for the symmetric groups. Both parts of the lemma then rely on understanding the way families behave upon passage to fixed points by a normal subgroup. Let $\Gamma$ be a subgroup of $G/N\times\Sigma_{n}$. Then 
\[
(\cO_{n}^{N})^{\Gamma}=\cO_{n}^{\pi^{-1}(\Gamma)},
\]
where $\pi\colon G\to G/N$ is the canonical projection, is either
empty or contractible. Thus $\cO_{n}$ is in fact a universal space,
making $\cO^{N}$ an $\Ninfty$ $G/N$-operad. 

For the second part, we again use the above equality of fixed points. If $\Gamma_{T}$ corresponds to an admissible $H/N$-set $T$ for $\cO^{N}$, then the above equality shows that $\pi^{-1}(\Gamma_{T})$ corresponds to an admissible $H$-set for $\cO$. Since this contains $N\times\{1\}$, we see that this admissible $H$-set is simply $T$ again, now viewed as an $H$-set. Thus the admissible sets for $\cO^{N}$ are precisely the admissible sets for $\cO$ which are fixed by $N$.
\end{proof}

The $\Ninfty$ $G/N$-operad $\cO^{N}$ is also an $\Ninfty$ operad via
the quotient $G\to G/N$. Thus it is a sub $\Ninfty$ operad of $\cO$,
and by restriction of structure, any $\cO$-algebra $R$ is also a
$\cO^{N}$-algebra. This is the heart of the following theorem. 

\begin{theorem}\label{thm:GeomFP}
If $R$ is an $\cO$-algebra, then $\Phi^{N}(R)$ is an $\cO^{N}$-algebra.
\end{theorem}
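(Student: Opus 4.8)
The plan is to build the $\cO^{N}$-algebra structure on $\Phi^{N}(R)$ by transporting the $\cO$-algebra structure on $R$ along the point-set geometric fixed point functor, using Proposition~\ref{prop:lax} together with the standard monoidal and colimit properties of $\Phi^{N}$. The first step I would take is the reduction flagged in the discussion above: the inclusion of $N$-fixed points $\cO^{N}\hookrightarrow\cO$ is a map of $G$-operads — each $\cO_{n}^{N}$ is $G\times\Sigma_{n}$-stable because $N$ is normal, and the structure maps and the unit of $\cO$ preserve $N$-fixed points — and $N$ acts trivially on $\cO^{N}$. Restricting structure along this map, $R$ becomes a $\cO^{N}$-algebra in $\Sp_{G}$ with $N$ acting trivially on the operad. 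So it suffices to show that applying $\Phi^{N}$ to such an algebra produces a $\cO^{N}$-algebra in $\Sp_{G/N}$, where now $\cO^{N}$ is regarded as the $\Ninfty$ $G/N$-operad of the preceding lemma.

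For this I would invoke the following properties of the point-set functor $\Phi^{N}\colon\Sp_{G}\to\Sp_{G/N}$ from~\cite[\S V.4]{MM}: it is (op)lax symmetric monoidal, with the lax structure map $\Phi^{N}(E)^{\sma n}\to\Phi^{N}(E^{\sma n})$ an isomorphism when $E$ is cofibrant; it commutes with wedge sums and with the $\Sigma_{n}$-orbits appearing in the free-algebra monad; and for a $G$-space $A$ it satisfies $\Phi^{N}(A_{+}\sma E)\cong (A^{N})_{+}\sma\Phi^{N}(E)$ naturally, so in particular $\Phi^{N}\Sigma^{\infty}_{+}A\cong\Sigma^{\infty}_{+}(A^{N})$. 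Pushing $\cO^{N}$ forward along the strong symmetric monoidal functor $\Sigma^{\infty}_{+}$ gives a spectral operad $\Sigma^{\infty}_{+}\cO^{N}$ in $\Sp_{G}$ whose algebras, in the classical operadic sense, are exactly the $\cO^{N}$-algebras in $\Sp_{G}$; applying Proposition~\ref{prop:lax} with $F=\Phi^{N}$ then shows that $\Phi^{N}(\Sigma^{\infty}_{+}\cO^{N})$ is an operad in $\Sp_{G/N}$ and that $\Phi^{N}(R)$ is canonically an algebra over it. Finally, since $N\times\{1\}$ acts trivially on $\cO^{N}_{n}$, the level-$n$ term is $\Phi^{N\times 1}\Sigma^{\infty}_{+}\cO^{N}_{n}\cong\Sigma^{\infty}_{+}\big((\cO^{N}_{n})^{N}\big)=\Sigma^{\infty}_{+}\cO^{N}_{n}$ with its residual $(G/N)\times\Sigma_{n}$-action, and naturality of $\Phi^{N}\Sigma^{\infty}_{+}\cong\Sigma^{\infty}_{+}(-)^{N}$ makes this compatible with the composition maps, so $\Phi^{N}(\Sigma^{\infty}_{+}\cO^{N})\cong\Sigma^{\infty}_{+}\cO^{N}$ as operads in $\Sp_{G/N}$. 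Unwinding the reformulation, $\Phi^{N}(R)$ is a $\cO^{N}$-algebra, as claimed.

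Concretely, the action maps would be the composites $(\cO^{N}_{n})_{+}\sma_{\Sigma_{n}}\Phi^{N}(R)^{\sma n}\to\Phi^{N}\big((\cO^{N}_{n})_{+}\sma_{\Sigma_{n}}R^{\sma n}\big)\xrightarrow{\,\Phi^{N}(\theta_{n})\,}\Phi^{N}(R)$, where the first arrow is the lax structure of $\Phi^{N}$ smashed with $(\cO^{N}_{n})_{+}$ and passed to $\Sigma_{n}$-orbits — using that $\Phi^{N}=\Phi^{N\times 1}$ on $G\times\Sigma_{n}$-spectra is natural in the group and that $(\cO^{N}_{n})^{N}=\cO^{N}_{n}$ — and $\theta_{n}$ is the $\cO^{N}$-action map on $R$. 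The unit and associativity axioms for these maps follow from functoriality of $\Phi^{N}$ and the coherence of its symmetric monoidal structure, exactly as in the proof of Proposition~\ref{prop:lax}.

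I expect the only point that genuinely needs care to be the gap between the point-set and homotopical statements: $\Phi^{N}$ is merely \emph{lax} monoidal, so the comparison $\Phi^{N}(R)^{\sma n}\to\Phi^{N}(R^{\sma n})$ — and hence the first map in the composite — is an isomorphism only after imposing cofibrancy. I would resolve this by adopting the standing hypothesis used elsewhere in the paper that each $\cO_{n}$, hence each $\cO_{n}^{N}$, has the homotopy type of a $G\times\Sigma_{n}$-CW complex, together with the fact that cofibrant orthogonal $G$-spectra are preserved by the relevant smash powers; alternatively, one reads the conclusion as endowing $\Phi^{N}(R)$ with a $\cO^{N}$-algebra structure up to coherent homotopy. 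With that caveat the argument is routine.
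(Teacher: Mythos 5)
Your argument is correct and essentially coincides with the paper's proof: restrict $R$ to a $\cO^{N}$-algebra along the inclusion $\cO^{N}\hookrightarrow\cO$, then use the lax symmetric monoidal structure of the point-set functor $\Phi^{N}$ to produce exactly the composite $(\cO^{N}_{n})_{+}\sma_{\Sigma_{n}}\Phi^{N}(R)^{\sma n}\to\Phi^{N}\big((\cO^{N}_{n})_{+}\sma_{\Sigma_{n}}R^{\sma n}\big)\to\Phi^{N}(R)$, which is the paper's construction verbatim (your routing through $\Sigma^{\infty}_{+}\cO^{N}$ and Proposition~\ref{prop:lax} just packages the same thing more formally). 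One small note: your closing caveat about cofibrancy is unnecessary for this point-set statement — the lax structure map $\Phi^{N}(R)^{\sma n}\to\Phi^{N}(R^{\sma n})$ only needs to exist in that direction, not be an isomorphism, so no cofibrancy hypothesis on $R$ is required to define the $\cO^{N}$-action.
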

\begin{proof}
Since $N$ acts trivially on $\cO^{N}$, the fact that $\Phi^{N}$ is lax
symmetric monoidal gives rise to a canonical composite 
\[
\cO^{N}_{n+}\wedge_{\Sigma_{n}}\Phi^{N}(R)^{n} \to \cO^{N}_{n+}\wedge_{\Sigma_{n}}\Phi^{N}(R^{n}) \to \Phi^{N}(\cO^{N}_{n+}\wedge_{\Sigma_{n}}R^{n}).
\]
All of our structure maps are then induced by $\Phi^{N}$ applied to
the structure maps for the $\cO^{N}$-algebra $R$. 
\end{proof}

\begin{corollary}\label{cor:Combinatorial}
If $\cO'$ is any sub $\Ninfty$ operad of $\cO$ on which $N$ acts trivially and $R$ is an $\cO$-algebra, then $\Phi^{N}(R)$ is an $\cO'$-algebra.
\end{corollary}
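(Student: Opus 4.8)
The plan is to deduce this immediately from Theorem~\ref{thm:GeomFP} by exhibiting $\cO'$ as a sub $\Ninfty$ operad of $\cO^{N}$ and then restricting structure. The point is that the hypothesis ``$N$ acts trivially on $\cO'$'' is exactly what is needed to factor the inclusion $\cO'\hookrightarrow\cO$ through the fixed-point operad.

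First I would observe that since the inclusion $\cO'_n\hookrightarrow\cO_n$ is $G\times\Sigma_n$-equivariant and $N$ acts trivially on $\cO'_n$, the image lies in $\cO_n^{N}$. Hence the inclusion factors as a composite of $G$-operad maps
\[
\cO'\longrightarrow\cO^{N}\longrightarrow\cO,
\]
where $\cO^{N}$ is regarded as a $G$-operad via the quotient $G\to G/N$ as in the discussion preceding Theorem~\ref{thm:GeomFP} (using here that $(\cO')^{N}=\cO'$). The first map is a map of $\Ninfty$ operads, since both source and target are $\Ninfty$ operads (for $\cO^{N}$ this is the preceding lemma).

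Next, by Theorem~\ref{thm:GeomFP}, $\Phi^{N}(R)$ is an $\cO^{N}$-algebra. Restricting the action along the $G$-operad map $\cO'\to\cO^{N}$ — i.e., precomposing the structure maps $\cO^{N}_{n+}\wedge_{\Sigma_n}\Phi^{N}(R)^{n}\to\Phi^{N}(R)$ with $\cO'_{n+}\wedge_{\Sigma_n}\Phi^{N}(R)^{n}\to\cO^{N}_{n+}\wedge_{\Sigma_n}\Phi^{N}(R)^{n}$ — endows $\Phi^{N}(R)$ with the structure of an $\cO'$-algebra, compatibly with the operadic axioms. (This is the trivial instance of Proposition~\ref{prop:lax} applied to the identity functor, or simply pullback of algebra structure along a map of operads.)

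There is no real obstacle here: the only thing to check carefully is that the factorization $\cO'\to\cO^{N}\to\cO$ is indeed a factorization through $\Ninfty$ operads and $G$-operad maps, which amounts to the bookkeeping of inflating the $G/N$-action on $\cO^{N}$ to a $G$-action and noting that $N$-triviality of $\cO'$ makes the first arrow well defined. Once that is in place, the corollary is a formal consequence of Theorem~\ref{thm:GeomFP} and restriction of structure.
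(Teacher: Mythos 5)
Your proof is correct and matches what the paper intends (the paper gives no explicit proof for the corollary). Factoring the inclusion $\cO'\hookrightarrow\cO$ through $\cO^N$ using the $N$-triviality of $\cO'$, invoking Theorem~\ref{thm:GeomFP}, and then restricting along the $G$-operad map $\cO'\to\cO^N$ is exactly the natural argument; one could equally well re-run the proof of Theorem~\ref{thm:GeomFP} verbatim with $\cO'$ in place of $\cO^N$, but that is the same thing unwound.
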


In particular, in the absolute worst case possible, we choose $\cO'$
to be the $G$-fixed subspace. The only admissible sets are those with trivial action (and this becomes an operad modeling a ``coherently homotopy commutative multiplication'' with no other structure). Then Corollary~\ref{cor:Combinatorial} shows that for any $\cO$-algebra $R$ and for any normal subgroup $N$, $\Phi^{N}(R)$ is a $\cO'$-algebra and in particular, has a coherently homotopy commutative multiplication.

\begin{remark}
The same statements are true for the actual fixed points, rather than
the geometric fixed points. The proofs also largely carry through
{\emph{mutatis mutandis}}. The only change is in the proof of
Theorem~\ref{thm:GeomFP}, in which the homeomorphism comparing
$\Phi^{N}(R)^{\wedge n}$ and $\Phi^{N}(R^{\wedge n})$ is replaced by 
a map 
\[
(R^{N})^{\wedge n}\to (R^{\wedge n})^{N}.
\]
See~\cite{westerland} for analysis of operads obtained in this fashion.
\end{remark}

\end{document}